\documentclass{amsart}
\usepackage{graphicx} 
\usepackage{hyperref}
\usepackage{amsmath, amsthm, amssymb, amscd}
\usepackage{mathtools}
\usepackage{enumerate}
\usepackage{hyperref}
\usepackage{verbatim}
\usepackage[noadjust]{cite}
\usepackage{esint}
\usepackage{tikz-cd}
\usetikzlibrary{shapes.geometric}
\usetikzlibrary{shapes.misc}
\usepackage[shortlabels]{enumitem}

\title[Conformal Dimension on Laakso-Type Spaces]{Conformal dimension and its attainment on self-similar Laakso-type fractal spaces}

\author{Riku Anttila}
\address[Riku Anttila]{Department of Mathematics and Statistics, University of Jyväskylä, P.O. Box 35, FI-40014 Jyväskylä, Finland}
\email{riku.t.anttila@jyu.fi}

\author{Sylvester Eriksson-Bique}
\address[Sylvester Eriksson-Bique]{Department of Mathematics and Statistics, University of Jyväskylä, P.O. Box 35, FI-40014 Jyväskylä, Finland}
\email{sylvester.d.eriksson-bique@jyu.fi}

\author{Lassi Rainio}
\address[Lassi Rainio]{Department of Mathematics and Statistics, University of Jyväskylä, P.O. Box 35, FI-40014 Jyväskylä, Finland}
\email{lassi.p.j.rainio@student.jyu.fi}

\thanks{The first and third authors were partially supported by the Eemil Aaltonen foundation. The second author was supported by the Research Council of Finland grant 354241. We thank Mathav Murugan and Ryosuke Shimizu for discussions on the attainment problem and conformal walk dimensions. }
\subjclass[2020]{30L10, 53C23, 20F65, 51F99, 28A78}
\keywords{Conformal dimension, Ahlfors regular, attainment problem,  combinatorially Loewner, quasisymmetric mappings, self-similar space, conformal gauge, Loewner spaces, replacement rule, Laakso spaces}
\date{\today}

\newcommand{\diam}{{\rm diam}}
\newcommand{\len}{{\rm len}}
\newcommand{\id}{{\rm id}}

\newcommand{\Mod}{{\rm Mod}}
\newcommand{\Res}{{\rm Res}}
\newcommand{\degr}{{\rm deg}}
\newcommand{\divr}{{\rm div}}
\newcommand{\dist}{{\rm dist}}
\newcommand{\supp}{{\rm supp}}

\newcommand{\AR}{{\rm AR}}
\newcommand{\A}{{\rm A}}

\newcommand{\cN}{{\rm \mathcal{N}}}

\newcommand{\qsequiv}{\sim_{\text{qs}}}
\newcommand{\Adm}{\operatorname{Adm}}

\newtheorem{theorem}[equation]{Theorem}
\newtheorem{lemma}[equation]{Lemma}
\newtheorem{proposition}[equation]{Proposition}
\newtheorem{corollary}[equation]{Corollary}

 \numberwithin{equation}{section}

\theoremstyle{definition}
\newtheorem{definition}[equation]{Definition}

\theoremstyle{remark}
\newtheorem{remark}[equation]{Remark}
\newtheorem{convention}[equation]{Convention}

\newcommand{\N}{\mathbb{N}}
\newcommand{\R}{\mathbb{R}}
\newcommand{\cG}{\mathcal{G}}
\newcommand{\cE}{\mathcal{E}}

\newcommand{\cJ}{\mathcal{J}}
\newcommand{\cH}{\mathcal{H}}
\newcommand{\cI}{\mathcal{I}}
\newcommand{\cM}{\mathcal{M}}

\definecolor{CB_SAFE_A}{HTML}{648fff}
\definecolor{CB_SAFE_B}{HTML}{785ef0}
\definecolor{CB_SAFE_C}{HTML}{dc267f}

\makeatletter
\let\c@equation\c@figure
\makeatother

\begin{document}

\begin{abstract}
A general construction of Laakso-type fractal spaces was recently introduced by the first two authors. In this paper, we establish a simple condition characterizing when the Ahlfors regular conformal dimension of a symmetric Laakso-type fractal space is attained. The attaining metrics are constructed explicitly. This gives new examples of attainment and clarifies the possible obstructions.
\end{abstract}

\maketitle

\section{Introduction}

\subsection{Overview}

\emph{Conformal dimension} is a quasisymmetric invariant introduced by Pansu \cite{P89}. Given a metric space, its \emph{(Ahlfors regular) conformal dimension} is the infimum of the Hausdorff dimensions of all Ahlfors regular metric spaces quasisymmetrically equivalent to it; see Subsection \ref{Subsection: Conformal dimension} for detailed definition. While the value of this infimum can often be estimated numerically \cite{Carrasco}, in most cases it remains unknown whether the infimum is realized --- that is, whether the conformal dimension of the metric space is \emph{attained}. This question is known as the \emph{attainment problem}.

A better understanding of attainment could provide answers to several open problems. For example, Bonk and Kleiner \cite{BK05} showed that Cannon's conjecture, a long-standing conjecture in geometric group theory, hinges on addressing the attainment problem for the boundaries of certain hyperbolic groups. A version of the attainment problem is also equivalent to the so-called Kleiner's conjecture \cite[Conjecture 7.5]{KleinerICM} on metric spaces satisfying the \emph{combinatorial Loewner property}. While counterexamples to this conjecture were recently found by the first two named authors \cite{anttila2024constructions}, the problem of characterizing attainment was left open.

In this paper, we fully resolve the attainment problem for a class of metric spaces called \emph{symmetric Laakso-type fractal spaces}. These include the counterexamples considered in \cite{anttila2024constructions} and the well-known fractals introduced by Laakso \cite{La00, LangPlaut,Laakso}. These fractals also give simpler models for boundaries of hyperbolic buildings and Fuchsian buildings considered in \cite{BourdonPI}. This significantly enlarges the class for which the attainment problem is understood. While interesting examples, such as the Sierp\'nski carpet, remain open, the class of symmetric Laakso-type fractal spaces is a natural first step towards understanding the attainment problem in a more general context.

\subsection{Main Theorem} 
Each Laakso-type fractal space arises as a certain \emph{limit space} of a sequence of graph approximations generated by an \emph{iterated graph system (IGS)}. A detailed description of this construction is provided in Section \ref{Section: Construction of Laakso-type fractal spaces}; here we provide a brief outline of the procedure.

    \begin{figure}[!ht]
    \input{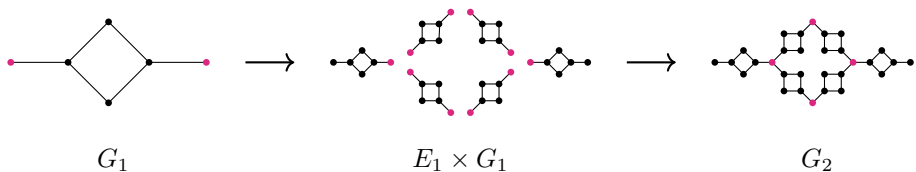}
    \caption{First iteration of the procedure that produces a sequence of graphs converging to a limit space known as the Laakso diamond.}
    \label{Figure: Construction of Laakso Diamond}
    \end{figure}

The construction of a Laakso-type fractal begins with a \emph{generator} graph \(G_1\). One then produces a sequence of graphs \((G_n)_{n\in\N}\) inductively as follows. Given \(G_n\), each edge of \(G_n\) is first replaced with a copy of \(G_1\). The copies meeting at junctions are then glued together by identifying vertices according to a fixed gluing rule, thereby producing \(G_{n+1}\). After equipping each graph \(G_n\) with a suitable semi-metric \(d_n\), one obtains the Laakso-type fractal space \((X,d_{L_*})\) as the Gromov–Hausdorff limit of the sequence \((G_n, d_n)_{n\in\N}\). The limit metric \(d_{L_*}\) is called the intrinsic metric; see Definition \ref{def:intrinsicmetric}. While the construction of the metric is slightly technical, in many cases, this metric is comparable to a re-scaled limit of weighted path metrics on $G_n$; see Remark \ref{rmk:quasiconvex}. We call a Laakso-type fractal space \emph{symmetric}, if $G_1$ has a certain mirror symmetry; see the next subsection for more discussion.

    \begin{figure}[!ht]
    \includegraphics{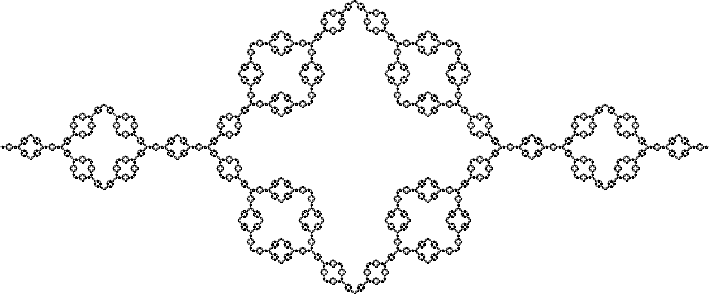}
    \caption{The Laakso diamond.}
    \label{Figure: Laakso Diamond}
    \end{figure}
    
Our main results demonstrate that the quasiconformal geometry of a symmetric Laakso-type space can be understood by solving a discrete modulus problem on the generator. In particular, the challenging attainment problem can be resolved in this setting by determining whether the generator contains a so-called \emph{removable edge}. 

    \begin{theorem}
    \label{Main Theorem: Characterization of Attainment}
    A symmetric Laakso-type fractal space \((X,d_{L_*})\) attains its Ahlfors regular conformal dimension if and only if it has no removable edges.
    \end{theorem}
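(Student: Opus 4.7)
The overall strategy is to reduce the attainment problem to an extremal discrete modulus problem on the generator graph $G_1$. Based on the authors' stated philosophy that quasiconformal geometry of a symmetric Laakso-type space is captured by a discrete modulus problem on $G_1$, one expects the Ahlfors regular conformal dimension $Q^*$ of $(X,d_{L_*})$ to coincide with the critical exponent of a discrete $Q$-modulus problem for paths in $G_1$ joining the source and sink of the gluing rule, with a \emph{removable edge} being precisely one to which some optimizer at the critical exponent assigns zero weight. The mirror symmetry of $G_1$ enters by ensuring that edge weights can be propagated consistently across all gluing junctions of the self-similar construction.

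For the direction ``no removable edges $\Rightarrow$ attainment'', the plan is to construct an attaining metric explicitly. First solve the extremal $Q^*$-modulus problem on $G_1$ to obtain optimal weights $\rho^*\colon E(G_1)\to[0,\infty)$, which by hypothesis must satisfy $\rho^*(e)>0$ for every edge $e$. Propagate these weights through the iterated graph system by assigning to each $n$-level edge with address $(e_{i_1},\ldots,e_{i_n})$ the length $\prod_{k=1}^n \rho^*(e_{i_k})$, and pass to the Gromov--Hausdorff limit to obtain an intrinsic metric $d$ on $X$. Simultaneously define a self-similar measure $\mu$ whose $n$-cell at that address has mass $\prod_{k=1}^n \rho^*(e_{i_k})^{Q^*}$. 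The remaining tasks are to verify that (i) $(X,d,\mu)$ is Ahlfors $Q^*$-regular, (ii) the identity map $(X,d_{L_*})\to(X,d)$ is quasisymmetric, and (iii) $\dim_H(X,d)=Q^*$. Strict positivity of $\rho^*$ is precisely what yields the uniform two-sided comparisons required for (i) and (ii), while (iii) follows from matching the construction to the modulus duality underlying the definition of $Q^*$.

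For the converse direction ``removable edge $\Rightarrow$ non-attainment'', I would argue by contradiction. Assume $e$ is removable and $d$ is an Ahlfors $Q^*$-regular metric in the conformal gauge of $(X,d_{L_*})$. Using the removability of $e$, construct a perturbed metric $d_t$ by scaling down the diameters of all cells whose address uses $e$ at every level by a parameter $t\in(0,1)$. Performing the scaling uniformly across scales preserves quasisymmetric equivalence with $d$, while the redistribution of modulus mass on $G_1$ (made possible precisely because $e$ carries no optimizer weight) strictly decreases the Hausdorff dimension of $(X,d_t)$, contradicting the definition of $Q^*$ as an infimum.

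The main obstacle will be the construction in the ``if'' direction, specifically the joint verification of Ahlfors regularity of $(X,d,\mu)$ and quasisymmetry of the identity map from $(X,d_{L_*})$. Both demand uniform two-sided control over the ratios of the new weighted diameters to the original ones at every scale of the self-similar structure, and this control depends delicately on the no-removable-edge condition through the strict positivity of $\rho^*$. A secondary technical hurdle is establishing the matching lower bound $\dim_H(X,d)\geq Q^*$, which relies on a duality between discrete modulus and capacity at the generator level --- exactly the kind of duality that the symmetry of $G_1$ is designed to make tractable.
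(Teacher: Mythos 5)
The ``if'' direction of your proposal is essentially the paper's route: solve the $Q_*$-modulus problem on $G_1$ to obtain a strictly positive symmetric optimizer $\rho^*$, propagate it multiplicatively through the IGS, build the metric $d_{\rho^*}$ and the self-similar measure, and verify Ahlfors regularity and quasisymmetric equivalence to $d_{L_*}$. This is exactly what Theorem \ref{Main Theorem: Construction of Metrics} does (via quasi-visual approximations in the sense of Bonk--Meyer), combined with Theorem \ref{Main Theorem: Value of Conformal Dimension}, and your outline matches it.

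The ``only if'' direction, however, diverges from the paper and has a genuine gap. The paper's argument does not perturb an attaining metric. Instead it introduces the subgraph $\widehat{E}=\supp(\rho^*)$, forms the subset $\widehat{X}=X(\widehat{E})$ of points whose address uses only non-removable edges, shows that $\widehat{X}$ is porous in $(X,d_{L_*})$ whenever $\widehat{E}\neq E_1$ (Lemma \ref{Lemma: Porous}), and proves $\dim_\AR(\widehat{X})=Q_*=\dim_\AR(X)$ via a delicate modulus/duality estimate (Lemmas \ref{Lemma: Critical exponent (porous)}, \ref{Lemma: Subset Lemma}, Corollary \ref{Corollary: Density to exponent}, Proposition \ref{Proposition: Equality of Confdim}). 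Then the known obstruction (Proposition \ref{Proposition: Porosity implies non-attainment}) --- a porous subset with full conformal dimension blocks attainment --- finishes the job. Your proposed perturbation argument, by contrast, asserts without justification that scaling down the diameters of all cells whose address involves $e$ ``preserves quasisymmetric equivalence with $d$'' and ``strictly decreases the Hausdorff dimension.'' Neither claim is clear: the scaling compounds geometrically with the number of occurrences of $e$ in the address, which in general destroys the comparability conditions underlying quasisymmetry and Ahlfors regularity; and even for metrics of the special form $d_\rho$, lowering $\rho(e)$ can violate the admissibility constraint $\sum_{f\in\theta}\rho(f)\geq1$ for paths through $e$, and restoring admissibility by raising the other weights undoes the intended dimension drop. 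More fundamentally, your argument is only naturally phrased for metrics of the form $d_\rho$, whereas the attaining metric $d$ in the gauge need not arise from the explicit cascade construction. The missing idea is the porosity/full-dimension obstruction, and in particular the hard lower bound $\dim_\AR(\widehat{X})\geq Q_*$, which the paper achieves by converting an arbitrary Ahlfors regular metric on $\widehat{X}$ into a nearly admissible density on $E_n(\widehat{E})$ and comparing to $\Mod_{Q_*}(\widehat{\Theta}^{(n)},G_n(\widehat{E}))=1$.
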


We now proceed to describe the terminology appearing in this theorem and to sketch the techniques used in the proof.

\subsection{The construction of metrics using admissible functions}

The generator of a symmetric Laakso-type fractal space contains two \emph{gluing sets}, \(I_-,I_+\subset V_1\), and admits a graph isometry \(\eta\colon G_1\to G_1\), for which \(\eta(I_-)=I_+\). One may think of the sets \(I_-\) and \(I_+\) as opposite ends of the generator; at the gluing stage, copies of the generator are always glued end-to-end.

    \begin{figure}[!ht]
    \begin{tikzpicture}[scale=0.8]

\draw (-4.5,1)--(-3.5,0)--(-2.5,1);
\draw (-4.5,-1)--(-3.5,0)--(-2.5,-1);
\draw[->,thick] (-1.75,0)--(-0.5,0);
\draw[->,thick] (3,0)--(4.25,0);

\draw (5,1.25)--(5.5,0.5)--(6,0.25);
\draw (5,0.75)--(5.5,0.5)--(6,-.25);
\draw (5,-1.25)--(5.5,-0.5)--(6,0.25);
\draw (5,-0.75)--(5.5,-0.5)--(6,-0.25);

\draw (6,0.25)--(6.5,0.5)--(7,1.25);
\draw (6,-0.25)--(6.5,0.5)--(7,0.75);
\draw (6,0.25)--(6.5,-0.5)--(7,-1.25);
\draw (6,-0.25)--(6.5,-0.5)--(7,-0.75);

\node at (-4.5,1) [color=CB_SAFE_A,circle,fill,inner sep=1pt]{};
\node at (-3.5,0) [circle,fill,inner sep=1pt]{};
\node at (-2.5,1) [color=CB_SAFE_A,circle,fill,inner sep=1pt]{};
\node at (-2.5,-1) [color=CB_SAFE_C,circle,fill,inner sep=1pt]{};
\node at (-4.5,-1) [color=CB_SAFE_C,circle,fill,inner sep=1pt]{};

\draw (0,1.5)--(0.5,0.875)--(1,0.25);
\draw (0,1)--(0.5,0.875)--(1,0.75);

\node at (0,1.5) [circle,fill,inner sep=1pt]{};
\node at (0,1) [circle,fill,inner sep=1pt]{};
\node at (0.5,0.875) [circle,fill,inner sep=1pt]{};
\node at (1,0.25) [color=CB_SAFE_C,circle,fill,inner sep=1pt]{};
\node at (1,0.75) [color=CB_SAFE_A,circle,fill,inner sep=1pt]{};

\draw (0,-1.5)--(0.5,-0.875)--(1,-0.25);
\draw (0,-1)--(0.5,-0.875)--(1,-0.75);

\node at (0,-1) [circle,fill,inner sep=1pt]{};
\node at (0,-1.5) [circle,fill,inner sep=1pt]{};
\node at (0.5,-0.875) [circle,fill,inner sep=1pt]{};
\node at (1,-0.75) [color=CB_SAFE_C,circle,fill,inner sep=1pt]{};
\node at (1,-0.25) [color=CB_SAFE_A,circle,fill,inner sep=1pt]{};

\draw (2.5,1.5)--(2,0.875)--(1.5,0.25);
\draw (2.5,1)--(2,0.875)--(1.5,0.75);

\node at (2.5,1.5) [circle,fill,inner sep=1pt]{};
\node at (2.5,1) [circle,fill,inner sep=1pt]{};
\node at (2,0.875) [circle,fill,inner sep=1pt]{};
\node at (1.5,0.25) [color=CB_SAFE_C,circle,fill,inner sep=1pt]{};
\node at (1.5,0.75) [color=CB_SAFE_A,circle,fill,inner sep=1pt]{};

\draw (2.5,-1.5)--(2,-0.875)--(1.5,-0.25);
\draw (2.5,-1)--(2,-0.875)--(1.5,-0.75);

\node at (2.5,-1) [circle,fill,inner sep=1pt]{};
\node at (2.5,-1.5) [circle,fill,inner sep=1pt]{};
\node at (2,-0.875) [circle,fill,inner sep=1pt]{};
\node at (1.5,-0.75) [color=CB_SAFE_C,circle,fill,inner sep=1pt]{};
\node at (1.5,-0.25) [color=CB_SAFE_A,circle,fill,inner sep=1pt]{};


\node at (5,1.25) [circle,fill,inner sep=1pt]{};
\node at (5.5,0.5) [circle,fill,inner sep=1pt]{};
\node at (5,0.75) [circle,fill,inner sep=1pt]{};
\node at (5.5,-0.5) [circle,fill,inner sep=1pt]{};
\node at (5,-0.75) [circle,fill,inner sep=1pt]{};
\node at (5,-1.25) [circle,fill,inner sep=1pt]{};
\node at (6,0.25) [color=CB_SAFE_A,circle,fill,inner sep=1pt]{};
\node at (6,-0.25) [color=CB_SAFE_C,circle,fill,inner sep=1pt]{};

\node at (6.5,0.5) [circle,fill,inner sep=1pt]{};
\node at (6.5,-0.5) [circle,fill,inner sep=1pt]{};
\node at (7,1.25) [circle,fill,inner sep=1pt]{};
\node at (7,0.75) [circle,fill,inner sep=1pt]{};
\node at (7,-0.75) [circle,fill,inner sep=1pt]{};
\node at (7,-1.25) [circle,fill,inner sep=1pt]{};

\node at (-4.5,1.3) {\(v^-\)};
\node at (-2.2,1.3) {\(v^+\)};
\node at (-4.5,-1.3) {\(w^-\)};
\node at (-2.2,-1.3) {\(w^+\)};
\node at (-3.5,0.3) {\(u\)};
\node at (1.25,-2) {\(G_1\times E_1\)};
\node at (-3.4,-2) {\(G_1\)};
\node at (6.1,-2) {\(G_2\)};
\end{tikzpicture}
    \caption{The generator \(G_1\) of a symmetric Laakso-type fractal space with gluing sets \(I_-=\{v^-,w^-\}\) and \(I_+=\{v^+,w^+\}\). \(G_2\) is produced by replacing each edge of \(G_1\) with a copy of itself and gluing the copies along the gluing sets at the junction \(u\). The symmetry exchanges \(v^+\) with \(v^-\) and \(w^+\) with \(w^-\).}
    \label{fig:placeholder1}
    \end{figure}
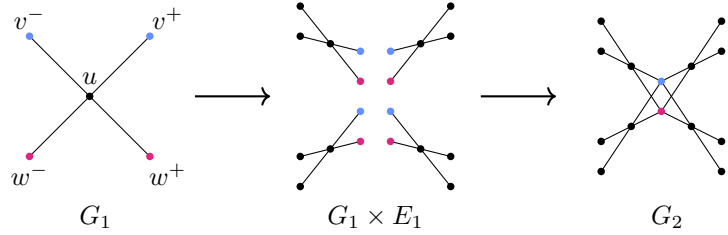

As our main tool, we construct a family of quasisymmetrically equivalent metrics on the limit space, including the intrinsic metric \(d_{L_*}\). Inspired by \cite{Carrasco, KeithLD,Kigamiweighted}, each metric in the family is given by an \emph{admissible density function} on the generator. Similarly as in these references, we denote the family of edge paths from \(I_-\) to \(I_+\) by \(\Theta^{(1)}\), and we say that a function \(\rho\colon E_1\to [0,1]\) is a \(\Theta^{(1)}\)-admissible density if \(\sum_{e\in \theta}\rho(e) \geq 1\) for every \(\theta\in\Theta^{(1)}\). A density \(\rho\) is \emph{symmetric} if \(\rho(\eta(e))=\rho(e)\). The following theorem describes the family of metrics associated to such densities.
 
\begin{theorem}\label{Main Theorem: Construction of Metrics}
    For every symmetric \(\Theta^{(1)}\)-admissible density \(\rho\colon E_1\to (0,1)\), there exists a metric \(d_\rho\) on \(X\) so that \((X,d_\rho)\) is \(Q\)-Ahlfors regular, where \(Q\geq 1\) is the unique value that satisfies the equation:
    \begin{equation}\label{eq:ARregexp}
        \sum_{e\in E_1}\rho(e)^Q=1.
    \end{equation}
    Moreover, for any two such densities \(\rho\) and \(\nu\), the spaces \((X,d_\rho)\) and \((X,d_\nu)\) are quasisymmetrically equivalent.
\end{theorem}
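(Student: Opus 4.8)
The plan is to realize $d_\rho$ as a \emph{chain metric} on the self-similar cell decomposition of $X$, in the spirit of the weighted partitions of Kigami \cite{Kigamiweighted} and the combinatorial constructions of Carrasco \cite{Carrasco}, and then to read off Ahlfors regularity from a canonical self-similar measure and quasisymmetry from the fact that the cell decomposition itself does not see the density. For $w=(i_1,\dots,i_n)\in E_1^n$ let $X_w\subseteq X$ be the level-$n$ cell attached to $w$ by the construction of Section \ref{Section: Construction of Laakso-type fractal spaces}, and set $s_\rho(w)=\prod_{j=1}^{n}\rho(e_{i_j})$, with $s_\rho(\varnothing)=1$. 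I would define
\[
 d_\rho(x,y)=\inf\Bigl\{\textstyle\sum_{k=1}^{N}s_\rho(w_k)\ :\ x\in X_{w_1},\ y\in X_{w_N},\ X_{w_k}\cap X_{w_{k+1}}\neq\varnothing\text{ for all }k\Bigr\},
\]
the infimum over finite chains of cells. Symmetry and the triangle inequality are immediate from concatenation, and $d_\rho$ is bounded: an edge path in $G_1$ joining the level-$1$ cells of $x$ and $y$ gives a chain of at most $\lvert E_1\rvert$ level-$1$ cells, so $d_\rho(x,y)\le\lvert E_1\rvert\max_{e}\rho(e)$. Thus $d_\rho$ is a bounded pseudometric, and the theorem reduces to (i) positivity of $d_\rho$, (ii) the estimate $\mu_\rho(B(x,r))\asymp r^{Q}$ for a suitable measure $\mu_\rho$, and (iii) quasisymmetry of $\id$ between two such metrics.

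The crux — and what I expect to be the main obstacle — is a \emph{crossing lemma}: writing $\partial_\pm X$ for the images of $I_\pm$ in $X$ and $\partial_\pm X_w$ for their images under the canonical similarity $X\to X_w$, every chain of cells joining $\partial_- X$ to $\partial_+ X$ has total weight at least $1$. I would prove this by induction on the largest cell-level occurring in the chain. If every cell has level $1$, the chain contains an edge path $\theta\in\Theta^{(1)}$, so its weight is at least $\sum_{e\in\theta}\rho(e)\ge 1$ by $\Theta^{(1)}$-admissibility. For the inductive step, group the cells of the chain by the level-$1$ cells containing them; these trace an edge path $\theta\in\Theta^{(1)}$, and inside each $X_{e_i}$ ($i\in\theta$) the corresponding sub-chain crosses $X_{e_i}$ from $\partial_-$ to $\partial_+$. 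Because $\rho$ is \emph{symmetric}, the similarity $X\to X_{e_i}$ scales $s_\rho$ by $\rho(e_i)$ irrespective of which end of $e_i$ is labelled $\partial_-$, so this sub-chain pulls back to a chain in $X$ from $\partial_- X$ to $\partial_+ X$ of strictly smaller maximal level, hence of weight $\ge\rho(e_i)$ by induction; summing over $i\in\theta$ gives total weight $\ge\sum_{i\in\theta}\rho(e_i)\ge1$. This is exactly where admissibility and the mirror symmetry of the generator are used; without symmetry the copies would have to be tracked with orientations and the induction would break. Combined with self-similarity and the structural fact (from Section \ref{Section: Construction of Laakso-type fractal spaces}) that $X_w$ is attached to the rest of $X$ only along the finite set $\partial X_w$ and is a quasiconvexly embedded copy of $X$, the lemma yields $C^{-1}s_\rho(w)\le\diam_{d_\rho}X_w\le C\,s_\rho(w)$ uniformly in $w$. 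Since $s_\rho(w)\le(\max_e\rho(e))^{\lvert w\rvert}\to0$ while distinct points eventually lie in disjoint cells, this gives $d_\rho(x,y)>0$ for $x\neq y$ and shows that $d_\rho$ metrizes the original topology on $X$.

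For Ahlfors regularity I would first note that $Q\mapsto\sum_{e\in E_1}\rho(e)^{Q}$ is continuous, strictly decreasing, equal to $\lvert E_1\rvert\ge2$ at $Q=0$ and to $\sum_{e}\rho(e)\ge1$ at $Q=1$ (admissibility along a single path), so \eqref{eq:ARregexp} has a unique root $Q\ge1$. Then I would let $\mu_\rho$ be the unique Borel probability measure with $\mu_\rho(X_w)=s_\rho(w)^{Q}$ for all $w$; this is consistent precisely because $\sum_i\mu_\rho(X_{wi})=s_\rho(w)^Q\sum_{e}\rho(e)^Q=\mu_\rho(X_w)$, and distinct subcells of $X_w$ overlap only in a finite ($\mu_\rho$-null) set of junction points. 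Given $x\in X$ and $0<r\le\diam_{d_\rho}X$, take $X_w\ni x$ maximal with $\diam_{d_\rho}X_w\le r$; its parent has diameter $>r$, so $s_\rho(w)\asymp r$ and $X_w\subseteq B(x,r)$, whence $\mu_\rho(B(x,r))\ge s_\rho(w)^Q\gtrsim r^{Q}$. For the upper bound, cover $B(x,r)$ by the maximal cells of $d_\rho$-diameter $\le r$ meeting it; each has $\mu_\rho$-mass $\asymp r^{Q}$, and there are boundedly many of them because the adjacency graph of such ``$r$-cells'' has degree bounded independently of $r$ (a consequence of $G_1$ being a fixed finite graph) and all of them lie within bounded adjacency-distance of the cell containing $x$. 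Hence $\mu_\rho(B(x,r))\asymp r^{Q}$ and $(X,d_\rho)$ is $Q$-Ahlfors regular.

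Finally, for two symmetric $\Theta^{(1)}$-admissible densities $\rho,\nu$ the identity $\id\colon(X,d_\rho)\to(X,d_\nu)$ acts on one and the same cell decomposition; only the weights $s_\rho$ versus $s_\nu$ change, and since $E_1$ is finite we have $\rho_{\min}^{\lvert w\rvert}\le s_\rho(w)\le\rho_{\max}^{\lvert w\rvert}$, so comparable $s_\rho$-sizes of nested cells force comparable depths and hence comparable $s_\nu$-sizes. I would verify that the pair $(s_\rho,s_\nu)$ is \emph{gentle} on the partition in the sense of \cite{Kigamiweighted} — equivalently, that $\id$ is weakly quasisymmetric — the key point being that when two level-$n$ cells are adjacent their codings diverge only along a tail of ``boundary'' edges running to the shared junction, and the generator's symmetry pairs the two tails so that $\prod_j\rho(e_{i_j})/\nu(e_{i_j})$ along them stays comparable; combined with the cell-diameter estimates this controls $d_\nu$ by $d_\rho$ and vice versa. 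Since each $(X,d_\rho)$ is connected and Ahlfors regular, hence doubling and uniformly perfect, weak quasisymmetry upgrades to quasisymmetry by the standard criterion (see, e.g., Heinonen's \emph{Lectures on Analysis on Metric Spaces}), completing the proof. (Equivalently, one may phrase this last step as a quasi-isometry of the hyperbolic fillings of $(X,d_\rho)$ and $(X,d_\nu)$, which are built from the same cells with ``scale $\leftrightarrow$ depth'' dictionaries differing by bounded linear distortion.)
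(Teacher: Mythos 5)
Your construction---the chain metric $d_\rho$ over the cell decomposition, the self-similar measure $\mu_\rho(X_w)=s_\rho(w)^Q$, and the weak-quasisymmetry-to-quasisymmetry upgrade---follows essentially the same route as the paper: the paper's finite-level pseudometrics $d_{\rho,n}$ (an infimum over mixed-generation edge collections $\Omega$ and their covered subgraphs) together with their limit $d_\rho$ amount to your chain metric, your crossing lemma is the boundary-to-boundary instance of \ref{Lemma: DL(1)}, and the upgrade from weak to full quasisymmetry on a connected doubling space is a valid alternative to the Bonk--Meyer quasi-visual approximation argument the paper uses.

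The genuine gap is in deducing the two-sided diameter estimate $\diam_{d_\rho}(X_w)\asymp s_\rho(w)$ from the crossing lemma alone. The crossing lemma controls chains from $\partial_-X_w$ to $\partial_+X_w$ that \emph{stay inside} $X_w$, but a competing chain may escape through $\partial X_w$ into a neighboring cell; since $\rho$ need not be uniform, a neighbor at the same generation could a priori carry a vastly smaller $s_\rho$-weight, and neither the crossing lemma nor self-similarity rules out such an escape route. What actually closes this gap is \ref{Lemma: DL(7)}: adjacent level-$n$ cells have $s_\rho$-weights comparable up to the factor $M_\rho/m_\rho$. That is a statement about the weight function $s_\rho$ itself, proved from Lemma \ref{Lemma: Intersecting Sequences} (adjacent codings agree up to $\eta$ at all but one index) plus the $\eta$-invariance of $\rho$; it is logically prior to---not a consequence of---the crossing lemma. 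You invoke exactly this mechanism later for gentleness (``the generator's symmetry pairs the two tails''), but you need it already at the diameter step. Relatedly, the assertion that $X_w$ is ``a quasiconvexly embedded copy of $X$'' is not supplied by Section \ref{Section: Construction of Laakso-type fractal spaces}: it is a nontrivial metric claim, and Remark \ref{rmk:quasiconvex} shows that even quasiconvexity of $(X,d_\rho)$ itself requires an additional hypothesis on $\rho$. Once adjacency-comparability is established, the lower bound follows as in Corollary \ref{Corollary: Diameter comparable to weight} (two disjoint interior grandchildren of $w$, separated because any cell meeting one of them has weight bounded below by a multiple of $s_\rho(w)$). A minor imprecision elsewhere: sub-cells of $X_w$ do not meet in finite sets of junction points---the intersections are Cantor-type sets $X_v$ branching $\lvert I\rvert$ ways at each level, as in Remark \ref{rmk:intersection}---though they are still $\mu_\rho$-null, which is what the consistency of $\mu_\rho$ actually requires.
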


As a result of Theorem \ref{Main Theorem: Construction of Metrics}, we obtain upper bounds for the conformal dimension. Obtaining lower bounds is generally much harder. Using an approach motivated by a series of works \cite{P89,TQuasi,Tyson,KL,Carrasco}, we obtain the conformal dimension as a critical exponent of a discrete modulus problem.  For any \(p\geq 1\), the \(p\)-edge modulus of \(\Theta^{(1)}\) is the value
    
    \[
        \Mod_p(\Theta^{(1)},G_1):=\inf\bigg\{\sum_{e\in E_1}\rho(e)^p\ \bigg\vert\ \rho\colon E_1\to [0,1]\text{ is }\Theta^{(1)}\text{-admissible}\bigg\}.
    \]

    \begin{theorem}
    \label{Main Theorem: Value of Conformal Dimension}
    The Ahlfors regular conformal dimension of \((X,d_{L_*})\) is the unique value \(Q_*\geq 1\) satisfying \(\Mod_{Q_*}(\Theta^{(1)},G_1)=1\).
    \end{theorem}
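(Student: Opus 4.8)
I would organize the argument as a well-posedness step, an upper bound via the metrics of Theorem~\ref{Main Theorem: Construction of Metrics}, and a lower bound via a self-similar discrete modulus estimate.

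\textbf{Well-posedness of $Q_*$.} First I would record the elementary properties of the discrete modulus that make $Q_*$ meaningful. The function $p\mapsto\Mod_p(\Theta^{(1)},G_1)$ is continuous and non-increasing: one may restrict the infimum to densities valued in $[0,1]$, since capping a density at $1$ preserves $\Theta^{(1)}$-admissibility and does not increase $\sum_e\rho(e)^p$, and on such densities $p\mapsto\rho(e)^p$ is non-increasing. At $p=1$ the modulus equals, by linear-programming duality, the minimal size of an $I_-$--$I_+$ edge cut, hence is at least $1$; and it tends to $0$ as $p\to\infty$ because the constant density $1/L$, where $L\ge 2$ is the combinatorial distance from $I_-$ to $I_+$ in the generator, is admissible. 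Finally, if the value $1$ were attained at exponents $p_1<p_2$, then $\Mod$ would be constant $1$ on $[p_1,p_2]$; but an extremal density for $\Mod_{p_2}$ with $p_2>1$ cannot be the indicator of a single edge (that edge would be a bridge, and one checks directly that then $\Mod_{p_2}<1$), so it takes a value in $(0,1)$ and forces $\Mod_p<1$ for $p>p_2$, a contradiction. Hence $Q_*$ exists and is unique.

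\textbf{Upper bound.} Fix $\varepsilon>0$. By the above, $\Mod_{Q_*+\varepsilon}(\Theta^{(1)},G_1)<1$, so there is a $\Theta^{(1)}$-admissible $\rho$ with $\sum_{e\in E_1}\rho(e)^{Q_*+\varepsilon}<1$; note $\rho(e)<1$ for every $e$, since an edge of weight $\ge 1$ lies in some path and alone contributes at least $1$ to the sum. Symmetrizing by $\rho^{\sym}(e):=\tfrac12\bigl(\rho(e)+\rho(\eta(e))\bigr)$ preserves $\Theta^{(1)}$-admissibility (as $\eta$ permutes $\Theta^{(1)}$) and, by convexity of $t\mapsto t^{Q_*+\varepsilon}$, does not increase the sum; adding a small multiple $t\rho_0$ of a fixed strictly positive symmetric admissible density (e.g.\ the constant $1/L$) makes the result strictly positive everywhere while, for $t$ small, keeping the sum below $1$. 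This produces a symmetric $\Theta^{(1)}$-admissible $\widetilde\rho\colon E_1\to(0,1)$ with $\sum_e\widetilde\rho(e)^{Q_*+\varepsilon}<1$. Theorem~\ref{Main Theorem: Construction of Metrics} then supplies a metric $d_{\widetilde\rho}$, quasisymmetrically equivalent to $d_{L_*}$, for which $(X,d_{\widetilde\rho})$ is $Q$-Ahlfors regular with $\sum_e\widetilde\rho(e)^Q=1$; since $q\mapsto\sum_e\widetilde\rho(e)^q$ is strictly decreasing, $Q<Q_*+\varepsilon$. Letting $\varepsilon\downarrow 0$ gives that the Ahlfors regular conformal dimension of $(X,d_{L_*})$ is at most $Q_*$.

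\textbf{Lower bound.} Let $(X,d')$ be $Q$-Ahlfors regular and quasisymmetrically equivalent to $(X,d_{L_*})$; I must show $\Mod_Q(\Theta^{(1)},G_1)\le 1$, which yields $Q\ge Q_*$. The combinatorial ingredient is the exact multiplicativity $\Mod_p(\Theta^{(n)},G_n)=\Mod_p(\Theta^{(1)},G_1)^n$. Submultiplicativity follows by multiplying optimal densities across the edge-replacement rule. For supermultiplicativity I would write $G_{n+1}$ as $G_1$ with each edge $e$ replaced by a copy $Y_e$ of $G_n$; given an admissible density $\sigma$ on $G_{n+1}$, set $c_e:=\inf\{\sum_{f\in\gamma}\sigma(f): \gamma \text{ a crossing path of } Y_e\}$. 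Concatenating crossing paths of the $Y_{e_i}$ along any $\theta=(e_1,\dots,e_k)\in\Theta^{(1)}$ yields a crossing path of $G_{n+1}$, whence $\sum_i c_{e_i}\ge 1$, so $(c_e)_{e\in E_1}$ is $\Theta^{(1)}$-admissible; since $\sigma/c_e$ restricted to $Y_e$ is a crossing density there, summing the resulting inequalities over $e$ gives $\sum_{f\in E_{n+1}}\sigma(f)^p\ge\Mod_p(\Theta^{(n)},G_n)\sum_e c_e^p\ge\Mod_p(\Theta^{(n)},G_n)\Mod_p(\Theta^{(1)},G_1)$. The analytic ingredient uses the self-similar decomposition $X=\bigcup_{e\in E_n}X_e$ into $|E_n|$ copies of $X$ glued along their (measure-zero) junction sets: $Q$-Ahlfors regularity of $d'$ gives $\sum_{e\in E_n}(\diam_{d'}X_e/\diam_{d'}X)^Q\asymp 1$ with constants independent of $n$, and since the chain of cells indexed by a crossing path of $G_n$ joins the two ends of $X$ --- which stay at $d'$-distance comparable to $\diam_{d'}X$ by quasisymmetric control of relative distances --- the density $e\mapsto C\,\diam_{d'}X_e/\diam_{d'}X$ on $E_n$ is $\Theta^{(n)}$-admissible for a fixed $C$. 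Hence $\Mod_Q(\Theta^{(n)},G_n)\le C^Q\sum_{e\in E_n}(\diam_{d'}X_e/\diam_{d'}X)^Q\le A$ for all $n$, and multiplicativity forces $\Mod_Q(\Theta^{(1)},G_1)^n\le A$ for all $n$, hence $\Mod_Q(\Theta^{(1)},G_1)\le 1$.

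\textbf{Main obstacle.} I expect the analytic half of the lower bound to be the crux: extracting, uniformly in $n$, a $\Theta^{(n)}$-admissible density on $G_n$ from an \emph{arbitrary} quasisymmetrically equivalent Ahlfors-regular metric. This requires verifying that the level-$n$ cells $X_e$ behave like faithfully rescaled copies of $X$ in the metric $d'$ --- each carrying $\mathcal H^Q$-mass comparable to $(\diam_{d'}X_e)^Q$, the two ends of $X$ remaining quantitatively far apart, and chains of cells realizing the distance between those ends up to a fixed factor --- which rests on quasisymmetric invariance of relative distances together with structural facts about the Laakso-type space from the construction section (nullity of junction sets and quasiconvexity, cf.\ Remark~\ref{rmk:quasiconvex}). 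Keeping every constant independent of the scale $n$ is the technical heart of the argument.
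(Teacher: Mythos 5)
Your overall architecture (well-posedness of $Q_*$, upper bound via the metrics of Theorem~\ref{Main Theorem: Construction of Metrics}, lower bound by producing uniformly bounded $\Theta^{(n)}$-admissible densities from an arbitrary Ahlfors regular metric in the gauge) mirrors the paper's strategy, and your upper bound argument is essentially Corollary~\ref{Corollary: Confdim upper bound}. The cosmetic differences — you argue directly on $X$ while the paper runs the lower bound on the subset $\widehat{X}$ (needed later for Theorem~\ref{Main Theorem: Characterization of Attainment}); you aim for a uniform bound $\Mod_Q(\Theta^{(n)},G_n)\leq A$ whereas the paper uses the exponent trick $\cM_p(\rho_n)\leq M_n^{p-q}\cdot C$ with $p>q$ — do not affect correctness. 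However, there is a genuine gap in your supermultiplicativity step, which is exactly where the paper invests its main technical effort (Lemma~\ref{Lemma: Optimal Density} and Corollary~\ref{Corollary: Modulus in multiplicative}).

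The problem is the claim that $(c_e)_{e\in E_1}$, with $c_e:=\inf\{L_\sigma(\gamma):\gamma\text{ a crossing path of }Y_e\}$, is $\Theta^{(1)}$-admissible. Admissibility of $\sigma$ only controls $\sigma$-length along paths of $G_{n+1}$ that actually concatenate, but the $c_{e_i}$-infimizing crossing paths of the separate copies $Y_{e_i}$ need not concatenate: each $Y_e$ is glued to its neighbour along a gluing set $I_\pm^{(n)}$ which can have several elements, so the cheapest crossing of $Y_{e_i}$ may terminate at a vertex of $I_+^{(n)}$ that is different from the vertex of $I_-^{(n)}$ where the cheapest crossing of $Y_{e_{i+1}}$ begins. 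A toy counterexample: if $|I|=2$ with $I_\pm=\{a_\pm,b_\pm\}$, take $\sigma$ on $Y_{e_1}\cup Y_{e_2}$ so that crossing $Y_{e_1}$ costs $0.1$ through the $a$-channel and $0.9$ through the $b$-channel, while crossing $Y_{e_2}$ costs $0.9$ through the $a$-channel and $0.1$ through the $b$-channel. Then every crossing of the union costs $\geq 1$ (so $\sigma$ is admissible), yet $c_{e_1}+c_{e_2}=0.2<1$, so $(c_e)$ is not admissible and your chain of inequalities collapses. This is precisely why the paper does \emph{not} argue with paths: Lemma~\ref{Lemma: Optimal Density} constructs an explicit unit flow $\cJ_n$ as a multiplicative cascade of $\cJ_1$ — using the doubling hypothesis (degree one at $I_\pm$) and symmetry to control how flow lines cross the junctions — and verifies the duality identity of Proposition~\ref{Proposition: Duality} to conclude that the multiplicative cascade density is optimal. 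Flows split and merge at junctions, which circumvents the path-matching obstruction entirely.

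Two smaller points. Your well-posedness argument is more circuitous than needed: the paper's Lemma~\ref{Lemma: Existence of Minimizers} shows directly that whenever all paths in $\Theta^{(1)}$ have length $\geq 2$ (guaranteed by non-degeneracy), the $p$-optimal density is strictly sub-unitary, and then strict monotonicity of $p\mapsto\Mod_p$ is immediate (Lemma~\ref{Lemma: Continuity of modulus}); your case analysis about single-edge indicators ultimately recreates this, but the statement "one checks directly that then $\Mod_{p_2}<1$" is doing the real work and needs the path-length condition spelled out. Finally, your "main obstacle" paragraph slightly misallocates the difficulty: the analytic lower bound (extracting admissible densities from the quasi-visual structure) is a routine adaptation of standard conformal dimension arguments once the quasi-visual approximation is in place, whereas the multiplicativity $\Mod_p(\Theta^{(n)},G_n)=\Mod_p(\Theta^{(1)},G_1)^n$ is the place where the symmetry and doubling hypotheses enter non-trivially, and your proposal elides the hard part there.
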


In contrast to a similar characterization of conformal dimension in \cite{Carrasco}, our special case offers some key simplifications. While one generally needs to consider the asymptotics of certain modulus problems on a sequence graphs with exponentially growing complexity, in our setting we only need to solve one modulus problem for a single graph. Compare for example to the involved numerical estimation of the conformal dimension of the Sierpi\'nski carpet in \cite{Kwapisz}.

It follows from convex optimization theory that whenever \(Q_*>1\), there exists a unique admissible density \(\rho^*\colon E_1\to [0,1)\) that is \(Q_*\)-optimal, in the sense that

    \[
        \sum_{e\in E_1}\rho^*(e)^{Q_*}=\Mod_{Q_*}(\Theta^{(1)},G_1)=1.
    \]   
Due to the symmetry of the generator, the optimal density \(\rho^*\) is also symmetric. It then follows from Theorem \ref{Main Theorem: Construction of Metrics} and Theorem \ref{Main Theorem: Value of Conformal Dimension} that the conformal dimension is attained if \(\rho^*(e)>0\) for all \(e\in E_1\). Our main theorem, Theorem \ref{Main Theorem: Characterization of Attainment}, asserts that this condition is also necessary.

    \begin{figure}[!ht]
    \begin{tikzpicture}
    \node at (-5,1) [circle,fill,inner sep=1pt]{};
    \node at (-5,-1) [circle,fill,inner sep=1pt]{};
    \node at (-4,0) [circle,fill,inner sep=1pt]{};
    \node at (-3,-1) [circle,fill,inner sep=1pt]{};
    \node at (-3,1) [circle,fill,inner sep=1pt]{};
    \node at (-2,0) [circle,fill,inner sep=1pt]{};
    \node at (-1,1) [circle,fill,inner sep=1pt]{};
    \node at (-1,-1) [circle,fill,inner sep=1pt]{};

    \draw (-5,1) -- (-4,0) -- (-3,1) -- (-2,0) -- (-1,1);
    \draw (-5,-1) -- (-4,0) -- (-3,-1) -- (-2,0) -- (-1,-1);
    \draw[-] (-3,1) to (-3,-1);

    \node at (1,1) [circle,fill,inner sep=1pt]{};
    \node at (1,0) [circle,fill,inner sep=1pt]{};
    \node at (1,-1) [circle,fill,inner sep=1pt]{};
    \node at (2,0) [circle,fill,inner sep=1pt]{};
    \node at (4,0) [circle,fill,inner sep=1pt]{};
    \node at (5,1) [circle,fill,inner sep=1pt]{};
    \node at (5,0) [circle,fill,inner sep=1pt]{};
    \node at (5,-1) [circle,fill,inner sep=1pt]{};

    \draw (1,1) -- (2,0);
    \draw (4,0) -- (5,1);
    \draw (1,0) -- (2,0);
    \draw (1,-1) -- (2,0);
    \draw (4,0) -- (5,-1);
    \draw (4,0) -- (5,0);
    \draw[-] (2,0) to [bend left = 5em] (4,0);
    \draw[-] (2,0) to [bend right = 5em] (4,0);
    
\end{tikzpicture}
    \caption{Two generators for symmetric Laakso-type spaces. The one on the left includes a vertical edge positioned in the middle of the graph. This edge is invariant under the symmetry, and thus must be removable. This example was the first one to be found to contradict Kleiner's conjecture, see \cite{anttila2024constructions}. The graph on the right does not have a removable edge, thereby giving an example where attainment occurs and the conjecture holds by Theorem \ref{Main Theorem: Characterization of Attainment} and Corollary \ref{cor:Loewner}. This graph is obtained by taking two parallel edges connecting two vertices in the middle, and adjoining three edges to each side in order to connect the boundary.}
    \label{fig:placeholder}
    \end{figure}
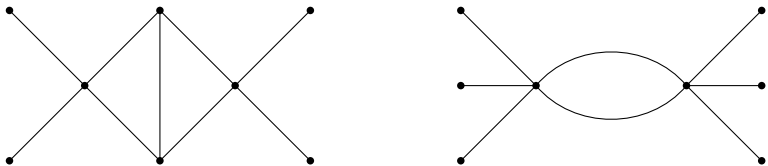

Any edge \(e^*\in E_1\) for which \(\rho^*(e^*)=0\) is called a \emph{removable edge}. Intuitively, the removal of such an edge leaves the value of the discrete modulus problem, and thus also the conformal dimension, unchanged. Using this observation, we prove Theorem \ref{Main Theorem: Characterization of Attainment} by showing that the presence of a removable edge implies the existence of a porous subset \(\widehat{X}\subset X\) with the same conformal dimension, which is a known obstruction to attainment. 

\subsection{Kleiner's conjecture and Sobolev spaces}

One of the original motivations for the study of Laakso-type spaces was Kleiner's conjecture on the attainment of conformal dimension for a certain family of self-similar spaces.
See Section \ref{Section: Approximate self-similarity and the Combinatorial Loewner Property} for a precise definition and detailed discussion. In \cite{anttila2024constructions}, the first two authors constructed counter-examples to this conjecture by exhibiting Laakso-type spaces having removable edges. Our results here show that there is also large family of Laakso type spaces that yield non-trivial positive examples for this conjecture; see Figure \ref{fig:placeholder}, Corollary \ref{cor:Loewner} and Theorem \ref{thm:CLP}. In these cases, the attaining metrics have an explicit description, but they lack a simple formula in terms of e.g. snowflaking. The attaining metrics also seem interesting in their own right, since they satisfy the Loewner property.

Another motivation for this work comes from the study of Sobolev spaces on fractals defined by re-scaled energies, an approach spearheaded by \cite{KuzuokaZhou} and further developed for all $p>1$ in \cite{kigami,shimizu,murugan2023first}. One can see hints of such constructions also in \cite{P89, BS}. For Laakso type spaces these spaces and their properties were studied in \cite{ASEBShimizu}; see therein also for an introduction on Sobolev spaces on fractals. Since \cite{murugan2023first}, it has been known that Sobolev spaces and their energy constructions are closely related to attainment problems; see \cite[Remark 8.7]{ASEBShimizu} for further discussion. The functions $f$ in the Sobolev space are associated with a natural energy measure $\Gamma_p\langle f\rangle$, and a measure class of minimal energy dominating measures $\Lambda$ that are minimal measures s.t. absolute continuity of energy measures, $\Gamma_p\langle f\rangle \ll \Lambda$, holds for all Sobolev functions $f$. In \cite{murugan2023first, MN}, it was suggested that the attainment problem is closely related to the existence of a doubling energy dominating measure. For Laakso type spaces, our work shows that the relationship between these problems becomes an equivalence. Indeed, for Laakso type spaces, the existence of a doubling minimal energy dominant measure is characterized in terms of removable edges \cite[Theorem 8.4]{ASEBShimizu}. This is the same condition as we obtained for attainment. Thus, for these spaces, attainment is equivalent to the existence of a doubling minimal energy dominant measure. This closely ties together the energy based approach to attainment in \cite{murugan2023first,ASEBShimizu} and the explicit metric constructions used in this paper. These remarks prove the following alternate characterization for attainment.

\begin{theorem}
    A symmetric Laakso-type fractal space attains its Ahlfors regular conformal dimension if and only if there is a doubling minimal energy dominant measure for the Sobolev space constructed in \cite{ASEBShimizu}.
\end{theorem}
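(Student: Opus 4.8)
The plan is to deduce this statement by chaining together two already-established equivalences rather than proving anything from scratch. On the one hand, Theorem \ref{Main Theorem: Characterization of Attainment} tells us that attainment of the Ahlfors regular conformal dimension for a symmetric Laakso-type fractal space \((X,d_{L_*})\) is equivalent to the nonexistence of removable edges in the generator \(G_1\) — equivalently, that the \(Q_*\)-optimal admissible density \(\rho^*\) is strictly positive on every edge of \(E_1\). On the other hand, the cited result \cite[Theorem 8.4]{ASEBShimizu} characterizes, for precisely the same class of spaces, when the \(p\)-energy Sobolev space constructed in \cite{ASEBShimizu} admits a doubling minimal energy dominant measure: this holds if and only if the generator has no removable edges. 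So the proof is essentially a transitivity argument: both conditions are equivalent to the \emph{same} combinatorial condition on \(G_1\).

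First I would recall precisely the statement of \cite[Theorem 8.4]{ASEBShimizu}, taking care that the notion of ``removable edge'' used there agrees verbatim with the one in this paper — namely, an edge \(e^*\) with \(\rho^*(e^*) = 0\), where \(\rho^*\) is the \(Q_*\)-optimal density for the discrete modulus problem \(\Mod_{Q_*}(\Theta^{(1)}, G_1) = 1\). One subtlety to check is the role of the exponent: the Sobolev construction in \cite{ASEBShimizu} is for a fixed \(p\), and one must confirm that the relevant value of \(p\) is exactly \(Q_*\), the conformal dimension, so that the optimal densities coincide. Assuming this alignment (which the discussion preceding the theorem asserts), one direction reads: if \((X,d_{L_*})\) attains its conformal dimension, then by Theorem \ref{Main Theorem: Characterization of Attainment} there are no removable edges, hence by \cite[Theorem 8.4]{ASEBShimizu} a doubling minimal energy dominant measure exists. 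The converse is the same chain run backwards. I would also note, as the excerpt does, that when \(Q_* = 1\) the optimal density need not be unique, so a remark may be warranted that in the degenerate case \(Q_* = 1\) the space is (quasisymmetric to) a curve or interval-like object and attainment holds trivially; this edge case should be handled explicitly to avoid a gap.

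The main obstacle I anticipate is not mathematical depth but bookkeeping across sources: ensuring that the definitions of ``minimal energy dominant measure'', ``doubling'', and ``removable edge'' in \cite{ASEBShimizu} are literally the objects appearing here, and that the hypotheses (symmetry of the generator, the normalization \(\rho \colon E_1 \to (0,1)\), the self-similarity assumptions) match on both sides. In particular I would verify that \cite[Theorem 8.4]{ASEBShimizu} does not impose extra regularity on \(G_1\) beyond what ``symmetric Laakso-type fractal space'' already guarantees, and that the energy measure class \(\Lambda\) and its relation to \(\Gamma_p\langle f \rangle\) is set up so that ``existence of a doubling measure in \(\Lambda\)'' is genuinely a property of the space and not of an auxiliary choice. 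Once these compatibility points are confirmed, the proof is a two-line deduction. I would write it as: by Theorem \ref{Main Theorem: Characterization of Attainment}, attainment holds iff \(G_1\) has no removable edges; by \cite[Theorem 8.4]{ASEBShimizu}, \(G_1\) has no removable edges iff the Sobolev space of \cite{ASEBShimizu} admits a doubling minimal energy dominant measure; combining, attainment holds iff such a measure exists, which is the claim.
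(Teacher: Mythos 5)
Your proposal matches the paper's argument exactly: the theorem is stated as an immediate consequence of combining Theorem~\ref{Main Theorem: Characterization of Attainment} (attainment iff no removable edges) with \cite[Theorem 8.4]{ASEBShimizu} (no removable edges iff a doubling minimal energy dominant measure exists), which is precisely the transitivity chain you describe. Your additional caveats about definitional alignment and the $Q_*=1$ case are sensible hygiene but go beyond what the paper itself spells out.
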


For general spaces such a simple characterization is not expected to hold. See for example \cite{MN}, where a related notion of conformal walk dimension is discussed. There, the Sierpi\'nski tetrahedron is shown to be a space with a doubling energy dominant measure, but not attaining its conformal walk dimension. The case of conformal dimension (greater than one) is analogous, but different, and seems to require the study of more complicated examples. For general spaces and for conformal dimension instead of conformal walk dimension, the attainment problem seems to depend on a much more delicate study of the associated energy measures. To indicate how difficult these problems are, we note that in many cases, such as the Sierpi\'nski carpet, it is not presently even known if a doubling minimal energy dominant measure exists. Much more work is needed to understand these points fully. 

\subsection{Outline of the paper}

In Section \ref{Section: Preliminaries}, we present some terminology and preliminaries. The construction of Laakso-type fractal spaces is presented in Section \ref{Section: Construction of Laakso-type fractal spaces}. In Section \ref{Section: Construction of Metrics}, we present a construction of metrics from admissible densities and prove Theorem \ref{Main Theorem: Construction of Metrics}. In Section \ref{Section: Moduli and Conformal Dimension}, we characterize attainment via removable edges, proving Theorems \ref{Main Theorem: Value of Conformal Dimension} and \ref{Main Theorem: Characterization of Attainment}. In Section \ref{Section: Approximate self-similarity and the Combinatorial Loewner Property}, we verify the approximate self-similarity and the combinatorial Loewner property for symmetric Laakso-type fractal spaces.

\section{Preliminaries}\label{Section: Preliminaries}

In this section, we present the relevant terminology and preliminaries from metric geometry and graph theory. For further details, we refer to \cite{He,bbi,MT,NakamuraYamasakiDuality}.

\subsection{Metric geometry}\label{Subsection: Conformal dimension}

Given a metric space \((X,d)\), we define for every \(x\in X\) and \(r>0\) the open ball of center \(x\) and radius \(r\) as the set
    \[
        B_d(x,r):=\{y\in X\mid d(x,y)<r\}.
    \]
Given two nonempty sets \(A,B\subseteq X\), we define the diameter of \(A\) and the distance between \(A\) and \(B\) as
    \[
        \diam(A,d):=\sup_{x,y\in A} d(x,y)
        \quad\text{and}\quad\dist(A,B,d):=\inf_{a\in A,b\in B} d(a,b),
    \]
respectively. When the metric is clear from the context, we omit it from the notation and simply write \(B(x,r)\), \(\diam(A)\) and \(\dist(A,B)\). For $Q > 0$, the  \emph{$Q$-dimensional Hausdorff measure} of a subset \(A\subset X\) is
\[
    \mathcal{H}^Q(A):=\lim_{\delta\to 0}\mathcal{H}_\delta^Q(A),
\]
where for all \(\delta>0\) the \emph{\(Q\)-dimensional Hausdorff \(\delta\)-content} of \(A\) is defined as
\[
    \mathcal{H}^Q_\delta(A):=\inf\bigg\{\sum_{i=1}^\infty \diam(A_i)^Q\; \bigg| \;A\subset\bigcup_{i=1}^\infty A_i,\;\diam(A_i)\leq\delta \bigg\}.
\]
The \emph{Hausdorff dimension} of \((X,d)\) is the value
    \[
        \dim_{\mathrm{H}}(X,d):=\inf\{ Q>0\mid \mathcal{H}^Q(X)=0\}.
    \] 

\begin{definition}\label{def:AR}

    We say that a metric space $(X,d)$ is \emph{\(Q\)-Ahlfors regular} for \(Q>0\) if there is a Radon measure \(\mu\) on $(X,d)$ and a constant $C \geq 1$ such that
    \begin{equation}\label{AReq}
    C^{-1} r^Q \leq \mu(B(x,r))\leq Cr^Q \text{ for all } x \in X \text{ and } r \in (0,2\diam(X)].
    \end{equation}
    If \((X,d)\) is \(Q\)-Ahlfors regular for some $Q > 0$, we say that $(X,d)$ is Ahlfors regular.
\end{definition}

We now review two standard properties of Ahlfors regular metric spaces. First, it follows from a volume counting argument that Ahlfors regular metric spaces satisfy the \emph{metric doubling property}. This means that there is $N \in \N$ such that, for every \(x\in X\) and \(r>0\), there are $x_1,\dots, x_N \in X$ satisfying
    \[
        B(x,2r)\subset\bigcup_{i=1}^N B(x_i,r).
    \]
Second, if a metric space is $Q$-Ahlfors regular for $Q > 0$, then its Hausdorff dimension is equal to $Q$ \cite[Pages 61-62]{He}. Moreover, $\mu=\cH^Q$ satisfies \eqref{AReq}.

\subsection{Conformal dimension} We now present the terminology and some results related to conformal dimension; see the monograph \cite{MT} by Mackay and Tyson for a comprehensive introduction.
Given a homeomorphism $\eta\colon[0,\infty)\to[0,\infty)$, we say that a homeomorphism between metric spaces \(f\colon (X,d_X) \to (Y,d_Y)\) is an \emph{\(\eta\)-quasisymmetry} if for all triples \(x,y,z\in X\) with \(x\neq z\),
    \[
    \frac{d_Y(f(x),f(y))}{d_Y(f(x),f(z))}\leq\eta\bigg(\frac{d_X(x,y)}{d_X(x,z)}\bigg).
    \]
    If $f : (X,d_X) \to (Y,d_Y)$ is an $\eta$-quasisymmetry for some $\eta$, we say that $f$ is a \emph{quasisymmetry}. We say that two metrics \(d\) and \(\delta\) on a set  \(X\) are \emph{quasisymmetrically equivalent} and denote \((X,d)\qsequiv(X,\delta)\), if the identity map \(\id\colon (X,d)\to(X,\delta)\) is a quasisymmetry. The Ahlfors regular conformal gauge of \((X,d)\) is the collection
    \[
    \mathcal{G}_{\AR}(X,d):=
        \left\{\delta\ \middle\vert \begin{array}{l}
            \ \delta\text{ is a metric on }X,\ (X,d)\qsequiv(X,\delta)\text{ and}\\
            (X,\delta)\text{ is }Q\text{-Ahlfors regular for some }Q\geq1
        \end{array} \right\},
    \]
and the Ahlfors regular conformal dimension of \((X,d)\) is the value
    \[
    \dim_{\text{AR}}(X,d)=\inf\{\dim_{\mathrm{H}}(X,\delta)\mid \delta \in \mathcal{G}_{\text{AR}}(X,d) \}.
    \]

We note that the Ahlfors regular conformal dimension is not monotone with respect to inclusion. For example, the unit square \(X=([0,1]^2,\lvert \cdot \rvert)\) has conformal dimension $\dim_{\AR}(X)=2$, but there are no Ahlfors regular metrics on the subset $Y= \{1/n : n\in \N\}\cup\{0\}\subset X$, and hence $\dim_{\AR}(Y)=\infty$. Thus, without some additional assumptions on the subset, it is not automatic that $\dim_{\AR}(Y)\leq \dim_{\AR}(X)$ whenever $Y\subset X$. For this reason, we consider the Assouad dimension.

\begin{definition}
    Let \((X,d)\) be a metric space. For all \(A\subset X\), denote
    \[
        N_r(A):=\inf\left\{n\in\N\ \middle\vert\ \exists(x_i)_{i=1}^n\subset X\text{ such that  } A\subset\bigcup_{i=1}^nB(x_i,r)\right\}.
    \]
The \emph{Assouad dimension} of a metric space \(X\) is defined as the value
    \[
        \dim_\A(X):= \inf\left\{ \alpha>0\ \middle\vert\ \begin{array}{l}
        \exists C\geq 1 \text{ such that for all }0<r<R\\
        \text{and } x\in X,\ N_r(B(x,R))\leq C(R/r)^\alpha
        \end{array}\right\}.
    \]
\end{definition}

The Assouad dimension is monotone; if \(Y\subset X\), then \(\dim_\A(Y)\leq \dim_\A(X)\). This follows quite directly from the definition. Moreover, if a metric space \((X,d)\) is Ahlfors regular, then it follows from a standard volume counting argument that \(\dim_\A(X,d)=\dim_{\mathrm{H}}(X,d)\). We make use of these facts by considering subsets that are complete and \emph{uniformly perfect}.

\begin{definition}
 A metric space \((X,d)\) is uniformly perfect if there exists a constant \(C\geq 1\) so that 
        \(
            B(x,r)\setminus B(x,r/C)\neq \varnothing
        \)
    for every \(x\in X\) and \(r\in(0,\diam(X,d))\).    
\end{definition}

We remark that a uniformly perfect metric space lying inside an Ahlfors regular metric space supports an Ahlfors regular metric. That is, $\cG_{\AR}(X,d)\neq \emptyset$, and thus $\dim_{\AR}(X)<\infty$. In fact, this is an equivalence because every Ahlfors regular space is uniformly perfect.

\begin{lemma}[{\cite[Proposition 14.14]{He}}]\label{Lemma: Confdim bound by Assouad dimension}
    Let \((X,d)\) be an Ahlfors regular metric space. If \(Y\subset X\) is complete and uniformly perfect, then  \(\dim_\AR(Y,d\vert_Y)\leq \dim_{\mathrm{A}}(X,d)\). 
\end{lemma}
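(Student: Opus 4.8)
The statement to prove is Lemma~\ref{Lemma: Confdim bound by Assouad dimension}: if $(X,d)$ is Ahlfors regular and $Y \subset X$ is complete and uniformly perfect, then $\dim_{\AR}(Y, d|_Y) \leq \dim_{\mathrm A}(X,d)$. Since this is cited directly from Heinonen's book as \cite[Proposition 14.14]{He}, the plan is to reconstruct its standard proof rather than invent something new. The key point is that a complete, uniformly perfect, doubling metric space (here $Y$, which is doubling because it sits inside the doubling space $X$) admits a quasisymmetrically equivalent Ahlfors regular metric whose Hausdorff dimension can be taken arbitrarily close to the Assouad dimension of $Y$ — and $\dim_{\mathrm A}(Y, d|_Y) \leq \dim_{\mathrm A}(X,d)$ by monotonicity of Assouad dimension. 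So really there are two ingredients: the monotonicity reduction, and an Assouad-embedding / snowflake-type construction of Ahlfors regular metrics in a conformal gauge.

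Concretely, I would proceed as follows. First, observe $Y$ is doubling: metric doubling passes to subsets trivially, and $X$ is doubling since it is Ahlfors regular (noted in the excerpt). Also $Y$ is complete and uniformly perfect by hypothesis. Set $s := \dim_{\mathrm A}(X,d) < \infty$ (finiteness follows from $X$ doubling; if $s = \infty$ there is nothing to prove). Fix $\varepsilon > 0$. The heart of the argument is the construction, for the complete uniformly perfect doubling space $(Y, d|_Y)$, of a metric $\delta_\varepsilon$ on $Y$ that is quasisymmetrically equivalent to $d|_Y$ and is $Q$-Ahlfors regular with $Q \leq \dim_{\mathrm A}(Y, d|_Y) + \varepsilon \leq s + \varepsilon$. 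Granting this, $\delta_\varepsilon \in \cG_{\AR}(Y, d|_Y)$, so $\dim_{\AR}(Y, d|_Y) \leq \dim_{\mathrm H}(Y, \delta_\varepsilon) = Q \leq s + \varepsilon$, and letting $\varepsilon \to 0$ finishes the proof.

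The construction of $\delta_\varepsilon$ is the main content and the place where one must be careful. The classical route (essentially Assouad's embedding theorem refined for uniformly perfect spaces, as in \cite{He} Chapters 12--14) is: for a small snowflake parameter, the metric $d^{\alpha}$ for $\alpha \in (0,1)$ is quasisymmetric to $d$; a complete, doubling, uniformly perfect space, after snowflaking, bi-Lipschitz embeds into some $\R^N$, and one can pull back a measure / use a dyadic-cube (Christ cube) decomposition of $Y$ to build a measure $\mu$ on $Y$ satisfying the Ahlfors regularity bound $\mu(B(y,r)) \asymp r^{Q}$ with $Q$ controlled by the doubling constant, hence by the Assouad dimension up to $\varepsilon$. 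Uniform perfectness is exactly what prevents the lower regularity bound from degenerating (it rules out isolated-point-like behavior at small scales), and completeness ensures the constructed metric space is proper enough for the cube construction and for $\cH^Q$ to behave. I would cite the relevant statements from \cite{He} — the existence of dyadic cubes on doubling spaces, the fact that uniformly perfect + doubling spaces carry doubling measures with regularity exponent close to the Assouad dimension, and the quasisymmetry-invariance under snowflaking — rather than rebuild them.

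The main obstacle is precisely this last step: producing an Ahlfors regular metric in the gauge with exponent pinned to $\dim_{\mathrm A}(Y)+\varepsilon$ rather than to some larger doubling-related exponent. Getting the sharp exponent requires choosing the snowflake parameter and the weights on the dyadic cubes carefully so that the regularity exponent of the resulting measure tends to $\dim_{\mathrm A}(Y)$; this is where uniform perfectness and the fine structure of the cube decomposition are genuinely used, and it is the technical core of Heinonen's Proposition 14.14. Since the paper quotes the result verbatim, in the writeup I would simply record the reduction (monotonicity of $\dim_{\mathrm A}$, doubling and uniform perfectness of $Y$) and invoke \cite{He} for the embedding/regular-measure construction, noting that the two facts highlighted in the excerpt — Assouad monotonicity and the coincidence $\dim_{\mathrm A} = \dim_{\mathrm H}$ for Ahlfors regular spaces — are exactly what glue the cited construction to the desired inequality.
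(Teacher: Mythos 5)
The paper offers no proof of this lemma; it is quoted verbatim from Heinonen and used as a black box. Your reconstruction therefore cannot be compared against a proof in the paper, but it can be checked for internal coherence, and it holds up: the two-step reduction --- (i) $\dim_\A(Y,d\vert_Y)\leq \dim_\A(X,d)$ by monotonicity, and (ii) a complete, doubling, uniformly perfect space admits, for every $\varepsilon>0$, a quasisymmetrically equivalent $Q$-Ahlfors regular metric with $Q\leq \dim_\A(Y)+\varepsilon$ --- is exactly the argument that underlies Heinonen's Proposition 14.14, and combining these with $\dim_{\mathrm H}=\dim_\A$ for Ahlfors regular spaces gives the stated inequality. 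Your framing of step (ii) via snowflaking, Christ-type dyadic cubes, and the construction of a doubling measure with exponent pinned near the Assouad dimension is the correct technical route (essentially the Vol{$'$}berg--Konyagin / Luukkainen--Saksman circle of ideas as presented in Heinonen's Chapters 13--14), and you are right that uniform perfectness is what supplies the lower mass bound and completeness is what makes the cube/measure construction go through. You correctly flag where the real work sits and defer to the cited source for it, which is the same level of detail the paper itself adopts. The only small caution is to keep the quantifiers straight in step (ii): one does not get a single Ahlfors regular metric of Hausdorff dimension exactly $\dim_\A(Y)$, only one with exponent $\leq\dim_\A(Y)+\varepsilon$ for each $\varepsilon>0$, and your write-up already handles this correctly by taking the infimum at the end.
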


It follows quite directly that monotonicity of conformal dimension, with respect to inclusion, holds for subsets that are complete and uniformly perfect.

\begin{corollary}\label{Corollary: Confdim bound for uniformly perfect subset}
    Let \((X,d)\) be a metric space with \(\dim_\AR(X,d)<\infty\). If a subset \(Y\subset (X,d)\) is complete and uniformly perfect, then \(\dim_\AR(Y,d\vert_Y)\leq \dim_\AR(X,d)\).
\end{corollary}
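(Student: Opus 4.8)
The plan is to apply Lemma~\ref{Lemma: Confdim bound by Assouad dimension} inside each metric of the Ahlfors regular conformal gauge of $(X,d)$ and then optimize over the gauge. Since $\dim_{\AR}(X,d)<\infty$, the gauge $\cG_{\AR}(X,d)$ is nonempty (the infimum over the empty set would be $+\infty$), so I would fix an arbitrary $\delta\in\cG_{\AR}(X,d)$. By definition $(X,\delta)$ is $Q$-Ahlfors regular for some $Q\ge 1$, and $\id\colon(X,d)\to(X,\delta)$ is a quasisymmetry. The goal is to prove $\dim_{\AR}(Y,d|_Y)\le\dim_{\mathrm{H}}(X,\delta)$; taking the infimum over $\delta$ then gives the corollary.

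First I would observe that the restriction $\id|_Y\colon(Y,d|_Y)\to(Y,\delta|_Y)$ is again a quasisymmetry, since the defining three-point inequality passes to subsets with the same control function $\eta$. Hence $(Y,d|_Y)\qsequiv(Y,\delta|_Y)$, these two spaces have the same Ahlfors regular conformal gauge, and in particular $\dim_{\AR}(Y,d|_Y)=\dim_{\AR}(Y,\delta|_Y)$. Next I would verify that $(Y,\delta|_Y)$, viewed as a subset of the Ahlfors regular space $(X,\delta)$, still satisfies the hypotheses of Lemma~\ref{Lemma: Confdim bound by Assouad dimension}: uniform perfectness is a quasisymmetric invariant, so $(Y,\delta|_Y)$ inherits it from $(Y,d|_Y)$; and $(Y,\delta|_Y)$ is complete because a quasisymmetric image of a complete space is complete whenever the target space is uniformly perfect. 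Granting this, Lemma~\ref{Lemma: Confdim bound by Assouad dimension} applied to $(X,\delta)$ and $Y$ gives $\dim_{\AR}(Y,\delta|_Y)\le\dim_{\mathrm{A}}(X,\delta)$, and the volume-counting facts recalled in the preliminaries identify $\dim_{\mathrm{A}}(X,\delta)=\dim_{\mathrm{H}}(X,\delta)$ for the Ahlfors regular space $(X,\delta)$. Concatenating, $\dim_{\AR}(Y,d|_Y)=\dim_{\AR}(Y,\delta|_Y)\le\dim_{\mathrm{H}}(X,\delta)$, and passing to the infimum over $\delta\in\cG_{\AR}(X,d)$ yields $\dim_{\AR}(Y,d|_Y)\le\dim_{\AR}(X,d)$.

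The only point that is not mere bookkeeping is the completeness transfer, and I expect it to be the main (though routine) obstacle; it is exactly here that uniform perfectness is needed, since completeness is not a quasisymmetric invariant in general. I would prove it directly: given a $\delta$-Cauchy sequence $(y_n)$ in $Y$, one fixes a tail basepoint $y^\ast=y_N$ and, using uniform perfectness of $(Y,\delta|_Y)$, selects a single auxiliary point $c\in Y$ whose $\delta$-distance to $y^\ast$ is comparably larger than the diameter of the tail $\{y_n:n\ge N\}$; the three-point inequality for the inverse quasisymmetry, applied to the triples $(y_n,y_m,c)$ and $(c,y_n,y^\ast)$, then bounds the $d$-distance $d(y_n,y_m)$ by $\eta'\!\big(\delta(y_n,y_m)/c_0\big)$ times a fixed constant, for some fixed $c_0>0$. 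Thus $(y_n)$ is $d$-Cauchy, hence $d$-convergent by completeness of $(Y,d|_Y)$, hence $\delta$-convergent since $\id|_Y$ is a homeomorphism. (The degenerate cases $\lvert Y\rvert\le 1$ and $\diam(Y,\delta|_Y)=\infty$ are handled separately, the latter being easier because anchor points arbitrarily far from $y^\ast$ exist automatically.) Everything else follows immediately from the cited results.
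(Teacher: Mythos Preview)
Your proof is correct and follows essentially the same route as the paper: fix $\delta\in\cG_{\AR}(X,d)$, transfer uniform perfectness and completeness of $Y$ to the metric $\delta|_Y$, apply Lemma~\ref{Lemma: Confdim bound by Assouad dimension} together with $\dim_{\mathrm A}(X,\delta)=\dim_{\mathrm H}(X,\delta)$, and optimize over $\delta$. You are in fact more careful than the paper on one point: the paper simply asserts that $(Y,\delta|_Y)$ is complete without justification, whereas you supply an argument. One minor remark: your completeness argument does not actually require uniform perfectness of the target---any fixed anchor point $c\in Y$ with $c$ distinct from the putative limit already yields the needed two-sided control on $\delta(y_n,c)$ (since $\delta(y_n,c)$ is a real Cauchy sequence with nonzero limit), and then the quasisymmetry inequality with base point $c$ bounds both $d(y_n,c)$ uniformly and $d(y_n,y_m)$ in terms of $\delta(y_n,y_m)$. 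So the step is even more routine than you suggest, but your version is certainly valid.
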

    \begin{proof}
        Let \(\delta\in \cG_\AR(X,d)\). As uniform perfectness is preserved by quasisymmetric maps (\cite[Exercise 11.2]{He}), the space \((Y,\delta\vert_Y)\) is complete and uniformly perfect. Consequently, we have by Lemma \ref{Lemma: Confdim bound by Assouad dimension} that
            \[
            \dim_\AR(Y,d\vert_Y)=\dim_\AR(Y,\delta\vert_Y)\leq \dim_\A(Y,\delta\vert_Y)\leq \dim_\A(X,\delta)=\dim_{\mathrm{H}}(X,\delta).
            \]
        Since \(\delta\in \cG_\AR(X,d)\) was arbitrary, it follows that 
            \[
            \dim_\AR(Y,d\vert_Y)\leq \inf_{\delta\in\cG_\AR(X,d)}\dim_{\mathrm{H}}(X,d)=\dim_\AR(X,d).
            \]  
    \end{proof}

A known obstruction to attainment, is the existence of suitable \emph{porous} subsets. We say that a subset \(Y\) of a metric space \(X\) is porous, if there exists a constant \(\delta>0\) such that for all \(y\in Y\) and \(r\in(0,\diam(X))\) there exists a point \(x\in X\) so that \(B(x,\delta r)\subset B(y,r)\setminus Y\).

\begin{proposition}[{\cite[Proposition 2.9]{anttila2024constructions}}]
\label{Proposition: Porosity implies non-attainment}
    Let \((X,d)\) be a compact Ahlfors regular metric space and let \(Y\subset X\) be a porous subset. If \(\dim_\AR(Y,d\vert_Y)=\dim_\AR(X,d)\), then the Ahlfors regular conformal dimension of \((X,d)\) is not attained.
\end{proposition}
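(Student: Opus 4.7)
The plan is to argue by contradiction. Assume $(X,d)$ attains its Ahlfors regular conformal dimension via some metric $\delta \in \cG_{\AR}(X,d)$, so that $(X,\delta)$ is $Q$-Ahlfors regular for $Q := \dim_{\AR}(X,d)$. The strategy is to show that under this assumption, the porosity of $Y$ forces $\dim_{\AR}(Y, d|_Y) < Q$, contradicting the equality hypothesis.

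First, I would transfer porosity to the attaining metric: since the identity $(X,d) \to (X,\delta)$ is a quasisymmetry and porosity is a quasisymmetric invariant (up to adjusting the constant; see \cite[Chapter 5]{MT}), $Y$ is porous inside the $Q$-Ahlfors regular space $(X,\delta)$. From here, a standard volume-counting argument combining the lower AR bound $\mu(B(x,r)) \gtrsim r^Q$ with porosity — at every scale $r \leq R$, a definite proportion of $B(x,R)$ lies in a ball disjoint from $Y$ — yields a quantitative gain in the covering exponent. Iterating across scales produces a constant $c > 0$, depending only on the porosity and AR constants, such that
\[
\dim_A(Y, \delta|_Y) \leq Q - c.
\]

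Next, I would convert this Assouad estimate into a bound on the Ahlfors regular conformal dimension of $Y$. The hypothesis $\dim_{\AR}(Y, d|_Y) = Q < \infty$ forces $\cG_{\AR}(Y, d|_Y)$ to be nonempty, hence $(Y, d|_Y)$ is uniformly perfect; uniform perfectness is preserved by quasisymmetries \cite[Exercise 11.2]{He}, so $(Y, \delta|_Y)$ is also uniformly perfect. The content of Heinonen's Proposition 14.14 in its sharper form — valid for uniformly perfect doubling metric spaces — gives $\dim_{\AR}(Y, \delta|_Y) \leq \dim_A(Y, \delta|_Y)$, whence $\dim_{\AR}(Y, \delta|_Y) \leq Q - c$. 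Since the restricted identity $(Y, d|_Y) \to (Y, \delta|_Y)$ is a quasisymmetry and AR conformal dimension is a quasisymmetric invariant,
\[
\dim_{\AR}(Y, d|_Y) = \dim_{\AR}(Y, \delta|_Y) \leq Q - c < Q = \dim_{\AR}(X, d),
\]
contradicting the assumed equality.

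The main obstacle is the Assouad-to-conformal-dimension step. Lemma 2.5 as stated bounds $\dim_{\AR}(Y)$ by $\dim_A(X)$, which in our setting is only $Q$ and therefore yields no strict gain. The sharper inequality $\dim_{\AR}(Y) \leq \dim_A(Y)$ is internal to Heinonen's proof rather than its statement, and invoking it cleanly requires $Y$ to be complete and uniformly perfect. If $Y$ is not complete, one can replace it by its closure $\overline{Y}$, which is still porous (with the same constant up to a factor) and satisfies $\dim_{\AR}(\overline{Y}, d|_{\overline{Y}}) \geq \dim_{\AR}(Y, d|_Y)$, so the argument goes through with this substitution.
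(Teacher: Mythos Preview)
Your proposal is correct and follows essentially the same approach as the proof referenced by the paper (which cites \cite[Proposition 2.9]{anttila2024constructions} and \cite[Proposition 8.3]{CM} rather than giving its own argument): assume attainment, transfer porosity via the quasisymmetry to the minimizing metric, use the Assouad dimension drop for porous subsets of Ahlfors regular spaces, and conclude via the bound $\dim_{\AR}(Y)\leq \dim_A(Y)$ for complete uniformly perfect spaces. Notably, you explicitly address the uniform perfectness of $Y$ --- precisely the gap the paper's remark after the proposition flags in the original references --- deducing it from $\dim_{\AR}(Y,d|_Y)<\infty$ just as the paper suggests.
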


We remark, that the proof for \cite[Proposition 2.9]{anttila2024constructions} contains a small gap, since  the uniform perfectness of the subset $Y$ is not discussed. This is implied by the assumption of \(\dim_\AR(Y,d\vert_Y)=\dim_\AR(X,d)\), together with the implied assumption that \(\dim_\AR(X,d)<\infty\); see the paragraph before Lemma \ref{Lemma: Confdim bound by Assouad dimension}. In fact, the same proof and statement appears in \cite[Proposition 8.3]{CM}, where the same omission occurs.

\subsection{Graphs and Modulus} A \emph{graph} is an ordered triple \(G:=(V,E,\xi)\) where \(V\) is a finite non-empty set of \emph{vertices}, \(E\) is a finite set of \emph{edges} and
    \[
        \xi\colon E\to\{\{v,w\}\mid v,w\in V, \;v\neq w\}
    \] 
is an \emph{endpoint map}, which assigns each edge to an unordered pair of distinct vertices called the \emph{endpoints} of the edge. We often write \(v\in e\) as a shorthand for \(v\in \xi(e)\) to indicate that a vertex \(v\in V\) is an endpoint of an edge \(e\in E\). Likewise, we write \(e\cap f\) to denote \(\xi(e)\cap \xi(f)\), the set of endpoints shared by some edges \(e,f\in E\). The \emph{degree} of a vertex \(v\in V\) is the number of edges that have it as an endpoint:
\[\deg(v):=\lvert\{e\in E\mid v\in e\}\rvert.\]

\begin{convention}
    Whenever it is necessary to distinguish between the endpoints of an edge \(e\in E\), we may denote them by \(e^-\) and \(e^+\), so that
        \[
        \xi(e)=\{e^-,e^+\}.
        \]
    The assignment of the labels \(e^-\) and \(e^+\) is arbitrary, except for graphs arising from simple iterated graph systems, for which we will always use a canonical assignment specified in Definition \ref{Definition: Simple IGS}.
\end{convention}

    Let \(G=(V,E,\xi)\) be a graph. If \(V'\subset V\) and \(E'\subset E\) so that \(\xi(E')\subseteq E\), then the graph \(G'=(V',E',\xi\vert_{E'})\) is a \emph{subgraph} of of \(G\). Given a collection \(\{G_i=(V_i,E_i,\xi_i)\}_{i\in I}\) of subgraphs of \(G\), we define their union and intersection, respectively, as the subgraphs 
        \[
            \bigcup_{i\in I}G_{i}:=\big(\bigcup_{i\in I}V_i,\bigcup_{i\in I}E_i,\xi\vert_{\bigcup_{i\in I}E_i}\big)\quad\text{and}\quad\bigcap_{i\in I}G_{i}:=\big(\bigcap_{i\in I}V_i,\bigcap_{i\in I}E_i,\xi\vert_{\bigcap_{i\in I}E_i}\big).
        \]
        
    An alternating sequence \(\theta=[v_0,e_1,v_1,\ldots,e_k,v_k]\), where \(\{v_i\}_{i=0}^k\subset V\), \(\{e_i\}_{i=1}^k\subset E\) and \(k\in\N_0\), is a path from \(v_0\) to \(v_k\) if \(\xi(e_i)=\{v_{i-1},v_i\}\) for all \(1\leq i\leq k\). We say that an edge \(e\in E\) belongs to \(\theta\) and denote \(e\in\theta\) if \(e=e_i\) for some \(1\leq i\leq k\). Given any two non-empty sets of vertices \(A,B\subset V\), we denote the collection of paths joining \(A\) to \(B\) by
        \[
            \Theta(A,B):=\{\theta\mid \theta \text{ is a path from }v\in A\text{ to }w\in B\}.
        \]
    The length of a path is the number of edges it contains, and given a path \(\theta\), we denote its length by
        \[
            \len(\theta):=\lvert \{e\in E\mid e\in\theta\}\rvert.
        \]
    If there exists a path from \(v\) to \(w\) for every pair \(v,w\in V\), then we say that \(G\) is \emph{connected} and we define the \emph{path metric} \(d_G\) on \(V\) by 
        \[
            d_G(v,w):=\min_{\theta\in\Theta(\{v\},\{w\})}\len(\theta).
        \]
        
    We will often omit writing more than the first vertex of a path, simply denoting \(\theta=[v_0,e_1,e_2,\dots, e_k]\), as the rest of the vertices are unambiguously determined by the equalities \(\xi(e_i)=\{v_{i-1},v_i\}\).

    Given two paths \(\theta_1=[v_0,e_1,\ldots e_k,v_k]\) and \(\theta_2=[u_0,f_1,\ldots u_l,f_l]\) for which \(v_k=u_0\), we define the concatenation
        \[
            \theta_1*\theta_2:=[v_0,e_1,\ldots e_k,v_k,f_1,\ldots, f_l,u_l].
        \]

    A key tool in our arguments, is the concept of edge modulus of a family of paths. 
    
\begin{definition}\label{def: edgemodulus1}
    Let \(G=(V,E,\xi)\) be a graph. A function \(\rho\colon E\to [0,\infty)\) is called a density. The support of a density \(\rho\) is the set \(\supp(\rho):=\{e\in E\mid \rho(e)\neq 0\}\) and the \(\rho\)-length of any non-constant path \(\theta=[v_0,e_1,e_2,\dots,e_k]\) in \(G\) is
        \[
            L_\rho(\theta):=\sum_{i=1}^{k}\rho(e_i).
        \]
    Given a family of paths \(\Theta\) in \(G\), we say that a density \(\rho\) is \emph{\(\Theta\)-admissible} if \(L_\rho(\theta)\geq 1\) for every path \(\theta\in \Theta\). We denote the set of \(\Theta\)-admissible densities is by \(\Adm(\Theta)\). 
    
    For every \(p\in [1,\infty)\), we define the \(p\)-mass of a density \(\rho\) as the value
        \[
            \cM_p(\rho):=\sum_{e\in E}\rho(e)^p,
        \]
    and the \(p\)-edge modulus of a family of paths \(\Theta\) as the value
        \[
            \Mod_p(\Theta,G):=\inf_{\rho\in \Adm(\Theta)} \cM_p(\rho).
        \]
    A density \(\rho^*\in\Adm(\Theta)\) satisfying \(\cM_p(\rho^*)=\Mod_p(\Theta,G)\) is called \emph{\(p\)-optimal} for \(\Theta\).
\end{definition}

\begin{lemma}\label{Lemma: Existence of Minimizers}
    Let \(\Theta\) be a non-empty family of paths on a graph \(G=(V,E,\xi)\). Then, there exists a $p$-optimal $\rho^*\in\Adm(\Theta)$. If \(p>1\), then the $p$-optimal $\rho^*$ is unique. Further,  if \(\len(\theta)\geq 2\) for all \(\theta\in\Theta\), then \(\rho(e)<1\) for all \(e\in E\).
\end{lemma}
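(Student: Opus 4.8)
\textbf{Proof plan for Lemma \ref{Lemma: Existence of Minimizers}.}

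The plan is to treat the three claims in order, all by elementary convex-analysis arguments on the finite-dimensional space $\R^E$ of densities. For \emph{existence of a minimizer}, first I would reduce to a compact domain: since $\Theta$ is nonempty, fix $\theta_0\in\Theta$ and note that for any $\Theta$-admissible $\rho$ the inequality $\sum_{e\in\theta_0}\rho(e)\geq 1$ forces $\rho(e_0)\geq 1/\len(\theta_0)$ for at least one edge $e_0\in\theta_0$; hence $\cM_p(\rho)\geq \len(\theta_0)^{-p}>0$, so $\Mod_p(\Theta,G)>0$. Moreover, if $\cM_p(\rho)\le \cM_p(\mathbf{1})=\lvert E\rvert$ — and the all-ones density $\mathbf{1}$ is trivially admissible, since every $\theta\in\Theta$ has $\len(\theta)\ge 1$, giving an a priori upper bound on the infimum — then $\rho(e)\le \lvert E\rvert^{1/p}$ for every $e$. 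Thus the infimum of $\cM_p$ over $\Adm(\Theta)$ equals its infimum over the set $\Adm(\Theta)\cap [0,\lvert E\rvert^{1/p}]^E$, which is closed (each constraint $L_\rho(\theta)\ge 1$ is closed and there are finitely many of them, as $\Theta$ may be infinite but the set of \emph{distinct} edge-supports of its paths is finite — or one simply observes $\Adm(\Theta)$ is an intersection of closed half-spaces) and bounded, hence compact. Since $\rho\mapsto\cM_p(\rho)$ is continuous, it attains its minimum there; this minimizer $\rho^*$ is $p$-optimal.

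For \emph{uniqueness when $p>1$}, I would use strict convexity: the function $t\mapsto t^p$ is strictly convex on $[0,\infty)$ for $p>1$, so $\cM_p$ is strictly convex on $\R^E$, while $\Adm(\Theta)$ is convex (an intersection of half-spaces). If $\rho^*_1\ne\rho^*_2$ were two $p$-optimal densities, their midpoint $\tfrac12(\rho^*_1+\rho^*_2)$ lies in $\Adm(\Theta)$ and, by strict convexity applied coordinatewise with strict inequality in at least one coordinate where $\rho^*_1(e)\ne\rho^*_2(e)$, satisfies $\cM_p\big(\tfrac12(\rho^*_1+\rho^*_2)\big)<\tfrac12\cM_p(\rho^*_1)+\tfrac12\cM_p(\rho^*_2)=\Mod_p(\Theta,G)$, contradicting optimality. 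Hence the optimal density is unique.

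For the \emph{final claim}, assume $\len(\theta)\ge 2$ for all $\theta\in\Theta$ and suppose toward a contradiction that $\rho^*(e_1)\ge 1$ for some edge $e_1$. Define $\widetilde\rho$ by $\widetilde\rho(e_1):=0$ and $\widetilde\rho(e):=\rho^*(e)$ for $e\ne e_1$. I claim $\widetilde\rho$ is still $\Theta$-admissible: for any $\theta\in\Theta$, if $e_1\notin\theta$ then $L_{\widetilde\rho}(\theta)=L_{\rho^*}(\theta)\ge 1$; if $e_1\in\theta$, then since $\len(\theta)\ge 2$ the path contains at least one other edge $e'\ne e_1$, and $L_{\widetilde\rho}(\theta)=L_{\rho^*}(\theta)-\rho^*(e_1)$ — this could be an issue, so instead I would argue more carefully: one cannot in general drop a whole unit of mass. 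The correct move is to \emph{cap} rather than delete: set $\widetilde\rho(e_1):=\max\{0,\,1-\rho^*(e_1\text{'s partner contribution})\}$...

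Rather, the clean argument is: if $\rho^*(e_1)\ge 1$, replace it by $\widetilde\rho(e_1):=1$ and keep all other values; then for any $\theta\in\Theta$ with $e_1\in\theta$ we still have $L_{\widetilde\rho}(\theta)\ge \widetilde\rho(e_1)=1$ (using $\len(\theta)\ge 1$ suffices here, all other terms being nonnegative), and for $\theta$ with $e_1\notin\theta$ nothing changes, so $\widetilde\rho\in\Adm(\Theta)$; but $\widetilde\rho(e_1)^p=1\le\rho^*(e_1)^p$ with strict inequality unless $\rho^*(e_1)=1$, and in the borderline case $\rho^*(e_1)=1$ I would instead exploit $\len(\theta)\ge 2$: then for every $\theta$ containing $e_1$ there is another edge $e'\in\theta$, and I can lower $\rho^*(e_1)$ to $1-\varepsilon$ while the constraint $L(\theta)\ge 1$ is still satisfied for all $\theta$ not containing $e_1$ and, for $\theta$ containing $e_1$, remains satisfied for small $\varepsilon$ provided the remaining slack is positive — if some $\theta$ with $e_1\in\theta$ has $L_{\rho^*}(\theta)=1$ exactly, then $\rho^*(e')=0$ for all $e'\in\theta\setminus\{e_1\}$, and one perturbs by moving mass from $e_1$ onto such an $e'$: set $\widetilde\rho(e_1)=1-\varepsilon$, $\widetilde\rho(e')=\varepsilon$ for a chosen partner $e'$; admissibility is preserved and, since $(1-\varepsilon)^p+\varepsilon^p<1$ for $p>1$ and small $\varepsilon>0$ (as the derivative of $t\mapsto t^p+(1-t)^p$ at $t=1$ is $p>0$), the $p$-mass strictly decreases, contradicting optimality. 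I expect this last step — handling the delicate boundary case $\rho^*(e)=1$ and choosing the right partner edge to absorb the transferred mass — to be the main obstacle requiring care; the existence and uniqueness parts are routine compactness and strict convexity.
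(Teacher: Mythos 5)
Your existence and uniqueness arguments are essentially the paper's: existence by restricting to a compact subset of $\Adm(\Theta)$ (the paper caps at $\rho\le 1$ componentwise, you bound by $|E|^{1/p}$; either works) and then using continuity of $\cM_p$; uniqueness for $p>1$ by strict convexity of $\cM_p$ and convexity of $\Adm(\Theta)$. Nothing to add there.

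For the final claim, your overall strategy --- reduce to the boundary case $\rho^*(e_1)=1$, then perturb by shifting mass off $e_1$ --- matches the paper's, but your specific perturbation has a genuine gap. You set $\widetilde\rho(e_1)=1-\varepsilon$ and boost a \emph{single} chosen partner edge $e'$ with $\rho^*(e')=0$ to $\varepsilon$. This fails when two critical paths $\theta_1,\theta_2$ through $e_1$ (both with $L_{\rho^*}(\theta_i)=1$) do not share a zero-weight edge other than $e_1$. Concretely, take a three-edge chain $a$--$e_1$--$b$ with $\Theta=\{[a,e_1],\,[e_1,b]\}$, two paths of length $2$, both through $e_1$. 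The admissible density $\rho^*=(0,1,0)$ cannot be improved by your move: shifting mass onto $a$ alone makes $[e_1,b]$ inadmissible, and shifting onto $b$ alone makes $[a,e_1]$ inadmissible. The paper's perturbation $\rho_\varepsilon$ sets $\rho_\varepsilon(e_1)=1-\varepsilon$ and $\rho_\varepsilon(f)=\varepsilon$ for \emph{every} $f\in E$ with $\rho^*(f)=0$ simultaneously. This covers all critical paths at once, because any path $\theta$ through $e_1$ with $L_{\rho^*}(\theta)=1$ must have $\rho^*(f)=0$ for all $f\in\theta\setminus\{e_1\}$ (the $\rho^*$-length already saturates at $e_1$), so every such $\theta$ gains at least $\varepsilon$ on some other edge; paths through $e_1$ with $L_{\rho^*}(\theta)>1$ retain admissibility for $\varepsilon$ small by finiteness. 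One then computes $\frac{d}{d\varepsilon}\cM_p(\rho_\varepsilon)\big|_{\varepsilon=0}=-p<0$ (the newly-positive edges contribute terms of order $\varepsilon^{p}$, negligible at first order since $p>1$), contradicting optimality. The fix to your argument is therefore to boost all zero-weight edges, not a single partner.
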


\begin{proof}
The existence follows from the optimization problem being convex, and since the optimum must be attained for $\rho$ that satisfy $\rho(e)\in [0,1]$. The uniqueness for $p>1$ follows from strict convexity.

Finally, assume that $\len(\theta)\geq 2$ for all edges $e$ and assume for the sake of contradiction that $\rho$ is optimal and admissible with $\rho(e)=1$ for some $e\in E$. Let $L(e)=\min_{e\in \theta\in \Theta} L_{\rho}(\theta)\geq 1$, where the minimum is attained since $\{L_\rho(\theta) : \theta\in \Theta\}$ is a discrete set. 

If $L(e)>1$, then we can decrease $\rho(e)$ slightly without altering admissibility. This contradicts minimality. If $L_\rho(e)=1$, then there is some edge $f\in E$ with $\rho(f)=0$. In this case define a new admissible weight function for $\epsilon \in (0,1/2)$
\[
\rho_\epsilon(f)=\begin{cases} 1-\epsilon & \text{ if  } f=e \\
\epsilon & \text{ if  } \rho(f)=0 \\
\rho(f) & \text{ otherwise }. \end{cases}
\]
It is direct to verify that $\rho_\epsilon$ is admissible. Further $\frac{d}{d\epsilon} \cM_p(\rho_\epsilon)|_{\epsilon = 0} = -p$.
\end{proof}

\begin{lemma}\label{Lemma: Continuity of modulus}
    Let \(\Theta\) be a non-empty family of paths on a graph \(G=(V,E,\xi)\) so that \(\len(\theta)\geq 2\) for all \(\theta\in\Theta\). Then, the mapping \(p\mapsto \Mod_p(\Theta,G)\) is continuous and strictly decreasing for \(p\in[1,\infty)\).
\end{lemma}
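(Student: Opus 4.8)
The plan is to deduce everything from Lemma~\ref{Lemma: Existence of Minimizers} together with a routine compactness argument, so very little new work is needed. First I would normalize the problem: since $E$ is finite and replacing any density $\rho$ by $\tilde\rho(e):=\min\{\rho(e),1\}$ preserves $\Theta$-admissibility (on any path $\theta$, either no truncation occurs, or some truncated edge already contributes $1$ to $L_{\tilde\rho}(\theta)$) and does not increase $\cM_p$, one has $\Mod_p(\Theta,G)=\min_{\rho\in K}\cM_p(\rho)$ with $K:=\Adm(\Theta)\cap[0,1]^E$ a nonempty (it contains $\rho\equiv1$), compact, convex subset of $\R^E$; the attainment of the minimum is exactly the content of Lemma~\ref{Lemma: Existence of Minimizers}.

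For strict monotonicity, I would fix $1\le p_1<p_2$ and take a $p_1$-optimal $\rho^*\in K$, which exists by Lemma~\ref{Lemma: Existence of Minimizers}. The hypothesis $\len(\theta)\ge2$ for all $\theta\in\Theta$, via the last clause of that lemma, gives $\rho^*(e)<1$ for every $e\in E$, while admissibility of $\rho^*$ and $\Theta\ne\varnothing$ force $\rho^*(e_0)>0$ for at least one edge $e_0$ lying on a path of $\Theta$ (on such a path $\sum_{e}\rho^*(e)\ge1>0$). Since $0<\rho^*(e_0)<1$ we have $\rho^*(e_0)^{p_2}<\rho^*(e_0)^{p_1}$, and $\rho^*(e)^{p_2}\le\rho^*(e)^{p_1}$ for all other edges because $\rho^*(e)\in[0,1]$; summing over $E$ yields
\[
\Mod_{p_2}(\Theta,G)\le\cM_{p_2}(\rho^*)<\cM_{p_1}(\rho^*)=\Mod_{p_1}(\Theta,G).
\]

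For continuity, I would note that $(p,t)\mapsto t^p$ is continuous on $[1,\infty)\times[0,1]$ (the only issue is at $t=0$, handled by $0\le t^p\le t$ for $p\ge1$ and $t\le1$), so $\cM\colon(p,\rho)\mapsto\sum_{e\in E}\rho(e)^p$ is continuous on $[1,\infty)\times K$. Fixing an arbitrary compact interval $[a,b]\subset[1,\infty)$, the map $\cM$ is uniformly continuous on the compact set $[a,b]\times K$. Writing $g(p):=\Mod_p(\Theta,G)=\min_{\rho\in K}\cM_p(\rho)$ and plugging a minimizer for one exponent into the objective for the other gives $|g(p)-g(p')|\le\sup_{\rho\in K}|\cM_p(\rho)-\cM_{p'}(\rho)|$, whose right-hand side tends to $0$ as $p'\to p$ by uniform continuity; hence $g$ is continuous on $[a,b]$, and since $[a,b]$ was arbitrary, on all of $[1,\infty)$.

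There is essentially no hard step here; the only point needing attention is in the strict-monotonicity argument, namely producing a minimizer with some coordinate strictly inside $(0,1)$. This is precisely where $\len(\theta)\ge2$ is used, and strict decrease genuinely fails without it: if, for instance, some $\theta\in\Theta$ consists of a single edge $e$, then $\rho(e)=1$ on it for every admissible $\rho$, and $\Mod_p(\Theta,G)$ can be constant in $p$.
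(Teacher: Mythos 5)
Your proof is correct. For strict monotonicity you take essentially the paper's route via Lemma~\ref{Lemma: Existence of Minimizers}: pick an optimal density that is sub-unitary, note it also has some coordinate in $(0,1)$, and compare $\cM_{p_2}$ with $\cM_{p_1}$ term by term; your chain $\Mod_{p_2}(\Theta,G)\le\cM_{p_2}(\rho^*)<\cM_{p_1}(\rho^*)=\Mod_{p_1}(\Theta,G)$, with $\rho^*$ the $p_1$-optimizer, is the clean way to close the comparison (the paper's displayed chain juggles both the $p$-optimizer and the $q$-optimizer, and the comparison really does need to go through the $q$-optimizer the way you do it). For continuity your route is genuinely different. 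The paper first invokes the monotonicity just proved to reduce to one-sided limits, handles the left limit by plugging the $p$-optimizer into $\cM_t$ as $t\nearrow p$, and handles the right limit by extracting a convergent subsequence of $(p+\tfrac1i)$-optimizers whose limit is again admissible. You instead truncate the admissible class to the compact convex set $K=\Adm(\Theta)\cap[0,1]^E$ (observing that truncation preserves admissibility and cannot increase $p$-mass), note that $(p,\rho)\mapsto\cM_p(\rho)$ is uniformly continuous on $[a,b]\times K$, and read off $\lvert g(p)-g(p')\rvert\le\sup_{\rho\in K}\lvert\cM_p(\rho)-\cM_{p'}(\rho)\rvert\to 0$. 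This is shorter, avoids the subsequence construction, and does not even use monotonicity as a preliminary; the paper's version is more hands-on and makes the one-sided behaviour explicit. Both are valid.
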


\begin{proof}
    Let \(\rho\) be the \(p\)-minimal density for \(\Theta\), for any \(p\in(1,\infty)\). By Lemma \ref{Lemma: Existence of Minimizers}, \(\rho\) is sub-unitary, and therefore the mapping \(t\mapsto\cM_t(\rho)\) is strictly decreasing for all \(t\in[1,\infty)\). Let \(1<q<p\), and let \(\nu\) and \(\rho\) be the \(q\)- and \(p\)-minimal densities for \(\Theta\), respectively. We then have that
        \[
        \Mod_p(\Theta,G)=\cM_p(\rho)<\cM_q(\rho)\leq \cM_q(\nu)=\Mod_q(\Theta,G),
        \]
    and therefore the mapping \(t\mapsto \Mod_t(\Theta,G)\) is strictly decreasing for all \(t\in [1,\infty)\). Consequently, to verify continuity, it is sufficient to show that
        \[
            \lim_{t\to p^-}\Mod_t(\Theta,G)=\Mod_{p}(\Theta,G)=\lim_{t\to p^+}\Mod_t(\Theta,G)
        \]
    for all \(p\in[1,\infty)\). For the first limit, let \(\rho\) be the \(p\)-optimal density for \(\Theta\). We then have that \(\cM_t(\rho)\geq \Mod_t(\Theta,G)\geq \Mod_p(\Theta,G)\) for all \(t\leq p\). Since the mapping \(t\mapsto \cM_t(\rho)\) is continuous, we have that \(\lim_{t\to p}\cM_t(\rho)=\cM_p(\rho)=\Mod_p(\Theta,G)\), and so it follows that
        \[
            \lim_{t\to p^-}\Mod_t(\Theta,G)=\Mod_p(\Theta,G).
        \]
    For the second limit let \(\{\rho_i\}_{i=1}^\infty\) be the sequence of \((p+1/i)\)-minimal densities for \(\Theta\). Since \(E\) is finite and \(\rho_i(e)\in[0,1)\) for all \(i\in\N\) and \(e\in E\), we may use a standard diagonal argument to pick a sub-sequence \(\{\rho_{i_j}\}_{j=1}^\infty\) for which \(\{\rho_{i_j}(e)\}_{j=1}^\infty\) converges for every \(e\in E\). Define the density \(\nu\colon E\to [0,1]\) by \(\nu(e):=\lim_{j\to\infty}\rho_{i_j}(e)\) for all \(e\in E\). We then have by construction that
        \[
            \lim_{t\to p^+}\Mod_t(\Theta,G)=\lim_{i\to\infty}\cM_{p+\frac{1}{i}}(\rho_i)=\cM_p(\nu).
        \]
    Since \(\Mod_t(\Theta,G)\leq \Mod_p(\Theta,G)\leq \cM_p(\nu)\) for all \(t\geq p\), it then follows that
        \[
            \lim_{t\to p^+}\Mod_t(\Theta,G)=\Mod_p(\Theta,G).
        \]
\end{proof}

For some of our arguments, we need to show that certain admissible densities are minimal. To this end, we use the dual relationship between moduli of path families and \emph{resistances} of flows. Let \(G=(V,E,\xi)\) be a graph and let \(A,B\subset V\) be non-empty and disjoint. The divergence of a function \(\cJ\colon V\times E\to \R\) at a vertex \(v\in V\) is defined as
        \[
            \divr(\cJ)(v):=\sum_{e\in E:v\in e}\cJ(v,e).
        \]
        
    A function \(\cJ\colon V\times E\to \R\) is called a \emph{flow} from \(A\) to \(B\) if it satisfies the following conditions:
    \begin{enumerate}
        \item \(\cJ(e^-,e)=-\cJ(e^+,e)\) for all \(e\in E\),
        \item \(\divr(\cJ)(v)=0\) for all \(v\not\in A\cup B\),
        \item \(\cJ(v,e)=0\) whenever \(v\not\in e\).
    \end{enumerate}
    For a flow $\cJ$ and $e\in E$ we write $|\cJ(e)|:=|\cJ(e^\pm,e)|$, where the choice of sign is irrelevant by (1). A flow \(\cJ\) from \(A\) to \(B\) is called a \emph{unit flow} if the net flux leaving \(A\) equals one, that is
        \[
            \sum_{v\in A}\divr(\cJ)(v)=1,\quad\text{or equivalently}\quad\sum_{v\in B}\divr(\cJ)(v)=-1.
        \]
    Let \(p\in(1,\infty)\) and denote it's dual exponent by \(q:=p/(p-1)\). The \emph{\(p\)-energy} of a unit flow \(\cJ\) is
        \[
            \cE_q(\cJ):=\sum_{e\in E}\lvert\cJ(e)\rvert^q,\quad\text{where }\quad\lvert\cJ(e)\rvert:=\lvert\cJ(e^-,e)\rvert=\lvert\cJ(e^+,e)\rvert.
        \]
    The \(p\)-resistance between \(A\) and \(B\) is defined as the value
        \[
            \Res_p(A,B,G):=\inf\{\cE_q(\cJ)\mid\cJ \text{ is a unit flow from }A\text{ to }B\}
        \]

The following proposition presents the dual relationship between modulus and resistance, see e.g. \cite[Theorem 5.1]{NakamuraYamasakiDuality}.
In the present work, however, we need a slightly more general version that also works for some disconnected graphs.
It follows fairly directly from the standard duality result by a slight modification of the graph involved.

\begin{proposition}\label{Proposition: Duality}
    Let $p\in (1,\infty)$ and $q = p/(p-1)$. Let \(G=(V,E,\xi)\) be a graph and let \(A,B\subset V\) be a pair of disjoint non-empty subsets such that $\Theta(A,B) \neq \varnothing$.
    Then
        \[
            \Mod_p(\Theta(A,B),G)^{\frac{1}{p}}\cdot\Res_p(A,B,G)^{\frac{1}{q}}=1.
        \]
    Further, if \(\cJ\) is a unit flow from \(A\) to \(B\) and \(\rho\in \Adm(\Theta(A,B))\), such that
        \[
            \mathcal{M}_p(\rho)^{\frac{1}{p}}\cdot\mathcal{E}_q(\mathcal{\cJ})^{\frac{1}{q}}=1,
        \]
    then both $\cJ$ and $\rho$ are optimal. For optimal $\cJ$ and $\rho$ we have
        \begin{equation}\label{flow_mod_vanish}
            \lvert \mathcal{F}(e) \rvert = \Mod_p(\Theta(A,B),G)^{-1}\rho(e)^{p-1}
        \end{equation}
\end{proposition}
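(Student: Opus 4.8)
The plan is to reduce the statement to the classical connected‑graph duality of \cite[Theorem 5.1]{NakamuraYamasakiDuality} by replacing $G$ with a connected graph on which the path family $\Theta(A,B)$, the $p$-modulus, and the $p$-resistance are all left unchanged. Since a path in $\Theta(A,B)$ lies in a single connected component, and that component therefore meets both $A$ and $B$, while conversely any component meeting both $A$ and $B$ carries such a path (being connected), the first step is to discard the rest. Let $G'$ be the union of the components of $G$ meeting both $A$ and $B$; there is at least one by the hypothesis $\Theta(A,B)\neq\varnothing$, and $\Theta(A,B)$ is carried by $G'$. Restriction of densities and extension by zero give $\Mod_p(\Theta(A,B),G)=\Mod_p(\Theta(A',B'),G')$ with $A'=A\cap V(G')$, $B'=B\cap V(G')$, and the unique optimal density (Lemma \ref{Lemma: Existence of Minimizers}) vanishes off $G'$. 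For the resistance one checks, by summing the flow conservation laws over a finite component meeting at most one of $A,B$, that the net flux of any unit flow leaving $A$ through such a component is automatically zero; hence restricting a unit flow to $G'$ is again a unit flow of no greater energy, extending by zero is harmless, $\Res_p(A,B,G)=\Res_p(A',B',G')$, and the (unique, since $q>1$) optimal flow vanishes off $G'$.

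Second, if $G'$ is disconnected I would glue its components without creating new competitors. Picking one vertex $c_i\in A\cap C_i$ in each component $C_i$ of $G'$, let $\hat G$ be $G'$ together with one new vertex $o$ and new edges $g_i$ joining $o$ to $c_i$; then $\hat G$ is connected. The crucial point is that $o$ is incident only to the $g_i$, so any $\Theta(A,B)$-path of $\hat G$ that visits $o$ contains, as the segment following its last visit to $o$, an honest $A$–$B$ path of $G'$; therefore the zero-extension of any admissible density on $G'$ is admissible on $\hat G$, and restriction goes the other way, yielding $\Mod_p(\Theta(A,B),\hat G)=\Mod_p(\Theta(A',B'),G')$. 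Dually, flow conservation at $o$ forces $\sum_i\cJ(c_i,g_i)=0$, so restricting a unit flow on $\hat G$ to $G'$ and extending a unit flow on $G'$ by zero on the $g_i$ both preserve the unit-flow property and the energy, giving $\Res_p(A,B,\hat G)=\Res_p(A',B',G')$.

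Now \cite[Theorem 5.1]{NakamuraYamasakiDuality} applied to the connected graph $\hat G$ gives $\Mod_p(\Theta(A,B),G)^{1/p}\Res_p(A,B,G)^{1/q}=1$ together with the proportionality \eqref{flow_mod_vanish} for the unique optimal density and flow on $\hat G$; transporting these back along the two zero-extensions, and noting that both sides of \eqref{flow_mod_vanish} vanish on the new edges and on the discarded components, yields \eqref{flow_mod_vanish} on $G$. The remaining ``further'' assertion is then formal: any admissible $\rho$ has $\cM_p(\rho)\geq\Mod_p(\Theta(A,B),G)$ and any unit flow $\cJ$ has $\cE_q(\cJ)\geq\Res_p(A,B,G)$, so $\cM_p(\rho)^{1/p}\cE_q(\cJ)^{1/q}\geq 1$ with equality forcing both quantities to be minimal, i.e. $\rho$ and $\cJ$ optimal.

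The step that needs the most care is the gluing: one must be sure that attaching everything at $o$ introduces no genuinely new admissibility constraint and no new flow capacity — that every $A$–$B$ path through $o$ is dominated by an old one and the extra edges carry no flux — because disconnectedness is precisely where the two extremal problems could otherwise change (resistance, unlike modulus, behaves sub‑additively and the flux can split among components). The reductions themselves are routine once the modification is fixed. An alternative, more computational route that avoids $\hat G$ altogether is to apply the connected duality to each component $C_i$ separately, writing $\Mod_p(\Theta(A,B),G)=\sum_i M_i$ and $\Res_p(A,B,G)=\min\{\sum_i\alpha_i^q R_i:\ \sum_i\alpha_i=1,\ \alpha_i\geq0\}$ with $M_i^{1/p}R_i^{1/q}=1$; here the identity $(p-1)(q-1)=1$ makes the optimal split $\alpha_i\propto M_i$, which collapses the sum to $\Mod_p(\Theta(A,B),G)^{1-q}$ and recovers the claimed relation.
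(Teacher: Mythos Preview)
Your argument is correct but takes a different reduction to the connected case than the paper does. The paper collapses $A$ to a single vertex $a$ and $B$ to a single vertex $b$, deletes any resulting loops, and passes to the connected component containing $a$ and $b$; this quotient is automatically connected because every component of $G$ meeting $A$ is now joined at $a$, and similarly for $B$. You instead first discard components not meeting both $A$ and $B$ and then adjoin a new vertex $o$ tied to one chosen $A$-vertex in each remaining component. Both constructions yield a connected graph with unchanged modulus and resistance, to which \cite[Theorem 5.1]{NakamuraYamasakiDuality} applies. The paper's quotient is marginally more economical --- no auxiliary vertex, hence no need to argue that paths through $o$ are dominated by old ones or that the $g_i$ carry no flux --- while your construction keeps the edge set of $G'$ intact, so transporting \eqref{flow_mod_vanish} back to $G$ involves no edge identifications to undo. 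Your closing alternative, applying the connected duality to each component $C_i$ and optimizing the flux split $\alpha_i\propto M_i$ via $(p-1)(q-1)=1$, is a third correct route and arguably the most transparent of the three.
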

\begin{proof} 
As stated, the connected case follows directly from \cite[Theorem 5.1]{NakamuraYamasakiDuality} and \cite[Lemma 4.13]{anttila2024constructions}. So, we are left to consider the case of a disconnected $G$. Let $\tilde{G}=(\tilde{V}, \tilde{E})$ be the graph obtained in these four steps: first identify all vertices in $A$ with a given vertex $a\in A$, then also identify all vertices in $B$ with a given vertex $b\in B$, remove possible loops and finally take the connected component containing $a$ and $b$ in the resulting graph.  It is not too hard to see that $\Mod_p(\Theta(A,B),G)=\Mod_p(\Theta(\{a\}, \{b\}, \tilde{G})$ and $\Res_p(\Theta(\{a\}, \{b\}, \tilde{G}))=\Res_p(A,B,G)$. 
\end{proof}

\section{Construction of Laakso-type fractal spaces}\label{Section: Construction of Laakso-type fractal spaces}

In this section, we present the construction of Laakso-type fractal spaces as limit spaces of iterated graph systems. Further discussion and detailed examples may be found in \cite{anttila2024constructions,ASEBShimizu}. In contrast to previous work, here we have adopted a notation that is explicitly suited to working with non-simple graphs.

\subsection{Iterated graph systems}\label{Subsection: Iterated graph systems} 

\begin{definition}
    An iterated graph system (IGS) consists of the following data:
    \begin{enumerate}
        \item A finite connected graph \(G_1=(V_1,E_1,\xi_1)\) called the \emph{generator}.
        \item A finite non-empty set \(I\) called the \emph{gluing set}.
        \item A collection of injective functions \(\{\phi_{v,e}\colon I\to V_1\}_{v\in e\in E_1}\) called \emph{gluing maps}, whose images, \(I_{v,e}:=\phi_{v,e}(I)\), satisfy the following whenever \(v\in e\in E_1\):
        \begin{enumerate}
            \item \(I_{e^-,e}\cap I_{e^+,e}=\varnothing\).
            \item There are no edges \(f\in E_1\) with \(\xi(f)\subset I_{v,e}\). 
        \end{enumerate}
    \end{enumerate}
    We refer to the pair \((I,\{\phi_{v,e}\colon I\to V_1\}_{v\in e\in E_1})\) as the \emph{gluing rule}. 
\end{definition}

    An IGS determines an infinite sequence of \emph{replacement graphs} \((G_n)_{n\in \N}\), and for each \(G_n\), a corresponding collection of gluing maps \(\{\phi_{v,e}\colon I\to V_1\}_{v\in e\in E_n}\), recursively, as follows: Consider an IGS with a generator \(G_1=(V_1,E_1,\xi_1)\) and gluing rule \((I,\{\phi_{v,e}\}_{v\in e\in E_1})\). The first graph in the sequence is the generator \(G_1\) itself. Suppose that \(G_n=(V_n,E_n,\xi_n)\) and \(\{\phi_{v,e}\}_{v\in e\in E_n}\) have been constructed for some \(n\in\N\). Then, we define the graph \(G_{n+1}=(V_{n+1},E_{n+1},\xi_{n+1})\) and the collection of gluing maps \(\{\phi_{v,e}\}_{v\in e\in E_{n+1}}\) as follows:
        \begin{enumerate}
            \item Let \(V_{n+1}:=(V_1\times E_n)/\sim\) with the identification \((\phi_{v,e}(i),e)\sim(\phi_{v,f}(i),f)\) for each \(i\in I\), whenever \(e\) and \(f\) share an endpoint \(v\).
            \item Let \(E_{n+1}:= E_1\times E_n\), and define \(\xi_{n+1}(e,f):=\{[(e^-,f)],[(e^+,f)]\}\) for every \((e,f)\in E_{n+1}\).
            \item Define the collection of gluing maps \(\{\phi_{v,e}\colon I\to V_1\}_{v\in e\in E_{n+1}}\), by setting \(\phi_{[(v,f)],(e,f)}:=\phi_{v,e}\) for all \((e,f)\in E_{n+1}\) and \(v\in e\).
        \end{enumerate}
    To clean up the notation, we drop the parentheses and simply write \([v,e]\) to denote any vertex \([(v,e)]\in V_n\). For any edge \(e\in E_n\), we denote it's level by \(\lvert e\rvert:=n\).

The replacement graphs of an IGS admit a natural projective structure defined as follows: For every \(n\geq 1\), we define the projection \(\pi_{n+1}\colon V_{n+1}\cup E_{n+1}\to V_n\cup E_n\) by setting
    \[
        \pi_{n+1}([v,e]):=
        \begin{cases}
            e&\text{if}\;v\not\in I_{e^-,e}\cup I_{e^+,e}\\
            e^-&\text{if}\;v\in I_{e^-,e}\\
            e^+&\text{if}\;v\in I_{e^+,e}
        \end{cases}
        \quad\text{and}\quad \pi_{n+1}(e,f):=f
    \]
for any vertex \([v,e]\in V_{n+1}\) and any edge \((e,f)\in E_{n+1}\), respectively. Furthermore, we define \(\pi_1:= \id\vert_{V_1\cup E_1}\), and for all \(n\geq m \geq 1\), we define
    \[
        \pi_{n,m}:=\begin{cases}
            \id\vert_{V_n\cup E_n}&\text{if } n=m,\\
            \pi_{m+1}\circ\pi_{m+2}\circ\ldots\circ\pi_n&\text{if } n>m.
        \end{cases}
    \]

\begin{definition}\label{Definition: Simple IGS}
An IGS is called \emph{simple} if there are exactly two gluing maps, \(\phi_-,\phi_+\colon I\to V_1\), so that \(\{\phi_{e^-,e},\phi_{e^+,e}\}=\{\phi_-,\phi_+\}\) for every edge \(e\in E_1\). For a simple IGS, we fix the endpoint labels \(e^-\) and \(e^+\) for each edge \(e\in E_n\) so that
    \[
        \phi_{e^-,e}=\phi_-\quad\text{and}\quad \phi_{e^+,e}=\phi_+.
    \]
We denote the gluing sets, higher-order gluing sets and the families of paths joining the \(n\)-th order gluing sets of a simple IGS by 
    \[
        I_\pm:=\phi_\pm(I),\quad I_\pm^{\smash{(n)}}:=\pi_{n,1}^{-1}(I_\pm)\quad\text{and}\quad\Theta^{\smash{(n)}}:=\Theta(I_-^{\smash{(n)}},I_+^{\smash{(n)}}),
    \]
respectively, and for each pair \((v,e)\) where \(e\in E_n\) and \(v\in \{e^-,e^+\}\), we define
    \[
        I_{v,e}^{(n)}:=\begin{cases}
            I_+^{(n)}&\text{if }v=e^+\\
            I_-^{(n)}&\text{if }v=e^-
        \end{cases}
    \]
For convenience, we also introduce the constant 
\[L_*:=\dist(I_-,I_+,d_{G_1}).\]

Usually, we will need to impose some minimal further assumptions on an IGS. We collect these in the following list. We say that a simple IGS is:
    \begin{enumerate}
        \item \emph{doubling}, if for every vertex \(v\in I_-\cup I_+\) we have \(\deg(v)=1\); 
        
        \item \emph{non-degenerate}, if there are no edges \(e\in E_1\) for which \(e^-\in I_\pm\) and \(e^+\in I_\mp\); 
        
        \item \emph{symmetric}, if there exists a graph isomorphism \(\eta:G_1\to G_1\)   for which \(\eta\circ\phi_\pm = \phi_\mp\);
    \end{enumerate}
\end{definition}

We review some properties of our construction.

\begin{lemma}[{\cite[Lemma 3.21]{anttila2024constructions}}]\label{Lemma: Doubling degree bound}
If a simple IGS is doubling then
    \[
    \sup_{n \in \N} \max_{v\in V_n}\deg(v)=\max_{v\in V_1}\deg(v).
    \]
\end{lemma}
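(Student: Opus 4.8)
The statement asserts that for a doubling simple IGS, the maximal vertex degree is stable under the replacement procedure: $\sup_{n}\max_{v\in V_n}\deg(v)=\max_{v\in V_1}\deg(v)$. The inequality $\geq$ is trivial since $G_1$ appears as the first graph in the sequence. So the work is in proving that no vertex of $G_{n+1}$ has degree exceeding $\max_{v\in V_1}\deg(v)$, which I would establish by induction on $n$.

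The plan is to analyze the two types of vertices in $V_{n+1}=(V_1\times E_n)/\!\sim$ separately, using the description of $E_{n+1}=E_1\times E_n$ and the endpoint map $\xi_{n+1}(e,f)=\{[e^-,f],[e^+,f]\}$. First, consider a vertex $[v,f]$ with $f\in E_n$ and $v\in V_1$ \emph{not} lying in any gluing set image $I_{e^-,e}\cup I_{e^+,e}$ — equivalently, $v$ is an ``interior'' vertex of the copy of $G_1$ sitting over the edge $f$. Such a vertex receives no identifications, so its neighbors in $G_{n+1}$ are exactly the edges $(e,f)$ with $e\in E_1$ and $v\in e$; hence $\deg_{G_{n+1}}([v,f])=\deg_{G_1}(v)\leq\max_{w\in V_1}\deg(w)$. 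Second, consider a gluing vertex, i.e.\ one of the form $[v,f]$ with $v\in I_{e^-,f}^{?}$; since the IGS is simple, $v\in I_-$ or $v\in I_+$, and because the IGS is doubling, $\deg_{G_1}(v)=1$. Such a vertex is glued to the corresponding gluing vertices of the neighboring copies of $G_1$ that meet at the junction vertex $\pi_{n+1}([v,f])\in V_n$. Concretely, the edges of $G_{n+1}$ incident to this glued vertex are the edges $(e_0,f')$ — with $e_0$ the unique edge of $G_1$ containing $v$ — as $f'$ ranges over the edges of $G_n$ incident to the junction vertex $u:=\pi_{n+1}([v,f])\in V_n$, on the appropriate side. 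Thus the degree of the glued vertex in $G_{n+1}$ is exactly $\deg_{G_n}(u)$ (one incident edge $(e_0,f')$ per edge $f'$ at $u$, using that $v$ has degree one in $G_1$ so contributes a single edge per copy), which by the inductive hypothesis is at most $\max_{w\in V_1}\deg(w)$.

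The one point demanding care — and the main obstacle — is the precise bookkeeping for the glued vertices: I need to verify that the identification $(\phi_{v,e}(i),e)\sim(\phi_{v,f}(i),f)$ does not cause two distinct edges over the \emph{same} edge $f'\in E_n$ to both land at the glued vertex, and that the map ``edge $f'$ at $u$ in $G_n$'' $\mapsto$ ``edge $(e_0,f')$ at the glued vertex in $G_{n+1}$'' is a bijection onto the incident edge set. This uses the doubling hypothesis crucially: if $v\in I_\pm$ had degree $\geq 2$ in $G_1$, a single copy of $G_1$ over one edge $f'$ could contribute multiple edges to the glued vertex, and the bound would blow up. It also uses condition (3)(b) in the IGS definition (no edge of $G_1$ has both endpoints in a single $I_{v,e}$) and the disjointness $I_{e^-,e}\cap I_{e^+,e}=\varnothing$ to ensure gluings on the two sides of an edge don't interfere. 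Since this is cited from \cite[Lemma 3.21]{anttila2024constructions}, I would present the above as the proof structure, carry out the two-case induction, and isolate the bijection claim for the glued vertex as the key lemma, noting it is precisely where ``doubling'' enters.
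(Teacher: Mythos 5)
The paper does not include its own proof of this lemma; it is cited verbatim from \cite[Lemma 3.21]{anttila2024constructions}, so there is no internal proof to compare against. Your argument is correct and is the natural one: the $\geq$ direction is trivial, and the $\leq$ direction follows by induction from the degree identities $\deg_{G_{n+1}}([v,f])=\deg_{G_1}(v)$ for interior vertices and $\deg_{G_{n+1}}([v,f])=\deg_{G_n}(\pi_{n+1}([v,f]))$ for gluing vertices, the latter being exactly where doubling enters (each copy of $G_1$ incident to the junction contributes precisely one edge, since a gluing vertex of $G_1$ has degree one).

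One small imprecision worth flagging in your bookkeeping for the glued case: you describe the incident edges as $(e_0,f')$ ``with $e_0$ the unique edge of $G_1$ containing $v$,'' treating $e_0$ as fixed. In fact, as $f'$ ranges over the edges of $G_n$ at the junction $u=\pi_{n+1}([v,f])$, the representative of the equivalence class over $f'$ is $\bigl(\phi_{u,f'}(i),f'\bigr)$, whose $V_1$-coordinate is $\phi_-(i)$ or $\phi_+(i)$ depending on whether $u=(f')^-$ or $u=(f')^+$; so the first coordinate $e_0$ of the incident edge depends on $f'$, namely $e_0=e_0(f')$ is the unique $G_1$-edge containing $\phi_{u,f'}(i)$. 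This does not affect the count — you still get exactly one incident edge per $f'$ at $u$, so $\deg_{G_{n+1}}([v,f])=\deg_{G_n}(u)$ — but the correct statement should record this dependence. You should also note, to rule out degenerate incidences, that $[e^-,g]=[e^+,g]$ is impossible for any $(e,g)\in E_{n+1}$, which follows because the equivalence class of a gluing vertex contains at most one representative per edge $g\in E_n$ (a consequence of $I_{g^-,g}\cap I_{g^+,g}=\varnothing$), and $e^-\neq e^+$ in $V_1$.
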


The graphs $G_n$ are in a sense self-similar. This can be explicated using the following mappings. For every \(n,m\in \N\) and \(e\in E_n\), we define the similarity map \(\sigma_{e,m}\colon V_m\cup E_m\to V_{n+m}\cup E_{n+m}\) inductively, by first setting
        \[
            \sigma_{e,1}(v):=[v,e]\quad\text{and}\quad\sigma_{e,1}(f):=(f,e)
        \]
    for all \(v\in V_1\) and \(f\in E_1\), respectively, and then setting
        \[
            \sigma_{e,{m+1}}([v,f]):=[v,\sigma_{e,m}(f)]\quad\text{and}\quad \sigma_{e,m+1}((g,f)):=(g,\sigma_{e,m}(f))
        \]
    for all \([v,f]\in V_{m+1}\) and \((g,f)\in E_{m+1}\), respectively. Furthermore, we denote
    \[
        e\cdot V_m:=
            \begin{cases}
                \{e^-,e^+\}&\text{if }m=0\\ 
                \sigma_{e,m}(V_m)&\text{if }m\geq 1
            \end{cases}
        \quad\text{and}\quad 
        e\cdot E_m:=
            \begin{cases}
                e&\text{if }m=0\\
                \sigma_{e,m}(E_m)&\text{if }m\geq 1
            \end{cases}
    \]
    and define \(e\cdot G_m\) as the subgraph of \(G_{n+m}\) with vertices \(e\cdot V_m\) and edges \(e\cdot E_m\).

The following Proposition is equivalent to \cite[Proposition 3.11]{anttila2024constructions}, with a minor difference in \ref{SM2}, since here we also allow graphs that are non-simple. As the proof does not change, we omit it.

\begin{proposition}\label{prop: similarity maps}
    The similarity mapping \(\sigma_{e,m}\) has the following properties for every \(n,m\in \N\) and each \(e\in E_n\):

    \begin{enumerate}[label={\color{blue}{\textup{$($SM\arabic*}$)$}}, widest=a, leftmargin=*]
        \item\label{SM1} The mapping is injective and the collections \(\{f\cdot V_m\}_{f\in E_n}\) and \(\{f\cdot E_m\}_{f\in E_n}\) cover \(V_{n+m}\) and \(E_{n+m}\), respectively. Moreover, if \(v_-,v_+\in V_m\) are any two vertices, then \(\{v_-,v_+\}=\xi_m(f)\) for some \(f\in E_m\) if and only if \(\{\sigma_{e,m}(v_-),\sigma_{e,m}(v_+)\}=\xi_{n+m}(\sigma_{e,m}(f))\) and 
            \[
            \phi_{v_\pm,f}=\phi_{\sigma_{e,m}(v_\pm),\sigma_{e,m}(f)}
            \]
        \item\label{SM2} For distinct edges \(e,f\in E_n\) the subsets \(e\cdot V_m\) and \(f\cdot V_m\) intersect if and only if \(e\) and \(f\) share an endpoint \(v\). Moreover, the intersection is
            \[
            \bigcup_{v\in e\cap f}\sigma_{e,m}\big(I_{v,e}^{(m)}\big)=\pi_{n+m,n}^{-1}(e\cap f)=\bigcup_{v\in e\cap f} \sigma_{f,m}(I_{v,f}^{(m)}).
            \]
        \item\label{SM3} For every \(e\in E_n\) we have that \(e\cdot E_m=\pi_{n+m,n}^{-1}(e)\cap E_{n+m}\). In particular, \(\{f\cdot E_m\}_{f\cdot E_n}\) is a partition of \(E_{n+m}\).
    \end{enumerate}
\end{proposition}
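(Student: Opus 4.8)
The plan is to prove Proposition \ref{prop: similarity maps} by induction on $m$, following the recursive definition of $\sigma_{e,m}$ and reducing everything to the single-step case $m=1$, where the statements become direct translations of how $G_{n+1}$ is built from $G_n$.

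\textbf{Base case and the single-step translation.} First I would establish the case $m=1$ carefully, since this is where all the combinatorial content sits. For $\sigma_{e,1}\colon V_1\cup E_1\to V_{n+1}\cup E_{n+1}$, $v\mapsto[v,e]$ and $f\mapsto(f,e)$, injectivity on edges is immediate from the definition $E_{n+1}=E_1\times E_n$. Injectivity on vertices requires the observation that $[v,e]\sim[w,e']$ only when $e,e'$ share an endpoint and $v,w$ lie in the corresponding gluing sets; since the gluing maps are injective and $I_{e^-,e}\cap I_{e^+,e}=\varnothing$, the identification never collapses two vertices of a single copy $e\cdot G_1$. The covering statements \ref{SM1}, \ref{SM3} for $m=1$ are just $V_{n+1}=(V_1\times E_n)/\sim$ and $E_{n+1}=E_1\times E_n$ unpacked. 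The endpoint/gluing-map compatibility in \ref{SM1} is exactly clause (2) and clause (3) in the recursive construction: $\xi_{n+1}(f,e)=\{[f^-,e],[f^+,e]\}$ and $\phi_{[(v,e)],(f,e)}=\phi_{v,f}$. For \ref{SM2} with $m=1$, two copies $e\cdot V_1$ and $e'\cdot V_1$ meet precisely at vertices of the form $[\phi_{v,e}(i),e]=[\phi_{v,e'}(i),e']$ for $v\in e\cap e'$ and $i\in I$, which is the set $\bigcup_{v\in e\cap e'}\sigma_{e,1}(I_{v,e})=\pi_{n+1,n}^{-1}(e\cap e')$ by the definition of $\pi_{n+1}$; and when $e,e'$ are disjoint there is no identification, so the copies are disjoint (here one also uses condition (b) in the IGS definition, that no edge has both endpoints in a single $I_{v,e}$, to rule out spurious coincidences at the edge level).

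\textbf{Inductive step.} Assume all three statements hold for $m$ and for every generator-level index; I would then deduce them for $m+1$ using the factorization implicit in $\sigma_{e,m+1}([v,f])=[v,\sigma_{e,m}(f)]$ and $\sigma_{e,m+1}((g,f))=(g,\sigma_{e,m}(f))$. The key point is that $\sigma_{e,m+1}$ on $V_{m+1}\cup E_{m+1}$ is the composite: apply the single-step inclusion $V_{m+1}\hookrightarrow$ (as $[v,f]$ with $f\in E_m$), then push $f$ forward via $\sigma_{e,m}$ into $E_{n+m}$, then reinterpret $[v,\sigma_{e,m}(f)]$ as living in $V_{(n+m)+1}=V_{n+m+1}$. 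Injectivity then follows by composing injectivity of $\sigma_{e,m}$ (induction hypothesis) with injectivity of the level-$(n+m)$ single-step map (already proved, applied with $n$ replaced by $n+m$). The covering statements follow similarly: $\{f\cdot E_{m+1}\}_{f\in E_n}$ refines $\{f\cdot E_m\}_{f\in E_n}$ under the level-$(n+m)$ partition, giving \ref{SM3}, and \ref{SM1}'s covering claim is its vertex analogue. The endpoint and gluing-map identities in \ref{SM1} propagate because $\phi_{[(v,f)],(g,f)}=\phi_{v,g}$ at every level, so the composite $\sigma_{e,m+1}$ preserves gluing maps as soon as $\sigma_{e,m}$ does. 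For \ref{SM2} I would combine the inductive description of $e\cdot V_m\cap f\cdot V_m$ with the fact that the further replacement (passing from level $m$ to $m+1$) only subdivides each copy internally and does not create new intersections between distinct top-level copies; the intersection formula $\bigcup_{v\in e\cap f}\sigma_{e,m+1}(I_{v,e}^{(m+1)})=\pi_{n+m+1,n}^{-1}(e\cap f)$ then follows from $I_{v,e}^{(m+1)}=\pi_{m+1,1}^{-1}(I_{v,e})$ together with compatibility $\pi_{n+m+1,n}=\pi_{n+m+1,n+m}\circ\pi_{n+m+1}\circ\cdots$ wait, more precisely from the commutation of $\sigma_{e,\bullet}$ with the projection maps $\pi$, which itself is an easy induction.

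\textbf{Main obstacle.} The genuinely delicate part is \ref{SM2} — specifically, verifying that distinct copies $e\cdot G_m$ and $f\cdot G_m$ intersect in \emph{exactly} the prescribed gluing set and nothing more, and that disjoint edges yield disjoint copies even after many iterations. This is where the non-simplicity allowed in this version of the construction (multiple edges between the same pair of vertices, the phenomenon illustrated in Figure \ref{fig:placeholder}) could in principle cause trouble: one must be sure the equivalence relation $\sim$ at each stage does not transitively glue together points that "should" stay separate. The safeguards are conditions (a) and (b) in the IGS definition (disjointness of the two gluing-set images inside a single edge-copy, and absence of edges internal to a gluing set) together with injectivity of the gluing maps; I would spell out how these force the relation $\sim$ to have the "expected" orbits, i.e. each orbit is either a singleton or a set of the form $\{[\phi_{v,e}(i),e] : e\ni v\}$ for a fixed $v\in V_n$ and $i\in I$. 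Since the statement is asserted to be a routine adaptation of \cite[Proposition 3.11]{anttila2024constructions}, I would keep the write-up brief, emphasizing only the points where non-simplicity requires checking, and otherwise cite the earlier argument.
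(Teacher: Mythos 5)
The paper does not actually give a proof of this proposition: it states that the result is equivalent to Proposition 3.11 of \cite{anttila2024constructions} ``with a minor difference in \ref{SM2}, since here we also allow graphs that are non-simple'' and then explicitly omits the proof. So there is no paper proof to compare against; your proposal can only be judged on its own merits, and by those it is the natural and correct route. Induction on $m$, with the base case $m=1$ unpacking the recursive construction of $G_{n+1}$, and an inductive step using the factorization $\sigma_{e,m+1}([v,f])=[v,\sigma_{e,m}(f)]$ to reduce to the single-step statement applied at level $n+m$, is exactly the intended structure. Your key structural claim --- that each non-singleton orbit of $\sim$ has the form $\{(\phi_{v,g}(i),g) : g\ni v\}$ for a fixed $v\in V_n$ and $i\in I$ --- is correct and is the heart of \ref{SM2}; the non-simple case does not change this, it only allows $e\cap f$ to have up to two elements, which the orbit argument handles without modification.

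One misattribution worth fixing: in the base case you appeal to condition (b) of the IGS definition (no edge $f$ with $\xi(f)\subset I_{v,e}$) ``to rule out spurious coincidences at the edge level.'' Edges of $E_{n+1}$ are literally pairs $(g,e)$ in $E_1\times E_n$ with no quotient taken, so there is nothing to rule out at the edge level; \ref{SM3} for $m=1$ is entirely automatic. What actually forces the orbit structure you describe (and hence \ref{SM2}) is condition (a), namely $I_{e^-,e}\cap I_{e^+,e}=\varnothing$, together with injectivity of the gluing maps: in any chain $(\phi_{v,g}(i),g)\sim(\phi_{v',g}(i'),g)\sim\cdots$ through a common edge $g$, equality $\phi_{v,g}(i)=\phi_{v',g}(i')$ with $v\neq v'$ is impossible by (a), and with $v=v'$ forces $i=i'$ by injectivity, so $v$ and $i$ are invariants of the orbit. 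Condition (b) plays its role elsewhere (for instance in the path-length non-degeneracy arguments), not in this proposition. This does not affect the soundness of your argument, but the attribution should be corrected. Similarly, the inductive step for \ref{SM2} is a bit looser than the decomposition $e\cdot V_{m+1}=\bigcup_{h\in e\cdot E_m}h\cdot V_1$ would give you (this lets you apply \ref{SM3} and the $m$-level inductive hypothesis cleanly, then the base case at level $n+m$), but what you sketch is compatible with that and would fill in straightforwardly.
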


For any two edges \(e\in E_n\) and \(f\in E_m\) we define the operation
        \[
            e\cdot f:=\sigma_{e,m}(f).
        \]
The operation is associative, and gives us a convenient way of representing any edge \(e\in E_n\) uniquely as a product \(e=e_1\cdot e_2\cdot \ldots \cdot e_n\), where \(e_i\in E_1\) for each \(1\leq i\leq n\). Given a path \(\theta=[v_1,f_1,\ldots, f_k,v_k]\) on \(G_1\), we also define the path
    \[
        e\cdot \theta:=[\sigma_{e,1}(v_0),\sigma_{e,1}(f_1),\ldots,\sigma_{e,1}(f_k),\sigma_{e,1}(v_k)].
    \]

The following proposition is roughly equivalent to \cite[Proposition 3.16]{anttila2024constructions}, with some differences in the definition of a path. We omit the proof, since it is very similar, but mention that the indices $k_i,s_i$ correspond to types of stopping times, and the intervals of indices $[k_i,s_i]$ to crossing times of the subgraphs $f_i\cdot G_m$. 

\begin{proposition}\label{Proposition: Path lift}
    Let \(\theta=[v_0,e_1,v_1,\dots ,e_k,v_k]\) be a path in \(G_{n+m}\) and \(A,B\subset V_n\) be non-empty disjoint sets. If \(v_0\in \pi_{n+m,n}^{-1}(A)\) and \(v_k\in \pi_{n+m,n}^{-1}(B)\), then there are numbers \(0\leq k_1 < s_1 \leq k_2 < s_2\leq,\dots,\leq k_l < s_l\leq k\), vertices \(u_0,\dots ,u_l\in V_n\) and edges \(f_1,\dots ,f_l\in E_n\) so that the following conditions hold
    \begin{enumerate}
        \item \([u_0,f_1,u_1\dots,f_l,u_l]\in \Theta(A,B)\),
        \item For $i=1,\dots,l$, the sub-path \(\theta_i=[v_{k_i},e_{k_i+1},\dots ,e_{s_i}, v_{s_i}]\) of \(\theta\) is a path from  \(v_{k_i}\in\pi_{n+m,n}^{-1}(u_i)\) to \(v_{s_i}\in\pi_{n+m,n}^{-1}(u_{i+1})\) and is contained in \(f_i\cdot G_m\).
    \end{enumerate}
\end{proposition}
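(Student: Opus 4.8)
The plan is to push the path $\theta$ down from the fine graph $G_{n+m}$ to $G_n$ using the self-similar block structure. By \ref{SM3} the edge set $E_{n+m}$ is partitioned by the blocks $\{f\cdot E_m\}_{f\in E_n}$, so to each edge $e_j$ of $\theta$ I attach its \emph{block label} $g_j:=\pi_{n+m,n}(e_j)\in E_n$, the unique $f\in E_n$ with $e_j\in f\cdot E_m$. I call a vertex $v_j$ of $\theta$ a \emph{junction} if $\pi_{n+m,n}(v_j)\in V_n$, as opposed to lying in the interior of a block (i.e.\ projecting into $E_n$). Since $A,B\subset V_n$, the hypotheses $v_0\in\pi_{n+m,n}^{-1}(A)$ and $v_k\in\pi_{n+m,n}^{-1}(B)$ force $v_0$ and $v_k$ to be junctions, with $\pi_{n+m,n}(v_0)\in A$ and $\pi_{n+m,n}(v_k)\in B$.

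The heart of the argument — and the step I expect to be the main obstacle — is the claim that the block label is locally constant away from junctions: if $v_j$ is not a junction, then $g_j=g_{j+1}$. This I would deduce from \ref{SM2}: the shared vertex $v_j$ lies in $g_j\cdot V_m\cap g_{j+1}\cdot V_m$, and if $g_j\neq g_{j+1}$ then this intersection equals $\pi_{n+m,n}^{-1}(g_j\cap g_{j+1})\subset\pi_{n+m,n}^{-1}(V_n)$, contradicting that $v_j$ is not a junction. Along the way I would also record, from \ref{SM2} and the definition of the projections, that inside a block $h\cdot G_m$ the junctions are precisely the images under $\sigma_{h,m}$ of the gluing sets $I_-^{(m)}\cup I_+^{(m)}$ of $G_m$, and that such a junction projects to the corresponding endpoint of $h$; this is exactly what identifies where one block hands off to the next along $\theta$.

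Granting this, I would list $0=t_0<t_1<\dots<t_r=k$, the indices of the junction vertices of $\theta$. On each interval $[t_a,t_{a+1}]$ all edges of $\theta$ then carry a single block label $h_a\in E_n$, so $\theta|_{[t_a,t_{a+1}]}$ is a path inside $h_a\cdot G_m$ running between two junctions whose projections $w_a:=\pi_{n+m,n}(v_{t_a})$ and $w_{a+1}$ are endpoints of $h_a$; consequently $[w_0,h_0,w_1,\dots,h_{r-1},w_r]$ is an edge walk in $G_n$ from a point of $A$ to a point of $B$, possibly with repeated vertices — a coincidence $w_a=w_{a+1}$ occurring exactly when $\theta$ enters $h_a\cdot G_m$ and leaves it through the same end. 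Finally, a standard loop-erasure extracts from this walk a genuine path $[u_0,f_1,u_1,\dots,f_l,u_l]\in\Theta(A,B)$, where the $f_i=h_{a_i}$ form a subcollection of the block edges kept with their orientation in the walk (so the single-end excursions are simply discarded); setting $k_i:=t_{a_i}$ and $s_i:=t_{a_i+1}$ produces the required indices $0\le k_1<s_1\le k_2<\dots<s_l\le k$ and sub-paths $\theta_i=\theta|_{[k_i,s_i]}$, each contained in $f_i\cdot G_m$ with $v_{k_i}$ and $v_{s_i}$ in the fibers over the two endpoints of $f_i$. Everything past the local block-label step is routine combinatorics on walks in the finite graph $G_n$ and runs exactly parallel to \cite[Proposition~3.16]{anttila2024constructions}, so I would present only that step in detail and refer there for the remaining bookkeeping.
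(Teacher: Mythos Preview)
Your proposal is correct and matches the paper's approach exactly: the paper omits the proof, referring to \cite[Proposition 3.16]{anttila2024constructions} and noting only that the indices $k_i,s_i$ are stopping times marking the crossings of the subgraphs $f_i\cdot G_m$, which is precisely your junction-and-block-label argument followed by erasing degenerate steps. Your identification of the key step (local constancy of the block label away from junctions, via \ref{SM2}) is the right one, and the remaining bookkeeping is routine as you say.
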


\begin{lemma}\label{Lemma: Non-degenerate path length}
    Let \(\theta=[v_0,e_1,\ldots,e_k,v_k]\in\Theta^{(n)}\) be a simple path. If the IGS is non-degenerate, then \(\len(\theta)\geq L_*^n\ge 2^n\). 
\end{lemma}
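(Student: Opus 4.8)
The plan is to proceed by induction on $n$, using the path-lifting result of Proposition \ref{Proposition: Path lift} to descend one level at a time. For the base case $n=1$: a simple path $\theta \in \Theta^{(1)} = \Theta(I_-, I_+)$ goes from a vertex of $I_-$ to a vertex of $I_+$, so its length is at least $\dist(I_-, I_+, d_{G_1}) = L_*$. The non-degeneracy hypothesis is what guarantees $L_* \geq 2$: if some path from $I_-$ to $I_+$ had length $1$, it would be a single edge $e$ with $e^- \in I_-$ and $e^+ \in I_+$ (or vice versa), which is precisely what non-degeneracy forbids. Since $L_*$ is a positive integer and $L_* \neq 1$, we get $L_* \geq 2$, hence $\len(\theta) \geq L_* = L_*^1 \geq 2^1$.

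For the inductive step, suppose the bound $\len(\theta) \geq L_*^{n}$ holds for all simple paths in $\Theta^{(n)}$, and let $\theta = [v_0, e_1, \ldots, e_k, v_k] \in \Theta^{(n+1)} = \Theta(I_-^{(n+1)}, I_+^{(n+1)})$ be simple. Write $n+1 = n + 1$ and apply Proposition \ref{Proposition: Path lift} with $m = 1$, $A = I_-$, $B = I_+$ (noting $I_-^{(n+1)} = \pi_{n+1,n}^{-1}(I_-^{(n)})$ — more precisely one applies the proposition in the form relating level $n+1$ to level $n$, with $A, B$ the $n$-th order gluing sets so that $\pi_{n+1,n}^{-1}(A) = I_-^{(n+1)}$). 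This produces indices $0 \le k_1 < s_1 \le \cdots \le k_l < s_l \le k$, edges $f_1, \ldots, f_l \in E_n$, and a path $[u_0, f_1, \ldots, f_l, u_l] \in \Theta(I_-^{(n)}, I_+^{(n)}) = \Theta^{(n)}$, where each sub-path $\theta_i = [v_{k_i}, e_{k_i+1}, \ldots, e_{s_i}, v_{s_i}]$ lies inside the copy $f_i \cdot G_1$ and runs from $\pi_{n+1,n}^{-1}(u_i)$ to $\pi_{n+1,n}^{-1}(u_{i+1})$.

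The key point is now twofold. First, since the lifted path visits distinct junction vertices $u_0, \ldots, u_l$ along a path in $\Theta^{(n)}$, we may extract from it a \emph{simple} path in $\Theta^{(n)}$ whose length is at least $\len(\theta)$ restricted appropriately — more carefully, we argue that $l \geq \len(\theta')$ for some simple $\theta' \in \Theta^{(n)}$, so by the inductive hypothesis $l \geq L_*^n$ (one must be slightly careful: the lifted sequence $[u_0, f_1, \ldots, u_l]$ need not itself be simple, but any path in $\Theta^{(n)}$ contains a simple subpath, and we can instead bound using that each crossed copy contributes, giving $l \ge L_*^n$ via the modulus/length comparison on $G_n$; alternatively reformulate the induction hypothesis as a lower bound on the number of level-$n$ edges crossed). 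Second, within each copy $f_i \cdot G_1$, the sub-path $\theta_i$ goes from $\sigma_{f_i,1}(I_{u_i,f_i}^{(1)})$ to $\sigma_{f_i,1}(I_{u_{i+1},f_i}^{(1)})$ — that is, from one gluing set of the generator copy to the opposite one — so by \ref{SM1} and the base-case argument applied inside $f_i \cdot G_1 \cong G_1$, we have $s_i - k_i = \len(\theta_i) \geq L_*$. Summing, $\len(\theta) \geq \sum_{i=1}^l (s_i - k_i) \geq l \cdot L_* \geq L_*^n \cdot L_* = L_*^{n+1} \geq 2^{n+1}$.

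The main obstacle is the bookkeeping in the inductive step: ensuring that the lifted path at level $n$ genuinely witnesses the inductive hypothesis. The subtlety is that Proposition \ref{Proposition: Path lift} gives a path in $\Theta(A,B)$ that need not be simple, while the inductive hypothesis is stated for simple paths. The cleanest fix is to strengthen the induction hypothesis to read: \emph{every} path (not necessarily simple) in $\Theta^{(n)}$ has length $\geq L_*^n$, which follows because any path contains a simple subpath in the same family — but one should double-check this is consistent with how $\Theta^{(n)}$ behaves under concatenation, or alternatively verify that the indices $k_i, s_i$ produced are genuinely increasing so that the $\theta_i$ are edge-disjoint and the count $\sum (s_i - k_i) \le k$ is valid. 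Once this is pinned down, the rest is a routine summation.
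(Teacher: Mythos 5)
Your proof is correct and follows the natural inductive route that the paper (and the referenced Lemma 3.18 of \cite{anttila2024constructions}) also takes: the base case extracts $L_*\geq 2$ directly from the non-degeneracy hypothesis, and the inductive step applies Proposition~\ref{Proposition: Path lift} to peel off one level. The concern you flag about the lifted path not being simple is real but handled exactly as you suggest --- the statement for simple paths self-strengthens to arbitrary paths in $\Theta^{(n)}$, since any path from $I_-^{(n)}$ to $I_+^{(n)}$ contains a simple subpath in the same family of no greater length, giving $l\geq L_*^n$. Combined with the observation that each $\theta_i$ is a simple path crossing $f_i\cdot G_1$ from one gluing set to the other (so $s_i-k_i\geq L_*$ by the $n=1$ case under $\sigma_{f_i,1}$) and the fact that the index intervals $[k_i,s_i]$ are non-overlapping so $\sum_i(s_i-k_i)\leq k$, the estimate $k\geq lL_*\geq L_*^{n+1}$ follows. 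No gap remains once the self-strengthening remark is made explicit.
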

We omit the proof since this is quite direct; see \cite[Lemma 3.18]{anttila2024constructions} for a detailed argument. 

\begin{lemma}\label{Lemma: Interior edge}
If a simple IGS is non-degenerate and doubling, then for every \(e\in E_n\) there exists \(f_1,f_2\in e\cdot E_2\) such that \(f_1\cap f_2=\varnothing\) and \(f_1\cap g=\varnothing\) for all \(g\in E_{n+2}\setminus (e\cdot E_2)\).
\end{lemma}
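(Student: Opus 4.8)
The plan is to transport the problem from the copy $e\cdot G_2\subseteq G_{n+2}$ back to $G_2$ itself via the similarity map $\sigma_{e,2}$. By \ref{SM1} this map is injective and incidence-preserving, with $\sigma_{e,2}(V_2)=e\cdot V_2$ and $\sigma_{e,2}(E_2)=e\cdot E_2$; combining this with \ref{SM2} and \ref{SM3} (noting that each $I_{v,e}^{(2)}$ is one of $I_-^{(2)},I_+^{(2)}$, and that $\{f\cdot E_2\}_{f\in E_n}$ partitions $E_{n+2}$), one sees that an edge $f_1\in e\cdot E_2$ satisfies $f_1\cap g=\varnothing$ for all $g\in E_{n+2}\setminus(e\cdot E_2)$ as soon as both endpoints of the preimage $f_1':=\sigma_{e,2}^{-1}(f_1)\in E_2$ lie outside $I_-^{(2)}\cup I_+^{(2)}$: then the endpoints of $f_1$ avoid $\sigma_{e,2}(I_-^{(2)}\cup I_+^{(2)})$, hence avoid $e\cdot V_2\cap f\cdot V_2$ for every $f\neq e$, hence avoid the vertex set of every edge $g$ in a neighbouring copy $f\cdot G_2$. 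So it suffices to produce, inside $G_2$, an \emph{interior} edge $f_1'$ (both endpoints outside $I_-^{(2)}\cup I_+^{(2)}$) together with a second edge $f_2'$ with $f_1'\cap f_2'=\varnothing$, and then set $f_i:=\sigma_{e,2}(f_i')$.

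To find $f_1'$ I would first record two facts about the generator. First, no edge of $G_1$ has both endpoints in $I_-\cup I_+$: the IGS axiom forbidding an edge inside a gluing set rules out both endpoints in $I_-$, resp.\ both in $I_+$, while non-degeneracy rules out one endpoint in $I_-$ and the other in $I_+$. Second, there is an edge $g_-\in E_1$ with no endpoint in $I_-$ and an edge $g_+\in E_1$ with no endpoint in $I_+$; indeed, if every edge of $G_1$ met $I_-$, then the unique edge incident to a vertex of $I_+$ (unique by the doubling assumption) would have its other endpoint in $I_-$, contradicting non-degeneracy, and symmetrically for $g_+$. Next I would unwind the definition of $\pi_{2,1}=\pi_2$, using $I_{e_0^\pm,e_0}=\phi_\pm(I)=I_\pm$, to see that a vertex $[u,e_0]$ of the copy $e_0\cdot G_1\subseteq G_2$ lies in $I_-^{(2)}\cup I_+^{(2)}$ exactly when $u\in I_-$ and $e_0^-\in I_-\cup I_+$, or $u\in I_+$ and $e_0^+\in I_-\cup I_+$. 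Now fix any $e_0\in E_1$: by the first fact at most one of $e_0^-,e_0^+$ lies in $I_-\cup I_+$, so the set of "bad" first coordinates $u$ is $I_-$, $I_+$, or $\varnothing$ accordingly, and choosing $f$ to be $g_-$, $g_+$, or an arbitrary edge in the respective cases makes $f_1':=e_0\cdot f=\sigma_{e_0,1}(f)$ an interior edge of $G_2$.

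Finally, for $f_2'$ I would simply count. By Lemma \ref{Lemma: Doubling degree bound}, $\deg_{G_2}(v)\le\Delta:=\max_{v\in V_1}\deg_{G_1}(v)\le|E_1|$, so the number of edges of $G_2$ sharing an endpoint with $f_1'$ is at most $\deg_{G_2}(f_1'^-)+\deg_{G_2}(f_1'^+)-1\le 2\Delta-1$ (the $-1$ because $f_1'$ itself is incident to both of its endpoints). Since $|E_2|=|E_1|^2$ and $|E_1|\ge 2$ (non-degeneracy forces $L_*=\dist(I_-,I_+,d_{G_1})\ge 2$, so by Lemma \ref{Lemma: Non-degenerate path length} there is a path of length $\ge 2$ in $G_1$), at least $|E_1|^2-(2\Delta-1)\ge(|E_1|-1)^2\ge 1$ edges of $G_2$ are disjoint from $f_1'$; take one of them as $f_2'$. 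Then $f_1:=\sigma_{e,2}(f_1')$ and $f_2:=\sigma_{e,2}(f_2')$ do the job: $f_1\cap f_2=\varnothing$ because $\sigma_{e,2}$ is injective and incidence-preserving, and $f_1\cap g=\varnothing$ for every $g\in E_{n+2}\setminus(e\cdot E_2)$ by the reduction in the first paragraph. The only genuinely delicate step is the bookkeeping in the second paragraph — pinning down exactly which vertices of each copy $e_0\cdot G_1$ belong to $I_-^{(2)}\cup I_+^{(2)}$ and confirming via \ref{SM2}–\ref{SM3} that interiority of $f_1'$ is precisely the hypothesis needed to keep $f_1$ from touching any edge outside $e\cdot E_2$; the rest is routine graph counting.
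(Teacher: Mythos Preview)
Your argument is correct, but it takes a noticeably different route from the paper's. The paper simply chooses a \emph{simple} path $\theta=[v_0,e_1,\ldots,e_k,v_k]\in\Theta^{(2)}$, which by Lemma~\ref{Lemma: Non-degenerate path length} has length $k\ge L_*^2\ge 4$, and sets $f_1=e\cdot e_2$, $f_2=e\cdot e_4$. Disjointness of $f_1,f_2$ is immediate since $\theta$ is simple. For the ``interior'' condition, the observation is that doubling forces every vertex in $I_\pm^{(2)}$ to have degree $1$ in $G_2$; but an \emph{internal} vertex of a simple path has degree $\ge 2$, so $v_1,\ldots,v_{k-1}\notin I_-^{(2)}\cup I_+^{(2)}$, and in particular both endpoints of $e_2$ are interior. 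This avoids all of your casework on which coordinates land in $I_\pm^{(2)}$, as well as the degree-counting argument for $f_2'$.

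Your approach, by contrast, is more explicit at the level of the generator: you pin down exactly which vertices $[u,e_0]$ belong to $I_\pm^{(2)}$ and build $f_1'=e_0\cdot f$ by hand, then use a pigeonhole bound $|E_2|-(2\Delta-1)\ge 1$ to locate $f_2'$. This is longer but has the mild advantage of making the structure of $I_\pm^{(2)}$ completely transparent and of not relying on the path-length lemma for anything beyond $|E_1|\ge 2$. Both proofs hinge on the same reduction via \ref{SM2}--\ref{SM3} that you carry out in your first paragraph.
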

\begin{proof}
    Choose a simple path \(\theta=[v_0,e_1,v_1,\ldots,e_k,v_k]\in\Theta^{(2)}\). Since the IGS is non-degenerate, it follows from Lemma \ref{Lemma: Non-degenerate path length} that \(\len(\theta)=k\geq 4\). Let \(f_1=e\cdot e_2\) and \(f_2=e\cdot e_4\). As \(\theta\) is simple, we have that \(e_2\cap e_4=\varnothing\) and consequently \(f_1\cap f_2=\varnothing\). By the doubling property, if \(v_i\in I^{\smash{(2)}}_\pm\), then \(\degr(v_i)=1\), and hence \(e_i=e_{i+1}\). Thus, as \(\theta\) is simple, it follows that \(v_i\not\in I_\pm^{\smash{(2)}}\) for all \(1\leq i\leq k-1\). Particularly, we have that \(e_2^-,e_2^+\not\in I^{\smash{(2)}}_\pm\) and hence it follows from \ref{SM2} that \(f_1\cap g =\varnothing\) for all \(g\in E^{n+2}\setminus(e\cdot E^2)\).
\end{proof}

\subsection{Limit space} The \emph{symbol space} of an IGS is the collection of projective sequences
    \[
        \Sigma := \big\{(e_i)_{i=1}^\infty \mid e_i\in E_i\text{ and }\pi_{i+1}(e_{i+1})=e_i\text{ for all }i\in\N\big\}.
    \]
For each \(n\in \N\) and \(e\in E_n\), we denote \(\Sigma_e:=\{(e_i)_{i=1}^\infty\in \Sigma\mid e_n=e\}\). The symbol space admits a natural \emph{word metric}, given as
    \[
        \delta_{\Sigma}((e_i)_{i = 1}^\infty,(f_i)_{i = 1}^\infty) :=
        \begin{cases}
            0 & \text{ if } e_i = f_i \text{ for all } i \in \N,\\
            2^{-\max \left\{0,\, k \, \big| \,  (e_i)_{i = 1}^k = (f_i)_{i = 1}^k \right\}} & \text{ otherwise.}
        \end{cases}
    \]
By construction, \((\Sigma, \delta_\Sigma)\) is a compact metric space. The \emph{limit space} of an IGS is defined as the quotient space \(X:=\Sigma/\sim\), where we identify
    \[
        (e_i)_{i=1}^\infty\sim(f_i)_{i=1}^\infty\iff e_i\cap f_i\neq \varnothing\text{ for all }i\in\N.
    \]
The canonical projection \(\chi\colon \Sigma \to X\) is given by \(\chi((e_i)_{i=1}^\infty):=[(e)_{i=1}^\infty]\), and we denote \(X_e:=\chi(\Sigma_e)\) for all \(n\in\N\) and \(e\in E_n\). For a simple IGS, we also define the sets \(X_-,X_+\subset X\) by setting
    \[
        X_\pm=\{[(e_i)_{i=1}^\infty]\mid e_n\cap I_{\pm}^{(n)}\neq\varnothing\;\;\text{for all}\;\;n\in\N\}
    \]

\begin{remark}\label{Remark: Product Decomposition}
By construction, every edge \(e\in E_n\) may be uniquely represented as a product \(e_1\cdot e_2\cdot \ldots \cdot e_n =e\), where \(e_i\in E_1\) for all \(1\leq i \leq n\). Likewise, every such product corresponds to a unique edge \(e_1\cdot e_2\cdot \ldots \cdot e_n\in E_n\). Consequently, for each \(n\in \N\), we may identify the edge set \(E_n\) with the set of sequences
    \[
        E_1^n:=\{(e_i)_{i=1}^n\mid e_i\in E_1\text{ for all }1\leq i \leq n\}
    \]
via the mapping \((e_i)_{i=1}^n\mapsto e_1\cdot e_2\cdot \ldots \cdot e_n\). Similarly, we may identify the symbol space \(\Sigma\) with the set of sequences 
    \[
        E_1^\N=\{(e_i)_{i=1}^\infty\mid e_i\in E_1\text{ for all } i\in \N\}
    \]
via the mapping \((e_i)_{i=1}^\infty \mapsto (e_1\cdot e_2\cdot \ldots \cdot e_i)_{i=1}^\infty\). It is often very convenient to think of the edges in terms of the corresponding product decompositions, which motivates the following definition.
\end{remark}

\begin{definition}\label{def:Tdef}
    We define mappings \(T:E_1^n \to E_n\) and \(T:E_1^\N \to \Sigma\) by
        \[
            T((e_i)_{i=1}^n):=\begin{cases}
                \;e_1\cdot e_2\cdot \ldots \cdot e_n&\text{if }n\in \N,\\
                (e_1\cdot e_2\cdot \ldots \cdot e_i)_{i=1}^\infty&\text{if }n=\infty.
            \end{cases}
        \]
    Furthermore, if \(x=T((e_i)_{i=1}^n)\), we denote \(T^{-1}(x)_i=e_i\in E_1\).
\end{definition}

As is made clear by the discussion in Remark \ref{Remark: Product Decomposition}, the inverse map \(T^{-1}\) is well defined. Moreover, we may conveniently refer to a specific term \(T^{-1}(x)_i\) in the product decomposition of an element \(x\in \Sigma\cup(\bigcup_{n=1}^\infty E_n)\) and we have
    \[
        x=\begin{cases}
            \,\ T^{-1}(x)_1\cdot \ldots \cdot T^{-1}(x)_n&\text{if }x\in E_n\text{ for some }n\in \N,\\
            \big(T^{-1}(x)_1\cdot \ldots \cdot T^{-1}(x)_i\big)_{i=1}^\infty&\text{if }x\in \Sigma.
        \end{cases}
    \]

\begin{lemma}\label{Lemma: Intersecting Sequences}
    Let \(\Sigma\) be the symbol space of a doubling and symmetric IGS. Suppose that \((e_i)_{i=1}^\infty
    \sim (f_i)_{i=1}^\infty\in \Sigma\) are such that \(e_{n}\cap f_{n}\neq \varnothing\) for some \(n\in\N\). Then there exists \(m\leq n\), such that 
        \[
            T^{-1}((e_i)_{i=1}^\infty)_i=
            \begin{cases}
                T^{-1}((f_i)_{i=1}^\infty)_i&\text{for } 1\leq i< m\\
                T^{-1}((f_i)_{i=1}^\infty)_i \;\text{ or }\; \eta\circ T^{-1}((f_i)_{i=1}^\infty)_i&\text{for }m< i\leq n
            \end{cases}
        \]
\begin{proof}
    Let \(m=\min\{n,k\mid e_k\neq f_k\}\) and assume that \(n>1\), as otherwise there is nothing to prove. As \(e_i=f_i\) for all \(1\leq i<m\), we must have that 
        \[
            T^{-1}((e_i)_{i=1}^\infty)_i=T^{-1}(e_i)_i=T^{-1}(f_i)_i=T^{-1}((f_i)_{i=1}^\infty)_i
        \]
    for all \(1\leq i <m\). Since \(e_n\cap f_n\neq\varnothing\), we have by \ref{SM2} that for all \(m\leq i< n\) there exists a vertex \(v_i\in e_i\cap f_i\) and an element \(a_i\in I\), so that 
        \[
            e_{i+1}\cap f_{i+1}=[\phi_{v_i,e_i}(a_i),e_i]=[\phi_{v_i,f_i}(a_i),f_i].
        \] 
    Since the IGS is symmetric, we have that \(\phi_{v_i,e_i}(a_i)\in\{\phi_{v_i,f_i}(a_i),\:\eta\circ \phi_{v_i,f_i}(a_i)\}\). By definition, the vertices \(\phi_{v_i,e_i}(a_i)\), \(\phi_{v_i,f_i}(a_i)\) and \(\eta\circ \phi_{v_i,f_i}(a_i)\) are endpoints of the edges \(\sigma_{e_i,1}^{-1}(e_{i+1})\), \(\sigma_{f_i,1}^{-1}(f_{i+1})\) and \(\eta\circ \sigma_{f_i,1}^{-1}(f_{i+1})\), respectively. Moreover, we have that \(\sigma_{e_i,1}^{-1}(e_{i+1})=T^{-1}((e_i)_{i=1}^\infty)_{i+1}\) and \(\sigma_{f_i,1}^{-1}(f_{i+1})=T^{-1}((f_i)_{i=1}^\infty)_{i+1}\). Therefore, the edge \(T^{-1}((e_i)_{i=1}^\infty)_{i+1}\) has a common endpoint \(\phi_{v_i,e_i}(a_i)\) with either \(T^{-1}((f_i)_{i=1}^\infty)_{i+1}\) or \(\eta\circ T^{-1}((f_i)_{i=1}^\infty)_{i+1}\). However, since the IGS is doubling, we have that \(\deg(\phi_{v_i,e_i}(a_i))=1\), and so there exists a unique edge \(\mathfrak{e}(\phi_{v_i,e_i}(a_i))\in E_1\) with \(\phi_{v_i,e_i}(a_i)\) as one of it's endpoints. Therefore, for all \(m<i\leq n\), we have that
        \[
            T^{-1}((e_i)_{i=1}^\infty)_{i}=\mathfrak{e}(\phi_{v_{i-1},e_{i-1}}(a_{i-1}))\in\{T^{-1}((f_i)_{i=1}^\infty)_{i},\;\eta\circ T^{-1}((f_i)_{i=1}^\infty)_{i}\}
        \]
\end{proof}

\end{lemma}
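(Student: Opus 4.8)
The plan is to take $m$ to be the first index at which the two symbol sequences disagree, truncated at $n$: set $m := \min\big(\{n\}\cup\{k\in\N : e_k\neq f_k\}\big)$. The assertion for indices below $m$ is then immediate, because $i<m$ forces $e_i=f_i$ as edges of $E_i$, and the product decomposition of an edge is unique and compatible with the projections, so $T^{-1}((e_i)_{i=1}^\infty)_i=T^{-1}((f_i)_{i=1}^\infty)_i$. Hence all the content lies in the range $m<i\leq n$. Here I would first record that $e_i\neq f_i$ for every $i\geq m$ — if $e_i=f_i$ for some $i>m$ then applying the projection $\pi_{i,m}$ would give $e_m=f_m$, a contradiction — so that the edges at these levels are genuinely distinct and \ref{SM2} can be applied to them.

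The core step is to locate, for each $m\leq i<n$, the vertex $w$ at which $e_{i+1}$ and $f_{i+1}$ meet; it exists because $(e_i)_{i=1}^\infty\sim(f_i)_{i=1}^\infty$. Writing $e_{i+1}=\sigma_{e_i,1}(e')$ and $f_{i+1}=\sigma_{f_i,1}(f')$ with $e'=T^{-1}((e_i)_{i=1}^\infty)_{i+1}$ and $f'=T^{-1}((f_i)_{i=1}^\infty)_{i+1}$ in $E_1$, the vertex $w$ lies in $e_i\cdot V_1\cap f_i\cdot V_1$, so by \ref{SM2} it has the form $w=\sigma_{e_i,1}(\phi_{v,e_i}(a))$ for some shared endpoint $v\in e_i\cap f_i$ and some $a\in I$; and since $e_i,f_i$ share the endpoint $v$, the gluing identification used to build $V_{i+1}$ shows $w=\sigma_{f_i,1}(\phi_{v,f_i}(a))$ for the same $v$ and $a$. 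Because $w$ is an endpoint of $e_{i+1}=\sigma_{e_i,1}(e')$, property \ref{SM1} gives that $\phi_{v,e_i}(a)$ is an endpoint of $e'$ in $G_1$; symmetrically $\phi_{v,f_i}(a)$ is an endpoint of $f'$.

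It remains to combine symmetry with the doubling hypothesis. For a simple IGS each of $\phi_{v,e_i}$, $\phi_{v,f_i}$ is either $\phi_-$ or $\phi_+$; if they coincide then $\phi_{v,e_i}(a)=\phi_{v,f_i}(a)$, and if they differ then $\eta\circ\phi_\pm=\phi_\mp$ yields $\phi_{v,e_i}(a)=\eta(\phi_{v,f_i}(a))$. In the first case $e'$ and $f'$ share an endpoint; in the second, since $\eta$ is a graph isomorphism it preserves incidence, so $e'$ and $\eta(f')$ share an endpoint. Either way the common endpoint lies in $I_-\cup I_+$, hence has degree $1$ by the doubling property, and a degree-$1$ vertex is incident to exactly one edge of $G_1$; therefore $e'=f'$ or $e'=\eta(f')$. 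This is precisely $T^{-1}((e_i)_{i=1}^\infty)_{i+1}\in\{T^{-1}((f_i)_{i=1}^\infty)_{i+1},\ \eta\circ T^{-1}((f_i)_{i=1}^\infty)_{i+1}\}$, and letting the index range over $m,\dots,n-1$ covers all $m<j\leq n$.

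I expect the only real difficulty to be bookkeeping: carefully unwinding the self-similarity notation ($\sigma_{e,1}$, the edge product $e_i\cdot e'$, the projections $\pi$) so that \ref{SM2} applies verbatim to pin down the meeting vertex $w$, and so that one can legitimately read off that $\phi_{v,e_i}(a)$ and $\phi_{v,f_i}(a)$ are endpoints of the factor edges $e'$ and $f'$. Once that dictionary is in place, the symmetry-plus-doubling conclusion is a one-line argument.
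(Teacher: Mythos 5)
Your argument is correct and follows the same structure as the paper's proof: choose $m$ as the first index of disagreement (truncated at $n$), use SM2 and the gluing identification to locate a common vertex of $e_{i+1}$ and $f_{i+1}$ in the form $[\phi_{v,e_i}(a),e_i]=[\phi_{v,f_i}(a),f_i]$, invoke symmetry to relate $\phi_{v,e_i}(a)$ and $\phi_{v,f_i}(a)$ via $\eta$, and close with the doubling (degree-one) hypothesis to pin down the unique edge. The only differences are cosmetic: you explicitly note $e_i\neq f_i$ for $i\geq m$ via the projections, and you cite SM1 to pass from an endpoint of $\sigma_{e_i,1}(e')$ back to an endpoint of $e'$, whereas the paper states these steps more tersely.
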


\begin{remark}\label{rmk:intersection}
    Let $X_{e,\pm}=\{[(e_i)_{i=1}^\infty
] \in X: e_n = e, e_k \in e\cdot I^{k-n}_{\pm}, k>n\}$. The proof of Lemma \ref{Lemma: Intersecting Sequences} shows  that for two adjacent edges $e,f\in E_n$  we have $X_e \cap X_f \subset X_{f,\pm}$.
\end{remark}

\begin{definition}
    A symmetric Laakso-type fractal space is the limit space \(X\) of a simple IGS that is doubling, non-degenerate and symmetric.
\end{definition}

\section{Construction of metrics}\label{Section: Construction of Metrics}
In this section, we construct a family of metrics on the limit space. Most importantly, this construction yields a subfamily of quasisymmetrically equivalent metrics with visual-metric-like properties, and as we will show in Section \ref{Section: Moduli and Conformal Dimension}, this subfamily constitutes a dimensionally exhaustive subset of the related conformal gauge.

In contrast to an earlier construction \cite{anttila2024constructions}, the present one allows us to remove the uniform scaling assumption, and to handle metrics that are not necessarily quasiconvex. Similar constructions that involve the chaining of balls, sets, or pairs of points have appeared earlier in many places, such as  \cite{bonkmeyer, BonkSchramm, Carrasco, SemmesBilip}. 

For the rest of this section, we fix a doubling, non-degenerate IGS with generator \(G_1=(V_1,E_1,\xi_1)\). Later on, we also require the IGS to be symmetric, in which case we denote the associated symmetry map by \(\eta\).

\subsection{Density cascade}
A density \(\rho\colon E_1\to (0,1)\) induces a sequence \((\rho_n)_{n\in\N}\) of densities as a \emph{multiplicative cascade}, where \(\rho_n\colon E_n\to (0,1)\) is defined recursively for each \(e\in E_n\) as
    \begin{equation}\label{Definition: Density Cascade}
        \rho_n(e):=
        \begin{cases}
            \rho(e)&\text{if }\;n=1, \\
            \rho_{n-1}(\pi_n(e))\cdot \rho_1(\sigma_{\pi_n(e),1}^{-1}(e)) &\text{if }\;n>1.
        \end{cases}
    \end{equation}
Equivalently, each density in the sequence can be given as a product
    \begin{equation}\label{Definition: Density Product}
        \rho_n(e)=\prod_{i=1}^n\rho(T^{-1}(e)_i).
    \end{equation}
From now on, given any density \(\rho\colon E_1\to (0,1)\), we refer by \(\rho_n\) to the density \(\rho_n\colon E_n\to (0,1)\) given equivalently by equations \eqref{Definition: Density Cascade} and \eqref{Definition: Density Product}. For convenience, we also introduce the constants:
    \[
        M_\rho:=\max_{e\in E_1}\rho_1(e)\;\;\text{and}\;\;m_\rho:=\min_{e\in E_1}\rho_1(e).
    \]
Note, that whenever \(e\in E_n\) and \(f\in e\cdot E_k\) for some \(k\in\N\), it follows directly from the construction that
    \begin{equation}\label{Equation: Max-min level bound}
        m_\rho^k\rho_n(e)\leq\rho_{n+k}(f)\leq M_\rho^k\rho_n(e).
    \end{equation}

Recall that for $e\in E_n$, we write $\lvert e \rvert:=n$. Whenever \(n>m\) and \(\Omega\in\bigcup_{i=1}^m E_i\), we define the sub-graph covered by \(\Omega\) in \(G_m\) as 
    \[
        p_n(\Omega):=\bigcup_{e\in \Omega}e\cdot G_{n-\lvert e \rvert}.
    \]
For every pair of vertices \(v,w\in V_n\) we define the collection
    \[
        \Lambda_{v,w}^n:=\left\{ \Omega\in\bigcup_{i=1}^nE_i\ \middle\vert\ v \text{ and } w\text{ are connected by a path in \(p_n(\Omega)\) } \right\}.
    \]
We then define the metric \(d_{\rho,n}\colon V_n\times V_n\to[0,\infty)\) as
\[
    d_{\rho,n}(v,w):=
    \begin{cases}
        0 &\text{if }\; v=w, \\
        \displaystyle\inf_{\Omega\in \Lambda_{v,w}^n}\displaystyle\sum_{e\in\Omega}\rho_{\lvert e \rvert}(e)& \text{if }\;v\neq w.
    \end{cases}      
\]
For any pair \(e,f\in E_n\), we denote \(d_{\rho,n}(e,f):=\min\{d_{\rho,n}(v,w)\mid v\in e,\:w\in f\}\).

\begin{remark}
    To see that \(d_{\rho,n}\) satisfies the triangle inequality, observe for any three vertices \(v,w,u\in V_n\), if \(\Omega_1\in\Lambda_{v,w}^n\) and \(\Omega_2\in \Lambda_{w,u}^n\), then \(\Omega_1\cup\Omega_2\in\Lambda_{v,u}^n\). Since \[\sum_{e\in\Omega_1\cup\Omega_2}\rho_{\lvert e \rvert}(e)\leq\sum_{e\in\Omega_1}\rho_{\lvert e \rvert}(e)+\sum_{e\in\Omega_2}\rho_{\lvert e \rvert}(e),\]
    it follows by passing to the infimum, that that \(d_{\rho,n}(v,u)\leq d_{\rho_n}(v,w)+d_{\rho,n}(w,u).\)
\end{remark}

If $m>n$, we say that $x_m\in V_m$ is an ancestor of $x_n \in V_n$ if $\pi_{m,n}(x_m)=x_n$.

\begin{lemma}\label{Lemma: Distance Lemma 1}
    Let \(\rho\colon E_1\to (0,1)\) be \(\Theta^{(1)}\)-admissible and \(n,m\in \N\) so that \(n<m\).
    \begin{enumerate}[label={\color{blue}{\textup{$($DL\arabic*}$)$}}, widest=a, leftmargin=*]
        \item \label{Lemma: DL(1)} If \(x_m,y_m\in V_m\) are ancestors of \(x_n,y_n\in V_n\), and \(x_n\neq y_n\), then 
            \[
                d_{\rho,m}(x_m,y_m)=d_{\rho,n}(x_n,y_n).
            \]
        \item \label{Lemma: DL(2)} If \(e\in E_n\) and \(x_m,y_m\in e\cdot V_{m-n}\), then
            \[
                d_{\rho,m}(x_m,y_m)\leq \rho_n(e).
            \]
        \item \label{Lemma: DL(3)} If \(e,f\in E_n\), \(v\in e\) and \(w\in f\), then
            \[
                d_{\rho,n}(v,w)-\rho_n(e)-\rho_n(f)\leq d_{\rho,n}(e,f)\leq d_{\rho,n}(v,w).
            \]
        \item \label{Lemma: DL(4)} If \(e,f\in E_m\), then
            \[
            \begin{split}
                &d_{\rho,n}(\pi_{m,n}(e),\pi_{m,n}(f))\leq d_{\rho,m}(e,f)\\
                &\leq d_{\rho,n}(\pi_{m,n}(e),\pi_{m,n}(f))+\rho_n(\pi_{m,n}(e))+\rho_n(\pi_{m,n}(f)).
            \end{split}
            \]
    \end{enumerate}
\end{lemma}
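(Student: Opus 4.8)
The four statements are all comparison inequalities between the semi-metrics $d_{\rho,n}$ at different levels, so the plan is to prove them essentially by unwinding the definitions together with the structural facts \ref{SM1}--\ref{SM3} about similarity maps and the covering graphs $p_n(\Omega)$. The key combinatorial observation to establish first is a \emph{projection/lift correspondence}: for $m>n$, a subcollection $\Omega\subset\bigcup_{i=1}^m E_i$ connecting two ancestors $x_m,y_m$ of $x_n,y_n$ in $p_m(\Omega)$ projects (via $\pi_{m,n}$ applied to those $e\in\Omega$ with $|e|\le n$, keeping the others) to a collection in $\Lambda^n_{x_n,y_n}$ with no larger $\rho$-weight, because $\rho_{|e|}(e)\ge \rho_n(\pi_{m,n}(e))\cdot(\text{something})$—more precisely one uses \eqref{Equation: Max-min level bound} and the multiplicativity \eqref{Definition: Density Product}. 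Conversely, Proposition \ref{Proposition: Path lift} lets one take a path in $p_n(\Omega)$ and exhibit an explicit connecting collection at level $m$ of equal weight. I expect the bookkeeping of which edges get "refined" versus "coarsened" to be the only real subtlety.

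\medskip
\textbf{Step 1 (\ref{Lemma: DL(1)}).} For the inequality $d_{\rho,m}(x_m,y_m)\le d_{\rho,n}(x_n,y_n)$: given $\Omega\in\Lambda^n_{x_n,y_n}$, the same collection $\Omega$ (viewed in $\bigcup_{i=1}^m E_i$, since $n<m$) lies in $\Lambda^m_{x_m,y_m}$, because $p_m(\Omega)$ contains $\pi_{m,n}^{-1}$ of $p_n(\Omega)$ and hence a path between the ancestors; the weight $\sum_{e\in\Omega}\rho_{|e|}(e)$ is unchanged. For the reverse: given $\Omega\in\Lambda^m_{x_m,y_m}$, define $\Omega'$ by replacing each $e\in\Omega$ with $|e|>n$ by its projection $\pi_{m,n}^{|e|-n}(e)\in E_n$ (and keeping edges of level $\le n$ as they are). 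One checks, using \eqref{Equation: Max-min level bound} with the crude bound $\rho_n(\pi(e))\ge \rho_{|e|}(e)$ that follows from $\rho<1$, that $\sum_{e\in\Omega'}\rho_{|e|}(e)\le \sum_{e\in\Omega}\rho_{|e|}(e)$, and that $\Omega'\in\Lambda^n_{x_n,y_n}$; passing to the infimum gives the claim. (The hypothesis $x_n\ne y_n$ is what guarantees $\Lambda^n_{x_n,y_n}$ is taken over genuinely separating families and no degenerate $0$ appears.)

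\medskip
\textbf{Step 2 (\ref{Lemma: DL(2)}) and (\ref{Lemma: DL(3)}).} For \ref{Lemma: DL(2)}, take $\Omega=\{e\}$: since $x_m,y_m\in e\cdot V_{m-n}$ and $e\cdot G_{m-n}$ is connected, we have $\{e\}\in\Lambda^m_{x_m,y_m}$, so $d_{\rho,m}(x_m,y_m)\le \rho_{|e|}(e)=\rho_n(e)$. For \ref{Lemma: DL(3)}: the right inequality $d_{\rho,n}(e,f)\le d_{\rho,n}(v,w)$ is immediate from the definition of $d_{\rho,n}(e,f)$ as a minimum over endpoints. The left inequality follows from the triangle inequality applied to $v,v',w',w$ where $v',w'$ realize $d_{\rho,n}(e,f)$, combined with \ref{Lemma: DL(2)} at level $m=n+1$ (or directly: any two endpoints of a common edge $e$ are joined in $p_n(\{e\})$, giving $d_{\rho,n}(v,v')\le\rho_n(e)$ when $v,v'\in e$)—this bounds $d_{\rho,n}(v,v')\le\rho_n(e)$ and $d_{\rho,n}(w,w')\le\rho_n(f)$.

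\medskip
\textbf{Step 3 (\ref{Lemma: DL(4)}), the main obstacle.} This is where Proposition \ref{Proposition: Path lift} is essential. The right-hand inequality: let $\Omega\in\Lambda^n_{A,B}$ where $A=\{\pi_{m,n}(e)^{\pm}\}$, $B=\{\pi_{m,n}(f)^{\pm}\}$ realize $d_{\rho,n}(\pi_{m,n}(e),\pi_{m,n}(f))$, so there is a path in $p_n(\Omega)$ between such endpoints; lifting this path to $G_m$ via the similarity maps yields a connecting collection at level $m$ between \emph{some} vertices in $\pi_{m,n}^{-1}(\pi_{m,n}(e))$ and $\pi_{m,n}^{-1}(\pi_{m,n}(f))$ of the same weight, and then \ref{Lemma: DL(2)} (applied inside $\pi_{m,n}(e)\cdot G_{m-n}$ and $\pi_{m,n}(f)\cdot G_{m-n}$) pays the extra $\rho_n(\pi_{m,n}(e))+\rho_n(\pi_{m,n}(f))$ to reach the actual edges $e,f$. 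The left-hand inequality: given $\Omega\in\Lambda^m_{e,f}$ realizing $d_{\rho,m}(e,f)$, apply Proposition \ref{Proposition: Path lift} to the connecting path to obtain a path $[u_0,f_1,\dots,f_l,u_l]\in\Theta(\pi_{m,n}(e)\text{-side},\pi_{m,n}(f)\text{-side})$ together with the decomposition into crossing subpaths $\theta_i\subset f_i\cdot G_{m-n}$; the collection $\Omega'=\Omega\cap\bigcup_{i\le n}E_i$ together with $\{f_i\}$ then lies in $\Lambda^n_{\pi_{m,n}(e),\pi_{m,n}(f)}$, and since each $\theta_i$ uses edges of $f_i\cdot G_{m-n}$ whose $\rho$-weights sum (via \eqref{Equation: Max-min level bound} and $L_\rho$-admissibility—here one crucially uses that $\rho$ is $\Theta^{(1)}$-admissible so crossing $f_i\cdot G_{m-n}$ costs at least $\rho_n(f_i)$) to at least $\rho_n(f_i)$, we get $\sum_{e'\in\Omega'}\rho_{|e'|}(e')\le\sum_{e'\in\Omega}\rho_{|e'|}(e')$, hence $d_{\rho,n}(\pi_{m,n}(e),\pi_{m,n}(f))\le d_{\rho,m}(e,f)$. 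The delicate point throughout is verifying that the admissibility of $\rho$ forces each full crossing of a refined block to cost at least its "one-level-up" weight $\rho_n(f_i)$—this is the quantitative heart of the lemma and the place I would be most careful.
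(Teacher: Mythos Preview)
Your sketches for \ref{Lemma: DL(2)} and \ref{Lemma: DL(3)} are correct and match the paper. The outline for \ref{Lemma: DL(4)} is also essentially right, and in fact contains the key mechanism that you are \emph{missing} in \ref{Lemma: DL(1)}.

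The genuine gap is in the reverse inequality of \ref{Lemma: DL(1)}. You claim that replacing each $e\in\Omega$ with $|e|>n$ by its projection to $E_n$ yields $\Omega'$ with $\sum_{e\in\Omega'}\rho_{|e|}(e)\le\sum_{e\in\Omega}\rho_{|e|}(e)$, and you cite the bound $\rho_n(\pi(e))\ge\rho_{|e|}(e)$. But that bound points the wrong way: term by term, each projected edge has \emph{larger} weight than the edge it replaces, so the crude projection argument gives $\sum_{\Omega'}\ge\sum_{\Omega}$, which is useless. The only way the total can go down is if many high-level edges collapse to the same projection and their combined weight exceeds that of the common image---and the reason this happens is precisely $\Theta^{(1)}$-admissibility: a connecting path that traverses a block $f\cdot G_1$ must spend at least $\rho_n(f)$ on edges inside it. You identify exactly this point as ``the quantitative heart'' in your Step~3, but you fail to invoke it in Step~1 where it is equally essential.

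The paper's proof of \ref{Lemma: DL(1)} does the induction step $m=n+1$, applies Proposition~\ref{Proposition: Path lift} to the connecting path in $p_{n+1}(\Omega)$ to obtain a lifted path $\vartheta=[x_n,e_1,\dots,e_k,y_n]$ in $G_n$ together with crossing subpaths $\theta_i\in\Theta^{(1)}$, and then uses admissibility to get $\rho_n(e_i)\le L_{\rho_{n+1}}(e_i\cdot\theta_i)$. One then keeps $\Omega\setminus E_{n+1}$ and adds only those $e_i$ not already covered; the admissibility inequality controls the cost. Once \ref{Lemma: DL(1)} is established this way, the paper deduces \ref{Lemma: DL(4)} from it (together with \ref{Lemma: DL(2)} and \ref{Lemma: DL(3)}), so the path-lift/admissibility machinery is invoked only once rather than twice. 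Your plan reverses the dependency; that is fine in principle, but you must repair \ref{Lemma: DL(1)} with the same admissibility argument you sketched for \ref{Lemma: DL(4)}.
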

\begin{proof}
    We begin with \ref{Lemma: DL(1)}. It is enough to only consider the case \(m=n+1\) since the general case then follows by induction. By construction, we have that \(\Lambda^n_{x_n,y_n}\subset\Lambda^{n+1}_{x_{n+1},y_{n+1}}\), and therefore it follows directly that
        \[
            d_{\rho,n+1}(x_{n+1},y_{n+1})\leq d_{\rho,n}(x_n,y_n).
        \]
    For the reversed inequality, let \(\Omega\in\Lambda^{n+1}_{x_{n+1},y_{n+1}}\). By definition, there is a simple path \(\theta\) in \(p_{n+1}(\Omega)\) joining \(x_{n+1}\) to \(y_{n+1}\). By Proposition \ref{Proposition: Path lift}, since \(x_{n+1}\neq y_{n+1}\), the path \(\theta\) admits a lift to \(G_n\); that is, there exists a simple path \(\vartheta=[x_n,e_1,\ldots,e_k,y_n]\) in \(G_n\) and, for each \(1\leq i\leq k\), a simple path \(\theta_i\in\Theta^{(1)}\) such that \(\{e_i\cdot\theta_i\}_{i+1}^k\) is a collection of edge-disjoint sub-paths of \(\theta\). As \(\rho\) is \(\Theta^{(1)}\)-admissible, we have for every \(1\leq i\leq k\) that
        \begin{equation}\label{Equation: DL1 Inequality 1}
            \rho_n(e_i)\leq \rho_n(e_i)\cdot L_{\rho_1}(\theta_i)=L_{\rho_{n+1}}(e_i\cdot\theta_i)=\sum_{e\in e_i\cdot\theta_i}\rho_{n+1}(e).
        \end{equation}
    Define
        \[
            \Omega_u:=\{e_i\in\vartheta\mid e_i\not\in p_n(\Omega \setminus E_{n+1})\},
        \]
    so that \(\vartheta\subset p_n((\Omega \setminus E_{n+1}) \cup\Omega_u)\) and hence \((\Omega \setminus E_{n+1}) \cup\Omega_u \in\Lambda^n_{x_n,y_n}\). For each \(e_i\in\Omega_u\), the corresponding sub-path \(e_i\cdot\theta_i\) is contained in \(p_{n+1}(\Omega)\) but cannot meet \(p_{n+1}(\Omega \setminus E_{n+1})\) as otherwise there would be some \(f\in\Omega \setminus E_{n+1}\) with an edge of \(e_i\cdot\theta_i\) in \(f\cdot E_{n+1-\lvert f\rvert}\) which would force \(e_i\in f\cdot E_{n-\lvert f\rvert}\subset  p_n(\Omega \setminus E_{n+1})\), contradicting \(e_i\in\Omega_u\). Consequently, if \(e_i\in\Omega_u\), then every edge of \(e_i\cdot\theta_i\) must belong to \(p_{n+1}(\Omega\cap E_{n+1})\) and therefore to \(\Omega \cap E_{n+1}\). 
    
    By equation \eqref{Equation: DL1 Inequality 1}, we have that
        \[
            \sum_{e\in\Omega_u}\rho_n(e)\leq \sum_{e\in \Omega \cap E_{n+1}}\rho_{n+1}(e),
        \]
    and therefore
        \[
            \sum_{e\in(\Omega \setminus E_{n+1}) \cup\Omega_u}\rho_{\lvert e\rvert}(e)\leq \sum_{e\in\Omega \setminus E_{n+1}}\rho_{\lvert e\rvert}+\sum_{e\in\Omega\cap E_{n+1}}\rho_{\lvert e\rvert} =\sum_{e\in\Omega}\rho_{\lvert e\rvert},
        \]
    and finally, since \((\Omega \setminus E_{n+1})\cup\Omega_u\in\Lambda^n_{x_n,y_n}\), we obtain
        \[
            d_{\rho,n}(x,y)\leq\sum_{e\in \Omega \setminus E_{n+1}\cup\Omega_u}\rho_{\lvert e\rvert }(e)\leq \sum_{e\in\Omega}\rho_{\lvert e\rvert}.
        \]
    Since \(\Omega\in\Lambda^{n+1}_{x_{n+1},y_{n+1}}\) was arbitrary, it follows that
        \[
            d_{\rho,n}(x_n,y_n)\leq d_{\rho,n+1}(x_{n+1},y_{n+1}).
        \]

    We move on to \ref{Lemma: DL(2)}. For any \(e\in E_n\), the sub-graph \(p_{m}(\{e\})=e\cdot G_{m-n}\) is connected and contains \(e\cdot V_{m-n}\). Thus, if \(x_m,y_m\in V_{m-n}\), then \(\{e\}\in\Lambda^m_{x_m,y_m}\) and consequently
        \(
            d_{\rho,m}(x_m,y_m)\leq \rho_n(e).
        \)
    \vspace{5pt}
    
    Next, we prove \ref{Lemma: DL(3)}. Directly from the definition of \(d_{\rho,n}\) for edges, we obtain
        \[
            d_{\rho,n}(e,f)=\min_{x\in e,y\in w}d_{\rho,n}(x,y)\leq d_{\rho,n}(v,w).
        \]
    On the other hand, since \(\{e\}\in\Lambda^n_{e^-,e^+}\) and \(\{f\}\in\Lambda^n_{f^-,f^+}\) it follows from the triangle inequality that
        \[
        \begin{split}
         d_{\rho,n}(v,w) &\leq d_{\rho,n}(e,f) + d_{\rho,n}(e^-,e^+) + d_{\rho,n}(f^-,f^+)\\
         &\leq d_{\rho,n}(e,f) + \rho_n(e) + \rho_n(f).
         \end{split}
        \]
    Thus, we obtain the remaining inequality
        \[
            d_{\rho,n}(v,w)-\rho_n(e)-\rho_n(f)\leq d_{\rho,n}(e,f).
        \]

    Finally, we prove \ref{Lemma: DL(4)}. We begin with the first inequality. First, note that if \(\pi_{m,n}(e)\cap\pi_{m,n}(f)\neq\varnothing\), then it follows immediately that 
        \[
            d_{\rho,n}(\pi_{m,n}(e),\pi_{m,n}(f))=0\leq d_{\rho,m}(e,f).
        \]
    Therefore, suppose that \(\pi_{m,n}(e)\cap \pi_{m,n}(f)=\varnothing\) and let \(x,y\in V_m\) be such that \(d_{\rho,m}(e,f)= d_{\rho,m}(x,y)\). Let \(\Omega\in \Lambda_{x,y}^m\). By definition, there exists a path \(\theta\) joining \(x\) and \(y\) in \(p_m(\Omega)\), and by Proposition \ref{Proposition: Path lift} there exists ancestors \(x_m,y_m\) of some vertices \(x_n\in \pi_{m,n}(e)\) and \(y_n\in \pi_{m,n}(f)\), respectively, so that \(\theta\) has a sub-path from \(x\) to \(x_m\) and another sub-path from \(y\) to \(y_m\). Therefore we have that \(\Omega\in \Lambda_{x_m,y_m}^m\), and since \ref{Lemma: DL(1)} gives us that \(d_{\rho,m}(x_m,y_m)=d_{\rho,n}(x_n,y_n)\), it follows that
        \begin{equation}
            d_{\rho,n}(\pi_{m,n}(e),\pi_{m,n}(f))\leq d_{\rho,n}(x_n,y_n)= d_{\rho,m}(x_m,y_m)\leq \sum_{g\in \Omega}\rho_{\lvert g \rvert}(g).
        \end{equation}
    Since the collection \(\Omega\in \Lambda_{x,y}^m\) was arbitrary, we obtain the first inequality
        \[
            d_{\rho,n}(\pi_{m,n}(e),\pi_{m,n}(f))\leq d_{\rho,m}(e,f).
        \]
    For the second inequality, let \(x,y\in V_m\) be such that \(d_{\rho,m}(e,f)= d_{\rho,m}(x,y)\) and let \(v,w\in V_n\) be such that \(d_{\rho,n}(v,w)=d_{\rho,n}(\pi_{m,n}(e),\pi_{m,n}(f))\). Let \(\Omega \in \Lambda_{v,w}^n\) and define 
        \[
            \Omega_0:=\Omega\cup\{\pi_{m,n}(e),\pi_{m,n}(f)\}.
        \]
    Since \(e\in \pi_{m,n}(e)\cdot E_{m-n}\) and \(f\in \pi_{m,n}(f)\cdot E_{m-n}\), we have that \(\Omega_0\in\Lambda_{x,y}^m\) and therefore
        \begin{equation}
            d_{\rho,m}(e,f)\leq d_{\rho,m}(x,y)\leq \sum_{g\in\Omega_0}\rho_{\lvert g\rvert}(g)=\rho_n(\pi_{m,n}(e))+\rho_n(\pi_{m,n}(f))+\sum_{g\in \Omega}\rho_{\lvert g \rvert}(g).
        \end{equation}
     Since \(\Omega\in \Lambda_{v,w}^n\) was arbitrary, we obtain the second inequality
        \[
            d_{\rho,m}(e,f)\leq d_{\rho,n}(\pi_{m,n}(e),\pi_{m,n}(f))+\rho_n(\pi_{m,n}(e))+\rho_n(\pi_{m,n}(f)).
        \]
\end{proof}

\subsection{Limit metrics}
    Let \(\rho\colon E_1\to (0,1)\) be a \(\Theta^{(1)}\)-admissible density. We define the metric \(d_\rho\) on the limit space \(X\) of the IGS  by
        \[
            d_\rho([(e_i)_{i=1}^\infty],[(f_i)_{i=1}^\infty]):=\lim_{i\to \infty}d_{\rho,i}(e_i,f_i).
        \]
    The metric is well defined since the limit does not depend on the chosen representatives for \([(e_i)_{i=1}^\infty]\) and \([(f_i)_{i=1}^\infty]\), as will be shown in the next lemma.
\begin{lemma}\label{Lemma: Well-definedness of Metric}
    The function \(d_\rho\) is a well-defined metric on \(X\).
\end{lemma}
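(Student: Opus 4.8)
The plan is to verify the three requirements for $d_\rho$ to be a well-defined metric on $X$: first, that the defining limit exists and is independent of the choice of representative sequences; second, that $d_\rho$ separates points (i.e. vanishes exactly on the diagonal of $X \times X$); and third, the triangle inequality. The triangle inequality is the cheapest: each $d_{\rho,i}$ is a semi-metric on $V_i$ by the earlier remark, and passing to the limit preserves the inequality, so $d_\rho([(e_i)],[(f_i)],[(g_i)])$ inherits it immediately once the limit is known to exist.

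For existence of the limit and independence of representatives, the key tool is Lemma~\ref{Lemma: Distance Lemma 1}, particularly \ref{Lemma: DL(4)}, which controls how $d_{\rho,m}(e,f)$ compares to $d_{\rho,n}(\pi_{m,n}(e),\pi_{m,n}(f))$ for $m > n$: the two differ by at most $\rho_n(\pi_{m,n}(e)) + \rho_n(\pi_{m,n}(f))$. Since $\rho_n(g) \leq M_\rho^n$ for any $g \in E_n$ (with $M_\rho < 1$), these error terms decay geometrically in $n$. Hence, for representatives $(e_i)_{i=1}^\infty$ and $(f_i)_{i=1}^\infty$, the sequence $i \mapsto d_{\rho,i}(e_i,f_i)$ is Cauchy: for $m > n$, $|d_{\rho,m}(e_m,f_m) - d_{\rho,n}(e_n,f_n)| \leq \rho_n(e_n) + \rho_n(f_n) \leq 2M_\rho^n$, using that $\pi_{m,n}(e_m) = e_n$ and $\pi_{m,n}(f_m) = f_n$ along a projective sequence. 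So the limit exists. For independence of representatives: if $(e_i) \sim (e_i')$ in $\Sigma$, then $e_i \cap e_i' \neq \varnothing$ for all $i$, so $d_{\rho,i}(e_i, e_i') \le \rho_i(e_i)+\rho_i(e_i') \le 2 M_\rho^i \to 0$ (using \ref{Lemma: DL(2)} or directly that adjacent edges are joined in $p_i$), and a triangle-inequality argument at each level then forces $\lim_i d_{\rho,i}(e_i,f_i) = \lim_i d_{\rho,i}(e_i',f_i')$ whenever $(e_i)\sim(e_i')$ and $(f_i)\sim(f_i')$.

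The main obstacle is positivity: showing $d_\rho(x,y) > 0$ whenever $x \neq y$ in $X$. If $x = [(e_i)]$ and $y = [(f_i)]$ are distinct points of the limit space, then by definition of the equivalence relation on $\Sigma$ there is some level $N$ with $e_N \cap f_N = \varnothing$, i.e.\ the edges are disjoint (not merely non-equal) at level $N$. One must then produce a uniform lower bound on $d_{\rho,m}(e_m, f_m)$ for all $m \geq N$. The natural route is via Lemma~\ref{Lemma: DL(4)} again: $d_{\rho,m}(e_m,f_m) \geq d_{\rho,N}(e_N,f_N)$, so it suffices to show $d_{\rho,N}(e_N,f_N) > 0$ for disjoint edges $e_N, f_N$ at level $N$ — equivalently, that $d_{\rho,N}$ is a genuine metric (not just a semi-metric) on the vertex set, or at least that it separates the vertex sets of disjoint edges. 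This in turn reduces, via \ref{Lemma: DL(1)} and induction on levels, to showing $d_{\rho,1}$ separates distinct non-adjacent vertices of $G_1$, which follows because any $\Omega \in \Lambda^1_{v,w}$ with $v,w$ non-adjacent must contain at least one edge of $E_1$, giving $\sum_{e \in \Omega}\rho_1(e) \geq m_\rho > 0$. Care is needed to track when vertices become identified (the quotient in the construction of $V_{n+1}$) versus genuinely separated, but the projective compatibility \ref{Lemma: DL(1)} is exactly designed to handle this bookkeeping.

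Finally, one checks symmetry of $d_\rho$ (immediate, since each $d_{\rho,i}$ is symmetric) and that $d_\rho$ is finite (bounded by $d_{\rho,1}(e_1,f_1) \le \sum_{e\in E_1}\rho_1(e)$ via \ref{Lemma: DL(4)} telescoped, or simply since the whole graph $G_1$ connects any two vertices). Assembling these pieces gives that $d_\rho$ is a well-defined metric on $X$.
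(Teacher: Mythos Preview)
Your approach is essentially the paper's: use \ref{Lemma: DL(4)} for existence of the limit and to transfer positivity from a finite level, and handle representatives via the vanishing of $\rho_i(e_i)$. Two points need tightening.

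\emph{Triangle inequality.} You call this ``the cheapest'' because $d_{\rho,i}$ is a semi-metric on $V_i$, but the quantity in the definition of $d_\rho$ is the \emph{edge} distance $d_{\rho,i}(e_i,f_i)=\min\{d_{\rho,i}(v,w): v\in e_i,\,w\in f_i\}$, and this minimum does \emph{not} satisfy a triangle inequality at any fixed level (take three edges forming a path: consecutive ones share a vertex, so the two outer edge-distances are zero while the middle one need not be). The paper fixes this via \ref{Lemma: DL(3)}: since $|d_{\rho,i}(e_i,f_i)-d_{\rho,i}(e_i^-,f_i^-)|\le\rho_i(e_i)+\rho_i(f_i)\to 0$, one has $d_\rho(x,y)=\lim_i d_{\rho,i}(e_i^-,f_i^-)$, and \emph{then} the vertex triangle inequality applies in the limit. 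The same remark applies to your ``triangle-inequality argument at each level'' for independence of representatives---it works, but only after passing through vertices.

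\emph{Positivity.} The reduction to level $1$ via \ref{Lemma: DL(1)} does not cover all cases: \ref{Lemma: DL(1)} requires the projections $\pi_{N,n}(v),\pi_{N,n}(w)$ to be distinct \emph{vertices} of $V_n$, but a projection may land on an edge, and distinct vertices in $V_N$ can project to the same vertex in $V_n$ (whenever $|I|>1$). The direct argument is simpler and complete: for distinct $v,w\in V_N$, every $\Omega\in\Lambda^N_{v,w}$ is nonempty and any $e\in\Omega$ has $|e|\le N$, so $\sum_{g\in\Omega}\rho_{|g|}(g)\ge m_\rho^N>0$. This is exactly what the paper uses when it asserts ``$d_{\rho,j}(e_j,f_j)>0$'' without further comment. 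The ``non-adjacent'' qualifier is unnecessary.
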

\begin{proof}
    Given \((e_i)_{i=1}^\infty,(f_i)_{i=1}^\infty,(h_i)_{i=1}^\infty\in \Sigma\) so that \((e_i)_{i=1}^\infty\sim (f_i)_{i=1}^\infty\), we have by definition that \(e_i\cap f_i\neq \varnothing\) for all \(i\in\N\). Since \(\{e_i\}\in \Lambda_{e_i^-,e_i^+}^i\) and \(\{f_i\}\in \Lambda_{f_i^-,f_i^+}^i\) for all \(i\in \N\), the triangle inequality of the metrics \(d_{\rho,i}\) gives us that
        \[
            \lvert d_{\rho,i}(e_i,h_i)-d_{\rho,i}(f_i,h_i)\rvert \leq d_{\rho,i}(e_i^-,e_i^+)+d_{\rho,i}(f_i^-,f_i^+)\leq\rho_i(e_i)+\rho_i(f_i),
        \]
    and since \(\rho_i(e_i),\rho_i(f_i)\stackrel{i\to \infty}{\longrightarrow}0\), we have that 
        \[
            \lim_{i\to \infty}d_{\rho,i}(e_i,h_i)=\lim_{i\to\infty}d_{\rho,i}(f_i,h_i).
        \]
    Therefore, if \((e_i)_{i=1}^\infty,(f_i)_{i=1}^\infty,(g_i)_{i=1}^\infty,(h_i)_{i=1}^\infty\in\Sigma\) are such that \((e_i)_{i=1}^\infty\sim(f_i)_{i=1}^\infty\) and \((h_i)_{i=1}^\infty\sim(g_i)_{i=1}^\infty\), then
        \[
            \lim_{i\to \infty}d_{\rho,i}(e_i,h_i)=\lim_{i\to \infty}d_{\rho,i}(f_i,h_i)=\lim_{i\to \infty}d_{\rho,i}(f_i,g_i)=\lim_{i\to\infty}d_{\rho,i}(f_i,g_i).
        \]
    Consequently, \(d_{\rho}\) is well defined. 
    
    Next, we will show that \(d_\rho\) is indeed a metric. Let \([(e_i)_{i=1}^\infty],[(f_i)_{i=1}^\infty]\in X\). If \([(e_i)_{i=1}^\infty]=[(f_i)_{i=1}^\infty]\), then by definition \(e_i\cap f_i\neq \varnothing\) and hence \(d_{\rho,i}(e_i,f_i)=0\), for all \(i\in \N\), from which it follows that
        \[
            d_{\rho}([(e_i)_{i=1}^\infty],[(f_i)_{i=1}^\infty])=\lim_{i\to \infty}d_{\rho,i}(e_i,f_i)=0.
        \]
     If \([(e_i)_{i=1}^\infty]\neq[(f_i)_{i=1}^\infty]\), then there exists some \(j\in \N\) such that \(e_i\cap f_i=\varnothing\), and hence \(d_{\rho,i}(e_i,f_i)>0\), for all \(i\geq j\). Particularly, since the sequence \(d_{\rho,i}(e_i,f_i)\) is increasing by \ref{Lemma: DL(4)}, we have that \(d_{\rho,i}(e_i,f_i)>d_{\rho,i}(e_j,f_j)>0\) for all \(i\geq j\), and so it follows that
        \[
            d_{\rho}([(e_i)_{i=1}^\infty],[(f_i)_{i=1}^\infty])=\lim_{i\to \infty}d_{\rho,i}(e_i,f_i)\geq d_{\rho,i}(e_j,f_j)>0.
        \]
    Consequently, we have \(d_{\rho}([(e_i)_{i=1}^\infty],[(f_i)_{i=1}^\infty])=0\) if and only if \([(e_i)_{i=1}^\infty]=[(f_i)_{i=1}^\infty]\). Symmetry of \(d_\rho\) follows directly from the symmetry of the metrics \(d_{\rho,i}\). For the triangle inequality, \ref{Lemma: DL(3)} gives us that
        \[
            \lvert d_{\rho,i}(e_i,f_i)- d_{\rho,i}(e_i^-,f_i^-)\rvert\leq d_{\rho,i}(e_i^-,e_i^+)+d_{\rho,i}(f_i^-,f_i^+)\leq \rho_i(e_i)+\rho_i(f_i),
        \]
    and since \(\rho_i(e_i),\rho_i(f_i)\stackrel{i\to \infty}{\longrightarrow}0\), it follows that 
        \begin{equation}\label{Equation: Edge to Vertex}
            d_{\rho}([(e_i)_{i=1}^\infty],[(f_i)_{i=1}^\infty])=\lim_{i\to\infty}d_{\rho,i}(e_i,f_i)=\lim_{i\to\infty}d_{\rho,i}(e_i^-,f_i^-).
        \end{equation}
    Given a third point \([(h_i)_{i=1}^\infty]\in X\) we have that \(d_{\rho,i}(e_i^-,f_i^-)\leq d_{\rho,i}(e_i^-,h_i^-) + d_{\rho,i}(h_i^-,f_i^-)\) for all \(i\in \N\), from which it follows that
        \[
        \begin{split}
            d_\rho([(e_i)_{i=1}^\infty],[(f_i)_{i=1}^\infty])&\stackrel{\eqref{Equation: Edge to Vertex}}{=}\lim_{i\to\infty}d_{\rho,i}(e_i^-,f_i^-)\\
            &\stackrel{\phantom{\eqref{Equation: Edge to Vertex}}}{\leq} \lim_{i\to\infty} d_{\rho,i}(e_i^-,h_i^-)+\lim_{i\to\infty}d_{\rho,i}(h_i^-,f_i^-)\\
            &\stackrel{\eqref{Equation: Edge to Vertex}}{=}d_\rho([(e_i)_{i=1}^\infty],[(h_i)_{i=1}^\infty])+d_\rho([(h_i)_{i=1}^\infty],[(f_i)_{i=1}^\infty]),
        \end{split}
        \]
    and so \(d_\rho\) satisfies the triangle inequality.
\end{proof}

\begin{definition}\label{def:intrinsicmetric}
The intrinsic metric \(d_{L_*}\) on \(X\) is the metric \(d_\rho\) given by the trivial admissible density \(\rho\equiv L_*^{-1}\).
\end{definition}

While the construction of a metric $d_\rho$ works for all admissible $\rho$, we need the following symmetry condition which is used to establish the doubling property. It is also needed to ensure that the the metrics $d_\rho$ for different $\rho$ are quasisymmetric to each other.

\begin{definition}
    Let \(\rho\colon E_1\to (0,1)\) be \(\Theta^{(1)}\)-admissible, and suppose that the IGS is symmetric with an associated symmetry map \(\eta\colon G_1\to G_1\). We say that the density \(\rho\) is \emph{symmetric}, if
        \(
            \rho(\eta(e))=\rho(e)
        \)
    for all \(e\in E_1\).
\end{definition}

The limit space  has a covering $\mathbf{X}^n := \{X_e : e\in E_n\}$ for every $n\in \N$, and it satisfies the quasi-visual approximation condition in the sense of \cite[Definition 2.1]{bonkmeyer}. To prove this, we first derive some inequalities for the limit metric.

\begin{lemma}\label{Lemma: Distance Lemma 2}
    Let \(x=[(e_i)_{i=1}^\infty],y=[(f_i)_{i=1}^\infty]\in X\), and let \(\rho\colon E_1\to (0,1)\) be a \(\Theta^{(1)}\)-admissible density.
    \begin{enumerate}[label={\color{blue}{\textup{$($DL\arabic*}$)$}}, widest=a, leftmargin=*]\setcounter{enumi}{4}
        \item\label{Lemma: DL(5)} For all \(n\in \N\) we have that
            \[
                d_{\rho,n}(e_n,f_n)\leq d_\rho(x,y).
            \]
        \item\label{Lemma: DL(6)} If \(e_n\cap f_n\neq \varnothing\) for some \(n\in\N\), then
            \[
                d_{\rho}(x,y)\leq \rho_n(e_n)+\rho_n(f_n).
            \]
        \item\label{Lemma: DL(7)} If \(e_n\cap f_n\neq \varnothing\) for some \(n\in\N\) and the IGS and \(\rho\) are symmetric, then
            \[
                \frac{m_\rho}{M_\rho}\rho_n(e_n) \leq \rho_n(f_n) \leq \frac{M_\rho}{m_\rho}\rho_n(e_n).
            \]
        \item\label{Lemma: DL(8)} If \(e_n\cap f_n= \varnothing\) for some \(n\in\N\) and the IGS and \(\rho\) are symmetric, then
            \[
                d_\rho(x,y)\geq\frac{m_\rho}{M_\rho}\rho_n(e_n).
            \]
    \end{enumerate}
\end{lemma}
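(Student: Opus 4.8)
The plan is to prove the four estimates in the order \ref{Lemma: DL(5)}, \ref{Lemma: DL(6)}, \ref{Lemma: DL(7)}, \ref{Lemma: DL(8)}, relying on Lemma~\ref{Lemma: Distance Lemma 1} and on the structural description of intersecting edges from Lemma~\ref{Lemma: Intersecting Sequences}. Throughout, recall that $d_\rho(x,y)=\lim_{i\to\infty}d_{\rho,i}(e_i,f_i)$ and that $\pi_{m,n}(e_m)=e_n$, $\pi_{m,n}(f_m)=f_n$ for all $m\geq n$ since $(e_i)_{i=1}^\infty$ and $(f_i)_{i=1}^\infty$ are projective sequences. For \ref{Lemma: DL(5)}, apply the first inequality of \ref{Lemma: DL(4)} to $e_m,f_m\in E_m$, which gives $d_{\rho,n}(e_n,f_n)\leq d_{\rho,m}(e_m,f_m)$ for every $m\geq n$, and let $m\to\infty$. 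For \ref{Lemma: DL(6)}, first note that $e_n\cap f_n\neq\varnothing$ forces $d_{\rho,n}(e_n,f_n)=0$ by the definition of $d_{\rho,n}$ on edges (take a common endpoint as both $v$ and $w$); then the second inequality of \ref{Lemma: DL(4)} gives $d_{\rho,m}(e_m,f_m)\leq d_{\rho,n}(e_n,f_n)+\rho_n(e_n)+\rho_n(f_n)=\rho_n(e_n)+\rho_n(f_n)$ for all $m\geq n$, and letting $m\to\infty$ finishes the proof.

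For \ref{Lemma: DL(7)}, the key observation is that the proof of Lemma~\ref{Lemma: Intersecting Sequences} uses only the hypothesis $e_n\cap f_n\neq\varnothing$ — from it one recovers $e_i\cap f_i\neq\varnothing$ for all $i\leq n$ via \ref{SM2} — so that argument applies here even though $x$ and $y$ need not coincide. It yields an index $m\leq n$ such that $T^{-1}(e_n)_i=T^{-1}(f_n)_i$ for $1\leq i<m$ and $T^{-1}(e_n)_i\in\{T^{-1}(f_n)_i,\ \eta(T^{-1}(f_n)_i)\}$ for $m<i\leq n$. Since $\rho$ is symmetric, $\rho_1(T^{-1}(e_n)_i)=\rho_1(T^{-1}(f_n)_i)$ for every $i\neq m$, and hence by the product formula \eqref{Definition: Density Product},
\[
    \frac{\rho_n(f_n)}{\rho_n(e_n)}=\frac{\rho_1(T^{-1}(f_n)_m)}{\rho_1(T^{-1}(e_n)_m)}\in\left[\frac{m_\rho}{M_\rho},\ \frac{M_\rho}{m_\rho}\right],
\]
which is the desired two-sided bound (the case $e_n=f_n$ being trivial).

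For \ref{Lemma: DL(8)}, I would combine \ref{Lemma: DL(5)} with a crossing argument. By \ref{Lemma: DL(5)} it is enough to show $d_{\rho,n}(e_n,f_n)\geq\frac{m_\rho}{M_\rho}\rho_n(e_n)$. Fix $v\in e_n$, $w\in f_n$; as $e_n\cap f_n=\varnothing$ we have $v\neq w$. Let $\Omega\in\Lambda^n_{v,w}$ and let $\theta$ be a path from $v$ to $w$ in $p_n(\Omega)\subseteq G_n$. Its first edge $\varepsilon$ has $v$ as an endpoint, so $\varepsilon\cap e_n\neq\varnothing$, and $\varepsilon\in g\cdot E_{n-|g|}$ for some $g\in\Omega$. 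Since the estimate established in \ref{Lemma: DL(7)} depends only on the pair of intersecting level-$n$ edges, it applies to $\varepsilon$ and $e_n$ and gives $\rho_n(\varepsilon)\geq\frac{m_\rho}{M_\rho}\rho_n(e_n)$; furthermore $\rho_{|g|}(g)\geq\rho_n(\varepsilon)$ by \eqref{Equation: Max-min level bound} together with $M_\rho<1$. Consequently $\sum_{g'\in\Omega}\rho_{|g'|}(g')\geq\rho_{|g|}(g)\geq\frac{m_\rho}{M_\rho}\rho_n(e_n)$, and taking the infimum over $\Omega\in\Lambda^n_{v,w}$ and the minimum over $v\in e_n$, $w\in f_n$ gives $d_{\rho,n}(e_n,f_n)\geq\frac{m_\rho}{M_\rho}\rho_n(e_n)$.

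The step I expect to be the main obstacle is \ref{Lemma: DL(7)}: one has to check carefully that two intersecting level-$n$ edges have product decompositions that agree in all but one coordinate, and otherwise only up to the symmetry $\eta$, and obtaining the sharp ratio $\tfrac{M_\rho}{m_\rho}$ — rather than some power of it — hinges on this. Once \ref{Lemma: DL(7)} is available, \ref{Lemma: DL(8)} follows quickly; its conceptual content is that symmetry rules out "cheap" edges adjacent to $e_n$ that could otherwise provide a short path leaving the copy $X_{e_n}$.
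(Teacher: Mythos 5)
Your proposal is correct and follows essentially the same route as the paper, with two small variations. For \ref{Lemma: DL(6)} you deduce the bound from the second inequality in \ref{Lemma: DL(4)} together with $d_{\rho,n}(e_n,f_n)=0$, whereas the paper argues directly that the subgraph $e_n\cdot G_m\cup f_n\cdot G_m$ is connected and contains both $e_{n+m}$ and $f_{n+m}$; both are valid and of comparable length. For \ref{Lemma: DL(8)} you bound the mass of an arbitrary $\Omega\in\Lambda^n_{v,w}$ and then pass to the infimum, while the paper fixes a minimizing $\Omega$ (which exists because $\Lambda^n_{v,w}$ is finite); again the two are equivalent, and your version sidesteps the attainment issue. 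Your explicit remark that Lemma~\ref{Lemma: Intersecting Sequences} really only needs $e_n\cap f_n\neq\varnothing$ (and recovers $e_i\cap f_i\neq\varnothing$ for $i\le n$ from it) is the right observation — the stated hypothesis $(e_i)\sim(f_i)$ of that lemma is not available in \ref{Lemma: DL(7)}, and the paper relies on this same implicit relaxation.
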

\begin{proof}
    We will begin with \ref{Lemma: DL(5)}. By \ref{Lemma: DL(4)}, we have  \(d_{\rho,i+1}(e_{i+1},f_{i+1})\geq d_{\rho,i}(e_i,f_i)\) for all \(i\in\N\), and so it follows for all \(n\in \N\) 
        \[
            d_{\rho}(x,y)=\lim_{i\to\infty}d_{\rho,i}(e_i,f_i)\geq d_{\rho,n}(e_n,f_n).
        \]

    \ref{Lemma: DL(6)} follows quite directly; since \(e_n\cap f_n\neq \varnothing\), the sub-graph \(e_n\cdot G_m\cup f_n\cdot G_m\) is connected and contains \(e_{n+m}\) and \(f_{n+m}\) for all \(m\in\N\). Therefore, \(d_{\rho,i}(e_i,f_i)\leq \rho_n(e_n)+\rho_n(f_n)\) for all \(i\geq n\), and so 
        \[
            d_\rho(x,y)=\lim_{i\to\infty}d_{\rho,i}(e_i,f_i)\leq \rho_n(e_n)+\rho_n(f_n).
        \]

     We move on to \ref{Lemma: DL(7)}. For any sequence \((h_i)_{i=1}^\infty\in \Sigma \) we have by construction that \((T^{-1}(h_n))_i=(T^{-1}((h_i)_{i=1}^\infty))_i\) for all \(1\leq i \leq n\). Thus, we may write \(\rho_n(e_n)\) and \(\rho_n(f_n)\) as the products
        \begin{equation}\label{Equation: Comparable edges 1}
            \rho_n(e_n)=\prod_{i=1}^n\rho\big(T^{-1}((e_i)_{i=1}^\infty)_i\big)\quad\text{and}\quad\rho_n(f_n)=\prod_{i=1}^n\rho\big(T^{-1}((f_i)_{i=1}^\infty)_i\big).
        \end{equation}
    Since the IGS is doubling, non-degenerate and symmetric, Lemma \ref{Lemma: Intersecting Sequences} ensures that there exists a number \(m\leq n\) so that
        \begin{equation}\label{Equation: Comparable edges 2}
            T^{-1}((e_i)_{i=1}^\infty)_i=
            \begin{cases}
                T^{-1}((f_i)_{i=1}^\infty)_i&\text{for } 1\leq i< m,\\
                T^{-1}((f_i)_{i=1}^\infty)_i \;\text{ or }\; \eta\circ T^{-1}((f_i)_{i=1}^\infty)_i&\text{for }m< i\leq n.
            \end{cases}
        \end{equation}
    As \(\rho\) is symmetric, we have \(\rho(T^{-1}((f_i)_{i=1}^\infty)_i)=\rho(\eta\circ T^{-1}((f_i)_{i=1}^\infty)_i)\) for all \(i\in\N\), and therefore it follows from \eqref{Equation: Comparable edges 2} that
        \begin{equation}\label{Equation: Comparable edges 3}
            \prod_{\substack{1\leq i\leq n\\ i\neq m }}\rho(T^{-1}((e_i)_{i=1}^\infty)_i)=\prod_{\substack{1\leq i\leq n\\ i\neq m }}\rho(T^{-1}((f_i)_{i=1}^\infty)_i).
        \end{equation}
    Combining \eqref{Equation: Comparable edges 1} and \eqref{Equation: Comparable edges 3}, we obtain
        \[
            \frac{\rho_n(e_n)}{\rho(T^{-1}((e_i)_{i=1}^\infty)_m)}=\frac{\rho_n(f_n)}{\rho(T^{-1}((f_i)_{i=1}^\infty)_m)},
        \]
    and since \(0<m_\rho\leq \rho(e)\leq  M_\rho<1\) for all \(e\in E_1\), we obtain the desired estimate
        \[
            \frac{m_\rho}{M_\rho}\rho_n(e_n)\leq \rho_n(f_n)\leq \frac{M_\rho}{m_\rho}\rho_n(e_n).
        \]

    Lastly, we prove \ref{Lemma: DL(8)}. Let \(v\in e_n\) and \(w\in f_n\) so that \(d_{\rho,n}(v,w)=d_{\rho,n}(e_n,f_n)\). Since \(e_n\cap f_n=\varnothing\), we have that \(v\neq w\), and may thus choose \(\Omega\in\Lambda^n_{v,w}\) so that 
        \[
            d_{\rho,n}(v,w)=\sum_{e\in \Omega}\rho_{\lvert e \rvert}(e).
        \]
    By definition, \(p_n(\Omega)\) contains a path joining \(v\) to \(w\), and therefore there exists an edge \(g\in p_n(\Omega)\) so that \(v\in g\cap e_n\). For every \(e\in\Omega\) for which \(g\in e\cdot G_{n-\lvert e \rvert}\), we have that \(\rho_n(g)\leq\rho_{\lvert e \rvert}(e)\) and therefore
        \[
            \rho_n(g)\leq\sum_{e\in \Omega}\rho_{\lvert e \rvert}(e)=d_{\rho,n}(v,w)=d_{\rho,n}(e_n,f_n).
        \]
    Since \(g\cap e_n\neq\varnothing\), we have by \ref{Lemma: DL(7)} that \(\rho_n(g)\geq (m_\rho/M_\rho)\cdot \rho_n(e)\). Due to \ref{Lemma: DL(5)}, we have that \(d_\rho(x,y)\geq d_{\rho,n}(e_n,f_n)\), and thus we obtain the desired inequality
        \[
            d_\rho(x,y)\geq d_{\rho,n}(e_n,f_n)\geq \rho_n(g)\geq\frac{m_\rho}{M_\rho}\rho_n(e_n).
        \]
\end{proof}

\begin{corollary}\label{Corollary: Diameter comparable to weight}
    If \(\rho\colon E_1\to (0,1)\) is a symmetric \(\Theta^{(1)}\)-admissible density, then
        \begin{equation}\label{Equation: Diameter comparable to weight}
            \frac{m_\rho^3}{M_\rho}\rho_n(e)\leq \diam (X_e,d_\rho)\leq \rho_n(e)\leq \frac{M_\rho}{m_\rho^3}\rho_n(e).
        \end{equation}
\end{corollary}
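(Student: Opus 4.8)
The plan is to prove the two middle inequalities, since the outer one is trivial: as $\rho$ takes values in $(0,1)$ we have $m_\rho\le M_\rho<1$, hence $m_\rho^3\le m_\rho\le M_\rho$ and so $M_\rho/m_\rho^3\ge 1$, giving $\rho_n(e)\le (M_\rho/m_\rho^3)\rho_n(e)$ for free.

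For the upper bound $\diam(X_e,d_\rho)\le\rho_n(e)$, I would fix arbitrary $x=[(x_i)_{i=1}^\infty],y=[(y_i)_{i=1}^\infty]\in X_e$ and choose the representatives so that $x_n=y_n=e$. For each $m>n$ the projective relations give $\pi_{m,n}(x_m)=\pi_{m,n}(y_m)=e$, so by \ref{SM3} both $x_m$ and $y_m$ lie in $e\cdot E_{m-n}$, and therefore (using \ref{SM1}) all of their endpoints lie in $e\cdot V_{m-n}$. Choosing one endpoint $v$ of $x_m$ and one endpoint $w$ of $y_m$ and applying \ref{Lemma: DL(2)} gives $d_{\rho,m}(v,w)\le\rho_n(e)$, and then \ref{Lemma: DL(3)} gives $d_{\rho,m}(x_m,y_m)\le d_{\rho,m}(v,w)\le\rho_n(e)$. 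Letting $m\to\infty$ in the definition of $d_\rho$ yields $d_\rho(x,y)\le\rho_n(e)$, and taking the supremum over $x,y\in X_e$ completes this half.

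For the lower bound, I would invoke Lemma \ref{Lemma: Interior edge}: since the IGS is non-degenerate and doubling, there exist $f_1,f_2\in e\cdot E_2$ with $f_1\cap f_2=\varnothing$. Because $e\cdot E_2=\pi_{n+2,n}^{-1}(e)\cap E_{n+2}$ by \ref{SM3}, every symbol sequence whose $(n+2)$-th coordinate is $f_j$ has $n$-th coordinate $e$; hence $X_{f_1},X_{f_2}\subset X_e$, and both cylinders are nonempty. Pick $x\in X_{f_1}$ and $y\in X_{f_2}$ with representatives whose $(n+2)$-th coordinates are $f_1$ and $f_2$. Since $f_1\cap f_2=\varnothing$ and both the IGS and $\rho$ are symmetric, \ref{Lemma: DL(8)} applied at level $n+2$ gives
\[
d_\rho(x,y)\ \ge\ \frac{m_\rho}{M_\rho}\,\rho_{n+2}(f_1).
\]
Finally, since $f_1\in e\cdot E_2$, the cascade estimate \eqref{Equation: Max-min level bound} gives $\rho_{n+2}(f_1)\ge m_\rho^2\,\rho_n(e)$, so $d_\rho(x,y)\ge (m_\rho^3/M_\rho)\,\rho_n(e)$, and therefore $\diam(X_e,d_\rho)\ge (m_\rho^3/M_\rho)\,\rho_n(e)$.

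I do not expect a serious obstacle here: the whole argument is assembled from the distance estimates of Lemmas \ref{Lemma: Distance Lemma 1} and \ref{Lemma: Distance Lemma 2} together with the separation statement of Lemma \ref{Lemma: Interior edge}. The only point requiring care is the bookkeeping with symbol representatives — making sure the chosen sequences have the asserted coordinates and that $X_{f_1},X_{f_2}\subseteq X_e$ — which is precisely where \ref{SM3} and the definition of the cylinder sets $X_e$ are used.
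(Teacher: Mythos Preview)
Your proof is correct and follows essentially the same approach as the paper: the upper bound comes from \ref{Lemma: DL(2)} (via \ref{Lemma: DL(3)}) applied to representatives with $n$-th coordinate $e$, and the lower bound comes from Lemma \ref{Lemma: Interior edge} to find disjoint $f_1,f_2\in e\cdot E_2$, then \ref{Lemma: DL(8)} and the cascade estimate \eqref{Equation: Max-min level bound}. The only difference is that you spell out the upper bound in slightly more detail than the paper does.
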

\begin{proof}
    Since every point \(x\in X_e\) has by definition a representative \((e_i)_{i=1}^\infty\in \Sigma\) for which \(e_n=e\), it follows from \ref{Lemma: DL(2)} that \(\diam(X_e,d_\rho)\leq \rho_n(e)\). Hence, it is enough to establish the leftmost inequality. By Lemma \ref{Lemma: Interior edge}, there exists \(f,h\in e\cdot E_2\) such that \(f\cap h=\varnothing\). Fix \(y\in X_f\) and \(z\in X_h\). Then, by definition, \(y\) has a representative \((f_i)_{i=1}^\infty\in \Sigma\) for which \(f_{n+2}=f\) and \(z\) has a representative \((h_i)_{i=1}^\infty\in \Sigma\) for which \(h_{n+2}=h\). Thus, since \(X_f,X_h\subset X_e\) and \(f\cap h =\varnothing\), we have by Lemma \ref{Lemma: DL(8)} that 
        \[
            \diam(X_e,d_\rho)\geq d_\rho(x,y)\geq \frac{m_\rho}{M_\rho}\rho_{n+2}(f)\geq\frac{m_\rho^3}{M_\rho}\rho_n(e),
        \]
    where the last inequality follows from equation \eqref{Equation: Max-min level bound}.
\end{proof}

Using the estimates presented thus far, we may establish some basic topological properties of the limit space.

\begin{lemma}\label{Lemma: Compact and Connected}
    For every \(\Theta^{(1)}\)-admissible density \(\rho\colon E_1\to (0,1)\), the metric space \((X,d_\rho)\) is compact and connected.

\end{lemma}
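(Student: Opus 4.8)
The plan is to prove compactness and connectedness of $(X,d_\rho)$ by comparing $d_\rho$ to the quotient of the word metric $\delta_\Sigma$ on $\Sigma$. First I would establish that the canonical projection $\chi\colon(\Sigma,\delta_\Sigma)\to(X,d_\rho)$ is continuous. Since $(\Sigma,\delta_\Sigma)$ is compact and $X=\chi(\Sigma)$, this immediately gives compactness of $(X,d_\rho)$ (a continuous image of a compact set is compact, and the metric topology is Hausdorff). To see the continuity, suppose $(e_i)_{i=1}^\infty,(f_i)_{i=1}^\infty\in\Sigma$ agree on the first $n$ coordinates, i.e. $e_n=f_n$. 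Then $e_n\cap f_n\neq\varnothing$, so by \ref{Lemma: DL(6)} we get $d_\rho(\chi((e_i)),\chi((f_i)))\leq 2\rho_n(e_n)\leq 2M_\rho^n$, which tends to $0$ as $n\to\infty$ uniformly; hence $\chi$ is (uniformly) continuous, in fact with an explicit modulus. This also re-confirms that $\mathrm{diam}(X_e,d_\rho)\to 0$ along any descending chain of edges, which will be convenient below.

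For connectedness, the strategy is to show that each $X_e$ is connected and that the covering $\mathbf{X}^{n+1}$ refines $\mathbf{X}^n$ with overlapping pieces, so that $X$ is an increasing union of connected "chains." Concretely, I would argue by a standard telescoping/Cantor-set argument: fix two points $x=[(e_i)],y=[(f_i)]\in X$ and, for each $n$, use that $G_n$ is connected to pick a path $[u_0,g_1,\dots,g_k,u_k]$ in $G_n$ from an endpoint of $e_n$ to an endpoint of $f_n$; the sets $X_{g_1},\dots,X_{g_k}$ together with $X_{e_n}$ and $X_{f_n}$ form a chain of subsets of $X$, consecutive ones intersecting (since consecutive edges share a vertex, their $X$-pieces intersect by construction), each of $d_\rho$-diameter at most $\max_{e\in E_n}\rho_n(e)\leq M_\rho^n$ by \ref{Lemma: DL(2)} and Corollary \ref{Corollary: Diameter comparable to weight}. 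Taking $n\to\infty$, this produces an $\varepsilon$-chain of points from $x$ to $y$ for every $\varepsilon>0$; since $(X,d_\rho)$ is compact, being chain-connected is equivalent to being connected, so $(X,d_\rho)$ is connected.

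Alternatively, and perhaps more cleanly, I would note that $(\Sigma,\delta_\Sigma)$ maps onto $X$ and transfer connectedness directly: show that for any $n$, the union $\bigcup_{e\in E_n, \text{along a path in }G_n} X_e$ is connected because it is a finite union of the closed sets $X_e$ in which consecutive members meet, and each $X_e$ is itself connected by the self-similar identification $X_e\cong X$ via $\sigma_{e,\cdot}$ (so one reduces to showing $X$ itself is connected, which follows once one knows the level-$1$ pieces $\{X_e\}_{e\in E_1}$ chain up along paths of $G_1$ — true since $G_1$ is connected and adjacent edges give intersecting pieces). The only genuinely delicate point is verifying that adjacent edges $e,f\in E_n$ really do give $X_e\cap X_f\neq\varnothing$: this is exactly the content of \ref{SM2} together with the identification defining the limit space (see also Remark \ref{rmk:intersection}), so it is available. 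The main obstacle is bookkeeping — making the chaining argument precise across levels and confirming the diameters shrink uniformly — but all the needed estimates (\ref{Lemma: DL(2)}, \ref{Lemma: DL(6)}, Corollary \ref{Corollary: Diameter comparable to weight}) are already in hand, so no new ideas are required.
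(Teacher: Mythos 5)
Your primary argument is correct and essentially identical to the paper's: continuity of $\chi$ via \ref{Lemma: DL(6)} gives compactness as the continuous image of the compact space $(\Sigma,\delta_\Sigma)$, and the chain argument in $G_n$ (with piece diameters controlled by \ref{Lemma: DL(2)}) produces $\varepsilon$-chains, which combined with compactness yields connectedness. Be wary of the ``alternative'' you sketch, however: it is circular, since it invokes connectedness of $X_e$ by appealing to $X_e\cong X$, which is precisely what you are trying to prove about $X$; moreover $(\Sigma,\delta_\Sigma)$ is totally disconnected (a Cantor set), so connectedness cannot be ``transferred'' from $\Sigma$ to $X$ — it has to be created by the overlap structure of the pieces, which is exactly what the chain argument accomplishes.
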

\begin{proof}
We will first prove that the canonical projection \(\chi\colon (\Sigma,\delta_\Sigma)\to(X,d_\rho)\) is continuous. If \(x=(e_i)_{i=1}^{\infty},y=(f_i)_{i=1}^\infty\in\Sigma\) so that \(\delta_\Sigma((e_i)_i^{\infty},(f_i)_i^\infty)<2^{-k}\) for some \(k\in\N\), then \(e_k\cap f_k\neq \varnothing\), and hence, due to \ref{Lemma: DL(6)},
    \[
        d_\rho(\chi(x),\chi(y))\leq\rho_k(e_k)+\rho_k(f_k)\leq 2M_\rho^k.
    \]
Consequently, for every \(\varepsilon>0\) one may choose \(\delta=2^{-\log(\varepsilon/4)/\log(M_\rho)}\), so that \(d_\rho(\chi(x),\chi(y))<\varepsilon\) whenever \(x,y\in\Sigma\) and \(\delta_\Sigma(x,y)<\delta\). Therefore, \(\chi\) is continuous. Since \((\Sigma,\delta_\Sigma)\) is compact and \(\chi\) is continuous,  \(\chi(\Sigma)=(X,d_\rho)\) is compact.

We next prove that \((X,d_\rho)\) is connected. For any \(x,y\in X\) and \(\varepsilon>0\), choose \(n\in\N\) so that \(M_\rho^n<\varepsilon\). Since \(G_n\) is connected, we may choose an edge path \(\theta=[v_0,e_1,v_2,\ldots,e_l,v_l]\) so that \(x\in X_{e_1}\), \(y\in X_{e_l}\) and \(l\leq \lvert E_n\rvert\). Choose a point \(z_i\in X_{e_i}\) for every \(1\leq i\leq l\). Since \(\diam(X_{e_i},d_\rho)\leq \rho_n(e_i)\leq M_\rho^n<\varepsilon\), we have that \(B_{d_\rho}(z_i,\varepsilon)\cap B_{d_\rho}(z_{i+1},\varepsilon) \neq \varnothing \) for every \(1\leq i\leq i-1\).  That is, every pair of points $x,y$ can be connected by an \emph{$2\varepsilon$-chain} $(z_i)_{i=1}^l$ (i.e. a collection of points s.t. $z_1=x,z_l =y$ and $d(z_i,z_{i+1})\leq 2\epsilon$ for each $i=1,\dots, l-1$). Since \(x,y\) and \(\varepsilon\) were arbitrary, and since $X$ is compact, it follows that \((X,d_\rho)\) is connected.
\end{proof}

\begin{remark}\label{rmk:quasiconvex}
    While the space \((X,d_\rho)\) need not be quasiconvex in general, we can recover quasiconvexity with relatively mild additional assumptions. One can show that if the IGS is symmetric and \(\rho\colon E_1\to (0,1)\) is a symmetric \(\Theta^{(1)}\)-admissible function, then the space \((X,d_\rho)\) is quasiconvex if and only if there exists a path \(\theta\in\Theta^{(1)}\) with \(L_\rho(\theta)=1\). For example, the intrinsic metric $d_{L_*}$ is quasiconvex whenever the IGS is symmetric.

    In the quasiconvex case, the metric $d_\rho$ has also another simpler construction: up to a bi-Lipschitz deformation it can be realized as the Gromov-Hausdorff limit of re-scaled path metrics on $G_n$, where edges are weighted by $\rho_n$. This gives in particular a slightly simpler description of the intrinsic metric $d_{L_*}$ in the symmetric case.

     We now briefly sketch the proof of quasiconvexity. This is not needed later in the paper, and is only included for the sake of completeness.

\end{remark}

    Since \(L_\rho(\theta)=1\), we have for any \(e\in E_n\) that \(L_{\rho_{n+1}}(e\cdot \theta)=\rho_n(e)\cdot L_\rho(\theta)=\rho_n(e)\). Thus, given any path \(\theta_n\) from \(v_a\) to \(v_b\) in \(G_n\), we may construct a path \(\theta_{n+1}\) in \(G_{n+1}\) from \(w_a\in\pi^{-1}_{n+1}(v_a)\) to \(w_b\in \pi^{-1}_{n+1}(v_b)\) as a concatenation, using symmetry in the same way as in the proof of Lemma \ref{Lemma: Subset Lemma}, so that \(L_{\rho_{n+1}}(\theta_{n+1})=L_{\rho_n}(\theta_n)\). Moreover, one can show that the path \(\theta_n\) can be promoted to a \(1\)-Lipschitz path \(\gamma\) on \(X\) with endpoints in \(X_{v_a}\) and \(X_{v_b}\), so that \(\ell(\gamma)\leq L_{\rho_n}(\theta_n)\).

    Now, for any \(x=[(e_i)_{i=1}^\infty]\), \(y=[(e_i)_{i=1}^\infty]\), one first fixes a path \(\theta_n\) joining an endpoint of \(e_n\) to an endpoint of \(f_n\), where \(n=\min\{n\in\N\mid e_n\cap f_n=\varnothing\}\). One then promotes it to an equal-length path in \(G_{n+1}\) and adds tail sections to obtain a path \(\theta_{n+1}\) joining \(e_{n+1}\) to \(f_{n+1}\). Continuing this process, at stage \(i>n\), the contribution of the tail sections to \(L_{\rho_i}(\theta_{i})\) is bounded above by \(K\cdot M_\rho^{i-n}\) for some \(K\geq 1\), and hence one obtains a sequence of \(1\)-lipschitz paths \((\gamma_i)_{i=n}^\infty\) from \(X_{e_i}\) to \(X_{f_i}\), so that \(\ell(\gamma_i)\leq L_{\rho_n}(\theta_n)+K\sum_{j=1}^\infty M_\rho^{j-n}\leq Cd(x,y)\) for all \(n\leq i<\infty\), for some constant \(C\geq 1\). As \(x=\bigcap_{i=1}^\infty X_{e_i}\) and \(y=\bigcap_{i=1}^\infty X_{f_i}\), the Arzelà–Ascoli theorem then quarantees the existance of a path \(\gamma\) from \(x\) to \(y\) on \(X\) with \(\ell(\gamma)\leq Cd(x,y)\).

\subsection{Gromov-Hausdorff convergence}
So far, the space $X$ was called a limit space for mostly intuitive reasons. In this subsection, which is not used in other parts of the paper, we prove that the limit space can also be obtained as a Gromov--Hausorff limit of replacement graphs; see e.g. \cite{bbi} for background on this notion. 

A sequence $(X_n, d_{X_n})$ \emph{Gromov-Hausdorff converges} to $(X,d)$ if for some sequence $\varepsilon_n \to 0$
\begin{itemize}
\item there exist mappings $\iota_n:X_n\to X$ s.t.
\[
d_{X_n}(x,y)-\varepsilon_n\leq d(\iota_n(x),\iota_n(y)) \leq d_{X_n}(x,y)+\varepsilon_n; 
\]
for all $x,y\in X_n$; and
\item for every $x\in X$ there exists an $x_n\in X_n$ s.t. $d(\iota_n(x_n),x)\leq \varepsilon_n$.
\end{itemize}

\begin{proposition}\label{Proposition: GH-convergence}
    For any \(\Theta^{(1)}\)-admissible density \(\rho\colon E_1\to (0,1)\), the sequence \( (G_n,d_{\rho,n}) _{n \in \N}\) Gromov-Hausdorff converges to the limit space \((X,d_{\rho})\).
\end{proposition}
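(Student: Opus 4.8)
The strategy is to produce explicit approximating maps $\iota_n \colon (G_n, d_{\rho,n}) \to (X, d_\rho)$ and control the two defects in the definition of Gromov--Hausdorff convergence with a single error parameter $\varepsilon_n := 2 M_\rho^n \to 0$. For each vertex $v \in V_n$, pick some infinite edge sequence $(e_i)_{i=1}^\infty \in \Sigma$ such that $v \in e_n$ (this is possible since $G_n$ is a replacement graph of $G_{n+1}$, so every vertex of $G_n$ is an endpoint of an edge used at level $n$; more concretely choose an edge $e \in E_n$ with $v \in e$ and then extend $e$ to an element of $\Sigma$), and set $\iota_n(v) := [(e_i)_{i=1}^\infty] = \chi((e_i)_{i=1}^\infty) \in X$.

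First I would verify the near-isometry estimate. Fix $v, w \in V_n$ with images $\iota_n(v) = [(e_i)_{i=1}^\infty]$ and $\iota_n(w) = [(f_i)_{i=1}^\infty]$, where $v \in e_n$ and $w \in f_n$. By \ref{Lemma: DL(5)} we have $d_{\rho,n}(e_n, f_n) \leq d_\rho(\iota_n(v), \iota_n(w))$, and by \ref{Lemma: DL(3)},
$$
d_{\rho,n}(v,w) - \rho_n(e_n) - \rho_n(f_n) \leq d_{\rho,n}(e_n, f_n) \leq d_{\rho,n}(v,w).
$$
Combining these with the bound $\rho_n(e_n), \rho_n(f_n) \leq M_\rho^n$ from \eqref{Definition: Density Product} (each factor is at most $M_\rho$), we get $d_{\rho,n}(v,w) - 2M_\rho^n \leq d_\rho(\iota_n(v), \iota_n(w))$. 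For the reverse direction, observe that since $\{e_n\} \in \Lambda^n_{e_n^-, e_n^+}$ (the subgraph $e_n \cdot G_0$ trivially connects the endpoints of $e_n$, so $d_{\rho,n}(v, e_n^{\pm}) \le \rho_n(e_n)$ and similarly for $w$), the triangle inequality for $d_{\rho,n}$ gives $d_{\rho,i}(e_i, f_i) \le d_{\rho,i}(v_i, w_i) + \rho_i(e_i) + \rho_i(f_i)$ for any ancestors $v_i, w_i$ of $v, w$, and by \ref{Lemma: DL(1)} the vertex distances along ancestors stabilize at $d_{\rho,n}(v,w)$. Passing $i \to \infty$ and using that the $\rho_i$-terms vanish yields $d_\rho(\iota_n(v), \iota_n(w)) = \lim_i d_{\rho,i}(e_i, f_i) \leq d_{\rho,n}(v,w) + 2M_\rho^n$ (being slightly careful: one uses \ref{Lemma: DL(4)} to bound $d_{\rho,i}(e_i,f_i)$ by $d_{\rho,n}(e_n,f_n) + \rho_n(e_n) + \rho_n(f_n) \le d_{\rho,n}(v,w) + 2M_\rho^n$ for all $i \ge n$). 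This establishes the two-sided estimate with $\varepsilon_n = 2M_\rho^n$.

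Next I would verify the density/coverage condition: given $x = [(e_i)_{i=1}^\infty] \in X$, take $x_n := e_n^- \in V_n$ (an endpoint of the level-$n$ edge $e_n$). Then $\iota_n(x_n) = [(g_i)_{i=1}^\infty]$ for some representative with $g_n \ni e_n^-$, so $g_n \cap e_n \neq \varnothing$, and by \ref{Lemma: DL(6)} together with \ref{Lemma: DL(7)} (or simply by \ref{Lemma: DL(6)} and the factor bound), $d_\rho(x, \iota_n(x_n)) \leq \rho_n(g_n) + \rho_n(e_n) \leq 2M_\rho^n = \varepsilon_n$. Since $\varepsilon_n \to 0$, both conditions hold and the proposition follows.

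The main obstacle is bookkeeping rather than conceptual: one must be careful that the choice of representative sequence $(e_i)_{i=1}^\infty$ extending a vertex $v$ is legitimate (every $v \in V_n$ indeed lies on some edge of $E_n$, hence extends to a symbol sequence), and that the estimates relating $d_\rho$ on the limit to $d_{\rho,n}$ on $G_n$ are applied with the correct monotonicity direction from \ref{Lemma: DL(4)} and \ref{Lemma: DL(5)}. A subtle point worth spelling out is that the limit defining $d_\rho$ is a limit of an increasing sequence (by \ref{Lemma: DL(4)}, $d_{\rho,i}(e_i,f_i)$ is nondecreasing in $i$), so that $d_{\rho,n}(e_n,f_n) \le d_\rho(x,y)$ genuinely holds and the upper bound comes from the telescoping estimate in \ref{Lemma: DL(4)} summed against the geometric decay of the $\rho_n$-weights; no new analytic input beyond Lemma \ref{Lemma: Distance Lemma 1} and Lemma \ref{Lemma: Distance Lemma 2} is needed.
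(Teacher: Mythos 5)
Your argument is correct. Both you and the paper build approximating maps from $V_n$ into $X$ by extending each vertex to a symbol sequence and then invoking Lemma~\ref{Lemma: Distance Lemma 1} and Lemma~\ref{Lemma: Distance Lemma 2}, but the implementations differ in one genuine respect. The paper's proof constructs a coherent ancestor chain $\iota_{n,m}\colon V_n\to V_m$ and an edge sequence tracking those ancestors; combined with \ref{Lemma: DL(1)}, this makes $\iota_{V_n}\colon (V_n,d_{\rho,n})\to (X,d_\rho)$ an \emph{exact} isometric embedding, so the only approximation enters in the density step. You instead send $v$ to an arbitrary extension of any edge containing it and settle for a $2M_\rho^n$-approximate isometry obtained directly from \ref{Lemma: DL(3)}, \ref{Lemma: DL(4)} and \ref{Lemma: DL(5)}. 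Your route is a little more economical (no compatible-ancestor bookkeeping, no use of \ref{Lemma: DL(1)}) at the cost of a weaker intermediate statement, which is all that Gromov--Hausdorff convergence requires; the density step via $x_n=e_n^-$ and \ref{Lemma: DL(6)} is identical in spirit to the paper's. One small caveat: in the reverse inequality, the passage through ``ancestors $v_i,w_i$ of $v,w$'' in your prose is not actually what you use — as your parenthetical correctly notes, the clean argument applies \ref{Lemma: DL(4)} directly to $e_i,f_i$ (which project to $e_n,f_n$) and then bounds $d_{\rho,n}(e_n,f_n)\le d_{\rho,n}(v,w)$; the ancestor framing should just be deleted, since your chosen extension $(e_i)$ need not track ancestors of $v$ at all.
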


\begin{proof}
    For all \(n\in\N\) and \(v\in V_n\), choose a vertex \(\hat{v}\in V_{n+1}\) so that \(\pi_{n+1}(\hat{v})=v\). Then, for every \(m>n\), define the mapping \(\iota_{n,m}\colon V_n\to V_{m}\) recursively, so that \(\iota_{n,n+1}(v):=\hat{v}\) for all \(n\in\N\) and \(\iota_{n,m}:= \iota_{n,n+1}\circ \iota_{n+1,n+2}\circ\ldots\circ \iota_{m-1,m}\). We then have that
        \[
            \iota_{n,m}\circ \iota_{m,k}=\iota_{m,k}\quad\text{and}\quad \pi_{k,m}\circ \iota_{n,k}=\iota_{n,m}\quad\text{for all }\;n<m<k.
        \]
    For all \(n\in\N\) and \(v\in V_n\), choose a sequence of edges \((e_{v,i})_{i=n}^\infty\) so that \(v\in e_{v,n}\), \(i_{n,m}(v)\in e_{v,m}\) and \(\pi_m(e_{v,m})=e_{v,m-1}\) for all \(m>n\), and define the mapping \(\iota_{n,m}^E\colon V_n\to E_m\) for all \(m\in\N\) by
        \[
            \iota_{n,m}^E(v)=\begin{cases}
                \pi_{n,m}(e_{v,n})  &\text{for }n>m,\\
                e_{v,m}             &\text{for }n\leq m.
            \end{cases}
        \]
    Then, for all \(n\in\N\), we may identify each \(v\in V_n\) with a point in \(X\) via the mapping \(\iota_{V_n}\colon V_n\to X\) defined by \(\iota_{V_n}(v):=[(\iota_{n,i}^E(v))_{i=1}^\infty]\). It follows from \ref{Lemma: DL(3)} and \ref{Lemma: DL(1)} for all \(v,w\in V_n\) that
        \[
        \begin{split}
            d_{\rho,m}(\iota_{n,m}(v),\iota_{n,m}(w)) - 2M_\rho^m   &\leq d_{\rho,m}(\iota_{n,m}^E(v),\iota_{n,m}^E(w))\\
            &\leq d_{\rho,m}(\iota_{n,m}(v),\iota_{n,m}(w)) = d_{\rho,n}(v,w)
        \end{split}
        \]
    for all \(m>n\), and hence
        \[
        \begin{split}
            d_{\rho}(\iota_{V_n}(v),\iota_{V_n}(w)) &=\lim_{m\to\infty}d_{\rho,m}(\iota_{n,m}^E(v),\iota_{n,m}^E(w))\\
        &=\lim_{m\to\infty}d_{\rho,m}(\iota_{n,m}(v),\iota_{n,m}(w))\\
        &=\lim_{m\to\infty}d_{\rho,n}(v,w)=d_{\rho,n}(v,w),
        \end{split}
        \]
    and so \(\iota_{V_n}\) is an isometric embedding, and the first condition of Gromov-Hausdorff convergence is satisfied. For every \(x= [(e_i)_{i=1}^\infty]\in X\) and \(n\in\N\) choose an endpoint \(v_n\in e_n\). It then follows from \ref{Lemma: DL(6)} that
        \[
            d_\rho(x,\iota_{V_n}(v_n))\leq 2\rho_n(e_n)\leq 2M_\rho^n,
        \]
    and so Gromov-Hausdorff convergence follows from $M_\rho \in (0,1)$.
\end{proof}

\subsection{Quasi-visual approximation}
    To verify the Ahlfors-regularity, we establish a good quasi-visual approximation for the limit space.

\begin{definition}[{\cite[Definition 2.1]{bonkmeyer}}]
    Let \(S\) be a bounded metric space. A sequence \(\{\mathbf{X}^n\}_{n\in\N_0}\) of finite covers of \(S\) by its subsets, so that \(\mathbf{X}^0=\{S\}\), is called a \emph{quasi-visual approximation} of \(S\) if there exits a constant $C$ s.t. the following properties are satisfied for all \(n\in\N_0\) with 
        \begin{enumerate}
            \item[(i)] \(\diam(X)\leq  C\diam(Y)\) for all \(X,Y\in \mathbf{X}^n\) with \(X\cap Y \neq \varnothing\).
            \vspace{2pt}
            \item[(ii)] \(\dist(X,Y)\geq C^{-1}\diam(X)\) for all \(X,Y\in \mathbf{X}^n\) with \(X\cap Y=\varnothing\).
            \vspace{2pt}
            \item[(iii)] \(C^{-1} \diam(Y)\leq \diam(X)\leq C \diam(Y)\) for all \(X\in \mathbf{X}^n,Y\in \mathbf{X}^{n+1}\) with \(X\cap Y\neq\varnothing\).
            \vspace{2pt}
            \item[(iv)] For some constants \(k_0\in \N\) and \(\lambda\in (0,1)\) independent of \(n\) we have \(\diam(Y)\leq \lambda \diam(X)\) for all \(X\in\mathbf{X}^n\) and \(Y\in \mathbf{X}^{n+k_0}\) with \(X\cap Y\neq \varnothing\).
        \end{enumerate}
\end{definition}

For limit spaces of IGS:s we use the coverings \(\{\mathbf{X}^n\}_{n\in\N_0}\), where \(\mathbf{X}^0:=\{X\}\) and \(\mathbf{X}^n:=\{X_e\mid e\in E_n\}\) for all \(n\in\N\). 

\begin{lemma}\label{Lemma: quasi-visual approximation}
    Let \(\rho\colon E_1\to (0,1)\) be a symmetric \(\Theta^{(1)}\)-admissible density. Then the sequence \(\{\mathbf{X}^n\}_{n\in\N_0}\), is a quasi-visual approximation of \((X,d_\rho)\).
\end{lemma}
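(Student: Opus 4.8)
I would verify the four conditions (i)--(iv) of the quasi-visual approximation definition directly, using the diameter estimate from Corollary \ref{Corollary: Diameter comparable to weight} as the backbone and the distance estimates \ref{Lemma: DL(2)}, \ref{Lemma: DL(5)}--\ref{Lemma: DL(8)} together with the combinatorial structure of the similarity maps (\ref{SM2}, \ref{SM3}) to control the geometry of the pieces $X_e$. Throughout, Corollary \ref{Corollary: Diameter comparable to weight} lets me replace $\diam(X_e,d_\rho)$ by $\rho_n(e)$ up to the multiplicative constant $M_\rho/m_\rho^3$; so every condition reduces to a statement about the cascade values $\rho_n$, which are products of at most $n$ factors each lying in $[m_\rho,M_\rho]$.

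First, for (i): if $X_e\cap X_f\neq\varnothing$ with $e,f\in E_n$, then picking representatives with $e_n=e$, $f_n=f$ for a common point gives $e\cap f\neq\varnothing$ (this is essentially Remark \ref{rmk:intersection}); hence \ref{Lemma: DL(7)} gives $\rho_n(e)\asymp\rho_n(f)$ with constant $M_\rho/m_\rho$, and Corollary \ref{Corollary: Diameter comparable to weight} converts this to $\diam(X_e)\asymp\diam(X_f)$. For (iii): if $X\in\mathbf{X}^n$, $Y\in\mathbf{X}^{n+1}$ with $X\cap Y\neq\varnothing$, then $Y=X_f$ for some $f\in E_{n+1}$ with $\pi_{n+1}(f)$ either equal to or sharing an endpoint with the edge $e$ defining $X$; either way $\rho_{n+1}(f)\leq M_\rho\,\rho_n(\pi_{n+1}(f))$ and $\rho_n(\pi_{n+1}(f))\asymp\rho_n(e)$ by \ref{Lemma: DL(7)}, so $\diam(Y)\asymp\diam(X)$. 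For (iv): by \eqref{Equation: Max-min level bound}, if $e\in E_n$ and $f\in e\cdot E_{k_0}$ then $\rho_{n+k_0}(f)\leq M_\rho^{k_0}\rho_n(e)$; choosing $k_0$ with $M_\rho^{k_0}\cdot(M_\rho/m_\rho^3)^2<1$ and using that $X_f\cap X_e\neq\varnothing$ forces $\pi_{n+k_0,n}(f)$ to be adjacent to (or equal to) $e$ — so that $\rho_n(\pi_{n+k_0,n}(f))\asymp\rho_n(e)$ — gives $\diam(X_f)\leq\lambda\,\diam(X_e)$ for a fixed $\lambda\in(0,1)$.

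The main obstacle is condition (ii), the separation estimate: I must show that disjoint pieces $X_e,X_f$ ($e,f\in E_n$) satisfy $\dist(X_e,X_f,d_\rho)\geq C^{-1}\diam(X_e)$. Here \ref{Lemma: DL(8)} is exactly the tool: if $x\in X_e$, $y\in X_f$ with $e\cap f=\varnothing$ (which holds because $X_e\cap X_f=\varnothing$ implies we may choose representatives whose $n$-th edges are disjoint — again via Remark \ref{rmk:intersection} and Lemma \ref{Lemma: Intersecting Sequences}), then $d_\rho(x,y)\geq (m_\rho/M_\rho)\rho_n(e)$, and Corollary \ref{Corollary: Diameter comparable to weight} turns the right side into a constant times $\diam(X_e)$. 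The subtlety I need to handle carefully is that $X_e\cap X_f=\varnothing$ as \emph{sets in $X$} does not literally say the defining edges are disjoint — two adjacent edges $e,f$ with $e\cap f\neq\varnothing$ could still have $X_e\cap X_f=\varnothing$ if the identification at the shared junction is not triggered. In that case I instead argue that any $x\in X_e$, $y\in X_f$ have representatives agreeing up to level $n-1$ or earlier, and then apply \ref{Lemma: DL(8)} at the first level $n'\leq n$ where the representatives' edges become disjoint, combined with \ref{Lemma: DL(7)} to compare $\rho_{n'}$ back to $\rho_n(e)$; the doubling hypothesis (degree one at gluing vertices) is what keeps this comparison uniform. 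Once (ii) is in place, assembling the four constants into a single $C$ finishes the proof.
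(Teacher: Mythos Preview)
Your approach is essentially the same as the paper's: both verify conditions (i)--(iv) by combining Corollary~\ref{Corollary: Diameter comparable to weight} with \ref{Lemma: DL(7)}, \ref{Lemma: DL(8)}, and the bound~\eqref{Equation: Max-min level bound}, and the paper handles (iii) and (iv) in one stroke via the $k$-step estimate you also sketch. The ``subtlety'' you raise for (ii) is not actually a concern: for $e,f\in E_n$ with $e\cap f\ni v$, one can always build a point of $X_e\cap X_f$ by iteratively extending through the gluing at $v$ (doubling gives a unique edge at each step on both sides), so $X_e\cap X_f=\varnothing$ genuinely implies $e\cap f=\varnothing$ and \ref{Lemma: DL(8)} applies directly---your fallback argument is correct but unnecessary.
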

\begin{proof}
    Properties (i) and (ii) follow directly from \ref{Lemma: DL(7)}, \ref{Lemma: DL(8)} and Corollary \ref{Corollary: Diameter comparable to weight}. Let \(X_e\in \mathbf{X}\) and \(X_f\in \mathbf{X}^{n+k}\) for some \(k\in \N\), so that \(X_e\cap X_f\neq\varnothing\). Then, there exists an edge \(g\in e\cdot E_k\) so that \(f\cap g=\varnothing\), and hence, by \ref{Lemma: DL(7)} and equation \eqref{Equation: Max-min level bound},
        \begin{equation}
            \frac{m_\rho^{k+1}}{M_\rho}\rho_n(e)\leq\frac{m_\rho}{M_\rho}\rho_{n+k}(g)\leq\rho_{n+k}(f)\leq\frac{M_\rho}{m_\rho}\rho_{n+k}(g)\leq \frac{M_\rho^{k+1}}{m_\rho}\rho_n(e).
        \end{equation}
    It then follows from Corollary \ref{Corollary: Diameter comparable to weight} that
        \begin{equation}\label{Equation: Quasi-visual proof 3 and 4}
            \frac{m_\rho^{k+4}}{M_\rho^2}\diam(X_e,d_\rho)\leq\diam(X_f,d_\rho)\leq\frac{M_\rho^{k+2}}{m_\rho^4}\diam(X_e,d_\rho).
        \end{equation}
    Since \(M_\rho<1\), there exists a value \(k_0\in \N\) such that \(M_\rho^{k+2}<m_\rho^4\). Therefore, properties (iii) and (iv) follow from equation \eqref{Equation: Quasi-visual proof 3 and 4} with choices \(k=1\) and \(k=k_0\), respectively.
\end{proof}

\subsection{Ahlfors regularity and quasisymmetry}
We now have all the tools to prove Theorem \ref{Main Theorem: Construction of Metrics}. This proof is given at the end of the section after some preparatory work.

Let $\rho:E_1\to (0,1)$ be a symmetric \(\Theta^{(1)}\)-admissible density. By the admissibility of $\rho$, we have
\[
    \mathcal{M}_1(\rho) = \sum_{e \in E_1} \rho(e) \geq 1.
\]
Since $M_\rho = \max_{e \in E_1} \rho(e) < 1$, there is a unique $Q \geq 1$ such that
        \[
            \cM_Q(\rho)=\sum_{e\in E_1}\rho(e)^Q=1.
        \]
        Indeed, this follows from the fact that the map $Q \mapsto \mathcal{M}_Q(\rho)$, that is continuous and strictly decreasing, vanishes at infinity and $\mathcal{M}_1(\rho) \geq 1$.
        
        Define a probability measure $\tau_1(\{e\})=\rho(e)^Q$ on $E_1$ and let $\tau$ be the product measure on $E_1^\N$ by the pushforward of $T$ as $\mathfrak{m}_{\rho}= T_*(\tau_\infty)$ on $\Sigma$, where $T$ was as defined in \ref{def:Tdef}. Note, that the pushforward of a measure $\nu$ on $X$ under a measurable map $f:X\to Y$ is given by $f_*(\nu)(A)=\nu(f^{-1}(A))$. 

    This measure satisfies  \(\mathfrak{m}_{\rho}(\Sigma_e)=\rho_{\lvert e \rvert }(e)^Q\) for $e\in E_n$. Next, let $\mu_\rho=\chi_*(\mathfrak{m}_{\rho})$. Observe that $\chi^{-1}(X_e)=\Sigma_e \bigcup_{f\cap e \neq \emptyset} \chi^{-1}(X_f\cap X_e)\cap \Sigma_f$. A simple calculation using Remark \ref{rmk:intersection} and $\mathfrak{m}_{\rho}(\chi^{-1}(X_{e,\pm}))=0$ shows then that $\mu_\rho(X_e) = \rho_n(e)^Q$ for all $n \in \N \cup \{0\}$ and $e \in E_n$; see \cite[Proposition 3.42]{ASEBShimizu} for more details.

\begin{lemma}\label{Lemma: Ahlfors Regular}
Let \(\rho\colon E_1\to(0,1)\) be a symmetric \(\Theta^{(1)}\)-admissible density. Then, the metric measure space \((X, d_\rho, \mu_\rho)\) is \(Q\)-Ahlfors regular, where \(Q\geq 1\) is the unique exponent satisfying
\[
\sum_{e\in E_1} \rho(e)^Q = 1.
\]
\end{lemma}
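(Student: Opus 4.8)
The plan is to verify the two-sided estimate \eqref{AReq} with the measure $\mu_\rho$ directly from the quasi-visual approximation $\{\mathbf{X}^n\}_{n\in\N_0}$ and the mass formula $\mu_\rho(X_e)=\rho_n(e)^Q$ for $e\in E_n$. First, fix $x\in X$ and $r\in(0,2\diam(X,d_\rho)]$, and choose $x=[(e_i)_{i=1}^\infty]$. Since $\diam(X_{e_n},d_\rho)\to 0$ comparably to $\rho_n(e_n)\le M_\rho^n$ by Corollary \ref{Corollary: Diameter comparable to weight}, and successive diameters are comparable by Lemma \ref{Lemma: quasi-visual approximation}(iii) while being uniformly contracted over $k_0$ steps by (iv), there is a well-defined \emph{first} level $n=n(x,r)$ at which $\diam(X_{e_n},d_\rho)\le r$; by (iii) and (iv) one gets $\diam(X_{e_n},d_\rho)\asymp r$ with constants depending only on $C,k_0,\lambda,M_\rho,m_\rho$. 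Combined with Corollary \ref{Corollary: Diameter comparable to weight}, this yields $\rho_n(e_n)\asymp r$, hence $\mu_\rho(X_{e_n})=\rho_n(e_n)^Q\asymp r^Q$.

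For the \textbf{lower bound} $\mu_\rho(B(x,r))\ge C^{-1}r^Q$, observe that $x\in X_{e_n}$ and $\diam(X_{e_n},d_\rho)\le r$ force $X_{e_n}\subset B(x,r)$, so $\mu_\rho(B(x,r))\ge\mu_\rho(X_{e_n})=\rho_n(e_n)^Q\asymp r^Q$. For the \textbf{upper bound} $\mu_\rho(B(x,r))\le Cr^Q$, I would pass to a slightly coarser level: let $m=\max\{0,n-k_1\}$ for a fixed integer $k_1$ (depending on $C,k_0,\lambda,m_\rho,M_\rho$) large enough that $\diam(X_{e_m},d_\rho)\ge Kr$ for a large constant $K$; this is possible since over $k_1$ levels the diameter grows by a definite factor, again by (iii)/(iv). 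The key geometric claim is that $B(x,r)$ meets only boundedly many cells $X_f$ with $f\in E_m$: if $X_f\cap B(x,r)\neq\varnothing$ then $\dist(X_{e_m},X_f,d_\rho)<r+\diam(X_f)$; by property (ii) of the quasi-visual approximation, cells $X_f$ at level $m$ that are disjoint from $X_{e_m}$ satisfy $\dist(X_{e_m},X_f)\ge C^{-1}\diam(X_{e_m})\ge C^{-1}Kr$, which for $K>2C$ is incompatible unless $X_f\cap X_{e_m}\neq\varnothing$; and by the doubling bound on vertex degrees (Lemma \ref{Lemma: Doubling degree bound}) together with \ref{SM2}, the number of level-$m$ cells adjacent to $X_{e_m}$ is bounded by a constant $N$ depending only on the generator. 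Hence $B(x,r)$ is covered by at most $N+1$ level-$m$ cells, each of measure $\rho_m(f)^Q$. Finally, for each such neighbour $f$ the comparability \ref{Lemma: DL(7)} (applicable since $f\cap e_m\neq\varnothing$) gives $\rho_m(f)\asymp\rho_m(e_m)$, and \eqref{Equation: Max-min level bound} relates $\rho_m(e_m)$ to $\rho_n(e_n)$ up to a factor $M_\rho^{-k_1}m_\rho^{k_1}$; thus each cell has measure $\asymp r^Q$ and $\mu_\rho(B(x,r))\le (N+1)\max_f\rho_m(f)^Q\le Cr^Q$.

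The main obstacle is the upper-bound step: one must rule out that $B(x,r)$ intersects infinitely many, or uncontrollably many, cells of the chosen level. The quasi-visual approximation axioms (i)--(ii) handle the \emph{metric} separation of non-adjacent cells, and Lemma \ref{Lemma: Doubling degree bound} with \ref{SM2} handles the \emph{combinatorial} count of adjacent cells; the delicate point is choosing the level correctly so that adjacency in the graph $G_m$ and metric proximity within distance $r$ coincide, which is exactly where the two-sided diameter estimate $\diam(X_{e_m},d_\rho)\asymp r$ (up to the controlled factor) is used. Once this is in place, the remaining estimates are the routine applications of \ref{Lemma: DL(7)}, \eqref{Equation: Max-min level bound}, and Corollary \ref{Corollary: Diameter comparable to weight} indicated above, and the conclusion that $\mu_\rho$ realizes $Q$-Ahlfors regularity of $(X,d_\rho)$ follows; in particular $\dim_{\mathrm H}(X,d_\rho)=Q$.
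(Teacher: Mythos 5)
Your proposal is correct and follows essentially the same strategy as the paper's proof: pick a level $n$ where $\rho_n(e_n)\asymp r$ to obtain the lower bound from $X_{e_n}\subset B(x,r)$, then pass to a coarser level for the upper bound so that $B(x,r)$ is covered by $X_{e}$ together with its boundedly many neighbours (controlled via the doubling degree bound), each of measure $\lesssim r^Q$. The only cosmetic difference is that you route the upper-bound separation argument through the quasi-visual approximation axioms (ii)--(iv), whereas the paper invokes \ref{Lemma: DL(7)} and \ref{Lemma: DL(8)} directly with an explicit choice of coarser level $k=\min\{i:\ r\geq\rho_i(e_i)\,m_\rho^2/M_\rho\}$; since those axioms were themselves derived from the same DL estimates, this is a repackaging rather than a genuinely different argument, and both are fine.
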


\begin{proof}
Fix \(x=[(e_i)_{i=1}^\infty]\in X\) and let \(0<r\leq 2\diam(X,d_\rho)\). We will start by showing that \(\mu_\rho(B(x,r))\gtrsim r^Q\). To that end, let \(n=\min\{i\in\N\mid r\geq \rho_i(e_i)\}\). We then have that \(X_{e_n}\subseteq B(x,r)\), and
    \begin{equation}\label{Equation: Regularity 1}
    \begin{cases}
        \rho_n(e_n)\leq r \leq 2\diam(X,d_\rho)\ &\text{if }n=1,\\
        \rho_n(e_n)\leq r < \rho_{n-1}(e_{n-1}) &\text{if }n>1.
    \end{cases}
    \end{equation}
Since \(\rho_i(e_i)\geq m_\rho \rho_{i-1}(e_{i-1})\) for all \(i>1\), and since \(\rho_1(e_1)\geq m_\rho\), it follows from equation \eqref{Equation: Regularity 1} that \(\rho_n(e_n)\geq r\cdot m_\rho/(2\diam(X,d_\rho))\), and so we approximate:
    \[
        \mu_\rho(B(x,r))\geq \mu_\rho(X_{e_n})=\rho_n(e_n)^Q\geq\frac{m_\rho^Qr^Q}{2^Q\diam(X,d_\rho)^Q}.
    \]  
Next, we show that \(\mu_\rho(B(x,r))\lesssim  r^Q\). Let \(k=\min\{i\in\N\mid r\geq \rho_i(e_i)\cdot m_\rho^2 /M_\rho\}\). If \(k=1\), then we have that \(r\geq \rho_1(e_1)\cdot  m_\rho^2/M_\rho\geq m_\rho^3 /M_\rho\), and hence \(r\cdot M_\rho/m_\rho^3\geq 1\), from which it follows that 
    \[
    \mu_\rho(B(x,r))\leq \mu_\rho(X)=1\leq \bigg(\frac{M_\rho }{m_\rho^3}\bigg)^Qr^Q.
    \]
On the other hand, if \(k>1\), then we have that
    \[
        \frac{m_\rho^2}{M_\rho}\rho_k(e_k)\leq r <\frac{m_\rho^2}{M_\rho}\rho_{k-1}(e_{k-1})\leq \frac{m_\rho}{M_\rho} \rho_k(e_k),
    \]
and so it follows from \ref{Lemma: DL(7)} and \ref{Lemma: DL(8)} that \(X_{f}\cap B(x,r)\neq \varnothing\) for some \(f\in E_k \) if and only if \(f\cap e_k\neq \varnothing\). Therefore, \(B(x,r)\subseteq\bigcup_{f\in E_k:f\cap e_k\neq\varnothing}X_f\), where we have by \ref{Lemma: DL(7)} that 
    \[
        \max_{f\in E_k:f\cap e_k\neq \varnothing}\rho_k(f)\leq \frac{M_\rho}{m_\rho}\rho_k(e_k)\leq \frac{M_\rho^2r}{m_\rho^3} \leq \frac{M_\rho r}{m_\rho^3}.
    \]
Now, since the IGS is doubling, there exists due to Lemma \ref{Lemma: Doubling degree bound} a constant \(C_{\deg}\in\N\), so that
    \[
        C_{\deg}=\max_{v\in V_1}\deg(v)=\sup_{n\in\N}\max_{v\in V_n}\deg(v).
    \] 
Therefore, we have that \(\lvert\{f\in E_k\mid f\cap e_k\neq \varnothing\}\rvert\leq 2C_{\deg}+1\), and thus may approximate:
    \[
    \begin{split}
    \mu_\rho(B(x,r))&\leq \sum_{f\in\Omega}\mu_\rho(X_f)\leq (2C_\degr +1)\bigg(\frac{M_\rho}{m_\rho}\rho_k(e_k)\bigg)^Q\\
    &\leq (2C_\degr +1)\bigg(\frac{M_\rho^2 r}{m_\rho^3}\bigg)^Q=(2C_\degr +1)\bigg(\frac{M_\rho^2}{m_\rho^3}\bigg)^Qr^Q.
    \end{split}
    \]
\end{proof}

\begin{corollary}
    The metric space \((X,d_{L_*})\) is \(\log_{L_*}(\lvert E_1\rvert)\)-Ahlfors regular.
\end{corollary}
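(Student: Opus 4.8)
The plan is simply to recognize $d_{L_*}$ as one of the metrics $d_\rho$ covered by Lemma~\ref{Lemma: Ahlfors Regular} and to compute the resulting exponent. By Definition~\ref{def:intrinsicmetric}, $d_{L_*} = d_\rho$ for the constant density $\rho \equiv L_*^{-1}$ on $E_1$, so the first task is to check that this $\rho$ satisfies the hypotheses of Lemma~\ref{Lemma: Ahlfors Regular}, namely that it is a symmetric $\Theta^{(1)}$-admissible density valued in $(0,1)$. Since the IGS is non-degenerate, no edge of $G_1$ joins $I_-$ to $I_+$, so $L_* = \dist(I_-,I_+,d_{G_1}) \geq 2$ (cf.\ Lemma~\ref{Lemma: Non-degenerate path length} with $n=1$), and hence $L_*^{-1} \in (0,1)$. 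For admissibility, any path $\theta \in \Theta^{(1)} = \Theta(I_-,I_+)$ has $\len(\theta) \geq L_*$ by the very definition of $L_*$ as a graph distance, so $L_\rho(\theta) = \len(\theta)\cdot L_*^{-1} \geq 1$. Finally, $\rho$ is constant, hence trivially symmetric: $\rho(\eta(e)) = \rho(e)$ for the symmetry map $\eta$ of the (symmetric) IGS.

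Given this, Lemma~\ref{Lemma: Ahlfors Regular} yields that $(X, d_{L_*}, \mu_\rho)$ is $Q$-Ahlfors regular, where $Q \geq 1$ is the unique exponent satisfying $\sum_{e \in E_1} \rho(e)^Q = 1$. Substituting $\rho \equiv L_*^{-1}$, this equation becomes $|E_1| \cdot L_*^{-Q} = 1$, i.e.\ $L_*^Q = |E_1|$, so $Q = \log_{L_*}(|E_1|)$. This is precisely the claim.

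There is no real obstacle here, as the corollary is an immediate specialization of the preceding lemma; the only point worth flagging is that, as throughout this portion of the subsection, the statement is understood in the symmetric setting, which is exactly what is needed to invoke Lemma~\ref{Lemma: Ahlfors Regular}. If one wished to avoid citing that lemma, the same volume-counting estimates as in its proof apply verbatim with $m_\rho = M_\rho = L_*^{-1}$, giving the bounds $C^{-1} r^Q \leq \mu_\rho(B(x,r)) \leq C r^Q$ directly; but the cleanest route is the one above.
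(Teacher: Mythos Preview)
Your proposal is correct and is exactly the intended argument: the paper states this corollary immediately after Lemma~\ref{Lemma: Ahlfors Regular} without proof, precisely because it follows by specializing to the constant density $\rho\equiv L_*^{-1}$ and solving $|E_1|\cdot L_*^{-Q}=1$. Your verification of the hypotheses (non-degeneracy gives $L_*\ge 2$, admissibility is immediate from the definition of $L_*$, and symmetry is trivial for a constant density) is complete and matches the implicit reasoning.
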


The motivation to quasi-visual approximations in the present work is the following lemma that gives us a short proof of the quasisymmetry.

\begin{lemma}[{\cite[Proposition 2.7]{bonkmeyer}}]\label{Lemma: quasi-visual quasisymmetry}
    Let \((S,d)\) and \((T,\delta)\) be bounded metric spaces, the map \(F\colon S\to T\) be a bijection, and \(\{\textbf{X}^n\}_{n\in\N_0}\) be a quasi-visual approximation of \((S,d)\). Then \(F\colon S\to T\) is a quasisymmetry if and only if \(\{F(\textbf{X}^n)\}_{n\in\N_0}\) is a quasi-visual approximation of \((T,\delta)\).
\end{lemma}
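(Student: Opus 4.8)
This is \cite[Proposition 2.7]{bonkmeyer}, and the plan is to deduce it from the standard distortion estimates for quasisymmetric maps on bounded spaces together with the observation that a quasi-visual approximation pins down the metric, up to multiplicative constants, through a purely combinatorial index. Throughout write $F^n := \{F(X): X\in \mathbf{X}^n\}$, a finite cover of $T$ with $F^0 = \{T\}$.

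For the forward implication I would assume $F$ is an $\eta$-quasisymmetry and verify (i)--(iv) for $\{F^n\}$. Properties (i)--(iii) follow directly from the usual quasisymmetric distortion lemmas (see \cite[Ch.~10--11]{He}): for bounded $A\subseteq B$ with $\diam A \asymp \diam B$ one has $\diam F(A) \asymp \diam F(B)$, and for disjoint $A,B$ with $\dist(A,B)\gtrsim \diam A$ one has $\dist(F(A),F(B))\gtrsim \diam F(A)$, with constants depending only on $\eta$ and the constant $C$ of $\{\mathbf{X}^n\}$; applying these to $A=X$, $B = X\cup Y$ and to disjoint pairs from $\mathbf{X}^n$ gives (i)--(iii). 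The one subtlety is (iv): a single application of quasisymmetry only gives $\diam F(Y)\le \eta(2\lambda)\,\diam F(X)$, which may fail to be $<1$. To repair this I would first observe that iterating (iv) of $\{\mathbf{X}^n\}$ along a chain of intersecting elements through a common point shows $\diam Y \le \lambda^N\,\diam X$ whenever $X\in\mathbf{X}^n$, $Y\in\mathbf{X}^{n+Nk_0}$, and $X\cap Y\neq\varnothing$; then choose $N$ with $\eta(2\lambda^N)<1$ and run the argument with $Nk_0$ in place of $k_0$.

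For the converse I would first prove a key lemma: given any quasi-visual approximation $\{\mathbf{Y}^n\}$ of a bounded space $(Z,\varrho)$ and distinct $x,y\in Z$, the index $n(x,y):=\max\{n: \exists\, X,Y\in \mathbf{Y}^n,\ x\in X,\ y\in Y,\ X\cap Y\neq\varnothing\}$ is finite (using (iv), which forces the diameters of the elements through a fixed point to $0$), and $\varrho(x,y)\asymp \diam X$ for every $X\in \mathbf{Y}^{n(x,y)}$ with $x\in X$ --- the upper bound from the triangle inequality and (i), the lower bound from (ii) at level $n(x,y)+1$ (where by maximality the elements containing $x$ and $y$ are disjoint) and (iii) to return to level $n(x,y)$. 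Now assume $\{F^n\}$ is a quasi-visual approximation of $(T,\delta)$. Because $F$ is a bijection, the incidence pattern ``$x\in X,\ y\in Y,\ X\cap Y\neq\varnothing$'' in $\mathbf{X}^n$ matches that of $F(x),F(y)$ in $F^n$, so $n(x,y)$ is the same whether computed in $(S,d)$ or in $(T,\delta)$. Applying the key lemma on both sides, for $x\neq z$ the quotients $\delta(F(x),F(y))/\delta(F(x),F(z))$ and $d(x,y)/d(x,z)$ are each comparable to a ratio of diameters of elements through $x$ at levels $n(x,y)$ and $n(x,z)$; property (iv) of $\{\mathbf{X}^n\}$ bounds the level gap $|n(x,y)-n(x,z)|$ by a function of $t:=d(x,y)/d(x,z)$, and property (iii) of $\{F^n\}$ then bounds the diameter ratio on the target side by a function of that gap, hence of $t$. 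Taking a continuous increasing majorant yields a homeomorphism $\eta$ of $[0,\infty)$ with $\delta(F(x),F(y))/\delta(F(x),F(z))\le \eta(t)$, i.e.\ $F$ is quasisymmetric.

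The easy half is the forward direction, modulo the iteration trick needed for (iv); the hard part will be the converse --- specifically, proving the key lemma with clean constants and then converting the merely exponential comparison between the levels $n(x,y)$ and $n(x,z)$ into a genuine quasisymmetry modulus $\eta$, all while keeping every constant dependent only on the structure constants of the two approximations.
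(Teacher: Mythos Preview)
The paper does not supply its own proof of this lemma; it is quoted verbatim from \cite[Proposition 2.7]{bonkmeyer} and used as a black box in the proof of Theorem~\ref{Main Theorem: Construction of Metrics}. Your sketch is a correct outline of the standard argument from that reference: the forward direction via the quasisymmetric distortion estimates (with the iteration trick for (iv)), and the converse via the combinatorial index $n(x,y)$ together with the two-sided comparison $\varrho(x,y)\asymp \diam X$ for $X\in\mathbf{Y}^{n(x,y)}$ containing $x$. One small clarification for the last step: to bound the target ratio $\delta(F(x),F(y))/\delta(F(x),F(z))$ by a function of $t$ you will need both (iii) and (iv) on both sides, and the roles swap depending on whether $t$ is small or large --- (iii) on the source gives a \emph{lower} bound $\abs{n(x,y)-n(x,z)}\gtrsim \log(1/t)$ for small $t$, which combined with (iv) on the target yields the required decay, while for large $t$ it is (iv) on the source that gives the \emph{upper} bound on the level gap and (iii) on the target that controls the ratio. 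With that in mind, the resulting $\eta$ is a power-type function, and your proposal is essentially complete.
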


We are now ready to prove Theorem \ref{Main Theorem: Construction of Metrics}.

\begin{proof}[Proof of Theorem \ref{Main Theorem: Construction of Metrics}]
    Let \(\rho,\nu\colon E_1 \to (0,1)\) be symmetric \(\Theta^{(1)}\)-admissible densities. By Lemma \ref{Lemma: Ahlfors Regular}, the spaces \((X,d_\rho,\mu_\rho)\) and \((X,d_\nu,\mu_\nu)\) are \(Q_\rho\)- and \(Q_\nu\)-Ahlfors regular, respectively, where \(Q_\rho\) and \(Q_\nu\) are the unique exponents for which
        \[
            \cM_{Q_\rho}(\rho)=\cM_{Q_\nu}(\nu)=1.
        \]
    By Lemma \ref{Lemma: Compact and Connected} both spaces are compact and thus bounded, and by Lemma \ref{Lemma: quasi-visual approximation} the collection \(\{\mathbf{X}^n\}_{n\in\N_0}\), where \(\mathbf{X}^0:=\{X\}\) and \(\mathbf{X}^n:=\{X_e\mid e\in E_n\}\) for all \(n\in\N\), is a quasi-visual approximation of both spaces. Therefore, it follows from Lemma \ref{Lemma: quasi-visual quasisymmetry} that \(\id \colon(X,d_\rho)\to(X,d_\nu)\) is a quasisymmetry.

\end{proof}

\section{Moduli and Conformal Dimension}\label{Section: Moduli and Conformal Dimension}

In this section, we characterize the value and attainment of the conformal dimension of a symmetric Laakso-type fractal space. For the rest of the section, we fix a simple IGS that is doubling, non-degenerate and symmetric, with a generator \(G_1=(V_1,E_1,\xi_1)\) and symmetry map by \(\eta\). We denote the limit space by \(X\).

\subsection{Critical exponent} 
Conformal dimension for general metric spaces can be studied via discrete moduli of graph approximations at different levels; see e.g. \cite{Carrasco,KL,Shaconf,MT}. In our case, this involves solving a moduli problem for the sequence of graphs $G_n$ approximating the limit space $X$. Individually solving these modulus problems is computationally hopeless because the sizes of these graphs grow exponentially with the level of approximation. The following lemma, however, shows that it actually suffices to solve the modulus problem for $G_1$, and the solutions for $G_n$ follow from a multiplicative construction. This is one of the most important aspects of our approach, and expresses the idea that $G_1$ contains all of the information needed to study the conformal geometry of $X$.

\begin{lemma}\label{Lemma: Optimal Density}
    For every \(n\in\N\) and \(p\in(1,\infty)\) the unique \(p\)-minimal \(\Theta^{(n)}\)-admissible density \(\rho_n\colon E_n\to[0,1]\) is symmetric, satisfies \(\rho_n(e)<1\) for all \(e\in E_n\), and admits a multiplicative cascade representation
    
        \begin{equation}\label{Equation: Optimal density is a cascade}
            \rho_n(e)=\prod_{i=1}^n\rho_1(T^{-1}(e)_i)\quad\text{for all }e\in E_n,
        \end{equation}
    where \(\rho_1\colon E_1\to [0,1)\) is the \(p\)-minimal \(\Theta^{(1)}\)-admissible density. Furthermore, the mapping \(t\mapsto\Mod_t(\Theta^{(n)},G_n)\) is continuous and strictly decreasing for all \(t\in[1,\infty)\).
\end{lemma}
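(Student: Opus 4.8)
The plan is to prove the three assertions about $\rho_n$ in turn, using the similarity structure of the $G_n$ (Proposition \ref{prop: similarity maps}, Proposition \ref{Proposition: Path lift}) together with the variational characterization of optimal densities.

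\emph{Step 1: the cascade density is admissible and its mass factorizes.} Let $\rho_1\colon E_1\to[0,1)$ be the $p$-minimal $\Theta^{(1)}$-admissible density, which exists, is unique, is sub-unitary, and is symmetric (by Lemma \ref{Lemma: Existence of Minimizers}, since $\len(\theta)\geq 2$ for all $\theta\in\Theta^{(1)}$ by non-degeneracy and Lemma \ref{Lemma: Non-degenerate path length}; symmetry follows by uniqueness because $\eta$ maps $\Theta^{(1)}$ to itself, so $\rho_1\circ\eta$ is also optimal). Define $\rho_n$ by the cascade formula \eqref{Equation: Optimal density is a cascade}; equivalently $\rho_n(e)=\rho_{n-1}(\pi_n(e))\cdot\rho_1(\sigma_{\pi_n(e),1}^{-1}(e))$. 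To see $\rho_n\in\Adm(\Theta^{(n)})$, take any $\theta\in\Theta^{(n)}=\Theta(I_-^{(n)},I_+^{(n)})$ and apply the path-lift Proposition \ref{Proposition: Path lift} with $n\rightsquigarrow 1$, $m\rightsquigarrow n-1$, $A=I_-$, $B=I_+$: this yields a path $[u_0,f_1,\dots,f_l,u_l]\in\Theta^{(1)}$ on $G_1$ and edge-disjoint sub-paths $\theta_i$ of $\theta$ with $\theta_i$ contained in $f_i\cdot G_{n-1}$. Each $\theta_i$ crosses $f_i\cdot G_{n-1}$ between the two halves of its gluing set, so $\theta_i$ projects to a path joining $I_-^{(n-1)}$ to $I_+^{(n-1)}$ inside $G_{n-1}$; inductively $L_{\rho_{n-1}}$ of that projected path is $\geq 1$, hence $L_{\rho_n}(\theta_i)=\rho_1(\sigma^{-1}_{f_i,1}(\cdot))$-weighted sum $\geq \rho_1(f_i)$ after using the cascade identity $\rho_n(f_i\cdot g)=\rho_1(f_i)\rho_{n-1}(g)$. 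Summing over $i$ and using $\sum_i\rho_1(f_i)=L_{\rho_1}([u_0,f_1,\dots])\geq 1$ gives $L_{\rho_n}(\theta)\geq 1$. A computation analogous to the one sketched before Lemma \ref{Lemma: Ahlfors Regular} (now with exponent $p$ in place of $Q$, using Proposition \ref{prop: similarity maps}\ref{SM3} that $\{f\cdot E_{n-1}\}_{f\in E_1}$ partitions $E_n$) gives the factorization
\begin{equation}\label{eq:massfactor}
\cM_p(\rho_n)=\sum_{e\in E_n}\rho_n(e)^p=\prod_{i=1}^n\Big(\sum_{e_i\in E_1}\rho_1(e_i)^p\Big)=\cM_p(\rho_1)^n=\Mod_p(\Theta^{(1)},G_1)^n.
\end{equation}

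\emph{Step 2: optimality and uniqueness of $\rho_n$.} By Lemma \ref{Lemma: Existence of Minimizers} there is a unique $p$-optimal $\widetilde\rho_n\in\Adm(\Theta^{(n)})$, and $\widetilde\rho_n(e)<1$ for all $e$ since $\len(\theta)\geq 2^n\geq 2$ on $\Theta^{(n)}$ (Lemma \ref{Lemma: Non-degenerate path length}). So it suffices to show $\cM_p(\rho_n)\le \Mod_p(\Theta^{(n)},G_n)$, i.e. that \eqref{eq:massfactor} is the actual modulus. For the matching lower bound I will exhibit a unit flow realizing the dual bound via Proposition \ref{Proposition: Duality}: take the $p$-optimal flow $\cJ_1$ from $I_-$ to $I_+$ on $G_1$ and build its cascade $\cJ_n$ on $G_n$ by the rule that on the sub-block $f\cdot G_{n-1}$ the flow equals $|\cJ_1(f)|$ times a suitably normalized copy of $\cJ_{n-1}$, with directions inherited via $\eta$ where gluing reverses orientation (symmetry of the IGS is exactly what makes this globally consistent; Kirchhoff's law at interior and junction vertices follows from the flow laws for $\cJ_1$ and $\cJ_{n-1}$ and Proposition \ref{prop: similarity maps}\ref{SM2}). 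Then $\cE_q(\cJ_n)=\prod_i\cE_q(\cJ_1)=\cE_q(\cJ_1)^n$, and since $\cM_p(\rho_1)^{1/p}\cE_q(\cJ_1)^{1/q}=1$ we get $\cM_p(\rho_n)^{1/p}\cE_q(\cJ_n)^{1/q}=1$, so by the equality clause in Proposition \ref{Proposition: Duality} both $\rho_n$ and $\cJ_n$ are optimal; uniqueness of the optimizer (Lemma \ref{Lemma: Existence of Minimizers}) forces $\rho_n=\widetilde\rho_n$. Symmetry of $\rho_n$ is immediate from \eqref{Equation: Optimal density is a cascade} and $\rho_1\circ\eta=\rho_1$, since $\eta$ acts coordinatewise on the product decomposition $T^{-1}(e)$.

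\emph{Step 3: continuity and strict monotonicity of $t\mapsto\Mod_t(\Theta^{(n)},G_n)$.} Since $\len(\theta)\geq 2$ for every $\theta\in\Theta^{(n)}$, this is exactly Lemma \ref{Lemma: Continuity of modulus} applied to the pair $(\Theta^{(n)},G_n)$; no new argument is needed. (Alternatively it follows from \eqref{eq:massfactor}: $\Mod_t(\Theta^{(n)},G_n)=\Mod_t(\Theta^{(1)},G_1)^n$ is a positive integer power of the continuous, strictly decreasing function $t\mapsto\Mod_t(\Theta^{(1)},G_1)$ of Lemma \ref{Lemma: Continuity of modulus}.)

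\emph{Main obstacle.} The routine parts are the admissibility direction (Step 1) and Step 3. The real work is the matching lower bound in Step 2: constructing the cascade flow $\cJ_n$ and checking it is a genuine unit flow across the gluings. The subtlety is orientation bookkeeping at the junctions — when two copies of $G_1$ are glued end to end, one copy must be run through $\eta$, and one must verify that the flow values agree on the identified vertices of $I_{v,e}^{(n-1)}$ and that divergence vanishes there; this is where symmetry of the IGS and the precise description of the overlap in Proposition \ref{prop: similarity maps}\ref{SM2} are essential. An alternative that sidesteps flows entirely is to prove the lower bound $\Mod_p(\Theta^{(n)},G_n)\ge \Mod_p(\Theta^{(1)},G_1)^n$ directly by the same path-lifting/averaging argument used for admissibility, applied to an arbitrary admissible $\sigma$ on $G_n$: restrict $\sigma$ to each block $f\cdot G_{n-1}$, use induction to bound its mass from below by $\Mod_p(\Theta^{(1)},G_1)^{n-1}$ times the "effective weight" of $f$, and then recognize the resulting weights on $E_1$ as $\Theta^{(1)}$-admissible. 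Either route works; I would present the duality route since Proposition \ref{Proposition: Duality} is already set up for it.
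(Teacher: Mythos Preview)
Your proposal is correct and follows essentially the same route as the paper: admissibility of the cascade density via the path-lift proposition, optimality via the dual cascade flow and Proposition \ref{Proposition: Duality}, factorization $\cM_p(\rho_n)=\cM_p(\rho_1)^n$ and $\cE_q(\cJ_n)=\cE_q(\cJ_1)^n$, and continuity/monotonicity from Lemma \ref{Lemma: Continuity of modulus}. The only minor divergence is that the paper deduces symmetry of $\rho_n$ from uniqueness of the minimizer (since the induced symmetry of $G_n$ preserves $\Theta^{(n)}$), rather than from your claim that $\eta$ acts coordinatewise on $T^{-1}(e)$, which is not set up in the paper and would need justification; the uniqueness argument is cleaner here.
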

\begin{proof}
    
    Fix \(p\in(1,\infty)\). Since the IGS is non-degenerate, we have that \(\len(\theta)\geq 2\) for all \(\theta\in\Theta^{(n)}\). It then follows directly from Lemma \ref{Lemma: Existence of Minimizers} that the unique \(p\)-minimal \(\Theta^{(n)}\)-admissible density \(\rho_n\colon E_n\to[0,1]\) satisfies \(\rho_n(e)<1\) for all \(e\in E_n\), and that the mapping \(t\mapsto\Mod_t(\Theta^{(n)},G_n)\) is continuous and strictly decreasing for all \(t\in[1,\infty)\). Since the IGS is symmetric, we have
        \[
            \cM_p(\rho_n\circ \eta)=\cM_p(\rho_n)=\Mod_p(\Theta^{(n)},G_n).
        \]
    Since the minimizer $\rho_n$ is unique, we necessarily have \(\rho_n\circ \eta=\rho_n\). 
    
    We now verify \eqref{Equation: Optimal density is a cascade} for each \(n\in\N\). First, by applying Proposition \ref{Proposition: Path lift} inductively, we see that
        \[
            \min_{\theta\in\Theta^{(n)}}L_{\rho_n}(\theta)\geq\big(\min_{\theta\in\Theta^{(1)}}L_{\rho}(\theta)\big)^n\geq1.
        \]
    Therefore, \(\rho_n\) is \(\Theta^{(n)}\)-admissible. Due to Proposition \ref{Proposition: Duality} and the uniqueness of \(\rho_n\), it is enough to show that there exists a unit flow \(\cJ_n\) from \(I^{(n)}_-\) to \(I^{(n)}_+\) such that, if \(\rho_n\) satisfies equation \eqref{Equation: Optimal density is a cascade}, then 
        \[
            \cM_p(\rho_n)^\frac{1}{p}\cdot \cE_q(\cJ_n)^\frac{1}{q}=1.
        \]
    Since \(\cM_p(\rho_1)=\Mod_p(\Theta^{(1)},G_1)\), there exists by Proposition \ref{Proposition: Duality} a unit flow \(\cJ_1\) from \(I_-\) to \(I_+\) so that \(\cM_p(\rho_1)^\frac{1}{p}\cdot\cE_q(\cJ_1)^\frac{1}{q}=1\). Note, that since \(\rho_1\circ \eta = \rho_1\), it follows that \(\cJ_1(\eta(z),\eta(e))=-\cJ_1(z,e)\) for every \(z\in e\in E_1\). For every \(n\in\N\), we define \(\cJ_{n+1}\) as follows: For every \(z\in e\in E_1\) and \(f\in E_n\) we set
        \[
            \cJ_{n+1}([z,f],(e,f)):=\cJ_{n}(f^-,f)\cJ_1(z,e).
        \]
    We will show by induction, that \(\cJ_{n+1}\) is a unit flow for from \(I_-^{(n+1)}\) to \(I_+^{(n+1)}\) for every \(n\in \N\). To that end, assume that \(\cJ_n\) is a unit flow. Since \(\cJ_1\) is unit flow, we have for all \(e\in E_1\) and \(f\in E_n\) that
        \[
        \begin{split}
            \cJ_{n+1}([e^+,f],(e,f))=\cJ_{n}(f^-,f)\cJ_1(e^+,e)
            &=-\cJ_{n}(f^-,f)\cJ_1(e^-,e)\\&=-\cJ_{n+1}([e^+,f],(e,f)).
        \end{split}
        \]
    Now, let \(v=[z,\hat{e}]\in e\in E_{n+1}\) for some \(z\in V_1\) and \(\hat{e}\in E_n\), and suppose that \(z\not\in I_-\cup I_+\). The edges with \(v\) as an endpoint are all contained in \(\hat{e}\cdot G_1\), and since \(\cJ_1\) is a flow, we have
        \[
        \begin{split}
            \divr(\cJ_{n+1})(v)=\sum_{\{f\mid v\in f\}}\cJ_{n+1}(v,f)&=\sum_{\{f\mid z\in f\}}\cJ_n(\hat{e}^-,\hat{e})\cJ_1(z,f)\\
            &=\cJ_n(\hat{e}^-,\hat{e})\cdot\divr(\cJ_1)(z) =0.
        \end{split}
        \]
    Suppose next that \(z\in I_-\cup I_+\). Then, \(v\in\pi_{n+1}^{-1}(\hat{v})\) for some \(\hat{v}=\pi_{n+1}(v)\in V_n\). Since the IGS is doubling, for every edge \(\hat{f}\in E_n\) with \(\hat{v}\) as an endpoint the set \(\hat{f}\cdot E_1\subset E_{n+1}\) contains exactly one edge \(f\) with \(v\) as an endpoint, and by construction no other edges with \(v\) as an endpoint can exist. Since the IGS is symmetric, there exists a vertex \(x\in I_-\) so that \(z\in\{x,\eta(x)\}\) and hence, for each of the edges \(\hat{f}\) with \(\hat{v}\) as an endpoint, the vertex \(v\) satisfies:
    \[
        v=[z,\hat{e}]=
        \begin{cases}
            [x,\hat{f}]&\text{if }\hat{v}=\hat{f}^-,\\
            [\eta(x),\hat{f}]&\text{if }\hat{v}=\hat{f}^+.
        \end{cases}
    \]
    Therefore, we have that
    \[
        \cJ_{n+1}(v,f):=
        \begin{cases}
            \cJ_n(\hat{v},\hat{f})\cJ_1(x,\mathfrak{e}(x))  &\text{if }\hat{v}=\hat{f}^-,\\
            -\cJ_n(\hat{v},\hat{f})\cJ_1(\eta(x),\mathfrak{e}(\eta(x))) &\text{if }\hat{v}=\hat{f}^+,
        \end{cases}
    \]
    and since \(\mathfrak{e}(\eta(x))=\eta(\mathfrak{e}(x))\) and \(\cJ_1(x,\mathfrak{e}(x))=-\cJ_1(\eta(x),\eta(\mathfrak{e}(x)))\), it then follows that
    \[
        \cJ_{n+1}(v,f)=\cJ_n(\hat{v},\hat{f})\cJ_1(x,\mathfrak{e}(x)),
    \]
    from which we may compute
    \begin{equation}\label{Equation: Divergence Lift}
    \begin{split}
        \divr(\cJ_{n+1})(v)=\sum_{\{f\mid v\in f\}}\cJ_{n+1}(v,f)&=\sum_{\{\hat{f}\mid \hat{v}\in\hat{f}\}}\cJ_n(\hat{v},\hat{f}) \cJ_1(x,\mathfrak{e}(x))\\
        &=\cJ_1(x,\mathfrak{e}(x))\cdot\divr(\cJ_n)(\hat{v}).
    \end{split}
    \end{equation}
    If \(v\not\in I_-^{(n+1)}\cup I_+^{(n+1)}\), then \(\hat{v}\not\in I_-^{(n)}\cup I_+^{(n)}\), and consequently \(\divr(\cJ_n)(\hat{v})=0\), and so we have by \eqref{Equation: Divergence Lift} that \(\divr(\cJ_{n+1})(v)=0\). Hence, \(\cJ_{n+1}\) is a flow. To see that it is a unit flow, we compute inductively the total flow:
    \[
    \begin{split}
        \sum_{v\in I_-^{(n+1)}}\divr(\cJ_{n+1})(v)&\stackrel{\eqref{Equation: Divergence Lift}}{=}\sum_{\hat{v}\in I_-^{(n)}}\sum_{x\in I_-^{(1)}}\cJ_1(x,\mathfrak{e}(x))\cdot \divr(\cJ_n)(\hat{v})\\
        &\stackrel{\phantom{\eqref{Equation: Divergence Lift}}}{=}\sum_{\hat{v}\in I_-^{(n)}}\divr(\cJ_n)(\hat{v})\sum_{x\in I_-^{(1)}}\cJ_1(x,\mathfrak{e}(x))\\
        &\stackrel{\phantom{\eqref{Equation: Divergence Lift}}}{=}\sum_{\hat{v}\in I_-^{(n)}}\divr(\cJ_n)(\hat{v})=1
    \end{split}
    \]
    Having constructed the unit flow \(\cJ_n\) from \(I^{(n)}_-\) to \(I^{(n)}_+\) for every \(n\in\N\), we only need show that if \(\rho_n\) satisfies equation \eqref{Equation: Optimal density is a cascade} then \(\cM_p(\rho_n)^{1/p} \cdot \cE_q(\cJ_n)^{1/q}=1\). To that end, first note that for every \(e\in E_{n}\) we have by construction that
    \begin{equation}\label{Equation: Optimal flow is a cascade}
        \lvert \cJ_{n}(e)\rvert=\prod_{i=1}^{n}\lvert \cJ_1(T^{-1}(e)_i)\rvert.
    \end{equation}
    Then, if \(\rho_n\) satisfies equation \eqref{Equation: Optimal density is a cascade}, we may compute:
    \begin{equation}\label{Equation: Energy and mass cascade}
    \begin{split}
        \cE_q(\cJ_n)&\stackrel{\eqref{Equation: Optimal flow is a cascade}}{=}\sum_{e\in E_n}\prod_{i=1}^{n}\lvert \cJ_1(T^{-1}(e)_i)\rvert^q=\prod_{i=1}^{n}\sum_{e\in E_1}\lvert\cJ_1(e)\rvert^q=\cE_q(\cJ_1)^n,\\
        \cM_p(\rho_n)&\stackrel{\eqref{Equation: Optimal density is a cascade}}{=}\sum_{e\in E_n}\prod_{i=1}^n\lvert \rho_1(T^{-1}(e)_i)\rvert^p=\prod_{i=1}^n\sum_{e\in E_1}\lvert\rho_n(e)\rvert^p=\cM_p(\rho_1)^n.
    \end{split}
    \end{equation}
    Since \(\cM_p(\rho_1)^{1/p} \cdot \cE_q(\cJ_1)^{1/q}=1\), it then follows that
    \[
        \cM_p(\rho_n)^\frac{1}{p}\cdot \cE_q(\cJ_n)^\frac{1}{q}\stackrel{\eqref{Equation: Energy and mass cascade}}{=}\cM_p(\rho_1)^\frac{n}{p}\cdot \cE_q(\cJ_1)^\frac{n}{q}=1.
    \]  
\end{proof}

As a direct corollary of equations \eqref{Equation: Optimal density is a cascade} and \eqref{Equation: Energy and mass cascade}, we obtain the following explicit formula for the moduli at different levels. 

\begin{corollary}\label{Corollary: Modulus in multiplicative}
    For any \(p\in(1,\infty)\), we have for all \(n\in \N\) that 
        \[
            \Mod_p(\Theta^{(n)},G_n)=\Mod_p(\Theta^{(1)},G_1)^n.
        \]
\end{corollary}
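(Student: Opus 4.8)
The plan is to read this off directly from Lemma \ref{Lemma: Optimal Density}, so the corollary is genuinely a one-line consequence. Fix $p\in(1,\infty)$ and $n\in\N$. Since the IGS is non-degenerate, every $\theta\in\Theta^{(n)}$ satisfies $\len(\theta)\ge 2$ (indeed $\ge 2^n$, by Lemma \ref{Lemma: Non-degenerate path length}), so Lemma \ref{Lemma: Existence of Minimizers} provides a \emph{unique} $p$-minimal $\Theta^{(n)}$-admissible density, which is exactly the $\rho_n$ of Lemma \ref{Lemma: Optimal Density}. By the definition of the edge modulus, $\Mod_p(\Theta^{(n)},G_n)=\cM_p(\rho_n)$, and in particular $\Mod_p(\Theta^{(1)},G_1)=\cM_p(\rho_1)$, where $\rho_1\colon E_1\to[0,1)$ is the $p$-minimal $\Theta^{(1)}$-admissible density on the generator.

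The second step is to invoke the cascade representation \eqref{Equation: Optimal density is a cascade}, namely $\rho_n(e)=\prod_{i=1}^n\rho_1(T^{-1}(e)_i)$, together with the identification $E_n\cong E_1^n$ of Remark \ref{Remark: Product Decomposition}. Under this identification the sum defining $\cM_p(\rho_n)$ factors over the $n$ levels, exactly the computation already performed in \eqref{Equation: Energy and mass cascade}:
\[
\cM_p(\rho_n)=\sum_{e\in E_n}\prod_{i=1}^n\rho_1\big(T^{-1}(e)_i\big)^p=\prod_{i=1}^n\sum_{e\in E_1}\rho_1(e)^p=\cM_p(\rho_1)^n.
\]
Chaining the two displays gives $\Mod_p(\Theta^{(n)},G_n)=\cM_p(\rho_n)=\cM_p(\rho_1)^n=\Mod_p(\Theta^{(1)},G_1)^n$, which is the claim.

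There is essentially no obstacle at the level of the corollary: all the substance — existence and uniqueness of the $p$-minimizers on each $G_n$, and, crucially, the fact that the optimal density and the optimal unit flow cascade multiplicatively along the levels — is contained in Lemma \ref{Lemma: Optimal Density}, whose proof rests on the flow–modulus duality of Proposition \ref{Proposition: Duality} and the explicit recursive construction of $\cJ_n$. The only thing the corollary itself requires is the bookkeeping that converts a sum of products over $E_n\cong E_1^n$ into a product of $n$ identical sums over $E_1$, which is immediate from the product structure of the edge sets and needs no further argument.
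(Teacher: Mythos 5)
Your proof is correct and matches the paper's approach exactly: the corollary is stated immediately after Lemma \ref{Lemma: Optimal Density} as a "direct corollary of equations \eqref{Equation: Optimal density is a cascade} and \eqref{Equation: Energy and mass cascade}", which is precisely the cascade representation and the factorization of $\cM_p(\rho_n)$ that you invoke.
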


The critical exponent $Q_*$ is obtained by solving the equation $\Mod_{Q_*}(\Theta^{(1)}, G_1)=1$. Solutions to this equation are found by the intermediate value theorem, since its fairly easy to see that $\Mod_{1}(\Theta^{(1)}, G_1)\geq 1$, since there is at least one path in $\Theta^{(1)}$. In the next lemma we characterize the case $\Mod_{1}(\Theta^{(1)}, G_1)=1$.  

\begin{lemma}\label{Lemma: Cut-points}
    The following four conditions are equivalent:
    \begin{enumerate}
        \item[(A)] \(\Mod_1(\Theta^{(1)},G_1)=1\).
        \item[(B)] There exists an edge \(e^*\in E_1\) that is contained in every path \(\theta\in\Theta^{(1)}\).
        \item[(C)] There are no edge disjoint paths in \(\Theta^{(1)}\).

    \end{enumerate}
\end{lemma}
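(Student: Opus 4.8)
The plan is to prove the implications $(B)\Rightarrow(A)$, $(A)\Rightarrow(B)$, $(B)\Rightarrow(C)$ and $(C)\Rightarrow(B)$, which together yield the equivalence of $(A)$, $(B)$ and $(C)$. Throughout I will use two standing facts: first, $\Theta^{(1)}\neq\varnothing$, since $G_1$ is connected and $I_-,I_+$ are nonempty and disjoint; and second, $\Mod_1(\Theta^{(1)},G_1)\geq 1$, because for any $\rho\in\Adm(\Theta^{(1)})$ and any $\theta\in\Theta^{(1)}$ one has $\cM_1(\rho)=\sum_{e\in E_1}\rho(e)\geq\sum_{e\in\theta}\rho(e)=L_\rho(\theta)\geq 1$. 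Two of the four implications are then immediate. For $(B)\Rightarrow(A)$: if $e^*\in E_1$ lies on every $\theta\in\Theta^{(1)}$, then the density $\rho$ with $\rho(e^*)=1$ and $\rho(e)=0$ for $e\neq e^*$ is $\Theta^{(1)}$-admissible, so $\Mod_1(\Theta^{(1)},G_1)\leq\cM_1(\rho)=1$, which together with the lower bound gives $(A)$. For $(B)\Rightarrow(C)$: any two members of $\Theta^{(1)}$ both contain $e^*$, so no two paths in $\Theta^{(1)}$ can be edge-disjoint.

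For $(A)\Rightarrow(B)$ I would extract the edge $e^*$ from an optimal density. By Lemma~\ref{Lemma: Existence of Minimizers} there is a $1$-optimal $\rho^*\in\Adm(\Theta^{(1)})$, and assuming $(A)$ we have $\sum_{e\in E_1}\rho^*(e)=\cM_1(\rho^*)=\Mod_1(\Theta^{(1)},G_1)=1$; since $\rho^*\geq 0$, this already forces $\rho^*(e)\leq 1$ for every $e$. Now fix any $\theta\in\Theta^{(1)}$; admissibility gives $1\leq\sum_{e\in\theta}\rho^*(e)\leq\sum_{e\in E_1}\rho^*(e)=1$, so equality holds throughout and hence $\rho^*(e)=0$ for every edge $e\notin\theta$. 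As $\theta$ was arbitrary, $\supp(\rho^*)$ is contained in the set of edges common to all paths of $\Theta^{(1)}$; and since $\cM_1(\rho^*)=1>0$ forces $\supp(\rho^*)\neq\varnothing$, any $e^*\in\supp(\rho^*)$ is an edge as required in $(B)$.

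The only implication carrying real content is $(C)\Rightarrow(B)$, for which I would invoke the edge version of Menger's theorem for the multigraph $G_1$. Condition $(C)$ says that the maximal number of pairwise edge-disjoint paths in $\Theta^{(1)}=\Theta(I_-,I_+)$ is exactly $1$ (it is at least $1$ since $\Theta^{(1)}\neq\varnothing$). Contracting $I_-$ to a single vertex $a$ and $I_+$ to a single vertex $b$ and deleting loops, which is precisely the reduction used in the proof of Proposition~\ref{Proposition: Duality}, brings the statement to the two-vertex case, where Menger's theorem equates the maximal number of edge-disjoint paths from $a$ to $b$ with the minimal number of edges in an $a$-$b$ edge cut; this minimum is therefore $1$. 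Pulling back, there is a single edge $e^*\in E_1$ whose removal separates $I_-$ from $I_+$ in $G_1$. Finally, every $\theta\in\Theta^{(1)}$ must traverse $e^*$, since otherwise $\theta$ would be a path from $I_-$ to $I_+$ inside $G_1\setminus\{e^*\}$, contradicting the separation. This establishes $(B)$.

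I expect $(C)\Rightarrow(B)$ to be the only nontrivial step: the density arguments above are soft linear-programming-type manipulations, whereas turning ``no two edge-disjoint paths'' into ``a common edge'' is genuinely the max-flow min-cut phenomenon. The one mild technical point is passing from Menger's theorem between two vertices to the version between the sets $I_-$ and $I_+$, which I would handle by the vertex-contraction trick already employed in the proof of Proposition~\ref{Proposition: Duality}. I will also make a point of recording the standing facts $\Theta^{(1)}\neq\varnothing$ and $\Mod_1(\Theta^{(1)},G_1)\geq 1$, and of reading $(C)$ as the nonexistence of two edge-disjoint members of $\Theta^{(1)}$.
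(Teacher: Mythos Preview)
Your proof is correct and follows essentially the same approach as the paper: the density argument showing that the support of a $1$-optimal admissible density lies in every path of $\Theta^{(1)}$, and the appeal to Menger's theorem for $(C)\Rightarrow(B)$, are exactly the paper's two ingredients. The only cosmetic difference is that the paper runs the cycle $(A)\Rightarrow(C)\Rightarrow(B)\Rightarrow(A)$, whereas you prove the four pairwise implications, but your $(A)\Rightarrow(B)$ is the paper's $(A)\Rightarrow(C)$ with the stronger conclusion read off directly.
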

\begin{proof}
    First, we prove that \((A)\Longrightarrow(C)\).
    Suppose \(\Mod_1(\Theta^{(1)},G_1)=1\). 
    Then, there exists a \(\Theta^{(1)}\)-admissible density \(\rho\colon E_1\to[0,1]\), for which \(\cM_1(\rho)=1\). If there is a simple path \(\theta\in\Theta^{(1)}\) which does not contain every edge \(e\in \supp(\rho)\), then
        \[
            L_\rho(\theta)=\sum_{e\in \theta}\rho(e)<\sum_{e\in E_1}\rho(e)=\cM_1(\rho)=1,
        \]
    which contradicts the admissibility of \(\rho\). Therefore, every path in \(\Theta^{(1)}\) contains all of the edges in \(\supp(\rho)\). In particular now two edge disjoint paths exist in $\Theta^{(1)}$. 
    
    The implication \((C)\Longrightarrow(B)\) is a direct consequence of Menger's Theorem \cite{menger}. 
    
    Finally \((B)\Longrightarrow(A)\) follows by setting $\rho(e^*)=1$ and $\rho(e)=0$ for all $e\neq e^*$ and observing that $\rho$ is $\Theta^{(1)}$-admissible.
\end{proof}

\begin{remark}
    If $\Mod_1(\Theta^{(1)},G_1)=1$, it is not difficult to conclude from the previous lemma that there exist a cut-point $p\in X$, i.e. points $p$ s.t. $X\setminus \{p\}$ is disconnected. Moreover, this point can be chosen such that $p$ disconnects $X_-$ from $X_+$. Also the converse of this is true, but a bit involved to show. In Theorem \ref{Main Theorem: Construction of Metrics}, we will also show that all these conditions are equivalent with the limit space having conformal dimension one. In short: the presence of cut points is equivalent to conformal dimension being $1$, and the lack thereof with conformal dimension being greater than $1$. These should be compared with \cite[Theorem 1.2]{carrascosplitting} and \cite[Corollary 1.2]{mckaydimone}, which are general statements on the connection between  conformal dimension and the presence of cut points.
\end{remark}

\begin{lemma}\label{Lemma: Critical exponent}
    There exists a unique value \(Q_*\geq 1\) for which \(\Mod_{Q_*}(\Theta^{(1)},G_1)=1\). Furthermore, the value \(Q_*\) satisfies
        \[
        \begin{split}
            Q_*&=\inf
            \left\{
                Q\geq1\ \middle\vert
                \begin{array}{l}
                    \exists \rho\colon E_1\to(0,1) \text{ s.t. } \cM_Q(\rho)=1,\\ \rho \text{ is symmetric and }\Theta^{(1)}\text{-admissible}
                \end{array}
            \right\}\\
            &=\inf\left\{ Q\geq 1\ \middle\vert \lim_{n\to\infty}\Mod_Q(\Theta^{(n)},G_n)=0\right\}.
        \end{split}
        \]
\end{lemma}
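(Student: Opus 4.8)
The plan is to prove the statement in three stages: existence and uniqueness of $Q_*$; the first infimum formula; and the second formula together with the main difficulty. For existence and uniqueness, note that by Lemma \ref{Lemma: Non-degenerate path length} every path in $\Theta^{(1)}$ has length at least $L_*\ge 2$, so Lemma \ref{Lemma: Continuity of modulus} applies and $t\mapsto\Mod_t(\Theta^{(1)},G_1)$ is continuous and strictly decreasing on $[1,\infty)$. Summing any admissible density over the edges of a fixed simple path $\theta\in\Theta^{(1)}$ gives $\cM_1(\rho)\ge L_\rho(\theta)\ge 1$, so $\Mod_1(\Theta^{(1)},G_1)\ge 1$; and the constant density $\rho\equiv 1/L_*$ is admissible, giving $\Mod_p(\Theta^{(1)},G_1)\le \lvert E_1\rvert\,L_*^{-p}\to 0$ as $p\to\infty$. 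The intermediate value theorem then yields a unique $Q_*\ge 1$ with $\Mod_{Q_*}(\Theta^{(1)},G_1)=1$.

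For the first infimum, denote the right-hand indexing set by $A$. If $Q\in A$ is witnessed by a density $\rho\colon E_1\to(0,1)$, then $\Mod_Q(\Theta^{(1)},G_1)\le\cM_Q(\rho)=1=\Mod_{Q_*}(\Theta^{(1)},G_1)$, and strict monotonicity forces $Q\ge Q_*$; hence $\inf A\ge Q_*$. Conversely, fix any $Q>Q_*$, so $Q>1$ and $\Mod_Q(\Theta^{(1)},G_1)<1$. Let $\sigma$ be the $Q$-optimal density, which is unique by Lemma \ref{Lemma: Existence of Minimizers}, hence symmetric (since $\sigma\circ\eta$ is also $Q$-optimal), and satisfies $\sigma(e)<1$ for every $e$. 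Set $\sigma_\varepsilon:=\sigma+\varepsilon$; for small $\varepsilon>0$ this is a symmetric $\Theta^{(1)}$-admissible density into $(0,1)$, and $\cM_Q(\sigma_\varepsilon)\to\cM_Q(\sigma)=\Mod_Q(\Theta^{(1)},G_1)<1$ as $\varepsilon\to 0$. Choosing such an $\varepsilon$ with $\cM_Q(\sigma_\varepsilon)<1$, and using that $t\mapsto\cM_t(\sigma_\varepsilon)$ is continuous, strictly decreasing, and at least $1$ at $t=1$, we get a unique $Q'\in[1,Q)$ with $\cM_{Q'}(\sigma_\varepsilon)=1$; so $Q'\in A$ and $\inf A\le Q'<Q$. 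Letting $Q\downarrow Q_*$ gives $\inf A\le Q_*$.

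For the second infimum, denote its indexing set by $B$. By Corollary \ref{Corollary: Modulus in multiplicative}, for $Q>1$ we have $\Mod_Q(\Theta^{(n)},G_n)=\Mod_Q(\Theta^{(1)},G_1)^n$, which tends to $0$ exactly when $\Mod_Q(\Theta^{(1)},G_1)<1$, i.e.\ when $Q>Q_*$; and for $Q=1$ the simple-path bound gives $\Mod_1(\Theta^{(n)},G_n)\ge 1$ for all $n$, so $1\notin B$. Thus $B=(Q_*,\infty)$ and $\inf B=Q_*$. The only genuinely delicate point is the reverse inequality in the first formula: we must produce \emph{strictly positive} symmetric admissible densities whose associated exponent approaches $Q_*$, whereas the $Q_*$-optimal density itself may vanish on removable edges — this is exactly what the perturbation $\sigma+\varepsilon$ is designed to handle.
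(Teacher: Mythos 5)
Your proof is correct and follows essentially the same approach as the paper's. The only technical variations are cosmetic: you deduce $\Mod_p(\Theta^{(1)},G_1)\to 0$ from the constant density $\rho\equiv 1/L_*$ rather than from a $p$-optimal density, and for the bound $\inf A\le Q_*$ you fix a small perturbation $\sigma+\varepsilon$ and then tune the exponent $Q'$ down to hit mass one, whereas the paper fixes the exponent $Q_*+\varepsilon$ and tunes the additive perturbation $\delta$ upward — dual parametrizations of the same idea (perturb the optimal density off zero, then restore $\cM(\cdot)=1$ by the intermediate value theorem).
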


\begin{proof}
    Let $p\in (1,\infty)$ and let \(\rho\colon E_1\to [0,1)\) be the \(p\)-minimal \(\Theta^{(1)}\)-admissible density for some \(p\in(1,\infty)\). We then have by Lemma \ref{Lemma: Optimal Density} that \(\rho(e)<1\) for all \(e\in E_1\), and therefore
        \[
            \lim_{t\to\infty}\Mod_t(\Theta^{(1)},G_1)\leq \lim_{t\to\infty}\cM_t(\rho)=0.
        \]
    Since we have that \(\Mod_1(\Theta^{(1)},G_1)\geq 1\) due to the admissibility condition, and since the mapping \(t\to\Mod_t(\Theta^{(1)},G_1)\) is continuous and strictly decreasing for all \(t\in [1,\infty)\) by Lemma \ref{Lemma: Optimal Density}, it follows that there exists a unique value \(Q_*\geq 1\), such that \(\Mod_{Q_*}(\Theta^{(1)},G_1)=1\). 
    
    Next, we will show that \(Q_*=Q_{\inf}\), where
        \[
            Q_{\inf}:=\inf
            \left\{
                Q\geq1\ \middle\vert
                \begin{array}{l}
                    \exists \rho\colon E_1\to(0,1) \text{ s.t. } \cM_Q(\rho)=1,\\ \rho \text{ is symmetric and }\Theta^{(1)}\text{-admissible}
                \end{array}
            \right\}.
        \]
    Let \(\varepsilon>0\). By Lemma \ref{Lemma: Optimal Density}, there exists a symmetric \(\Theta^{(1)}\)-admissible density \(\rho_\varepsilon\colon E_1\to[0,1)\), so that \(\cM_{Q_*+\varepsilon}(\rho_\varepsilon)<1\). Thus, for some value \(\delta\in(0,1-\cM_{\rho_\varepsilon})\), the symmetric \(\Theta^{(1)}\)-admissible density \(\nu_\varepsilon\colon E_1\to(0,1)\) given by 
        \[
            \nu_\varepsilon(e):=\rho_\varepsilon(e)+\delta\quad\text{for all }e\in E_1
        \]
    satisfies \(\cM_{Q_*+\varepsilon}(\nu_\varepsilon)=1\). Since \(\varepsilon>0\) was arbitrary, it follows that \(Q_{\inf}\leq Q_*\). 
    
    On the other hand, since we have by Lemma \ref{Lemma: Optimal Density} that the map \(t\mapsto\Mod_t(\Theta^{(1)},G_1)\) is strictly decreasing, it follows that
        \[
            \cM_Q(\rho)\geq\Mod_Q(\Theta^{(1)},G_1)\geq \Mod_{Q_*}(\Theta^{(1)},G_1)=1
        \]
    whenever \(1\leq Q\leq Q_*\) and \(\rho\colon E_1\to (0,1)\) is a symmetric \(\Theta^{(1)}\)-admissible density, and therefore we have that \(Q_*\leq Q_{\inf}\).
    
    Finally, we show that \( Q_*=\inf\{Q\geq 1\mid \lim_{n\to\infty}\Mod_Q(\Theta^{(n)},G_n)=0\}\). Using again the fact that \(t\to\Mod_t(\Theta^{(1)},G_1)\) is strictly decreasing, it follows from Corollary \ref{Corollary: Modulus in multiplicative} that for all \(Q\in[1,\infty)\) we have
        \[
            \lim_{n\to\infty}\Mod_Q(\Theta^{(n)},G_n)=\begin{cases}
                \infty&\text{if }Q<Q_*,\\
                1&\text{if }Q=Q_*,\\
                0&\text{if }Q>Q_*.
            \end{cases}
        \]
    The claim now follows directly.
\end{proof}

We now obtain the critical exponent \(Q_*\) as an upper bound for the conformal dimension.

\begin{corollary}\label{Corollary: Confdim upper bound}
    Let \(Q_*\geq1\) be the unique value for which \(\Mod_{Q_*}(\Theta^{(1)},G_1)=1\). Then \(\dim_\AR(X,d_{L_*})\leq Q_*\).
\end{corollary}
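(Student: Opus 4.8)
The plan is to exhibit, for every $\varepsilon > 0$, a metric $\delta$ in the Ahlfors regular conformal gauge $\mathcal{G}_{\AR}(X, d_{L_*})$ whose Hausdorff dimension is at most $Q_* + \varepsilon$; taking the infimum over such $\varepsilon$ then yields $\dim_\AR(X, d_{L_*}) \leq Q_*$. The candidate metric will be one of the metrics $d_\rho$ constructed in Section \ref{Section: Construction of Metrics}, for a carefully chosen symmetric $\Theta^{(1)}$-admissible density $\rho \colon E_1 \to (0,1)$.

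First I would recall from Lemma \ref{Lemma: Critical exponent} that
\[
    Q_* = \inf\left\{ Q \geq 1 \ \middle\vert\ \exists\, \rho \colon E_1 \to (0,1)\text{ symmetric and } \Theta^{(1)}\text{-admissible with } \cM_Q(\rho) = 1 \right\}.
\]
Hence, given $\varepsilon > 0$, there exists a symmetric $\Theta^{(1)}$-admissible density $\rho \colon E_1 \to (0,1)$ with $\cM_Q(\rho) = 1$ for some exponent $Q \leq Q_* + \varepsilon$ (after a harmless relabelling one may take $Q \in [Q_*, Q_* + \varepsilon]$, or just work with whatever $Q$ the infimum characterization provides). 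By Theorem \ref{Main Theorem: Construction of Metrics} — equivalently, by Lemma \ref{Lemma: Ahlfors Regular} — the space $(X, d_\rho, \mu_\rho)$ is $Q$-Ahlfors regular with $Q$ the unique exponent satisfying $\sum_{e \in E_1} \rho(e)^Q = \cM_Q(\rho) = 1$, which is exactly our chosen $Q$. As recalled in Subsection \ref{Subsection: Conformal dimension}, a $Q$-Ahlfors regular space has Hausdorff dimension exactly $Q$, so $\dim_{\mathrm{H}}(X, d_\rho) = Q \leq Q_* + \varepsilon$.

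It remains to check that $d_\rho \in \mathcal{G}_{\AR}(X, d_{L_*})$, i.e. that $(X, d_\rho)$ is quasisymmetrically equivalent to $(X, d_{L_*})$ and is Ahlfors regular for some exponent $\geq 1$. The latter holds with exponent $Q \geq 1$ by the previous paragraph. For the former: the intrinsic metric $d_{L_*}$ is by Definition \ref{def:intrinsicmetric} the metric $d_\nu$ associated to the constant density $\nu \equiv L_*^{-1}$, which is manifestly symmetric ($\eta$ is a graph isomorphism, so $\nu \circ \eta = \nu$) and $\Theta^{(1)}$-admissible (every $\theta \in \Theta^{(1)}$ has $\len(\theta) \geq L_*$ by definition of $L_* = \dist(I_-, I_+, d_{G_1})$, so $L_\nu(\theta) = L_*^{-1}\len(\theta) \geq 1$). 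Thus both $\rho$ and $\nu$ are symmetric $\Theta^{(1)}$-admissible densities valued in $(0,1)$, and the "moreover" clause of Theorem \ref{Main Theorem: Construction of Metrics} gives that $(X, d_\rho) \qsequiv (X, d_\nu) = (X, d_{L_*})$. Therefore $d_\rho \in \mathcal{G}_{\AR}(X, d_{L_*})$, and
\[
    \dim_\AR(X, d_{L_*}) \leq \dim_{\mathrm{H}}(X, d_\rho) = Q \leq Q_* + \varepsilon.
\]
Since $\varepsilon > 0$ is arbitrary, $\dim_\AR(X, d_{L_*}) \leq Q_*$.

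I do not expect a serious obstacle here: all the heavy lifting — the construction of $d_\rho$, its Ahlfors regularity, and the quasisymmetry invariance — has already been done in Section \ref{Section: Construction of Metrics} and packaged into Theorem \ref{Main Theorem: Construction of Metrics}, and the characterization of $Q_*$ as an infimum of admissible exponents is Lemma \ref{Lemma: Critical exponent}. The only mild subtlety is bookkeeping around the infimum: the infimum in Lemma \ref{Lemma: Critical exponent} need not be attained by a density valued strictly in $(0,1)$ when $Q_* = 1$ (cf. Lemma \ref{Lemma: Cut-points}), but for the upper bound we only need densities realizing exponents arbitrarily close to $Q_*$ from above, which the infimum characterization supplies directly, so this causes no difficulty.
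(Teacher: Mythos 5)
Your proof is correct and essentially the same as the paper's: both invoke Lemma \ref{Lemma: Critical exponent} to produce symmetric $(0,1)$-valued $\Theta^{(1)}$-admissible densities realizing exponents arbitrarily close to $Q_*$ from above, then apply Theorem \ref{Main Theorem: Construction of Metrics} (via Lemma \ref{Lemma: Ahlfors Regular}) to conclude $d_\rho\in\cG_\AR(X,d_{L_*})$ with $\dim_{\mathrm{H}}(X,d_\rho)=Q$. The paper phrases this with a sequence $(\rho_n,Q_n)_{n\in\N}$ satisfying $\inf_n Q_n=Q_*$ rather than an $\varepsilon$-argument, and takes for granted that $d_{L_*}=d_\nu$ with $\nu\equiv L_*^{-1}$ symmetric and admissible, a point you spell out explicitly; the content is identical.
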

\begin{proof}
    By Lemma \ref{Lemma: Critical exponent}, there exists a sequence of symmetric \(\Theta^{(1)}\)-admissible densities \((\rho_n)_{n=1}^\infty\colon E_1\to (0,1)\) and a sequence \((Q_n)_{n=1}^\infty\in (Q_*,\infty)\) with \(\inf_{n\in\N}Q_n=Q_*\), so that \(\cM_{Q_n}(\rho_n)=1\) for all \(n\in\N\). Therefore, we have by Theorem \ref{Main Theorem: Construction of Metrics} that \(d_{\rho_n}\in\cG_\AR(X,d_{L_*})\) for every \(n\in\N\) and thus
        \begin{equation}\label{Equation: Confdim upper bound}
            \dim_\AR(X,d_{L_*})\leq\inf_{n\in\N}\dim_{\mathrm{H}}(X,d_{\rho_n})=\inf_{n\in\N}=Q_*.
        \end{equation}
\end{proof}

Next, we prove the corresponding lower bound for conformal dimension. This will rely on finding appropriate porous subsets.

\subsection{Porous subsets} 
Let \(F\subset E_1\). We define the subsets \(E_n(F)\subset E_n\) and \(V_n(F)\subset V_n\) for every \(n\in \N\) by
        \[
        \begin{split}
            E_n(F)&:=\{e_1\cdot\ldots\cdot e_n\in E_n\mid e_i\in F\text{ for all }1\leq i\leq n\},\\
            V_n(F)&:= \{v\in \{e^-,e^+\}\mid e\in E_n(F)\},
        \end{split}
        \]
    and denote by \(G_n(F)\) the sub-graph of \(G_n\) with edges \(E_n(F)\) and vertices \(V_n(F)\). We define the subset \(X(F)\subset X\) as
        \[
            X(F):=\bigcap_{n=1}^\infty \bigcup_{e\in E_n(F)}X_e=\{[(e_n)_{n=1}^\infty]\in X\mid e_n\in E_n(F)\text{ for all }n\in\N \},
        \]
    and for every \(n\in\N\) and \(e\in E_n(F)\) the subset \(X_e(F):=X_e\cap X(F)\).

The following lemma is needed to study the geometry of the subset $X(F)$. 

\begin{lemma}\label{Lemma: Subset Lemma}
    Let \(F\subseteq E_1\) so that \(G_1(F)\) contains a path \(\theta=[v_0,e_1,\ldots,e_k,v_k]\) for which \(v_k=\eta(v_0)\). Then 
        \begin{enumerate}
            \item\label{Lemma: Subset Lemma (1)} \(G_n(F)\) contains a path in \(\Theta^{(n)}\) for every \(n\in\N\).
            \item\label{Lemma: Subset Lemma (2)} For every \(e\in E_n(F)\), there exists a point \(x\in \mathrm{int}(X_e(F))\) such that
                \[
                    B_{d_{L_*}\vert_{{X(F)}}}(x,L_*^{-(n+2)})\subset X_e(F).
                \]
            \item\label{Lemma: Subset Lemma (3)} \(X(F)\) is complete and uniformly perfect.
            \item\label{Lemma: Subset Lemma (4)} For every \(\delta\in\cG_\AR(X(F),d_{L_*})\) there exists a constant \(K>1\) such that for every \(n\in\N\) and \(e\in E_n(F)\) there exists \(x\in X_e(F)\) and \(r>0\) so that
                \[
                    B_\delta(x,r)\subset X_e(F)\subset B_\delta(x,Kr).
                \]
        \end{enumerate}
\end{lemma}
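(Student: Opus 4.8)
The plan is to handle the four parts in order, bootstrapping from the concatenation construction in part \ref{Lemma: Subset Lemma (1)}. For part \ref{Lemma: Subset Lemma (1)}, I would argue by induction on $n$, using the hypothesis that $G_1(F)$ contains a path $\theta$ from $v_0$ to $\eta(v_0)$. The key point is that $\theta$ need not itself join $I_-$ to $I_+$, but by combining $\theta$ with its image under the symmetry $\eta$ and exploiting that the IGS is symmetric and doubling, one can splice copies of $\theta$ inside each edge-copy at level $n$ to traverse a whole level-$n$ path. More precisely, given a path $\vartheta = [u_0, f_1, \ldots, f_\ell, u_\ell] \in \Theta^{(n)}$ built at stage $n$ (using $F$-edges), replace each $f_i$ by $f_i \cdot \theta$ or $f_i \cdot (\eta\circ\theta)$, chosen so that the endpoints match up at the junction vertices; because the IGS is doubling, each junction vertex in $I_\pm^{(n)}$ has degree one in each adjacent copy, so there is a unique way to continue, and the symmetry guarantees that $\eta(v_0)$ on one copy is identified with $v_0$ on the next. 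This produces a path in $G_{n+1}(F)$ that lies in $\Theta^{(n+1)}$; the base case $n=1$ is immediate since $\theta$ itself, suitably chosen (or $\theta$ composed with a path realizing $L_*$), connects the gluing sets. This is essentially the same argument sketched in Remark \ref{rmk:quasiconvex}.

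For part \ref{Lemma: Subset Lemma (2)}, I would use Lemma \ref{Lemma: Interior edge}: given $e \in E_n(F)$, there exist $f_1, f_2 \in e \cdot E_2$ with $f_1 \cap f_2 = \varnothing$ and $f_1 \cap g = \varnothing$ for all $g \in E_{n+2} \setminus (e \cdot E_2)$. The issue is that $f_1$ may not lie in $E_{n+2}(F)$, so instead I would first pass to a point $x$ built from an $F$-itinerary that, after level $n$, follows the path from part \ref{Lemma: Subset Lemma (1)} applied inside $e$; this keeps $x$ in $\mathrm{int}(X_e(F))$. Then, using \ref{Lemma: DL(8)} for the intrinsic metric ($\rho \equiv L_*^{-1}$, so $m_\rho = M_\rho$ and the constant is $1$), any point $y \in X(F)$ with $d_{L_*}(x,y) < L_*^{-(n+2)}$ must have its level-$n$ representative edge intersecting $e$'s level-$n$ edge; combined with the interior property of $e$ (no level-$(n+2)$ edge outside $e \cdot E_2$ meets the relevant sub-edge) this forces $y \in X_e$, hence $y \in X_e(F)$. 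Part \ref{Lemma: Subset Lemma (3)} then follows: completeness of $X(F)$ is clear since it is a closed subset of the compact space $(X, d_{L_*})$ (it is an intersection of the closed sets $\bigcup_{e \in E_n(F)} X_e$), and uniform perfectness follows from part \ref{Lemma: Subset Lemma (2)} together with Corollary \ref{Corollary: Diameter comparable to weight}, which gives $\diam(X_e(F)) \leq \diam(X_e) \leq L_*^{-n}$ while the ball of radius $\sim L_*^{-(n+2)}$ inside it is nonempty and omits points at scale comparable to $L_*^{-n}$ away within $X(F)$ — using part \ref{Lemma: Subset Lemma (1)} to guarantee $X(F)$ has points at all the needed scales.

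For part \ref{Lemma: Subset Lemma (4)}, I would combine part \ref{Lemma: Subset Lemma (2)} with a quasisymmetry argument. Let $\delta \in \cG_\AR(X(F), d_{L_*})$, so $\mathrm{id}\colon (X(F), d_{L_*}|_{X(F)}) \to (X(F), \delta)$ is an $\eta$-quasisymmetry for some distortion function. From part \ref{Lemma: Subset Lemma (2)} we have, for each $e \in E_n(F)$, a point $x \in X_e(F)$ and the two-sided containment $B_{d_{L_*}}(x, L_*^{-(n+2)}) \subset X_e(F)$, together with $X_e(F) \subset B_{d_{L_*}}(x, L_*^{-n})$ (since $\diam X_e \le L_*^{-n}$); the ratio of these two radii is the fixed constant $L_*^2$. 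Quasisymmetric maps distort such "roundness" by a controlled amount: there is a standard lemma (e.g. via \cite[Proposition 10.8]{He} or the three-point / relative-distance distortion estimate) that a quasisymmetry sends a set sandwiched between two concentric balls of fixed radius ratio to a set sandwiched between two concentric balls of bounded radius ratio $K = K(\eta, L_*^2)$, independent of $n$ and $e$. Applying this yields $x$ (unchanged as a point) and an $r > 0$ with $B_\delta(x,r) \subset X_e(F) \subset B_\delta(x, Kr)$, as required. The main obstacle is part \ref{Lemma: Subset Lemma (1)}'s concatenation bookkeeping — keeping track of which copies get $\theta$ versus $\eta\circ\theta$ and verifying the identifications at junctions are consistent — but this is a direct extension of the symmetry/doubling argument already used elsewhere in the paper, so it should go through without new ideas.
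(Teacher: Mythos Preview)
Your overall strategy matches the paper's proof closely: induction for part~(1), an interior-edge argument via part~(1) for part~(2), closedness plus the scale estimates for part~(3), and a quasisymmetric roundness-distortion argument for part~(4). There is one genuine slip, however. In part~(1) you propose to replace each $f_i$ by $f_i\cdot\theta$ or $f_i\cdot(\eta\circ\theta)$. The problem is that $F$ is not assumed to be $\eta$-invariant, so the edges $\eta(e_1),\ldots,\eta(e_k)$ of $\eta\circ\theta$ need not belong to $F$, and then $f_i\cdot(\eta\circ\theta)$ would leave $G_{n+1}(F)$. The paper avoids this by using the \emph{reversed} path $\theta^{-1}$ instead of $\eta\circ\theta$: the edges are unchanged, so it stays in $G_1(F)$, and it goes from $v_k\in I_+$ to $v_0\in I_-$, which is exactly what is needed for the concatenation when the orientation of $f_i$ is flipped. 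The junction bookkeeping (your description via the doubling property and the identification of $\eta(v_0)$ with $v_0$) is then exactly the content of the paper's equation verifying $[v_k^i,f_i]=[v_0^{i+1},f_{i+1}]$.

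A second minor point: you write that $\theta$ ``need not itself join $I_-$ to $I_+$'', but in fact the hypothesis $v_k=\eta(v_0)$ together with $\eta(I_-)=I_+$ and the way the lemma is used forces $v_0\in I_-$ and $v_k\in I_+$; the paper uses this directly, and your base case becomes immediate rather than requiring an auxiliary path. With these two corrections, parts~(2)--(4) of your proposal are essentially the paper's arguments verbatim: part~(2) uses a level-$(n+2)$ edge $f=e\cdot h\in E_{n+2}(F)$ with $h\cap I_\pm^{(2)}=\varnothing$ (produced by part~(1), not by Lemma~\ref{Lemma: Interior edge}, for exactly the reason you noticed), and then \ref{Lemma: DL(8)}; part~(3) is closedness plus a two-scale estimate; and part~(4) is the roundness-preservation computation you cite, which the paper carries out explicitly with $K=4\eta(2L_*^2)$.
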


\begin{proof}
    First, we will prove claim \eqref{Lemma: Subset Lemma (1)} by induction. Suppose \(G_n(F)\) contains a path \([u_0,f_1,\ldots,f_m,u_m]\in \Theta^{(n)}\). Since \(v_0\in I_-\) and \(v_k=\eta(v_0)\in I_+\), it follows for all \(1\leq i\leq m-1\) that
        \begin{equation}\label{Equation: Symmetric concatenation}
        \begin{cases}
            (v_k,f_i)\sim (v_0,f_{i+1})&\text{if and only if}\quad u_i=f_i^+=f_{i+1}^-,\\
            (v_k,f_i)\sim (v_k,f_{i+1})&\text{if and only if}\quad u_i=f_i^+=f_{i+1}^+,\\
            (v_0,f_i)\sim (v_0,f_{i+1})&\text{if and only if}\quad u_i=f_i^-=f_{i+1}^-,\\
            (v_0,f_i)\sim (v_k,f_{i+1})&\text{if and only if}\quad u_i=f_i^-=f_{i+1}^+.
        \end{cases}
        \end{equation}
    Define for all \(1\leq i\leq m\) the path
        \[
            \theta_i:=[v^i_0,e^i_1,\ldots,e^i_k,v^i_k]=\begin{cases}
                \theta&\text{if}\quad u_i=f_i^+,\\
                \theta^{-1}&\text{if}\quad u_i=f_i^-.
            \end{cases}
        \]
    Then, by \eqref{Equation: Symmetric concatenation}, we have for all \(1\leq i\leq m-1\) that \([v^i_k,f_i]=[v^{i+1}_0,f_i]\), and hence the concatenation \(\vartheta:=(f_1\cdot\theta_i)* (f_2\cdot\theta_2)*\ldots *(f_m\cdot\theta_m) \) is well defined. Moreover, we have that \([v^1_0,f_1]\in I^{(n+1)}_-\) and \([v^m_k,f_m]\in I^{(n+1)}_+\), and therefore \(\vartheta\in\Theta^{(n+1)}\). Since \(f_i\in E_n(F)\) and \(e_j\in E_1(F)\), we have by definition that \(f_i\cdot e_j\in E_{n+1}(F)\) for all \(1\leq i\leq m\) and \(1\leq j\leq k\), and so the path \(\hat{\theta}\) is contained in \(G_{n+1}(F)\). This concludes the induction.
    \vspace{5pt}

    We will now prove claim \eqref{Lemma: Subset Lemma (2)}. By claim \eqref{Lemma: Subset Lemma (1)}, there exists a path \(\theta\in\Theta^{(2)}\) with every edge contained in \(E_2(F)\). Since the IGS is non-degenerate, we have that \(\len(\theta)\geq 4\), and may thus choose an edge \(h\in E_2(F)\) so that \(h\cap I_{\pm}^{\smash{(2)}}=\varnothing\). Consequently, we may choose an edge \(f=(e\cdot h)\in e\cdot E_2(F)\) so that \(f\cap g=\varnothing\) for all \(g\in E_{n+2}(F)\setminus (e\cdot E_2(F))\). By definition, for any \(x\in X_f(F)\) and \(y\in X(F)\setminus X_e(F)\), there exists representatives \((e_i)_{i=1}^\infty\) and \((g_i)_{i=1}^\infty\), respectively, so that \(e_{n+2}=f\) and \(g_{n+2}\not\in e\cdot E_2(F)\) and therefore \(e_{n+2}\cap f_{n+2}=\varnothing\). It then follows from \ref{Lemma: DL(8)} that \(d_{L_*}(x,y)\geq L_*^{-n-2}\) whenever \(x\in X_f(F) \) and \(y\in X(F)\setminus X_e(F)\), and therefore
        \[
            x\in B_{d_{L_*}\vert_{X(F)}}(x,L_*^{-n-2})\subset \mathrm{int}(X_e(F)).
        \]

    Next, we prove claim \eqref{Lemma: Subset Lemma (3)}, beginning with completeness. Since the set \(X_e\) is closed for any \(e\in E_n\), the finite union \(\bigcup_{e\in E_n(F)}X_e\) is closed for all \(n\in\N\). Since arbitrary intersections of closed sets are closed, \(X(F)\) is a closed subset of a compact metric space. Thus \(X(F)\) is complete. For uniform perfectness, let \(C=2L_*^3\diam(X(F),d_{L_*})\) and \(x=[(e_i)_{i=1}^\infty]\in X(F)\). By claim \eqref{Lemma: Subset Lemma (1)}, there exists a path \(\theta\in \Theta^{(2)}\) with every edge contained in \(E_2(F)\). Since the IGS is non-degenerate, we have that \(\len(\theta)\geq 4\), and may thus choose for any edge \(e\in E_2(F)\) an edge \(f\in E_2(F)\) so that \(e\cap f=\varnothing\). Consequently, for each \(n\in\N\), we may choose an edge \(f_{n+2}\in e_n\cdot E_2(F)\) so that \(e_{n+2}\cap f_{n+2}=\varnothing\), and a point \(y_n\in X_{f_{n+2}}(F)\). It then follows that
        \[
            L_*^{-n-2}\leq d_{L_*}(x,y_n)\leq L_*^n
        \]
    for every \(n\in \N\). Let \(r\in (0,\diam(X(F),d_{L_*})\) and 
        \[
            k:=\max\{k\in\N\mid 2L_*<r\leq 2L_*^{-k+1}\diam(X(F),d_{L_*})\}.
        \]
    We then have that \(y_k\in B(x,2L_*^{-k})\setminus B(x,L_*^{-k-2})\subset B(x,r)\setminus B(x,r/C)\) and there \(B(x,r)\setminus B(x,r/C)\neq \varnothing\). Since \(x\in X(F)\) and \(r\in (0,\diam(X(F),d_{L_*}))\) were arbitrary, it follows that \((X(F),d_{L_*})\) is \(C\)-uniformly perfect.
    \vspace{5pt}

    Finally, we prove claim \eqref{Lemma: Subset Lemma (4)}. By claim \eqref{Lemma: Subset Lemma (3)}, the metric space \((X(F),d_{L_*})\) is complete and uniformly perfect, and hence a metric \(\delta\in\cG_\AR(X(F),d_{L_*})\) exists. By claim \eqref{Lemma: Subset Lemma (2)}, there exists a point \(x\in X_e(F)\) so that
        \[
            B_{d_{L_*}\vert_{X(F)}}(x,L_*^{-n-2})\subset X_e(F).
        \]
    Let \(\eta\) be the distortion function of the quasisymmetry \(\id\colon (X(F),d_{L_*})\to (X(F),\delta)\) and let \(K=4\eta(2L_*^2)\). We may assume that \(\diam(X_e(F),\delta)>0\), since otherwise \(\diam(X(F),\delta)=0\) and the claim becomes trivial. Let \(r=2K^{-1}\diam(X_e(F),\delta)\). Then, since \(\diam(X_e(F),d_{L_*})\leq L_*^{-n}\), it follows that \(X_e(F)\subset B_\delta(x,Kr)\), and so it suffices to show that \(B_\delta(x,r)\subset X_e(F)\). To that end, let \(y\in B_\delta(x,r)\) and pick \(z\in X_e(F)\) with \(\delta(x,z)\geq\diam(X_e(F),\delta)/2=Kr/4\). By the quasisymmetry condition
        \[
            \frac{K}{4}\leq\frac{\delta(x,z)}{\delta(x,y)}\leq \eta\bigg(\frac{d_{L_*}(x,z)}{d_{L_*}(x,y)}\bigg).
        \]
    Thus,
        \[
            \frac{d_{L_*}(x,y)}{d_{L_*}(x,z)}\leq\frac{1}{\eta^{-1}(K/4)},
        \]
    and
        \[
            d_{L_*}(x,y)\leq\frac{d_{L_*}(x,z)}{\eta^{-1}(K/4)}\leq\frac{L_*^{-n}}{\eta^{-1}(K/4)}<L_*^{-n-2}.
        \]
    Hence, \(y\in X_e(F)\). As \(y\in B_\delta(x,r)\) was arbitrary, it follows that  \(B_\delta(x,r)\subset X_e(F)\) as claimed.
\end{proof}

    With the aim of using porosity to obstruct attainment of conformal dimension, we establish the following.

\begin{lemma}\label{Lemma: Porous}
    If \(F\subsetneq E_1\) is non-empty, then \(X(F)\) is a closed porous subset of \((X,d_{L_*})\).
\end{lemma}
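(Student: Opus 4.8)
The plan is to show that $X(F)$ is closed and then verify the porosity estimate directly from the metric bounds already established.

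First I would handle closedness. This is immediate and in fact already contained in the proof of Lemma \ref{Lemma: Subset Lemma}\eqref{Lemma: Subset Lemma (3)}: each $X_e$ is closed (being the continuous image of the compact set $\Sigma_e$), hence $\bigcup_{e\in E_n(F)}X_e$ is a finite union of closed sets, and $X(F)=\bigcap_{n=1}^\infty\bigcup_{e\in E_n(F)}X_e$ is an intersection of closed sets, therefore closed. One subtlety: Lemma \ref{Lemma: Subset Lemma} assumed that $G_1(F)$ contains a path $\theta$ with $v_k=\eta(v_0)$, whereas here $F$ is an arbitrary non-empty proper subset. Closedness does not need that hypothesis, so this part goes through unconditionally.

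Next I would establish porosity. Fix $y=[(e_i)_{i=1}^\infty]\in X(F)$ and $r\in(0,\diam(X,d_{L_*}))$. Choose $n=n(r)$ to be the smallest integer with $L_*^{-n}\le r$, so that $L_*^{-n}\le r< L_*^{-n+1}$; then $X_{e_n}\subset B_{d_{L_*}}(y,r)$ by \ref{Lemma: DL(2)} (since $\rho_n(e_n)=L_*^{-n}\le r$ for the intrinsic density $\rho\equiv L_*^{-1}$), up to adjusting $n$ by a bounded additive constant to get strict containment with room to spare. Since $F\subsetneq E_1$, pick $g^*\in E_1\setminus F$. The key geometric input is Lemma \ref{Lemma: Interior edge} (or a direct use of \ref{SM2} together with non-degeneracy): inside $e_n\cdot E_2$ there is an edge $f = e_n\cdot h$ with $h$ ``interior'' in $G_2$, i.e. $h\cap I_{\pm}^{(2)}=\varnothing$, so that $f\cap g=\varnothing$ for every $g\in E_{n+2}\setminus(e_n\cdot E_2)$. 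Actually what I want is a slightly different edge: I want an edge $f'\in e_n\cdot E_{N}$ for a fixed $N$ (depending only on the IGS) whose first ``new'' letter is $g^*\notin F$ and which is interior, so that $X_{f'}\cap X(F)=\varnothing$ and moreover $X_{f'}$ is separated from the rest of $X$. Concretely: since $g^*\notin F$, any edge of the form $e_n\cdot g^*\cdot(\text{interior continuation})$ gives $X_{e_n\cdot g^*}\cap X(F)=\varnothing$ because every representative of a point of $X(F)$ has all letters in $F$ (here I use Remark \ref{rmk:intersection}: the only way a point of $X_{e}\setminus X_e(F)$ could still lie in $X(F)$ is through a boundary identification, which lands in $X_{e,\pm}$, and by pushing one level deeper into an interior edge we avoid these identifications).

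Then I would pick $x\in X_{f'}$. Using \ref{Lemma: DL(2)} we get $\diam(X_{f'},d_{L_*})\le L_*^{-(n+N)}$, so $B_{d_{L_*}}(x,\tfrac12 L_*^{-(n+N)})$ is comparable in size to $r$ times a fixed constant, hence of the form $B(x,\delta_0 r)$ with $\delta_0=\delta_0(L_*,N)>0$; and using \ref{Lemma: DL(8)} together with $f'\cap g=\varnothing$ for all relevant $g$, every point of $B(x,\delta_0 r)$ lies in $X_{f'}$, which is disjoint from $X(F)$. Finally $B(x,\delta_0 r)\subset B_{d_{L_*}}(y,r)$ provided $\delta_0$ is small and $x$ was chosen inside $X_{e_n}\subset B(y,r)$ with a bit of slack — which is arranged by the choice of $n$. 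This gives the porosity constant $\delta=\delta_0$, uniform in $y$ and $r$. The main obstacle is the bookkeeping around boundary identifications: ensuring that the ``pocket'' $X_{f'}$ is genuinely disjoint from $X(F)$ and not merely disjoint from a single cylinder $X_e$, which is exactly why one passes to an interior edge two (or $N$) levels down and invokes Remark \ref{rmk:intersection} / \ref{SM2}; the metric estimates themselves are routine given \ref{Lemma: DL(2)} and \ref{Lemma: DL(8)}.
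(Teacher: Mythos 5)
Your proposal is correct and takes essentially the same route as the paper: go down a bounded number of levels inside the ball, descend into a cylinder $X_f$ whose first new letter is $g^*\notin F$, and then use the interior-edge machinery together with \ref{Lemma: DL(8)} to produce a sub-ball at a comparable scale that misses $X(F)$. The one difference is bookkeeping: the paper packages the ``pocket ball'' step by invoking Lemma \ref{Lemma: Subset Lemma}\eqref{Lemma: Subset Lemma (2)} with $F=E_1$ (which already supplies a point $x\in\intr(X_f)$ and a ball of fixed scale $L_*^{-(|f|+2)}$ inside $\intr(X_f)$), whereas you rebuild this from Lemma \ref{Lemma: Interior edge} and \ref{Lemma: DL(8)} by hand; also, your claim that $B(x,\delta_0 r)\subset X_{f'}$ for an \emph{arbitrary} $x\in X_{f'}$ is slightly too strong (it can fail near the boundary of the cylinder), but the weaker and sufficient statement $B(x,\delta_0 r)\cap X(F)=\varnothing$ does hold, by comparing a representative of $x$ through the interior edge with the ``all-letters-in-$F$'' representative of any putative point of $X(F)$ and applying \ref{Lemma: DL(8)}.
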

\begin{proof} 
    Fix a constant \(c=L_*^{-5}\cdot\diam(X,d_{L_*})^{-1}\). Let \(y=[(e_i)_{i=1}^\infty]\in X(F)\), \(0<r\leq \diam(X,d_{L_*})\) and choose \(n\in\N\) so that
        \[
            L_*^{-n}\leq r\leq L_*^{-(n-1)}\cdot \diam(X,d_{L_*}).
        \]
    Since \(\diam(X_{e_{n+1}},d_{L_*})\leq L_*^{-(n+1)}\), it follows that \(X_{e_{n+1}}\subset B(y,L_*^{-n})\subset B(y,r)\). Now, choose any \(g\in E_1\setminus F\) and let \(f=e_{n+1}\cdot g\in E_{n+2}\). Then, by definition, \(f\not\in E_{n+2}(F)\) and therefore \(\mathrm{int}(X_f)\subset X\setminus X(F)\), from which it follows that
        \[
            \mathrm{int}(X_f)\subset X_{e_{n+1}}\setminus X(F)\subset B(y,r)\setminus X(F).
        \]
    Note that \(X=X(E_1)\). Thus, by Lemma \ref{Lemma: Subset Lemma}, there exists \(x\in \mathrm{int}(X_f)\), so that \(B(x,L_*^{-(\lvert f\rvert+2)})=B(x,L_*^{-(n+4)})\subset \mathrm{int}(X_f)\). Since \(r\leq L_*^{-(n-1)}\cdot \diam(X,d_{L_*})\), the constant \(c\) then satisfies 
        \[
            B(x,cr)\subset B(x,L^{-(n+4)})\subset\mathrm{int}(X_f)\subset B(y,r)\setminus X(F).
        \]
    Since the point \(y\in X(F)\) and the radius \(0<r\leq \diam(X,d_{L_*})\) were arbitrary, we conclude that \(X(F)\) is a porous subset of \((X,d_{L_*})\).
\end{proof}

\subsection{Characterization of attainment}
    For this subsection, we fix a simple path
        \[
            \theta^*:=[v^*_0,e^*_1,\ldots,e^*_{L_*},v^*_{L_*}]\in \Theta^{(1)}
        \]
    for which \(\eta(v^*_0)=v^*_k\) and \(\len(\theta^*)=L_*\). If $Q_*>1$, we also fix $\rho^*$, which is the minimizer for $\Mod_{Q_*}(\Theta^{(1)}, G_1).$
    
\begin{definition}\label{def:removable}
    We define the subset \(\widehat{E}\subseteq E_1\) by    
        \[
            \widehat{E}:=\begin{cases}
                \{e : e\in \theta^*\}&\text{if}\quad Q_*=1,\\
                \supp(\rho^*)&\text{if}\quad Q_*>1,
            \end{cases}
        \]
    and define for all \(n\in\N\) the family of paths 
        \[
            \widehat{\Theta}^{(n)}:=\{[v_0,e_1,\ldots,e_k,v_k]\in \Theta^{(n)}\mid e_i\in E_n(\widehat{E}) \text{ for all }1\leq i\leq k\}.
        \]
    Furthermore, we denote \(\widehat{X}:=X(\widehat{E})\), \(\widehat{X}_\pm:=X_\pm\cap \widehat{X}\), and \(\widehat{X}_e:=X_e(\widehat{E})\). We say that the IGS has a removable edges if \(\widehat{E}\neq E_1\). 
\end{definition}
The edges in \(\widehat{E}\neq E_1\) are called removable, since removing them from $E_1$ does not alter the value of modulus, as the following lemma shows. Recall that $\Mod_{Q_*}(\Theta^{(n)}, G_n)=1$.

\begin{lemma}\label{Lemma: Critical exponent (porous)}
    For every \(n\in\N\), the family of paths \(\widehat{\Theta}^{(n)}\) is non-empty and satisfies \(\Mod_{Q_*}(\widehat{\Theta}^{(n)},G_n(\widehat{E}))=1\).
\end{lemma}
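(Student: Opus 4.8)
The plan is to split the statement into two parts: non-emptiness of $\widehat{\Theta}^{(n)}$, and the modulus identity $\Mod_{Q_*}(\widehat\Theta^{(n)},G_n(\widehat E))=1$.

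First I would establish non-emptiness. The key input is Lemma~\ref{Lemma: Subset Lemma}\eqref{Lemma: Subset Lemma (1)}: if $G_1(\widehat E)$ contains a path $\theta=[v_0,e_1,\dots,e_k,v_k]$ with $v_k=\eta(v_0)$, then $G_n(\widehat E)$ contains a path in $\Theta^{(n)}$, which by definition means $\widehat\Theta^{(n)}\neq\varnothing$. So I need to find such a path inside $G_1(\widehat E)$. In the case $Q_*=1$, $\widehat E$ consists exactly of the edges of the fixed path $\theta^*=[v_0^*,e_1^*,\dots,e_{L_*}^*,v_{L_*}^*]$, which satisfies $\eta(v_0^*)=v_{L_*}^*$, so $\theta^*$ itself does the job. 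In the case $Q_*>1$, I take $\widehat E=\supp(\rho^*)$ where $\rho^*$ is the $Q_*$-optimal symmetric density. Here I need to argue that $\supp(\rho^*)$ supports a path from $I_-$ to $I_+$. This should follow from admissibility: if no path in $\Theta^{(1)}$ lay entirely in $\supp(\rho^*)$, then every path $\theta\in\Theta^{(1)}$ would contain some edge outside $\supp(\rho^*)$, i.e.\ with $\rho^*$-value zero --- but that does not immediately contradict $L_{\rho^*}(\theta)\ge 1$. The correct argument instead uses the flow-modulus duality (Proposition~\ref{Proposition: Duality}): the optimal unit flow $\cJ$ satisfies $|\cJ(e)|=\Mod_{Q_*}(\Theta^{(1)},G_1)^{-1}\rho^*(e)^{Q_*-1}$ by \eqref{flow_mod_vanish}, so $|\cJ(e)|>0$ exactly on $\supp(\rho^*)$. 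A unit flow from $I_-$ to $I_+$ decomposes into a nonnegative combination of paths from $I_-$ to $I_+$ (flow decomposition), each supported on $\{|\cJ(e)|>0\}=\supp(\rho^*)$; hence at least one path $\theta\in\Theta^{(1)}$ lies in $G_1(\widehat E)$. Using the symmetry of $\rho^*$ (hence of $\supp(\rho^*)$), one can furthermore arrange such a path $\theta$ to have $v_k=\eta(v_0)$ --- either directly, or by taking $\theta$ joining $v_0$ to some $w\in I_+$ and concatenating $\theta$ with a symmetric partner inside $G_1(\widehat E)$; in any case $\eta(\theta)$ is also a path in $G_1(\widehat E)$, from $I_+$ to $I_-$, which suffices for the concatenation argument in Lemma~\ref{Lemma: Subset Lemma}\eqref{Lemma: Subset Lemma (1)}. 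This gives $\widehat\Theta^{(n)}\neq\varnothing$.

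Next I would prove $\Mod_{Q_*}(\widehat\Theta^{(n)},G_n(\widehat E))=1$. The inequality $\Mod_{Q_*}(\widehat\Theta^{(n)},G_n(\widehat E))\le 1$ comes from exhibiting an admissible density of mass $1$: take $\rho_n^*\colon E_n(\widehat E)\to[0,1)$ given by the multiplicative cascade $\rho_n^*(e)=\prod_{i=1}^n\rho_1^*(T^{-1}(e)_i)$ (when $Q_*=1$ use the indicator-type density from Lemma~\ref{Lemma: Cut-points}, or more uniformly apply the $Q_*>1$ cascade reasoning after noting $\rho^*$ is $Q_*$-optimal). By Lemma~\ref{Lemma: Optimal Density} and the computation \eqref{Equation: Energy and mass cascade}, $\cM_{Q_*}(\rho_n^*)=\cM_{Q_*}(\rho^*)^n=1$; and by the path-lift argument (Proposition~\ref{Proposition: Path lift}, as in the proof of Lemma~\ref{Lemma: Optimal Density}), $\rho_n^*$ restricted to $E_n(\widehat E)$ is $\widehat\Theta^{(n)}$-admissible since every $\theta\in\widehat\Theta^{(n)}$ lifts to a path in $\Theta^{(n)}$ whose constituent subpaths are copies of $\Theta^{(1)}$-paths living in $G_1(\widehat E)$, on which $\rho^*$ is admissible. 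For the reverse inequality $\Mod_{Q_*}(\widehat\Theta^{(n)},G_n(\widehat E))\ge 1$, I would use the optimality of $\rho_n^*$: any $\widehat\Theta^{(n)}$-admissible density $\rho$ on $E_n(\widehat E)$ extends by zero to a $\Theta^{(n)}$-admissible density on all of $E_n$ --- here I need that $\widehat\Theta^{(n)}$ carries "enough" paths, namely that every $\theta\in\Theta^{(n)}$ already passes only through edges of $E_n(\widehat E)$... which is false in general. So instead the right argument is flow-side: by Proposition~\ref{Proposition: Duality} applied to the (possibly disconnected) graph $G_n(\widehat E)$ with $A=I_-^{(n)}\cap V_n(\widehat E)$, $B=I_+^{(n)}\cap V_n(\widehat E)$, it suffices to produce a unit flow on $G_n(\widehat E)$ from $A$ to $B$ whose $q$-energy equals $\cE_q(\cJ_n)=\cE_q(\cJ_1)^n$. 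The optimal flow $\cJ_n$ constructed in Lemma~\ref{Lemma: Optimal Density} is supported exactly on $\{|\cJ_n(e)|>0\}$, and by the cascade formula \eqref{Equation: Optimal flow is a cascade} together with $|\cJ_1(e)|>0\iff e\in\supp(\rho_1^*)=\widehat E$ (from \eqref{flow_mod_vanish}), we get $\supp(\cJ_n)\subseteq E_n(\widehat E)$; thus $\cJ_n$ restricts to a unit flow on $G_n(\widehat E)$, and $\cM_{Q_*}(\rho_n^*)^{1/Q_*}\cdot\cE_q(\cJ_n)^{1/q}=1$ forces both to be optimal on $G_n(\widehat E)$, giving $\Mod_{Q_*}(\widehat\Theta^{(n)},G_n(\widehat E))=\cM_{Q_*}(\rho_n^*)=1$.

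The main obstacle I anticipate is the bookkeeping around $\supp(\rho^*)=\supp(\cJ)$ and the careful invocation of the generalized duality (Proposition~\ref{Proposition: Duality}) on the disconnected graph $G_n(\widehat E)$ --- in particular checking that the identifications of gluing vertices match up between the flow $\cJ_n$ and the path family $\widehat\Theta^{(n)}$, so that the restricted $\cJ_n$ is genuinely a unit flow between the $A,B$ used to compute $\Mod_{Q_*}(\widehat\Theta^{(n)},G_n(\widehat E))$. The $Q_*=1$ case needs separate (but easier) handling since $\rho^*$ is not defined there; one uses the cut-edge $e^*$ from Lemma~\ref{Lemma: Cut-points}, for which $E_n(\widehat E)$ consists of the single "diagonal" path's refinements and the modulus is trivially $1$. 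Everything else is a routine combination of the cascade identities already proven.
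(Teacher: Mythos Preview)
Your approach is essentially the paper's: for $Q_*>1$, restrict the optimal density $\rho^*_n$ and optimal flow $\cJ^*_n$ to $G_n(\widehat E)$, observe that neither the $Q_*$-mass nor the $q$-energy changes (since only zeros are dropped), and invoke the duality of Proposition~\ref{Proposition: Duality} on the subgraph to conclude optimality there; the $Q_*=1$ case is handled separately by the counting argument you describe.

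One small gap: for non-emptiness when $Q_*>1$, your route via Lemma~\ref{Lemma: Subset Lemma}\eqref{Lemma: Subset Lemma (1)} needs a path $\theta$ in $G_1(\widehat E)$ with endpoint condition $v_k=\eta(v_0)$, and the concatenation you sketch does not produce one in general---if $\theta$ ends at $w\in I_+$, then $\eta(\theta)^{-1}$ starts at $\eta(w)$, which need not equal $w$, so the pieces don't join. The cleanest fix is to bypass Lemma~\ref{Lemma: Subset Lemma} here and work directly on $G_n$: the optimal flow $\cJ^*_n$ is a unit flow from $I_-^{(n)}$ to $I_+^{(n)}$ supported exactly on $E_n(\widehat E)$ (by the cascade formula~\eqref{Equation: Optimal flow is a cascade} together with~\eqref{flow_mod_vanish}), so standard flow decomposition already yields a path in $\widehat\Theta^{(n)}$ with no lifting required.
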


\begin{proof}
    Let us first consider the case \(Q_*=1\). It follows directly from Lemma \ref{Lemma: Subset Lemma} that \(\widehat{\Theta}^{(n)}\) is non-empty. Since every edge \(e\in E_n(\widehat{E})\) is of the form \(e_1\cdot \ldots\cdot e_n\), where \(e_i\in \theta^*\) for all \(1\leq i\leq n\), and since \(\theta^*\) is a simple path with \(\len(\theta^*)=L_*\), it follows that \(\lvert E_n(\widehat{E})\rvert = \len(\theta^*)^n= L_*^n\). We also have that \(\min\{\len(\theta)\mid\theta\in\Theta^{(n)}\}=L_*^n\), and so every path in \(\widehat{\Theta}^{(n)}\) must contain every edge in \(E_n(\widehat{E})\). Consequently, the density \(\rho_n\equiv L_*^{-n}\) is \(\widehat{\Theta}^{(n)}\)-admissible with \(\cM_1(\rho_n)=1\), and therefore 
        \[
            1=\Mod_1(\widehat{\Theta}^{(n)},G_n(\widehat{E}))=\Mod_{Q_*}(\widehat{\Theta}^{(n)},G_n(\widehat{E})).
        \]

    Next, we consider the case \(Q_*>1\). Let $\rho^*_n$ be the optimal density function for $\Mod_{Q_*}(\Theta^{(n)}, G_n)$ and $\cJ^*_n$ the optimal flow for $\Res_p(X_-,X_+,G_n)$. By Lemma \ref{Lemma: Optimal Density} we have that \(\rho^*_n(e_1\cdot\ldots\cdot e_n)>0\) if and only if \(e_i\in \supp(\rho^*_1(e))=\widehat{E}\) for all \(1\leq i\leq n\). Moreover, by Proposition \ref{Proposition: Duality}, we have that \(\lvert \cJ^*_n(e)\rvert >0\) if and only if \(\rho^*_n(e)>0\). Therefore,
        \[
            E_n(\widehat{E})=\{e\in E_n\mid \rho^*_n(e)>0\}=\{e\in E_n\mid \lvert\cJ^*_n(e)\rvert>0\}.
        \]
     The support of a non-trivial flow must contain a path and thus by Lemma \ref{Lemma: Subset Lemma} it follows that \(\widehat{\Theta}^{(n)}\) is non-empty.  It is clear that \(\cJ^*_n\vert_{V_n(\widehat{E}) \times E_n(\widehat{E})}\) is a unit flow and that \(\rho^*_n\vert_{E_n(\widehat{E})}\) is a \(\widehat{\Theta}^{(n)}\)-admissible density. Since
        \[
            \cM_{Q}(\rho^*_n\vert_{E_n(\widehat{E})})=\sum_{e:\rho^*_n(e)\neq 0}\rho^*_n(e)^{Q}=\cM_{Q}(\rho^*_n),
        \]
    and
        \[
            \cE_{Q}(\cJ^*_n\vert_{\cI(G_n(\widehat{E}))})=\sum_{e:\lvert \cJ^*_n(e)\rvert\neq0}\lvert\cJ^*_n(e)\rvert=\cE_{Q}(\cJ^*_n),
        \]
    for all \(1\leq Q<\infty\), it follows that
        \[
            \cM_{Q_*}(\rho^*_n\vert_{E_n(\widehat{E})})^{\frac{1}{Q_*}}\cdot\cE_{\frac{Q_*}{Q_*-1}}(\cJ^*_n\vert_{\cI(G_n(\widehat{E}))})^{\frac{Q_*-1}{Q_*}}=1,
        \]
    and therefore, since \(\widehat{\Theta}^{(n)}\) is non-empty, it follows from Proposition \ref{Proposition: Duality}, that 
        \[
            \Mod_{Q_*}(\widehat{\Theta}^{(n)},G_n(\widehat{E}))= \cM_{Q_*}(\rho^*_n\vert_{E_n(\widehat{E})})=\cM_{Q_*}(\rho^*_n)=\Mod_{Q_*}(\Theta^{(n)},G_n)=1.
        \]
    
\end{proof}

\begin{corollary}\label{Corollary: Density to exponent}
    Let \(q\geq 1\). If for every \(p>q\) there exists a \(\widehat{\Theta}^{(n)}\)-admissible density \(\rho:E_n\to(0,1)\) for some \(n\in\N\), such that \(\cM_p(\rho)<1\), then \(q\geq Q_*\).
\end{corollary}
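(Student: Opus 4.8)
The plan is to argue by contradiction, using the monotonicity of the discrete modulus in the exponent together with Lemma \ref{Lemma: Critical exponent (porous)}, which says that deleting the removable edges leaves the value of the critical modulus problem unchanged. So assume, for contradiction, that $q < Q_*$, and fix an exponent $p$ with $q < p < Q_*$; note that such a $p$ exists and that in particular $Q_* > 1$.

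By the hypothesis applied to this $p$, there exist $n \in \N$ and a $\widehat{\Theta}^{(n)}$-admissible density $\rho \colon E_n \to (0,1)$ with $\cM_p(\rho) < 1$. The first step is to pass from $\rho$ to a density on the subgraph $G_n(\widehat{E})$: the restriction $\rho|_{E_n(\widehat{E})}$ is still $\widehat{\Theta}^{(n)}$-admissible, because by Definition \ref{def:removable} every path in $\widehat{\Theta}^{(n)}$ uses only edges lying in $E_n(\widehat{E})$, and its $p$-mass satisfies $\cM_p(\rho|_{E_n(\widehat{E})}) = \sum_{e \in E_n(\widehat{E})} \rho(e)^p \le \cM_p(\rho) < 1$. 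Hence
\[
    \Mod_p\big(\widehat{\Theta}^{(n)}, G_n(\widehat{E})\big) \le \cM_p\big(\rho|_{E_n(\widehat{E})}\big) < 1 .
\]

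For the contradiction I would now invoke Lemma \ref{Lemma: Critical exponent (porous)}, which gives that $\widehat{\Theta}^{(n)}$ is non-empty and $\Mod_{Q_*}(\widehat{\Theta}^{(n)}, G_n(\widehat{E})) = 1$. Since the IGS is non-degenerate, every path in $\Theta^{(n)} \supseteq \widehat{\Theta}^{(n)}$ has length at least $2^n \ge 2$ (Lemma \ref{Lemma: Non-degenerate path length}; a length-one path would be simple and hence excluded), so Lemma \ref{Lemma: Continuity of modulus} applies to the graph $G_n(\widehat{E})$ and the family $\widehat{\Theta}^{(n)}$: the map $t \mapsto \Mod_t(\widehat{\Theta}^{(n)}, G_n(\widehat{E}))$ is strictly decreasing on $[1,\infty)$. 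Because $p < Q_*$, this yields
\[
    \Mod_p\big(\widehat{\Theta}^{(n)}, G_n(\widehat{E})\big) > \Mod_{Q_*}\big(\widehat{\Theta}^{(n)}, G_n(\widehat{E})\big) = 1 ,
\]
contradicting the displayed inequality above. Therefore $q \ge Q_*$.

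The argument is essentially bookkeeping, so I do not anticipate a genuine obstacle; the only two points needing care are (i) verifying that restricting the ambient density $\rho$ to $E_n(\widehat{E})$ preserves $\widehat{\Theta}^{(n)}$-admissibility while not increasing the $p$-mass, so that the hypothesis actually controls $\Mod_p(\widehat{\Theta}^{(n)}, G_n(\widehat{E}))$ and not a larger quantity, and (ii) checking the hypotheses of Lemma \ref{Lemma: Continuity of modulus} (non-emptiness of $\widehat{\Theta}^{(n)}$ and the lower bound on path lengths) so that strict monotonicity in the exponent is legitimately available. The substantive content has already been packaged into Lemma \ref{Lemma: Critical exponent (porous)}.
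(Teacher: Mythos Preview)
Your proof is correct and follows essentially the same approach as the paper's: argue by contradiction, choose $p$ with $q<p<Q_*$, and combine Lemma~\ref{Lemma: Critical exponent (porous)} with the strict monotonicity in Lemma~\ref{Lemma: Continuity of modulus} to force $\Mod_p(\widehat{\Theta}^{(n)},G_n(\widehat{E}))>1$, contradicting the existence of an admissible density with $p$-mass below $1$. Your treatment is slightly more explicit in restricting $\rho$ from $E_n$ to $E_n(\widehat{E})$ and in checking the hypotheses of Lemma~\ref{Lemma: Continuity of modulus}, but the substance is identical.
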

\begin{proof}
    By Lemma \ref{Lemma: Critical exponent (porous)}, we have that  \(\Mod_{Q_*}(\widehat{\Theta}^{(n)},G_n(\widehat{E}))=1\) for all \(n\in\N\). Therefore, by Lemma \ref{Lemma: Continuity of modulus}, we have for all \(n\in\N\) that 
        \[
            \Mod_q(\widehat{\Theta}^{(n)},G_n(\widehat{E}))>\Mod_p(\widehat{\Theta}^{(n)},G_n(\widehat{E}))>1
        \]
    whenever \(1\leq q<p<Q_*\). Therefore, for any \(n\in\N\) and \(1\leq q<p<Q_*\), there cannot exists a \(\widehat{\Theta}^{(n)}\)-admissible density \(\rho\) for which \(\cM_p(\rho)<1\).
\end{proof}

We are now ready to prove the lower bound for conformal dimension. The following proposition could be proved also with the techniques from \cite{Carrasco}. This would be quite indirect, and we instead prefer to give a short and self-contained proof.

    \begin{proposition}\label{Proposition: Equality of Confdim}
        Let \(Q_*\geq 1\) be the unique value for which \(\Mod_{Q_*}(\Theta^{(1)},G_1)=1\). Then, \(\dim_\AR(\widehat{X},d_{L_*})\geq Q_*\)
    \end{proposition}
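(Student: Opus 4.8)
The plan is to establish the matching lower bound by showing that every metric $\delta\in\cG_\AR(\widehat X,d_{L_*})$ that is $Q$-Ahlfors regular satisfies $Q\ge Q_*$; since the Hausdorff dimension of a $Q$-Ahlfors regular space equals $Q$, this yields $\dim_\AR(\widehat X,d_{L_*})\ge Q_*$. Note first that $\cG_\AR(\widehat X,d_{L_*})\ne\varnothing$: by Lemma \ref{Lemma: Subset Lemma} the set $\widehat X$ is complete and uniformly perfect, and it sits inside the Ahlfors regular space $(X,d_{L_*})$. The engine of the argument is Corollary \ref{Corollary: Density to exponent}: it suffices to produce, for every $p>Q$, a level $n\in\N$ and a $\widehat\Theta^{(n)}$-admissible density $\rho\colon E_n\to(0,1)$ with $\cM_p(\rho)<1$. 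Such a density is obtained from the geometry of $\delta$ by the familiar recipe of weighting each edge by a normalized $\delta$-diameter of its tile.

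Fix $\delta$ as above, with Ahlfors regularity constant $C_0$ and associated measure $\mu$, and set $\kappa:=\dist_\delta(\widehat X_-,\widehat X_+)$. This is positive: $\widehat X_-$ and $\widehat X_+$ are closed subsets of the compact space $(\widehat X,\delta)$, disjoint because $X_-\cap X_+=\varnothing$ (a consequence of non-degeneracy), and non-empty because $\widehat\Theta^{(n)}\ne\varnothing$ for all $n$ by Lemma \ref{Lemma: Critical exponent (porous)}. For $n\in\N$ and a parameter $\epsilon\in(0,1)$ to be fixed, define $\rho\colon E_n\to(0,\infty)$ by $\rho(e):=\kappa^{-1}\diam(\widehat X_e,\delta)$ for $e\in E_n(\widehat E)$ and $\rho(e):=\epsilon$ otherwise. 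I would then record three estimates, each uniform in $n$. First, $\Delta_n:=\max_{e\in E_n(\widehat E)}\diam(\widehat X_e,\delta)\to 0$ as $n\to\infty$: indeed $\diam(X_e,d_{L_*})\to 0$ uniformly over $e\in E_n$ by Corollary \ref{Corollary: Diameter comparable to weight}, and the quasisymmetry $\id\colon(\widehat X,d_{L_*})\to(\widehat X,\delta)$, being a quasisymmetry between compact spaces, is uniformly continuous. Second, $\diam(\widehat X_e,\delta)^Q\le C_0(2K)^Q\,\mu(\widehat X_e)$ for each $e\in E_n(\widehat E)$, because by Lemma \ref{Lemma: Subset Lemma} the tile $\widehat X_e$ contains a $\delta$-ball of radius $r\ge\diam(\widehat X_e,\delta)/(2K)$ (with $K$ the constant from that lemma), whose measure is at least $C_0^{-1}r^Q$. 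Third, $\sum_{e\in E_n(\widehat E)}\mu(\widehat X_e)\le (2C_{\deg}+1)\mu(\widehat X)$, since each point of $\widehat X$ lies in at most $2C_{\deg}+1$ of the tiles $\{X_e:e\in E_n\}$, where $C_{\deg}=\max_{v\in V_1}\deg(v)$ is the degree bound of Lemma \ref{Lemma: Doubling degree bound}. Combining the last two gives $\sum_{e\in E_n(\widehat E)}\diam(\widehat X_e,\delta)^Q\le C_4$ with $C_4:=C_0(2K)^Q(2C_{\deg}+1)\mu(\widehat X)$ independent of $n$.

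With these in hand the computation is short. For $p>Q$, using $\diam(\widehat X_e,\delta)^p\le\Delta_n^{\,p-Q}\diam(\widehat X_e,\delta)^Q$,
\[
  \cM_p(\rho)\ \le\ \kappa^{-p}\sum_{e\in E_n(\widehat E)}\diam(\widehat X_e,\delta)^p+\epsilon^p\lvert E_1\rvert^{n}\ \le\ \kappa^{-p}\Delta_n^{\,p-Q}C_4+\epsilon^p\lvert E_1\rvert^{n},
\]
so choosing $n$ so large that $\kappa^{-p}\Delta_n^{\,p-Q}C_4<\tfrac12$ (possible since $\Delta_n\to 0$ and $p-Q>0$) and then $\epsilon$ small enough that $\epsilon^p\lvert E_1\rvert^{n}<\tfrac12$ gives $\cM_p(\rho)<1$; enlarging $n$ further we also get $\rho(e)\le\kappa^{-1}\Delta_n+\epsilon<1$ for all $e$, so $\rho\colon E_n\to(0,1)$. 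It remains to check admissibility: for $\theta=[v_0,e_1,\ldots,e_k,v_k]\in\widehat\Theta^{(n)}$ all edges lie in $E_n(\widehat E)$, and
\[
  L_\rho(\theta)=\kappa^{-1}\sum_{i=1}^k\diam(\widehat X_{e_i},\delta)\ \ge\ \kappa^{-1}\diam\Big(\bigcup_{i=1}^k\widehat X_{e_i},\,\delta\Big)\ \ge\ \kappa^{-1}\kappa=1,
\]
where the first inequality uses that consecutive tiles $\widehat X_{e_i},\widehat X_{e_{i+1}}$ intersect and the second that $\bigcup_i\widehat X_{e_i}$ contains a point of $\widehat X_-$ (a descendant of the endpoint $v_0\in I_-^{(n)}$ realized through edges of $\widehat E$) and a point of $\widehat X_+$. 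Thus $\rho$ witnesses the hypothesis of Corollary \ref{Corollary: Density to exponent} with $q=Q$, so $Q\ge Q_*$, and since $\delta$ was arbitrary the proposition follows. (If the chain-connectedness only yields admissibility up to a constant, one simply rescales $\rho$ by that constant, which does not affect the conclusion since $\cM_p(\rho)\to 0$.)

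The step I expect to be the main obstacle is the admissibility verification — more precisely, the two topological facts used there: that consecutive tiles $\widehat X_{e_i}$ and $\widehat X_{e_{i+1}}$ meet \emph{inside} $\widehat X$, and that the chain reaches from $\widehat X_-$ to $\widehat X_+$. Both reduce to the observation, via Remark \ref{rmk:intersection} and the structure of the identifications, that the relevant intersection and boundary-continuation points can be chosen using only edges of $\widehat E$; this is where one uses that $\widehat E$ contains edges incident to both $I_-$ and $I_+$, which holds because $\widehat\Theta^{(1)}\ne\varnothing$. The only other genuinely quantitative input is the pair of estimates relating $\delta$-diameters of tiles to their $\mu$-measure, which couples the ball-in-tile statement of Lemma \ref{Lemma: Subset Lemma} with the bounded-overlap property of the covers $\mathbf{X}^n$ coming from the doubling assumption.
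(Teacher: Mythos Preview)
Your proposal is correct and follows essentially the same approach as the paper's proof: build admissible densities for $\widehat\Theta^{(n)}$ from the $\delta$-diameters of the tiles $\widehat X_e$, use Lemma~\ref{Lemma: Subset Lemma}(4) and Ahlfors regularity to control the $p$-mass, and invoke Corollary~\ref{Corollary: Density to exponent}. The only cosmetic differences are that the paper defines $\rho_n$ only on $E_n(\widehat E)$ (avoiding the auxiliary $\epsilon$-extension) and bounds $\sum_e\mu(B_e)$ via disjointness of the inscribed balls rather than bounded overlap of the tiles; your flagging of the $\widehat X_{e_i}\cap\widehat X_{e_{i+1}}\neq\varnothing$ step as the delicate point is apt, and the paper treats it at the same level of detail.
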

    \begin{proof}

        Let \(p,q\in (1,\infty)\) so that \(p>q>\dim_\AR(\widehat{X},d_{L_*}\vert_{\widehat{X}})\), and choose a $q$-Ahlfors regular metric \(\delta\in\cG_\AR(\widehat{X},d_{L_*})\). For every \(n\in\N\), define the density \(\rho_n\colon E_n(\widehat{E}) \to (0,1)\) by
            \[
                \rho_n(e):=\frac{\diam(\widehat{X}_e,\delta)}{\dist(X_-,X_+,\delta)}.
            \]
        Since $X_-\cap X_+=\emptyset$ we have $\dist(X_-,X_+,\delta)>0$. Given \(\theta=[v_0,e_1,\ldots,e_k,v_k]\in\widehat{\Theta}^{(n)}\), we have that \(\widehat{X}_{e_i}\cap \widehat{X}_{e_{i+1}}\neq\varnothing\) for all \(1\leq i\leq k-1\), and that \(\widehat{X}_{e_1}\cap X_-\neq \varnothing\) and \(\widehat{X}_{e_k}\cap X_+\neq \varnothing\). It therefore follows from the triangle inequality that \(\dist(\widehat{X}_-,\widehat{X}_+,\delta)\leq\sum_{e\in \theta}\diam(\widehat{X}_e,\delta)\), and hence
            \[
                L_{\rho_n}(\theta)=\sum_{e\in \theta}\rho_n(e)=\sum_{e\in \theta}\frac{\diam(\widehat{X}_e,\delta)}{\dist(X_-,X_+,\delta)}\geq\frac{\dist(X_-,X_+,\delta)}{\dist(X_-,X_+,\delta)}=1.
            \]
        Thus, the density \(\rho_n\) is \(\widehat{\Theta}^{(n)}\)-admissible for every \(n\in\N\). Now, we wish to estimate \(\cM_p(\rho_n)\). By Lemma \ref{Lemma: Subset Lemma} there exists a constant \(K>1\), such that we may choose for every edge \(e\in \bigcup_{n\in\N}E_n(\widehat{E})\) a point \(x(e)\in \widehat{X}_e\) and a radius \(r(e)>0\) so that 
            \begin{equation}\label{Equation: Cocentric balls}
                B_\delta(x(e),r(e))\subset \widehat{X}_e\subset B_\delta(x(e),Kr(e)).
            \end{equation}
        Denote \(B_e:=B_\delta(x(e),r(e))\). Recall that the measure $\mu=\cH^q$ (with respect to the metric $\delta$) satisfies the condition in \eqref{AReq}. By equation \eqref{Equation: Cocentric balls} and the $q-$Ahlfors regularity of \(\delta\), there exists some constant \(C>1\) such that
            \begin{equation}\label{Equation: Diameter Estimate}
                C^{-1}\cH^q(B_e,\delta)\leq\diam(\widehat{X}_e,\delta)\leq CK^q\cH^q(B_e,\delta)
            \end{equation}
        for every edge \(e\in \bigcup_{n\in\N}E_n(\widehat{E})\). Since \(\mathrm{int}\widehat{X}_e\cap\mathrm{int}\widehat{X}_f=\varnothing\) whenever \(e,f\in E_n(\widehat{E})\) and \(e\neq f\) for some \(n\in\N\), the collection of balls \(\{B_e\}_{e\in E_n(\widehat{E})}\) is disjoint and
            \begin{equation}\label{Equation: Measure Estimate}
                \sum_{e\in E_n(\widehat{E})}\cH^q(B_e,\delta)\leq \cH^q(X,\delta)<\infty.
            \end{equation}
        Let \(M_n:=\max\{\rho_n(e)\mid e\in E_n(\widehat{E})\}\). From the quasisymmetry condition, it follows easily that $M_n \leq \diam(\widehat{X},\delta)\eta(L_*^{1-n})$ and $\lim_{n\to\infty} M_n = 0$. Since \(\rho_n(e)^p\leq\rho_n(e)^{p-q}\rho_n(e)^q\leq M_n^{p-q}\rho_n(e)^q\) for all \(e\in E_n\), we may estimate:
            \begin{equation}\label{Equation: Density Estimate}
            \begin{split}
                \cM_p(\rho_n)&\leq M_n^{p-q}\sum_{e\in E_n(\widehat{E})}\rho_n(e)^q=M_n^{p-q}\sum_{e\in E_n(\widehat{E})}\frac{\diam(\widehat{X}_e,\delta)^q}{\dist(X_-,X_+,\delta)^q}\\
                &\stackrel{\eqref{Equation: Diameter Estimate}}{\leq} M_n^{p-q}\sum_{e\in E_n(\widehat{E})}\frac{CK^q\cH^q(B_e,\delta)}{\dist(X_-,X_+,\delta)^q}\stackrel{\eqref{Equation: Measure Estimate}}{\leq} M_n^{p-q}\frac{CK^q\cH^q(X,\delta)}{\dist(X_-,X_+,\delta)^q}.
            \end{split}    
            \end{equation}
        Since \(p-q>0\) and \(M_n\stackrel{n\to\infty}{\to}0\), it follows that \(M_n^{p-q}\stackrel{n\to\infty}{\to}0\), and so there exists some \(n\in\N\) such that \(\cM_p(\rho_n)<1\). Since, \(p>q\) was arbitrary, it follows from Corollary \ref{Corollary: Density to exponent} that \(q\geq Q_*\). Since \(q>\dim_\AR(\widehat{X},d_{L_*}\vert_{\widehat{X}})\) was arbitrary, it follows that \(\dim_\AR(\widehat{X},d_{L_*})\geq Q_*\).
    \end{proof}

        We are now ready to prove the main Theorems.
    
    \begin{proof}[Proof of Theorem \ref{Main Theorem: Value of Conformal Dimension}]
        It follows from Lemmas \ref{Lemma: Critical exponent (porous)} and \ref{Lemma: Subset Lemma}, that the subset \(\widehat{X}\) is complete and uniformly perfect. Thus, as \(\dim_\AR(X,d_{L_*})\leq \log_{L_*}(\lvert E_1\rvert)<\infty\), it follows Corollary \ref{Corollary: Confdim bound for uniformly perfect subset} that
            \begin{equation}\label{Equation: Confdim inequality}
                \dim_\AR(\widehat{X},d_{L_*})\leq\dim_\AR(X,d_{L_*}).
            \end{equation}
        By Corollary \ref{Corollary: Confdim upper bound} we have that \(\dim_\AR(X,d_{L_*})\leq Q_*\) and by Proposition \ref{Proposition: Equality of Confdim} that \(\dim_\AR(\widehat{X},d_{L_*})\geq Q_*\). Therefore it follows that 
            \[
                Q_*=\dim_\AR(X,d_{L_*})=\dim_\AR(\widehat{X},d_{L_*}).
            \]
    \end{proof}

    \begin{proof}[Proof of Theorem \ref{Main Theorem: Characterization of Attainment}]
        If the IGS has removable edges, meaning that \(E_1\neq \widehat{E}_1\), then Lemma \ref{Lemma: Porous} implies that the subset \(\widehat{X}\) is porous. As shown in the proof of Theorem \ref{Main Theorem: Value of Conformal Dimension}, \(\dim_\AR(X,d_{L_*})=\dim_\AR(\widehat{X},d_{L_*})\) and hence, by Proposition \ref{Proposition: Porosity implies non-attainment}, the conformal dimension of \((X,d_{L_*})\) is not attained. On the other hand, if the IGS does not have removable edges, then, by definition, there exists a symmetric \(\Theta^{(1)}\)-admissible density \(\rho^*\colon E_1\to (0,1)\) so that \(\cM_{Q_*}(\rho^*)=1\). It then follows from Theorem \ref{Main Theorem: Construction of Metrics} that 
        \(d_{\rho^*}\in \cG_\AR(X,d_{L_*})\) and
            \[
                \dim_\mathrm{H}(X,d_{\rho^*})=Q_*=\dim_\AR(X,d_{L_*}),
            \]
        where the last equality is given by Theorem \ref{Main Theorem: Value of Conformal Dimension}. Thus, the conformal dimension of \((X,d_{L_*})\) is attained.
    \end{proof}
    
\section{On the Combinatorial Loewner Property}\label{Section: Approximate self-similarity and the Combinatorial Loewner Property}
One of the motivations for this work comes from the combinatorial Loewner property (CLP) discussed in \cite{BourK}, and the related \emph{Kleiner's conjecture} \cite[Conjecture 7.5]{KleinerICM}. The conjecture states that the combination of CLP and approximate self-similarity implies that a space is quasisymmetric to a Loewner space. By \cite[\S 1.6]{CEB} (or \cite[Proposition 11.14]{murugan2023first}), this problem is equivalent to the attainment of conformal dimension for such spaces. In \cite{anttila2024constructions}, the first two authors found counterexamples to this conjecture. All of these examples came from IGSs with removable edges, and the reason for non-attainment was the same as in Theorem \ref{Main Theorem: Characterization of Attainment}. While these counterexamples resolved the conjecture for specific spaces, it remains a difficult open problem to understand the general case; see for example the discussion in \cite{davidsebsphere}. In particular, it would be important to understand if the conjecture has an affirmative answer for some classes of spaces. Which additional assumptions are needed to guarantee that a space is quasisymmetric to a Loewner space?

In this section, we find a large class of non-trivial cases where Kleiner's conjecture is true. This is done by verifying the approximate self-similarity and the combinatorial Loewner properties for the limit spaces $X$ that we have constructed. Thus, in the absence of removable edges, Theorem \ref{Main Theorem: Characterization of Attainment} implies attainment of the conformal dimension and hence verifies Kleiner's conjecture for many symmetric Laakso-type spaces. As seen in earlier sections, the metrics attaining the conformal dimension are explicit, but do not arise from a snowflake deformation, or some other trivial construction.

An additional motivation for this section is to observe that symmetric Laakso-type spaces of minimal Hausdorff dimension are \emph{Loewner spaces}. This follows immediately from \cite[\S 1.6]{Ch} (or \cite[Proposition 11.14]{murugan2023first}), and is stated at the end of the section. For the definition of a Loewner space, see \cite[Definition 1.11]{Ch} or \cite{HK}.

In order discuss Kleiner's conjecture, we will need to establish approximate self-similarity and CLP for the spaces of interest. Most of the work on verifying CLP was already done in  \cite{anttila2024constructions}, and the only difference here is that we used a slightly different metric. For expository purposes, we define approximate self-similarity and CLP, and outline the general approach to prove these for our metrics. For most of the technical steps we refer to \cite{anttila2024constructions}, and we simply highlight any changes.

\begin{definition}\label{def:approxselfsim}
    A metric space $(X,d)$ is approximately self-similar if there exists a constant $L>1$ s.t. for every $x\in X, r\in (0,\diam(X)]$ there exists an open set $U_{x,r}\subset X$ and a $L$-bi-Lipschitz map $f:(B(x,r),d/r)\to U_{x,r}$. 
\end{definition}

We recall the definition of the intrinsic metric $d_{L_*}$ from Definition \ref{def:intrinsicmetric}.

\begin{proposition} A symmetric Laakso-type fractal space \((X,d_{L_*})\) is approximately self-similar.
\end{proposition}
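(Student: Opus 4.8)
The plan is to exploit the product/self-similar structure of the replacement graphs established in Section \ref{Section: Construction of Laakso-type fractal spaces} together with the metric estimates \ref{Lemma: DL(1)}--\ref{Lemma: DL(8)}. The key geometric fact is that for any $e \in E_n$, the subset $X_e \subset X$ equipped with the rescaled metric $L_*^{n} d_{L_*}|_{X_e}$ is bi-Lipschitz to a fixed model space, namely the limit space $X$ itself (or a small bounded perturbation of it), with a bi-Lipschitz constant that does \emph{not} depend on $n$ or on the choice of $e$. Indeed, the similarity maps $\sigma_{e,m}$ of Proposition \ref{prop: similarity maps} set up a bijection between $X_e$ and $X$, and under this bijection the density cascade satisfies $\rho_{n+m}(e \cdot f) = L_*^{-n}\rho_m(f)$ for the trivial density $\rho \equiv L_*^{-1}$; feeding this into the definition of $d_{\rho,m}$ and then passing to the limit shows that $L_*^n d_{L_*}$ restricted to $X_e$ agrees with the intrinsic metric on the model up to the additive errors appearing in \ref{Lemma: DL(3)} and \ref{Lemma: DL(4)}, which are themselves comparable to the scale. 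Using \ref{Lemma: DL(7)} and \ref{Lemma: DL(8)} (valid here since the IGS is symmetric and $\rho \equiv L_*^{-1}$ is symmetric), these errors are controlled multiplicatively, so one gets a genuine bi-Lipschitz equivalence with a uniform constant $L_0 > 1$.

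Next I would translate this from "pieces $X_e$" to "balls $B(x,r)$", which is the content of Definition \ref{def:approxselfsim}. Given $x \in X$ and $0 < r \le \diam(X,d_{L_*})$, choose $n = n(x,r) \in \N$ with $L_*^{-n} \le r < L_*^{-(n-1)}$, and let $e = e_n$ be the level-$n$ edge with $x \in X_e$ (using a representative of $x$). By \ref{Lemma: DL(6)} and Corollary \ref{Corollary: Diameter comparable to weight} the ball $B(x,r)$ contains $X_e$ and is contained in a bounded union of the $X_f$ for $f \in E_n$ adjacent to $e$; by the doubling property and Lemma \ref{Lemma: Doubling degree bound} the number of such neighbouring pieces is bounded by a constant depending only on $G_1$. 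So $B(x,r)$ is sandwiched between $X_e$ and a uniformly bounded union of its neighbours, all of which rescale by $L_*^n \approx r^{-1}$ to bi-Lipschitz copies of bounded model pieces. One then sets $U_{x,r}$ to be the corresponding open neighbourhood inside the model space and defines $f$ on $B(x,r)$ via $\sigma_{e,\,\cdot}^{-1}$; the map is $L$-bi-Lipschitz for $L = L(L_0, C_{\deg}, L_*)$ once one checks that the finitely many model pieces glue together along their gluing sets in a way that is itself bi-Lipschitz, which follows from the same estimates applied one level up.

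The main obstacle I expect is precisely this last gluing point: one must verify that the union of the model piece at $e$ with the (boundedly many) adjacent model pieces, each carrying its rescaled intrinsic metric, is bi-Lipschitz to an open subset of a \emph{single} fixed space with a constant independent of $n$. This is where the additive error terms in \ref{Lemma: DL(3)}--\ref{Lemma: DL(4)} must be handled carefully: the distance between two points lying in different adjacent pieces is computed by a chain that crosses the shared gluing vertex, and one needs the rescaled version of \ref{Lemma: DL(8)} to guarantee that points in non-adjacent pieces stay uniformly far apart after rescaling, so that the glued metric is genuinely comparable to the model. Since this is essentially the same computation already carried out in \cite{anttila2024constructions} for the uniformly-scaled construction, I would state the bi-Lipschitz gluing lemma, indicate that the estimates \ref{Lemma: DL(1)}--\ref{Lemma: DL(8)} are exactly what is needed to reproduce the argument of \cite[proof of approximate self-similarity]{anttila2024constructions} with $d_{L_*}$ in place of the earlier metric, and conclude. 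The only genuinely new ingredient relative to that reference is the verification that $d_{L_*}$ — which here is not assumed a priori to coincide with a rescaled path metric — still satisfies the required local self-similarity, and this is furnished by \ref{Lemma: DL(1)} (distances stabilize under taking ancestors) combined with Corollary \ref{Corollary: Diameter comparable to weight}.
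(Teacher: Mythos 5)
There is a genuine gap in the central claim. You assert that for every $e\in E_n$ the space $\bigl(X_e,\,L_*^{n}d_{L_*}|_{X_e}\bigr)$ is bi-Lipschitz to $(X,d_{L_*})$ with a constant independent of $n$ and $e$, and that the additive errors in \ref{Lemma: DL(3)}--\ref{Lemma: DL(4)}, controlled via \ref{Lemma: DL(7)}--\ref{Lemma: DL(8)}, furnish this. But those additive errors are of order $\rho_n(e)=L_*^{-n}$, which after rescaling by $L_*^{n}$ become $O(1)$ --- the same size as the diameter of the rescaled piece --- so they do not yield any bi-Lipschitz control. The estimates \ref{Lemma: DL(7)}--\ref{Lemma: DL(8)} compare weights of adjacent and non-adjacent level-$n$ edges; they say nothing about whether a shortest chain $\Omega$ between two points of $X_e$ might leave $e\cdot G_m$ and thereby shorten the distance in a way that has no analogue in the model. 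Without ruling this out, the restriction of $d_{L_*}$ to $X_e$ need not be uniformly comparable to the intrinsic metric on the model, and the argument stalls exactly at the gluing obstacle you correctly flag.

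The paper resolves this by a different mechanism. Instead of $X_e$ it works with the star $\cN(e)\cdot X$ (the union over edges adjacent to $e$) and with the size-two star $\widehat{\cN}(e)$, and it first observes that any shortest edge path in the definition of the metric joining two points of $\cN(e)$ must stay inside $\widehat{\cN}(e)$ --- this is the localization you would need but never establish. It then defines combinatorial equivalence of edges $e\in G_n$, $f\in G_m$ by requiring a graph isomorphism of $\widehat{\cN}(e)$ onto $\widehat{\cN}(f)$ compatible with the gluing maps, and uses the similarity maps of Proposition \ref{prop: similarity maps} to produce a genuine $L_*^{\,n-m}$-homothety $\cN(e)\cdot X\to\cN(f)\cdot X$ (not merely a bi-Lipschitz map). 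Because there are only finitely many isomorphism classes, and each is represented by some $f\in G_m$ with $m\leq N$ for a fixed $N$, the target $\cN(f)\cdot X$ is an open subset of $X$ and one may take $U_{x,r}$ to be (an open subset of) it. The combinatorial-equivalence bookkeeping and the size-two-neighborhood localization are the two ideas your proposal is missing, and they are what replace your unsupported bi-Lipschitz claim and what dissolve the gluing problem rather than controlling it by estimates.
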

\begin{proof}
    The proof is very similar to \cite[Section 3.8]{anttila2024constructions} and we simply sketch the details. For every $e\in G_n$, let $\cN(e)$ be the subgraph of size $1$ about $e$ and $\widehat{\cN}(e)$ a subgraph of size two. We say that two edges $e\in G_n, f\in G_m$ are combinatorially equivalent, if $\widehat{\cN}(e), \widehat{\cN}(f)$ are isomorphic as graphs via an isomorphism $\psi$, which satisfies $\phi_{\psi(u),\psi(a)}=\phi_{u,a}$ for each edge $u$ in $\cN(e)$ and $a\in u$. 

    Further, any edge path in the definition of the metric connecting $y,z\in \cN(e)$ which is shortest must be contained in $\widehat{\cN(e)}$. Using these facts, and the proof of \cite[Corollary 3.34]{anttila2024constructions} one sees that there is a $L_{*}^{n-m}$-homothety $\cN(e)\cdot X\to \cN(f)\cdot X$, where $A\cdot X = \bigcup_{e\in A} e\cdot A$ for any $A\subset \bigcup_{m=1}^\infty E_m$.

    Finally, approximate self-similarity follows from finiteness of isomorphism classes and these homotheties. 
    It is direct to see that there are finitely many isomorphism classes of edges, and there is some $N\in \N$ so that each isomorphism class already contains a representative for some $e\in G_n$ with $n\leq N$. Let $x\in e\cdot X$ and $r<L_*^{-N}$. By Property \ref{Lemma: DL(4)}, we have $B(x,r) \subset \cN(e)\cdot X$ for some edge $e\in G_m$ with $L_*^{-m-1}\leq r\leq L_*^{-m}$.  Thus, the previous paragraph constructs the desired homothety. This suffices to show approximate self-similarity; see \cite[Proposition 3.35.]{anttila2024constructions} for further details.
\end{proof}

The Combinatorial Loewner Property (CLP) is more technical to state, and we refer to \cite{anttila2024constructions} for the classical definition from \cite{BourK} and \cite{clais}. Here, we will present a simpler definition better suited for our setting.

For $v\in V_n$, let $X_v = \bigcup_{e\in E_n, v\in e} e \cdot X$. This set resembles a star around $v$, and $X_v\cap X_w$ if and only if $v,w$ are connected by an edge. If $A,B\subset X$ are two subsets, let $\Gamma(A,B)$ be the collection of continuous paths connecting $A$ to $B$, and $\Gamma_L(A,B)$ the collection of continuous curves with diameter at most $L$. A function $\rho:V_n\to [0,1]$ is called \emph{admissible} for a collection of curves $\Gamma$ if 
\[
\sum_{v\in V_n, X_v\cap \gamma \neq \varnothing} \rho(v)\geq 1
\text{ for all } 
\gamma \in \Gamma.
\]
Let
\[
\Mod_p^D(\Gamma;G_n)=\inf \sum_{v\in V_n} \rho(v)^p,
\]
where the infimum is taken over all admissible functions $\rho$ for $\Gamma(E,F)$.

\begin{definition} Let $Q>1$. We say that $(X,d_{L_*})$ satisfies the \emph{$Q$-Combinatorial Loewner property} if the following hold.
\begin{enumerate}[label={\color{blue}{\textup{$($CLP\arabic*}$)$}}, widest=a, leftmargin=*]
\item\label{CLP1} For every $A\geq 4$ there exists a $\delta>0$ and $L\geq 1$ s.t. for all $x,y\in X, r>0$ with $d_{L_*}(x,y)\leq Ar$, and all $n>0$ s.t. $L_*^{-n}<r/2$,
\[
\Mod_Q^D(\Gamma_{Lr}(B(x,r),B(y,r); G_n))\geq \delta.
\]
\item\label{CLP2} For every $A\geq 2$ there exists a $\delta_A$ with $\lim_{A\to \infty} \delta_A = 0$ st. the following holds. For every $x\in X$, $r<\diam(X)=1/2$ and $n>0$ s.t. $L_*^{-n}<r/2$, we have
\[
\Mod_Q^D(\Gamma(B(x,r),X \setminus B(x,Ar)); G_n)\leq \delta_A.
\]
\end{enumerate}
\end{definition}
This definition is different from the classical one, but can easily be seen to be equivalent. Unlike the classical definition, here we do not use the notion of an $\alpha$-approximation and additionally \ref{CLP1} only gives a lower bound for moduli of curves connecting balls instead of all pairs of sets; see \cite{anttila2024constructions} for the classical definition. The equivalence is seen by combining two observations. First, one can reduce from all sets to balls by \cite[Proposition 2.9]{BourK}. Second, our graphs $G_n$ correspond isomorphically with an $\alpha$-approximation. This follows by an argument in \cite[Proposition 3.37]{anttila2024constructions} combined with Lemma \ref{Lemma: Subset Lemma} and Lemma \ref{Lemma: Distance Lemma 1} -- which account for the fact that we are using a slightly different metric. This argument shows that $G_n$ is the incidence graph of the covering $\{X_v : v\in V_n\}$, and that this covering forms an $\alpha$-approximation.

So far, in this paper we have studied the edge modulus, and fortunately this is comparable to the corresponding vertex modulus. If $A \subset X$ is a set, let $G_n(A)=\{v\in V_n : X_v \cap A\neq \varnothing\}$. 
\begin{lemma}[{\cite[Proposition 5.4.9]{anttila2024constructions}}]\label{lem:dmodemod} There exists a constant $C\geq 1$ s.t. the following holds. If $A,B\subset X$ s.t. $G_n(A),G_n(B)$ are disjoint, then
    \[
    C^{-1}\Mod_p^D(A,B;G_n) \leq\Mod_p(\Theta(G_n(A),G_n(B));G_n) \leq C\Mod_p^D(A,B;G_n),
    \]
    where the comparability constant is independent of $n$.
\end{lemma}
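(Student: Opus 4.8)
The plan is to pass back and forth between admissible edge densities on $\Theta(G_n(A),G_n(B))$ and admissible vertex densities for $\Gamma(A,B)$, showing that each can be converted into the other at the cost of a multiplicative constant depending only on the maximal vertex degree $C_{\deg}=\max_{v\in V_1}\deg(v)$ (which is uniform in $n$ by Lemma \ref{Lemma: Doubling degree bound}). The key geometric fact is that for each $v\in V_n$ the star $X_v=\bigcup_{e\ni v}e\cdot X$ is covered by at most $C_{\deg}$ of the pieces $X_e$, $e\in E_n$, and each $X_e$ meets at most two stars $X_v$; moreover a continuous path $\gamma$ in $X$ meeting $X_v$ and $X_w$ induces, after discretization at level $n$, an edge path in $G_n$ between $G_n(\{\gamma\})$-vertices, and conversely a combinatorial edge path gives rise to a continuous path through the corresponding $X_e$'s (this is exactly the $\alpha$-approximation correspondence recorded in the paragraph preceding the lemma, via \cite[Proposition 3.37]{anttila2024constructions}, Lemma \ref{Lemma: Subset Lemma} and Lemma \ref{Lemma: Distance Lemma 1}).

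First I would prove $\Mod_p(\Theta(G_n(A),G_n(B));G_n)\le C\,\Mod_p^D(A,B;G_n)$. Given a vertex density $\rho\colon V_n\to[0,1]$ admissible for $\Gamma(A,B)$, define an edge density $\tilde\rho\colon E_n\to[0,1]$ by $\tilde\rho(e):=\min\{1,\rho(e^-)+\rho(e^+)\}$. If $\theta=[v_0,e_1,\dots,e_k,v_k]$ is an edge path joining $G_n(A)$ to $G_n(B)$, the edges of $\theta$ piece together (through the sets $X_{e_i}$) into a continuous curve $\gamma$ from $A$ to $B$, and every vertex $v\in V_n$ with $X_v\cap\gamma\neq\varnothing$ is an endpoint of some $e_i$ in $\theta$; hence $L_{\tilde\rho}(\theta)=\sum_i\tilde\rho(e_i)\ge\sum_{v:\,X_v\cap\gamma\neq\varnothing}\rho(v)\ge 1$ up to the bounded overcounting (each vertex sits in at most $C_{\deg}$ edges of a path only trivially — along a path each vertex has at most two incident path-edges, so the overcount is by a factor $\le 2$). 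Thus $c\,\tilde\rho$ is admissible for a fixed $c\le 1$, and $\mathcal M_p(\tilde\rho)\le 2^{p-1}\sum_{e}(\rho(e^-)^p+\rho(e^+)^p)\le 2^{p-1}C_{\deg}\,\mathcal M_p(\rho)$, giving the bound after taking infima.

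For the reverse inequality $\Mod_p^D(A,B;G_n)\le C\,\Mod_p(\Theta(G_n(A),G_n(B));G_n)$, start from an admissible edge density $\sigma\colon E_n\to[0,1]$ and set $\hat\sigma(v):=\min\{1,\sum_{e\ni v}\sigma(e)\}$. Given $\gamma\in\Gamma(A,B)$, its discretization at level $n$ yields an edge path $\theta$ in $G_n$ joining $G_n(A)$ to $G_n(B)$ all of whose edges $e$ satisfy $X_e\cap\gamma\neq\varnothing$, and each such edge has an endpoint $v$ with $X_v\cap\gamma\neq\varnothing$; therefore $\sum_{v:\,X_v\cap\gamma\neq\varnothing}\hat\sigma(v)\ge\sum_{e\in\theta}\sigma(e)=L_\sigma(\theta)\ge 1$, so $\hat\sigma$ is $\Gamma(A,B)$-admissible. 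The mass estimate $\mathcal M_p^D(\hat\sigma)=\sum_v\hat\sigma(v)^p\le\sum_v\big(\sum_{e\ni v}\sigma(e)\big)^p\le C_{\deg}^{p-1}\sum_v\sum_{e\ni v}\sigma(e)^p=2\,C_{\deg}^{p-1}\mathcal M_p(\sigma)$ then finishes the proof, with $C=2\,C_{\deg}^{p-1}$ working for both directions (and one may absorb the $p$-dependence since $p$ ranges over a bounded set in the applications, or keep $C$ depending on $p$ as stated).

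The main obstacle I anticipate is not the density bookkeeping but making the discretization correspondence between continuous curves in $(X,d_{L_*})$ and edge paths in $G_n$ fully rigorous and uniform in $n$: one must verify that a connected curve $\gamma$ meeting a family of the closed sets $\{X_e\}_{e\in E_n}$ forces the corresponding edges to form a \emph{connected} subgraph of $G_n$ joining the prescribed vertex sets, and conversely that a combinatorial edge path can be realized by an actual continuous path of controlled diameter (needed if one wants the comparison to respect the curve families $\Gamma_{Lr}$ appearing in \ref{CLP1}). This is precisely where one invokes that $G_n$ is the incidence graph of the covering $\{X_v:v\in V_n\}$ and that this covering is an $\alpha$-approximation, as established via \cite[Proposition 3.37]{anttila2024constructions} together with Lemmas \ref{Lemma: Subset Lemma} and \ref{Lemma: Distance Lemma 1}; granting that, the rest is the routine two-sided density transfer sketched above, and indeed the statement and argument are identical to \cite[Proposition 5.4.9]{anttila2024constructions} modulo the change of metric, so one can cite that source for the details.
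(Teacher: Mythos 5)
The paper does not actually supply a proof of this lemma; it simply cites \cite[Proposition 5.4.9]{anttila2024constructions} and records the supporting remarks (the incidence-graph/$\alpha$-approximation correspondence) in the preceding paragraph. So there is no internal proof to compare against, and your proposal must be judged on its own.

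Your overall strategy — transfer densities back and forth between vertices and edges with bounded overcounting, and defer the curve/edge-path discretization to the $\alpha$-approximation correspondence — is the right one and the second direction is fine. However, the first direction as written has a genuine error. You claim that every vertex $v\in V_n$ with $X_v\cap\gamma\neq\varnothing$ is an endpoint of some $e_i\in\theta$, and then deduce
$\sum_i\tilde\rho(e_i)\ge\sum_{v:\,X_v\cap\gamma\neq\varnothing}\rho(v)\ge 1$. This is not correct. If $x\in\gamma\cap X_v$ and $x\in X_{e_i}$, then $X_v\cap X_{e_i}\neq\varnothing$, which (since $X_v=\bigcup_{f\ni v}X_f$ and $X_f\cap X_{e_i}\neq\varnothing$ iff $f\cap e_i\neq\varnothing$) only tells you that some edge $f\ni v$ shares a vertex with $e_i$. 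That puts $v$ at combinatorial distance $\le 1$ from the vertex set of $\theta$, not in it. So the set $\{v:X_v\cap\gamma\neq\varnothing\}$ is in general strictly larger than the vertex set of $\theta$, and what one actually has is $\sum_i\tilde\rho(e_i)\ge\sum_{v\in\theta}\rho(v)\le\sum_{v:\,X_v\cap\gamma\neq\varnothing}\rho(v)$ — the inequality runs the wrong way and admissibility of $\rho$ does not give you $\sum_i\tilde\rho(e_i)\ge 1$. The fix is standard: replace $\tilde\rho(e):=\min\{1,\rho(e^-)+\rho(e^+)\}$ by a sum of $\rho$ over the full combinatorial $1$-neighborhood $N_1(e):=\{v\in V_n:X_v\cap X_e\neq\varnothing\}$. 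Then $\{v:X_v\cap\gamma\neq\varnothing\}\subseteq\bigcup_{e\in\theta}N_1(e)$ so admissibility follows, and since $|N_1(e)|$ and the number of edges $e$ containing a given $v$ in $N_1(e)$ are both bounded by a constant depending only on $C_{\deg}$ (Lemma \ref{Lemma: Doubling degree bound}), the $p$-mass estimate goes through with a constant independent of $n$ exactly as you wrote. With this correction the proposal matches the intended density-transfer argument.
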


Further, a simple argument allows us to compute moduli at different levels of the graph. Let $M_Q=\Mod_Q(\Theta^{(1)},G_1)$. 

\begin{lemma}
[{\cite[Theorem 4.38]{anttila2024constructions}}]
\label{lem:modulusbound} Let $n>m.$ For every $A,B\subset V_m$ we have
\[
\Mod_p(\Theta(\pi_{n,m}^{-1}(A),\pi_{n,m}^{-1}(B)), G_n) = \Mod_p(\Theta(A,B),G_m) M_Q^{m-n}.
\]
\end{lemma}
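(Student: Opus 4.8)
The statement to prove is Lemma \ref{lem:modulusbound} (a variant of \cite[Theorem 4.38]{anttila2024constructions}):
\[
\Mod_p(\Theta(\pi_{n,m}^{-1}(A),\pi_{n,m}^{-1}(B)),G_n)=\Mod_p(\Theta(A,B),G_m)\cdot M_Q^{m-n}.
\]
(Here I read the exponent $m-n<0$ as meaning multiplication by $M_Q^{-(n-m)}$, consistent with Corollary \ref{Corollary: Modulus in multiplicative}, which is the case $A=I_-$, $B=I_+$, $m=1$.) The plan is to run the two inequalities separately, in each case lifting or projecting densities along $\pi_{n,m}$ and exploiting the self-similar structure via Proposition \ref{prop: similarity maps} and the path-lifting Proposition \ref{Proposition: Path lift}, in exactly the way Lemma \ref{Lemma: Optimal Density} did for $\Theta^{(n)}$.

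First I would prove $\Mod_p(\Theta(\pi_{n,m}^{-1}(A),\pi_{n,m}^{-1}(B)),G_n)\le \Mod_p(\Theta(A,B),G_m)M_Q^{m-n}$. It suffices to do this for $n=m+1$ and iterate. Take an optimal (or near-optimal) $\Theta(A,B)$-admissible density $\sigma$ on $G_m$ together with, on $G_1$, a density $\tau$ that is $\Theta^{(1)}$-admissible and $p$-optimal, so $\mathcal M_p(\tau)=M_Q$. Define a density $\rho$ on $G_{m+1}=E_1\times E_m$ by the cascade rule $\rho(e\cdot f):=\sigma(f)\cdot\tau(e)$ for $e\in E_1$, $f\in E_m$ (this is formula \eqref{Definition: Density Cascade} with the roles of the two factors split off the top level versus the bottom). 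Admissibility: given any path $\theta\in\Theta(\pi_{m+1,m}^{-1}(A),\pi_{m+1,m}^{-1}(B))$ in $G_{m+1}$, Proposition \ref{Proposition: Path lift} gives a lift to a path $[u_0,f_1,\dots,f_l,u_l]\in\Theta(A,B)$ in $G_m$ with disjoint subpaths $\theta_i\subset f_i\cdot G_1$, each $\theta_i$ crossing $f_i\cdot G_1$ and hence containing a scaled copy of some path in $\Theta^{(1)}$; then
\[
L_\rho(\theta)\ \ge\ \sum_{i=1}^l \sigma(f_i)\,L_\tau(\theta_i)\ \ge\ \sum_{i=1}^l\sigma(f_i)\ \ge\ 1.
\]
Mass: $\mathcal M_p(\rho)=\sum_{e\in E_1}\sum_{f\in E_m}\sigma(f)^p\tau(e)^p=\mathcal M_p(\sigma)\cdot M_Q$, so passing to the infimum over $\sigma$ gives the desired upper bound.

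For the reverse inequality $\Mod_p(\Theta(\pi_{n,m}^{-1}(A),\pi_{n,m}^{-1}(B)),G_n)\ge \Mod_p(\Theta(A,B),G_m)M_Q^{m-n}$, again reduce to $n=m+1$ and argue by duality, as in Lemma \ref{Lemma: Optimal Density}. Let $\sigma$ be $p$-optimal for $\Theta(A,B)$ on $G_m$ with optimal unit flow $\mathcal J$ from $A$ to $B$ (Proposition \ref{Proposition: Duality}, using $\Theta(A,B)\ne\varnothing$ as implied by the hypotheses), and let $\tau$, $\mathcal J_1$ be $p$-optimal density and unit flow for $\Theta^{(1)}$ on $G_1$. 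Build the cascade flow $\widetilde{\mathcal J}$ on $G_{m+1}$ exactly as in the proof of Lemma \ref{Lemma: Optimal Density}, setting $\widetilde{\mathcal J}([z,f],(e,f)):=\mathcal J(f^-,f)\mathcal J_1(z,e)$; the same verification (using that the IGS is doubling, so degree-one gluing vertices force the two local edges to agree, and — when $A,B$ are not gluing sets — using that a single representative endpoint label is consistent) shows $\widetilde{\mathcal J}$ is a unit flow from $\pi_{m+1,m}^{-1}(A)$ to $\pi_{m+1,m}^{-1}(B)$. Then $|\widetilde{\mathcal J}(e\cdot f)|=|\mathcal J(f)||\mathcal J_1(e)|$, so $\mathcal E_q(\widetilde{\mathcal J})=\mathcal E_q(\mathcal J)\cdot\mathcal E_q(\mathcal J_1)$ and together with $\mathcal M_p(\rho)=\mathcal M_p(\sigma)M_Q$ and the normalizations $\mathcal M_p(\sigma)^{1/p}\mathcal E_q(\mathcal J)^{1/q}=1$, $\mathcal M_p(\tau)^{1/p}\mathcal E_q(\mathcal J_1)^{1/q}=1$ we get $\mathcal M_p(\rho)^{1/p}\mathcal E_q(\widetilde{\mathcal J})^{1/q}=1$; Proposition \ref{Proposition: Duality} then certifies $\rho$ as optimal, giving the matching lower bound and hence equality. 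Iterating $n-m$ times yields the general statement.

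The main obstacle I anticipate is the flow-lifting step when $A$ and $B$ are arbitrary subsets of $V_m$ rather than the gluing sets $I_\pm^{(m)}$: in Lemma \ref{Lemma: Optimal Density} the identity $\eta\circ\phi_\pm=\phi_\mp$ and the symmetry of $\tau$ and $\mathcal J_1$ were used to make the cascade flow consistent across gluing vertices, and one must check that this argument still goes through here — it does, because the consistency at a gluing vertex only depends on the local combinatorics at that vertex (which copy of $G_1$ sits on each side), not on where $A,B$ are. A secondary, purely bookkeeping point is that $\pi_{n,m}^{-1}(A)$ may have some edges of $G_n$ lying "between" $A$-components absorbed into it; but since $\pi_{n,m}^{-1}$ of a vertex set is a union of blocks $e\cdot G_{n-m}$, Proposition \ref{prop: similarity maps}\,\ref{SM3} guarantees the edge partition is clean, and the path-lift/flow-cascade arguments are insensitive to this. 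One should also note the degenerate case $\Theta(A,B)=\varnothing$, where both sides are interpreted as $+\infty$ (or the statement is simply vacuous), so we may assume throughout that a connecting path exists.
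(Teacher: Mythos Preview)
The paper does not prove this lemma itself but defers to \cite[Theorem~4.38]{anttila2024constructions}; your approach (cascade an admissible density for the upper bound, cascade the optimal unit flow and invoke Proposition~\ref{Proposition: Duality} for the lower bound) is precisely the template the paper uses in Lemma~\ref{Lemma: Optimal Density} for the special case $A=I_-$, $B=I_+$, and your key observation --- that the gluing-vertex consistency in the flow cascade uses only the symmetry of the level-$1$ optimal flow $\cJ_1$, not of the level-$m$ flow $\cJ$ --- is exactly right and is what makes the argument go through for arbitrary $A,B\subset V_m$.

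One minor remark: the statement as printed carries a typo --- the factor should be $\Mod_p(\Theta^{(1)},G_1)^{\,n-m}$ rather than $M_Q^{\,m-n}$ (check the case $m=1$, $A=I_-$, $B=I_+$ against Corollary~\ref{Corollary: Modulus in multiplicative}); your opening parenthetical does not quite untangle this, but your actual computation, iterating $\cM_p(\rho)=\cM_p(\sigma)\cdot\Mod_p(\Theta^{(1)},G_1)$, produces the correct formula.
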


We can now prove CLP for the limits space $X$ by adapting \cite[Theorem 5.2]{anttila2024constructions}. Notice that the value $Q>1$ s.t. a metric space is $Q$-CLP is unique. Indeed, if there is any such $Q$, then $Q$ is equal to the conformal dimension; see e.g. \cite[Lemma 4.2]{EBConf}. 

\begin{theorem}\label{thm:CLP} 
A symmetric Laakso-type fractal space \((X,d_{L_*})\) is \(Q\)-CLP for some \(Q>1\) if and only if the associated family of paths \(\Theta^{(1)}\) contains two edge-disjoint paths.
\end{theorem}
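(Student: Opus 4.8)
The plan is to reduce the statement to the already-established characterizations of the critical exponent $Q_*$ and the edge modulus of $\Theta^{(1)}$, and then connect these to the defining modulus inequalities \ref{CLP1} and \ref{CLP2}. First I would observe that by Lemma \ref{Lemma: Cut-points}, the existence of two edge-disjoint paths in $\Theta^{(1)}$ is equivalent to $\Mod_1(\Theta^{(1)},G_1)>1$, which by Lemma \ref{Lemma: Critical exponent} and the strict monotonicity of $t\mapsto \Mod_t(\Theta^{(1)},G_1)$ is in turn equivalent to $Q_*>1$. So the theorem becomes: $(X,d_{L_*})$ is $Q$-CLP for some $Q>1$ if and only if $Q_*>1$, and in that case necessarily $Q=Q_*$ by \cite[Lemma 4.2]{EBConf} (since $Q$-CLP forces $Q=\dim_{\AR}(X,d_{L_*})=Q_*$ by Theorem \ref{Main Theorem: Value of Conformal Dimension}).

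For the forward direction (contrapositive): if there are no two edge-disjoint paths, then $Q_*=1$, and a $Q$-CLP with $Q>1$ cannot hold because $Q$-CLP implies conformal dimension equals $Q>1$, contradicting $\dim_{\AR}(X,d_{L_*})=Q_*=1$. (One may alternatively note directly that by the Remark following Lemma \ref{Lemma: Cut-points}, $Q_*=1$ produces a cut point separating $X_-$ from $X_+$, and a space with such a cut point cannot be $Q$-CLP for $Q>1$; but invoking Theorem \ref{Main Theorem: Value of Conformal Dimension} is cleaner.)

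For the reverse direction, assume $Q_*>1$ and set $Q=Q_*$, so $M_Q:=\Mod_Q(\Theta^{(1)},G_1)=1$. The heart of the matter is to verify \ref{CLP1} and \ref{CLP2} for this $Q$. Here I would adapt the proof of \cite[Theorem 5.2]{anttila2024constructions} essentially verbatim, the only change being the switch to the metric $d_{L_*}$; the necessary metric estimates are supplied by Lemma \ref{Lemma: Distance Lemma 1} and Lemma \ref{Lemma: Distance Lemma 2}, and the identification of $G_n$ with an $\alpha$-approximation is provided by the discussion following the definition of $Q$-CLP (via \cite[Proposition 3.37]{anttila2024constructions}, Lemma \ref{Lemma: Subset Lemma}, and Lemma \ref{Lemma: Distance Lemma 1}). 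Concretely: using Lemma \ref{lem:dmodemod} one passes between the vertex modulus $\Mod_Q^D$ and the edge modulus $\Mod_Q$; using Lemma \ref{lem:modulusbound} and $M_Q=1$ one sees that the relevant moduli at level $n$ are comparable to moduli of bounded-complexity subgraph configurations that depend only on the combinatorial type of the ball, of which there are finitely many by approximate self-similarity. For \ref{CLP2}, the upper bound $\delta_A\to 0$ follows because separating $B(x,r)$ from $X\setminus B(x,Ar)$ requires crossing on the order of $\log_{L_*}A$ "annular" scales, each contributing a multiplicative factor $M_Q=1$ to the modulus but with the admissible density having to be spread over exponentially many edges, forcing the $Q$-mass down like a negative power of $\log A$ — exactly as in \cite[Theorem 5.2]{anttila2024constructions}; for \ref{CLP1}, the lower bound $\delta>0$ follows from the duality Proposition \ref{Proposition: Duality} together with Lemma \ref{Lemma: Critical exponent (porous)}, which guarantees $\Mod_{Q_*}(\widehat{\Theta}^{(n)},G_n(\widehat E))=1$ uniformly in $n$, so that any admissible density for a curve family connecting two genuinely separated balls has $Q$-mass bounded below. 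The main obstacle is the bookkeeping in \ref{CLP1}: one must show that the curve family $\Gamma_{Lr}(B(x,r),B(y,r))$, restricted to an appropriate neighborhood subgraph, contains (an isomorphic copy of) a family like $\Theta^{(m)}$ for some $m$ with $L_*^{-m}\approx r$, so that Lemma \ref{lem:modulusbound} and the normalization $M_Q=1$ apply; this is where the "quasi-self-similar" structure and the diameter comparisons of Corollary \ref{Corollary: Diameter comparable to weight} do the work, and it is handled exactly as the corresponding step in \cite{anttila2024constructions} once the metric change is absorbed. Finally, uniqueness of the CLP exponent gives that no $Q\neq Q_*$ works, completing the equivalence.
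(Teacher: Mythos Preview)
Your proposal is correct and follows essentially the same route as the paper: reduce ``two edge-disjoint paths'' to $Q_*>1$ via Lemma~\ref{Lemma: Cut-points}, dispose of the forward direction by the conformal-dimension obstruction (citing \cite[Lemma 4.2]{EBConf} and Theorem~\ref{Main Theorem: Value of Conformal Dimension}), and for the converse defer the verification of \ref{CLP1} and \ref{CLP2} to the arguments in \cite{anttila2024constructions}, noting which metric estimates must be swapped in for the new metric $d_{L_*}$. The paper's own proof is in fact even terser than yours---it simply points to \cite[Proposition 5.2]{anttila2024constructions} for \ref{CLP2} and \cite[Proposition 5.7]{anttila2024constructions} for \ref{CLP1}, flagging only that \ref{Lemma: DL(6)} supplies the diameter bound and Lemma~\ref{Lemma: Subset Lemma} replaces \cite[Proposition 3.37]{anttila2024constructions}. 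Two small remarks on your added heuristics: for \ref{CLP2} the phrase ``each contributing a multiplicative factor $M_Q=1$'' is misleading (a factor of $1$ does nothing)---the decay comes from the \emph{series law} across the $\approx\log_{L_*}A$ nested annuli, which yields the negative power of $\log A$ you state; and for \ref{CLP1} the more natural reference is Corollary~\ref{Corollary: Modulus in multiplicative} (giving $\Mod_{Q_*}(\Theta^{(n)},G_n)=1$ directly) rather than Lemma~\ref{Lemma: Critical exponent (porous)}, and the duality Proposition~\ref{Proposition: Duality} is not really the mechanism---the lower bound comes from embedding a copy of $\Theta^{(m)}$ inside the curve family via self-similarity, exactly as you acknowledge in your ``main obstacle'' paragraph.
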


\begin{proof}

The necessity of the condition is shown by contrapositive. 
If $G_1$ does not have two edge disjoint paths, then by Lemma \ref{Lemma: Cut-points}, we have that $\Mod_1(\Theta^{(1)},G_1)=1$ and by Theorem \ref{Main Theorem: Value of Conformal Dimension}, we have that the Ahlfors regular conformal dimension is $1$. On the other hand a $Q$-CLP space with $Q>1$ has $\dim_{AR}(X)=Q>1$ by \cite[Lemma 4.2]{EBConf}. Thus, in this case the limit space can not be $Q$-CLP.

Next, we show sufficiency. Let $Q=\dim_{\AR}(X)$, which by Lemma \ref{Lemma: Cut-points}, Proposition \ref{Proposition: Equality of Confdim} and Corollary \ref{Corollary: Confdim upper bound} satisfies $Q>1$. The bound \ref{CLP2} follows from the argument in \cite[Proposition 5.2]{anttila2024constructions}, where the relevant bound $\diam(X_v)\leq 2L_*^{-n}$ follows from \ref{Lemma: DL(6)}.

Next, the bound \ref{CLP1} follows from \cite[Proposition 5.7]{anttila2024constructions}. Here, one needs to use Lemma \ref{Lemma: Subset Lemma} to replace the use of \cite[Proposition 3.37]{anttila2024constructions}.
\end{proof}

Finally, from the CLP we also obtain the classical Loewner property for minimizers. See \cite{HK} for the classical definition of a Loewner space, and \cite{CEB} for more discussion on this notion.

\begin{corollary}\label{cor:Loewner}
    A symmetric Laakso-type fractal $(X,d_{L_*})$ is quasisymmetric to a $Q$-Ahlfors regular $Q$-Loewner space for $Q>1$ if and only if the associated IGS doesn't have a removable edge and $Q=\dim_{\AR}(X)>1$. Moreover, if  $(X,d)$ is any minimizer for the Ahlfors regular conformal dimension and $Q=\dim_{\AR}(X)>1$, then $(X,d)$ is $Q$-Loewner. 
\end{corollary}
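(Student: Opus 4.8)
The plan is to combine Theorem \ref{Main Theorem: Characterization of Attainment}, Theorem \ref{thm:CLP}, and the general principle --- due to \cite[\S 1.6]{CEB} or \cite[Proposition 11.14]{murugan2023first} --- that for an approximately self-similar, $Q$-CLP metric space, attainment of the Ahlfors regular conformal dimension is equivalent to being quasisymmetric to a $Q$-Ahlfors regular $Q$-Loewner space. First I would record the standing hypotheses: by the Propositions proved in this section, $(X,d_{L_*})$ is approximately self-similar, and by Theorem \ref{thm:CLP} it is $Q$-CLP with $Q=\dim_{\AR}(X)$ precisely when $\Theta^{(1)}$ contains two edge-disjoint paths. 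By Lemma \ref{Lemma: Cut-points} and Theorem \ref{Main Theorem: Value of Conformal Dimension}, the existence of two edge-disjoint paths is exactly the condition $Q_*=\dim_{\AR}(X)>1$; and by Lemma \ref{Lemma: Cut-points} again, this is equivalent to $\Mod_1(\Theta^{(1)},G_1)>1$, hence to $\widehat{E}\neq\{e^*\}$ being replaced by the genuine support condition --- but the cleaner phrasing is simply: two edge-disjoint paths exist iff $Q_*>1$.

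For the first (iff) statement, I would argue both directions. Suppose the IGS has no removable edge and $Q=\dim_{\AR}(X)>1$. Then by Theorem \ref{Main Theorem: Characterization of Attainment} the conformal dimension is attained, and since $Q>1$ the space is $Q$-CLP by Theorem \ref{thm:CLP} (note $Q>1$ forces two edge-disjoint paths via Lemma \ref{Lemma: Cut-points} and Theorem \ref{Main Theorem: Value of Conformal Dimension}). Approximate self-similarity was established above, so the CLP-plus-approximate-self-similarity-plus-attainment package from \cite[\S 1.6]{CEB} / \cite[Proposition 11.14]{murugan2023first} yields a quasisymmetry onto a $Q$-Ahlfors regular $Q$-Loewner space. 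Conversely, if $(X,d_{L_*})$ is quasisymmetric to a $Q$-Ahlfors regular $Q$-Loewner space with $Q>1$, then that Loewner space is a minimizer realizing $\dim_{\AR}(X)=Q$ (Loewner spaces attain their conformal dimension, being Loewner of their own Hausdorff dimension), so the conformal dimension is attained; by Theorem \ref{Main Theorem: Characterization of Attainment} the IGS has no removable edge, and $Q=\dim_{\AR}(X)$ equals the common value, which is $>1$.

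For the ``Moreover'' statement, suppose $(X,d)$ is any minimizer for $\dim_{\AR}(X)$ with $Q:=\dim_{\AR}(X)>1$. Then $d\in\cG_{\AR}(X,d_{L_*})$ with $\dim_{\mathrm H}(X,d)=Q$, so attainment holds; applying the first part, the IGS has no removable edge, and $(X,d_{L_*})$ --- hence also $(X,d)$, being quasisymmetric to it --- is $Q$-CLP by Theorem \ref{thm:CLP}. Now $(X,d)$ is a $Q$-Ahlfors regular, $Q$-CLP, approximately self-similar space that attains its conformal dimension, so by the same equivalence from \cite[\S 1.6]{CEB} / \cite[Proposition 11.14]{murugan2023first} --- applied to the minimizer itself rather than to $(X,d_{L_*})$ --- one concludes $(X,d)$ is $Q$-Loewner. (Alternatively: the quasisymmetry equivalence class produced by the first part contains a Loewner representative, and a standard rigidity argument shows that any $Q$-Ahlfors regular minimizer in a $Q$-CLP gauge is already $Q$-Loewner; invoking \cite[\S 1.6]{CEB} directly is cleaner.) The main obstacle here is purely one of citation hygiene: verifying that the cited equivalence in \cite{CEB,murugan2023first} is stated for approximately self-similar $Q$-CLP spaces in the precise generality needed, and that ``minimizer'' in our sense matches the ``attainment'' hypothesis there --- all of which is routine given the references, so no substantive new argument is required.
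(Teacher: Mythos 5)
Your proposal is correct and follows essentially the same route as the paper: Tyson's result (\cite[1.7.~Corollary]{TQuasi}, which is what you mean by ``Loewner spaces attain their conformal dimension'') gives the backward direction, while Theorem~\ref{Main Theorem: Characterization of Attainment}, Theorem~\ref{thm:CLP}, approximate self-similarity, and the equivalence in \cite[\S 1.6]{CEB} give both the forward direction and the moreover clause in one stroke. The only cosmetic difference is that the paper proves the forward direction and the moreover clause simultaneously by observing that \cite[\S 1.6]{CEB} already applies to any minimizer, whereas you spell the moreover clause out as a separate application; this is a presentation choice, not a mathematical difference.
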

\begin{proof}
If $(X,d_{L_*})$ is quasisymmetric to a $Q$-Ahlfors regular $Q$-Loewner space for some $Q>1$, then by \cite[1.7. Corollary]{TQuasi}, $Q$ must be equal to the Ahlfors regular conformal dimension of $(X,d_{L_*})$. Thus, $X$ attains its Ahlfors regular conformal dimension $Q$ and by Theorem \ref{Main Theorem: Characterization of Attainment} there cannot be any removable edge.

On the other hand, if $(X,d_{L_*})$ does not have a removable edge and $\dim_{\AR}(X)>1$, then by Theorem \ref{Main Theorem: Characterization of Attainment} it attains its Ahlfors regular conformal dimension and $Q=\dim_{\AR}(X)>1$. By Theorem \ref{thm:CLP} $(X,d_{L_*})$ is also $Q$-CLP, and by \cite[\S 1.6]{CEB} any minimizer of Ahlfors regular conformal dimension is $Q$-Loewner. This completes the proof of the converse statement and also the moreover-clause.

\end{proof}

\bibliographystyle{acm}
\bibliography{clp}

\appendix

\end{document}